\DeclareSymbolFont{largesymbols}{OMX}{zplm}{m}{n} 
\let\originalleft\left     
\let\originalright\right
\renewcommand{\left}{\mathopen{}\mathclose\bgroup\originalleft}
\renewcommand{\right}{\aftergroup\egroup\originalright}
\newcolumntype{C}{>{$}c<{$}} 
\numberwithin{equation}{section}
\renewcommand{\Re}{\operatorname{Re}}
\newcommand{\img}{\operatorname{im}}
\renewcommand{\ge}{\geq}
\renewcommand{\le}{\leq}
\DeclarePairedDelimiter{\brac}{\lparen}{\rparen} 
\DeclarePairedDelimiter{\sqbrac}{\lbrack}{\rbrack} 
\DeclarePairedDelimiter{\set}{\lbrace}{\rbrace}
\newcommand{\st}{\mspace{5mu} : \mspace{5mu}} 
\DeclarePairedDelimiter{\abs}{\lvert}{\rvert}
\DeclarePairedDelimiter{\ang}{\langle}{\rangle}
\DeclarePairedDelimiter{\normord}{{} :}{: {}} 
\DeclarePairedDelimiter{\powser}{\llbracket}{\rrbracket} 
\DeclarePairedDelimiterX{\comm}[2]{\lbrack}{\rbrack}{#1 , #2}  
\DeclarePairedDelimiterX{\acomm}[2]{\lbrace}{\rbrace}{#1 , #2} 
\DeclarePairedDelimiterX{\super}[2]{\lparen}{\rparen}{#1 \delimsize\vert \mathopen{} #2} 
\newcommand{\dpair}[2]{\ang{#1,#2}} 
\newcommand{\pair}[2]{\brac*{#1,#2}} 
\newcommand{\dd}{\mathrm{d}}   
\newcommand{\ee}{\mathsf{e}}   
\newcommand{\wun}{\mathbf{1}}  
\newcommand{\vac}{\Omega}
\DeclareMathOperator{\Res}{Res}
\DeclareMathOperator{\cspn}{span}
\newcommand{\spn}[1]{\cspn_{\CC}\set*{#1}}                    
\newcommand{\zspn}[1]{\cspn_{\ZZ}\set*{#1}}
\newcommand{\rspn}[1]{\cspn_{\RR}\set*{#1}}
\newcommand{\rgen}[1]{\cspn_{R}\set*{#1}}
\newcommand{\ra}{\rightarrow}
\newcommand{\lra}{\longrightarrow}
\newcommand{\dses}[5]{0 \lra #1 \overset{#2}{\lra} #3 \overset{#4}{\lra} #5 \lra 0} 
\DeclareMathOperator{\ind}{Ind}
\newcommand{\Ind}[3]{\ind^{#1}_{#2} #3}
\DeclareMathOperator{\ext}{Ext}
\newcommand{\Extgrp}[3]{\ext^{#1}\brac*{#2,#3}}
\DeclareMathOperator{\Hom}{Hom}
\newcommand{\Homgrp}[2]{\Hom\brac*{#1,#2}}
\newcommand{\dExt}[2]{\dim\Extgrp{}{#1}{#2}}
\newcommand{\dHom}[2]{\dim\Homgrp{#1}{#2}}
\newcommand{\fld}[1]{\mathbb{#1}}    
\newcommand{\alg}[1]{\mathfrak{#1}}  
\newcommand{\Mod}[1]{\mathcal{#1}}   
\newcommand{\VOA}[1]{\mathsf{#1}}    
\newcommand{\categ}[1]{\mathscr{#1}} 
\newcommand{\ZZ}{\fld{Z}}
\newcommand{\NN}{\fld{N}}
\newcommand{\RR}{\fld{R}}
\newcommand{\CC}{\fld{C}}
\newcommand{\UEA}[1]{\mathsf{U}\brac*{#1}}
\newcommand{\bglie}{\alg{G}} 
\newcommand{\weyl}{\alg{A}}
\newcommand{\latva}{\VOA{V}_{K}}                                  
\newcommand{\bgva}{\VOA{G}}
\newcommand{\hva}{\VOA{H}}
\DeclareMathOperator{\opp}{opp}
\DeclareMathOperator{\wt}{wt}                                               
\newcommand{\finite}[1]{\overline{#1}}
\newcommand{\func}[2]{#1 #2}                         
\newcommand{\conjaut}{\mathsf{c}}                    
\newcommand{\sfaut}{\sigma}                          
\newcommand{\conjmod}[1]{\func{\conjaut}{#1}}        
\newcommand{\sfmod}[2]{\func{\sfaut^{#1}}{#2}}       
\newcommand{\catR}{\categ{R}}           
\newcommand{\catFLE}{\categ{F}}         
\newcommand{\Fock}[1]{\Mod{F}_{#1}}          
\newcommand{\LFock}[1]{\mathbb{F}_{#1}}      
\DeclarePairedDelimiter{\bra}{\langle}{\rvert}
\DeclarePairedDelimiter{\ket}{\lvert}{\rangle}
\DeclarePairedDelimiterX{\braket}[2]{\langle}{\rangle}{#1 \delimsize\vert \mathopen{} #2}
\DeclarePairedDelimiterX{\bracket}[3]{\langle}{\rangle}{#1 \delimsize\vert \mathopen{} #2 \delimsize\vert \mathopen{} #3}
\newcommand{\fuse}{\mathbin{\boxtimes}}                                            
\newcommand{\ffuse}{\fuse^{\mathsf{ff}}}                                        
\newcommand{\itype}[3]{\binom{#3}{#1,\ #2}}
\DeclareRobustCommand{\ddfuse}{\text{\reflectbox{$\boxslash$}}}
\DeclareMathOperator{\compspace}{COMP}
\newcommand{\comp}[2]{\compspace\brac*{#1,#2}}
\DeclareMathOperator{\lgrspace}{LGR}
\newcommand{\lgr}[2]{\lgrspace\brac*{#1,#2}}
\newcommand{\vop}[2]{\mathrm{V}_{#1}\brac*{#2}}
\newcommand{\SCR}{\mathcal{S}}
\newcommand{\scr}[1]{\SCR_{#1}}
\newcommand{\hw}{highest-weight}
\newcommand{\hwvs}{\hw{} vectors}
\newcommand{\va}{vertex algebra}
\newcommand{\voa}{vertex operator algebra}
\newcommand{\ope}{operator product expansion}
\newcommand{\PBW}{Poincar\'{e}-Birkhoff-Witt}
\newcommand{\lhs}{left-hand side}
\newcommand{\rhs}{right-hand side}
\theoremstyle{plain}
\newtheorem{thm}{Theorem}[section]
\newtheorem{prop}[thm]{Proposition}
\newtheorem{lem}[thm]{Lemma}
\newtheorem{cor}[thm]{Corollary}
\newtheorem*{thm*}{Theorem}
\theoremstyle{definition} 
\newtheorem{defn}[thm]{Definition}
\newtheorem*{rmk}{Remark}
\Crefname{thm}{Theorem}{Theorems}
\Crefname{prop}{Proposition}{Propositions}
\Crefname{lem}{Lemma}{Lemmas}
\Crefname{cor}{Corollary}{Corollaries}
\Crefname{defn}{Definition}{Definitions}
\Crefname{tab}{Table}{Tables}
\newcommand*\tFo[4]{\tensor[_2]{F}{_1}\brac*{#1, #2; #3; #4}}
\newcommand*\Gam[1]{\Gamma\brac*{#1}}
\newcommand{\bfn}[2]{B\brac*{#1,#2}}
\Crefname{ass}{Assumption}{Assumptions}
\newcommand{\module}[1]{\mathcal{#1}}
\newcommand{\Bmod}[2]{\module{B}^{#1}_{#2}}
\newcommand{\Tmod}[2]{\module{T}^{#1}_{#2}}
\newcommand{\Vmod}[1]{\VacMod_{#1}}
\newcommand{\Stmod}[1]{\Stag_{#1}}
\newcommand{\Stsum}[2]{\module{Q}^{#1}_{#2}} 
\newcommand{\Amod}{\module{A}}
\newcommand{\Mmod}{\module{M}}
\newcommand{\Nmod}{\module{N}}
\newcommand{\Wmod}{\module{W}}
\newcommand{\VacMod}{\module{V}}          
\newcommand{\Typ}[1]{\module{W}_{#1}}     
\newcommand{\Stag}{\module{P}}            
\newcommand{\iop}[2]{\mathcal{Y}\brac{#1,#2}}
\newlength\squareheight
\DeclareMathOperator{\soc}{soc}
\DeclareMathOperator{\hd}{hd}
\newcommand{\Proj}[1]{\mathsf{P}\sqbrac{#1}}
\newcommand{\Inje}[1]{\mathsf{J}\sqbrac{#1}} 
\newcommand{\JmodTL}[1]{\mathsf{J}_{#1}} 
\newcommand{\PmodTL}[1]{\mathsf{P}_{#1}} 
\newcommand{\SmodTL}[1]{\mathsf{S}_{#1}}
\newcommand{\CmodTL}[1]{\mathsf{C}_{#1}}
\newcommand{\ImodTL}[1]{\mathsf{I}_{#1}}
\newcommand{\BmodTL}[2]{\mathsf{B}^{#1}_{#2}}
\newcommand{\TmodTL}[2]{\mathsf{T}^{#1}_{#2}}
\DeclareMathOperator{\coker}{coker}
\begin{document}

\title[]{Bosonic ghostbusting --- The bosonic ghost vertex algebra
  admits a logarithmic module category with rigid fusion}

\author[R Allen]{Robert Allen}

\address[Robert Allen]{
School of Mathematics \\
Cardiff University \\
Cardiff, United Kingdom, CF24 4AG.
}

\email{allenr7@cardiff.ac.uk}

\author[S Wood]{Simon Wood}

\address[Simon Wood]{
School of Mathematics \\
Cardiff University \\
Cardiff, United Kingdom, CF24 4AG.
}

\email{woodsi@cardiff.ac.uk}

\subjclass[2010]{Primary 17B69, 81T40; Secondary 17B10, 17B67, 05E05}

\begin{abstract}
The rank 1 bosonic ghost \va{}, also known as the $\beta \gamma$
ghosts, symplectic bosons or Weyl \va{}, is a simple example of a
conformal field theory which is neither rational, nor $C_2$-cofinite. We
identify a module category, denoted category \(\catFLE\), which satisfies three necessary conditions coming
from conformal field theory considerations: closure under restricted duals,
closure under fusion and closure under the action of the modular group on
characters. We prove the second of these conditions, with the other two
already being known. Further, we show that category \(\catFLE\) has
sufficiently many projective and injective modules, give a classification of
all indecomposable modules, show that fusion is rigid and
compute all fusion products. The fusion product formulae turn out to
perfectly match a previously proposed
Verlinde formula, which was computed using a conjectured
generalisation of the usual rational Verlinde formula, called the standard
module formalism. The bosonic ghosts therefore exhibit essentially all of the
rich structure of rational theories despite satisfying none of the standard
rationality assumptions such as \(C_2\)-cofiniteness, the \va{} being
isomorphic to its restricted dual or having a one-dimensional conformal weight
0 space. In particular, to the best of the authors' knowledge this
is the first example of a proof of rigidity for a logarithmic
non-\(C_2\)-cofinite \va{}.
\end{abstract}

\maketitle

\onehalfspacing

\section{Introduction}
\label{sec:intro}

A \va{} is called logarithmic if it admits 
reducible yet indecomposable modules on which the Virasoro \(L_0\) operator
acts non-semisimply, giving rise to logarithmic singularities in the
correlation functions of the associated conformal
field theory.
There is a general consensus within the research community that many
of the structures familiar from rational \va{s} such as
modular tensor categories \cite{HuaVer08} and, in particular, the
Verlinde formula should generalise in some form to the logarithmic case, at least for
sufficiently nice logarithmic \va{s}. To this end,
considerable work has been done on developing non-semisimple or non-finite
generalisations of modular tensor categories
\cite{FuclHo13,FuclCFT17,CrelMod17}. However, progress has been
hindered by a severe lack of examples, making it hard to come up with
the right set of assumptions.

Ghost systems have been used extensively in theoretical physics and quantum
algebra. Their applications include gauge fixing in string theory
\cite{FriCon86}, Wakimoto free field realisations \cite{WakFoc86}, quantum
Hamiltonian reduction \cite{FeiDSred90} and constructing the chiral de Rham 
complex on smooth manifolds \cite{MalChi99}.
Fermionic ghosts at central charge \(c=-2\) in the form of symplectic fermions
have received a lot of attention in the past
\cite{KauLog95,GabInd96,GabBdyBlk07}, due to their even subalgebra being one of the
first known examples of a logarithmic \va{}. 
In particular, they are one of the few known examples of
\(C_2\)-cofinite yet logarithmic \va{s}
\cite{RunSBos14,AdaTrip08,TsuTrip15}. This family also provides the
only known examples of logarithmic \(C_2\)-cofinite \va{s}
with a rigid fusion product \cite{RunSBos14,TsuRig13}.

Here we study the rank 1 bosonic ghosts at central charge \(c=2\). One
of the motivations for studying this algebra is that it is simple
enough to allow many quantities to be computed explicitly, while
simultaneously being distinguished from better understood algebras in a number
of interesting ways. For example, the bosonic ghosts are
not \(C_2\)-cofinite and they were shown to be logarithmic by D. Ridout and the second
author in \cite{RidBos14}, in which the module category to be studied here,
denoted category \(\catFLE\), was introduced. The main goals of \cite{RidBos14}
were determining the modular properties of characters in category \(\catFLE\)
and computing the Verlinde formula, using the standard module formalism pioneered
by D. Ridout and T. Creutzig \cite{RidSL208,CreMod12,CreWZW13}, to
predict fusion product formulae. Later, D. Adamovi{\'c} and V. Pedi{\'c} computed the
dimensions of spaces of intertwining operators among the simple modules of
category \(\catFLE\) in \cite{AdaBG19}, which turned out to match the predictions made by the
Verlinde formula in \cite{RidBos14}. Here we show that fusion (in the sense of
the \(P(w)\)-tensor products of \cite{HuaLog}) equips 
category \(\catFLE\) with the structure of a braided tensor
category. This, in particular, implies that category \(\catFLE\) is closed under fusion, that is, the fusion
product of any two objects in \(\catFLE\) has no contributions from
outside \(\catFLE\) and is hence again an object in \(\catFLE\). We
derive explicit formulae for the decomposition of any fusion product
into indecomposable direct summands, and we show that fusion is rigid and
matches the Verlinde formula of \cite{RidBos14}.

A further source of interest for the bosonic ghosts is an exciting recent
correspondence between four-dimensional super conformal field theory
and two-dimensional conformal field theory \cite{Beem4D2D15}, where the bosonic
ghosts appear as one of the smaller examples on the two-dimensional
side. Within this context the bosonic ghosts are the first member of a
family of \va{s} called the \(\mathcal{B}_p\) algebras
\cite{CreuBp14,AugBp19}. The \(\mathcal{B}_p\)-module categories are conjectured to
satisfy interesting tensor categorical equivalences to the module
category of the unrolled restricted quantum groups of
\(\mathfrak{sl}_2\). It will be an interesting future problem to
explore these categorical relations in more detail using the results
of this paper.

The paper is organised as follows. In \cref{sec:ghosts}, we fix notation by giving an introduction to the
bosonic ghost algebra and certain important automorphisms called conjugation
and spectral flow; construct category \(\catFLE\), the module category to be studied; and give two
free field realisations of the bosonic ghost algebra.
In \cref{sec:proj} we begin the analysis of category \(\catFLE\) as an abelian
category by using the free field realisations of the bosonic ghost algebra to
construct a logarithmic module, denoted \(\Stag\), on which the operator \(L_0\) has rank 2 Jordan blocks. We
further show that \(\Stag\) is both an injective hull and a projective cover
of the vacuum module (the bosonic ghost \va{} as a module over
  itself), and classify all projective modules in category \(\catFLE\),
thereby showing that category \(\catFLE\) has sufficiently many projectives
and injectives.
In \cref{sec:indecomp} we complete the analysis of category \(\catFLE\) as an
abelian category by classifying all indecomposable modules. 
In \cref{sec:rigid} we show that fusion equips category \(\catFLE\) with the
structure of a vertex tensor category, the main obstruction being showing that
certain conditions, sufficient for the existence of associativity isomorphisms, hold.
We further show that the simple projective modules of \(\catFLE\) are rigid. 
In \cref{sec:fusion} we show that category \(\catFLE\) is rigid and
determine direct sum decompositions for all fusion products of modules in
category \(\catFLE\).
In \cref{sec:suffconvext} we review an argument by Yang
\cite{Yang18}, which provides sufficient conditions for a technical
property, called convergence and extension, required for the existence of associativity
isomorphisms. We adjust the argument of Yang slightly to remove certain assumptions
on module categories. This adjusted argument proves \cref{thm:gradconext},
which should also prove useful for the generalisations of category \(\catFLE\) to
other \va{s} such as those constructed from affine Lie algebras at
admissible levels.

\subsection*{Acknowledgements}
\label{sec:acknowledgements}

SW would like to thank Yi-Zhi Huang, Shashank Kanade, Robert McRae and Jinwei
Yang for helpful and stimulating discussions regarding the subtleties of
vertex tensor categories, and Ehud Meir for the same regarding finitely
generated modules. Both authors are very grateful to Thomas Creutzig for
drawing their attention to a mistake relating to convergence and extension
properties in a previous version of this manuscript. RA's research is
supported by the EPSRC Doctoral Training Partnership grant EP/R513003/1. 
SW's research is supported by the Australian Research Council Discovery
Project DP160101520. 

\section{Bosonic ghost \va{}}
\label{sec:ghosts}

In this section we introduce the bosonic ghost \va{}, along with its
gradings and automorphisms. We define the module category which will be the
focus of this paper. We also introduce useful tools for the classification of
modules and calculation of fusion products, including two free field
realisations. Note that we will make specific choices of conformal
  structure for all \va{s} considered in this paper and so will not
  distinguish between \va{s}, \voa{s} and conformal \va{s} or other similar
  naming conventions.

\subsection{The algebra and its automorphisms}\label{sec:prelim}

The bosonic ghost \va{} (also called \(\beta\gamma\) ghosts) is closely related to the Weyl
algebra. Their defining relations resemble each other and the Zhu algebra of the bosonic ghosts is isomorphic to the
Weyl algebra. The bosonic ghosts are therefore also often referred to
as the Weyl \va{}. Due to these connections,
we first introduce the Weyl algebra and its modules before
going on to consider the bosonic ghosts.

\begin{defn}
  The (rank 1) \emph{Weyl} algebra \(\weyl\) is the unique unital associative
  algebra with two generators \(p,q,\) subject to the relations
  \begin{equation}
    [p,q]=1,
  \end{equation}
  and no additional relations beyond those required by the axioms of
  an associative algebra. The grading operator is the element \(N=qp\).
\end{defn}

\begin{defn}
  \label{def:weylmods}
  We define the following indecomposable \(\weyl\)-modules:
  \begin{enumerate}
  \item \(\CC[x]\), where \(p\) acts as \(\partial/\partial x\) and
    \(q\) acts as \(x\). Denote this module by \(\finite{\VacMod}\).
    \label{itm:finhw}
  \item \(\CC[x]\), where \(p\) acts as \(x\) and \(q\) acts as
    \(-\partial/\partial x\). Denote this module by
    \(\conjmod{\finite{\VacMod}}\).
    \label{itm:finlw}
  \item \(\CC[x,x^{-1}]x^{\lambda}\), \(\lambda\in \CC\setminus\ZZ\), where \(p\) acts as \(\partial/\partial x\) and
    \(q\) acts as \(x\). Note that shifting \(\lambda\) by an integer
    yields an isomorphic module. Denote the mutually inequivalent
    isomorphism classes of these modules by \(\finite{\Typ{\mu}}\), where
    \(\mu\in \CC/\ZZ\), \(\mu\neq \ZZ\) and \(\lambda\in \mu\).
    \label{itm:dense}
  \item \(\CC[x,x^{-1}]\), where \(p\) acts as \(\partial/\partial x\) and
    \(q\) acts as \(x\). Denote this module by \(\finite{\Typ{0}^+}\). This
    module is uniquely characterised by the non-split exact sequence
    \begin{equation}
      \dses{\finite{\VacMod}}{}{\finite{\Typ{0}^{+}}}{}{\conjmod{\finite{\VacMod}}}.
    \end{equation}
    \label{itm:indecp}
  \item \(\CC[x,x^{-1}]\), where \(p\) acts as \(x\) and \(q\) acts as
    \(-\partial/\partial x\). Denote this module by
    $\finite{\Typ{0}^-}$. This
    module is uniquely characterised by the non-split exact sequence
    \begin{equation}
      \dses{\conjmod{\finite{\VacMod}}}{}{\finite{\Typ{0}^{-}}}{}{\finite{\VacMod}}.
    \end{equation}
    \label{itm:indecm}
  \end{enumerate}
  A module on which \(N=qp\) acts semisimply is called a weight module.
  Note that \(N\) acts semisimply on all modules above.
\end{defn}

\begin{prop}[Block \cite{BloIrr79}]
  \label{thm:weylmods}
  Any simple \(\weyl\)-module on which \(N\) acts semisimply is
  isomorphic to one of those listed in \cref{def:weylmods}, Parts \ref{itm:finhw} -- \ref{itm:dense}.
\end{prop}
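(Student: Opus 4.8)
The plan is to decompose the module into $N$-eigenspaces and to track exactly how $p$ and $q$ move between them, reducing the whole classification to understanding a single ``junction'' in the weight ladder. So let $M$ be a simple $\weyl$-module on which $N=qp$ acts semisimply, and write $M=\bigoplus_{\lambda}M_{\lambda}$ for its decomposition into $N$-eigenspaces $M_{\lambda}=\{v\in M:Nv=\lambda v\}$. The two relations I would extract first are $[N,p]=-p$ and $[N,q]=q$ (immediate from $[p,q]=1$ and $N=qp$), which give $pM_{\lambda}\subseteq M_{\lambda-1}$ and $qM_{\lambda}\subseteq M_{\lambda+1}$, together with $qp=N$ and $pq=N+1$. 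Fixing a nonzero $v\in M_{\lambda_{0}}$, simplicity gives $M=\weyl v$, and since every element of $\weyl v$ lies in a weight space $M_{\lambda_{0}+n}$ with $n\in\ZZ$, the support of $M$ lies in a single coset $\mu\in\CC/\ZZ$.

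Next I would analyse the pair of maps between consecutive weight spaces. For $v\in M_{\lambda}$ we have $qp\,v=\lambda v$ and $pq\,v=(\lambda+1)v$, so $p\colon M_{\lambda}\to M_{\lambda-1}$ and $q\colon M_{\lambda-1}\to M_{\lambda}$ satisfy $qp=\lambda$ on $M_{\lambda}$ and $pq=\lambda$ on $M_{\lambda-1}$. Whenever $\lambda\neq0$ these composites are nonzero scalars, so $p$ and $q$ are mutually inverse isomorphisms (up to the scalar $\lambda$) between $M_{\lambda}$ and $M_{\lambda-1}$. The crucial consequence is that the \emph{only} link in the two-sided ladder that can fail to be an isomorphism is the one joining $M_{-1}$ and $M_{0}$, and this link is relevant precisely when $0$ lies in the coset, i.e.\ when $\mu=\ZZ$. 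If $\mu\neq\ZZ$, every link is an isomorphism, so all weight spaces share a common dimension $d$ and any submodule is nothing but a single subspace of one weight space propagated around by these isomorphisms; simplicity then forces $d=1$, and normalising a weight vector identifies $M\cong\finite{\Typ{\mu}}$, giving Part~\ref{itm:dense}.

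The remaining case $\mu=\ZZ$ is where the work lies. Here I would set $\alpha=q|_{M_{-1}}\colon M_{-1}\to M_{0}$ and $\beta=p|_{M_{0}}\colon M_{0}\to M_{-1}$; the identities $qp|_{M_{0}}=0$ and $pq|_{M_{-1}}=0$ give $\alpha\beta=0=\beta\alpha$. Because every other link is an isomorphism, a submodule is equivalent to a pair of subspaces $U\subseteq M_{0}$ and $V\subseteq M_{-1}$ with $\beta(U)\subseteq V$ and $\alpha(V)\subseteq U$. Testing simplicity against the candidate submodule $(\img\alpha,\img\beta)$ and a short rank count (surjectivity of both would force $\alpha\beta\neq0$) shows this pair must be $(0,0)$, i.e.\ $\alpha=\beta=0$; but then $\bigoplus_{\lambda\geq0}M_{\lambda}$ is a submodule, so simplicity forbids $M_{0}$ and $M_{-1}$ from both being nonzero. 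Hence $M$ is either lowest-weight with $M_{\lambda}=0$ for $\lambda<0$, or highest-weight with $M_{\lambda}=0$ for $\lambda>-1$; in each case simplicity forces the surviving weight spaces to be one-dimensional, yielding $\finite{\VacMod}$ and $\conjmod{\finite{\VacMod}}$ respectively (Parts~\ref{itm:finhw} and~\ref{itm:finlw}). I expect the junction analysis in this integral case to be the main obstacle, the rest being bookkeeping with the two relations $qp=N$ and $pq=N+1$; it is also exactly the point at which the non-simple integral modules $\finite{\Typ{0}^{\pm}}$ of Parts~\ref{itm:indecp}--\ref{itm:indecm} are correctly excluded.
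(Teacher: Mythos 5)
Your proof is correct, but note that the paper does not actually prove this statement at all: it is quoted as Block's theorem, with \cite{BloIrr79} cited in place of an argument (see also the remark following \cref{thm:simclass}, where the classification of simples in \(\catR\) is deduced from Block's result). What you have written is therefore a self-contained elementary replacement for that citation, valid in exactly the weight-module setting the paper needs. Your key steps are all sound: the weight-ladder decomposition from \([N,p]=-p\), \([N,q]=q\); the observation that \(qp=N\) and \(pq=N+1\) make every link \(M_{\lambda}\leftrightarrow M_{\lambda-1}\) with \(\lambda\neq0\) an isomorphism; the junction analysis at \(\lambda=0\), where \(\alpha\beta=\beta\alpha=0\) plus simplicity forces \(\alpha=\beta=0\) and hence a one-sided ladder; and the normalisations identifying the three outcomes with \(\finite{\VacMod}\), \(\conjmod{\finite{\VacMod}}\) and \(\finite{\Typ{\mu}}\). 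Two points you use tacitly deserve a sentence in a polished write-up: (i) a submodule of a module on which \(N\) acts semisimply is itself a direct sum of its intersections with the \(N\)-eigenspaces (an interpolation/Vandermonde argument), which is what legitimises ``submodule = compatible family of subspaces propagated by the isomorphisms''; (ii) in the rank count at the junction it is cleaner to argue that if the submodule determined by \((\operatorname{im}\alpha,\operatorname{im}\beta)\) were all of \(M\), then \(M_{0}=\operatorname{im}(\alpha\beta)=0\) and \(M_{-1}=\operatorname{im}(\beta\alpha)=0\), so every weight space vanishes and \(M=0\), contradicting simplicity --- this avoids the slight imprecision in ``surjectivity of both would force \(\alpha\beta\neq0\)'' when \(M_{0}\) could a priori be zero. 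By way of comparison, Block's theorem is far stronger --- it classifies \emph{all} simple \(\weyl\)-modules, including those with no \(N\)-eigenvectors, using localisation machinery --- whereas your ladder argument buys a short, self-contained proof of precisely the statement the paper invokes.
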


\begin{defn}
  The \emph{bosonic ghost \va{}} $\bgva$ is the unique
  \va{} strongly generated by two fields \(\beta,\gamma\),
    subject to the defining \ope{s}
	\begin{equation}
	\label{eq:bgoperel}
	\gamma(z)\beta(w) 
        \sim \frac{1}{z-w},\qquad \beta(z)\beta(w)\sim\gamma(z)\gamma(w)\sim 0,
      \end{equation}
      and no additional relations beyond those required by \va{}
      axioms.
  \end{defn}
  The bosonic ghost \va{} admits a one-parameter family
      of conformal structures. Here we choose the Virasoro field (or energy
      momentum tensor) to be
      \begin{equation}
        T(z)=-\normord{\beta(z)\partial\gamma(z)},
      \end{equation}
      thus determining the central charge to be \(c=2\) and the conformal
      weights of \(\beta\) and \(\gamma\) to be \(1\) and \(0\), respectively.
The bosonic ghost fields can thus be expanded as formal power series with
  the mode indexing chosen to reflect the conformal weights.
\begin{equation}
\beta(z) = \sum_{n \in \ZZ} \beta_n z^{-n-1}, \qquad 
\gamma(z) = \sum_{n \in \ZZ} \gamma_n z^{-n}.
\end{equation}
The \ope{s} of \(\beta\) and \(\gamma\) fields imply that their modes generate the
\emph{bosonic ghost Lie algebra} $\bglie$ 
satisfying the Lie brackets
\begin{equation}
  \label{eq:bgcommrel}
  \comm{\gamma_m}{\beta_n}=\delta_{m+n,0}\wun,\qquad
  \comm{\beta_m}{\beta_n}=\comm{\gamma_m}{\gamma_n}
    =0,\qquad m,n\in\ZZ,
\end{equation}
where \(\wun\) is central and acts as the
identity on any \(\bgva\)-module, since it corresponds to the identity (or vacuum) field.

Within $\bgva$ there is a rank 1 Heisenberg \va{} generated by the field
\begin{equation}
  J(z) = \normord{\beta(z) \gamma(z)}.
\end{equation}
A quick calculation reveals that \(J\) is a free boson of Lorentzian
signature, not a conformal primary, and that \(J\) defines a grading on
\(\beta\) and \(\gamma\) called ghost weight (or ghost number), that is,
\begin{align}
  J(z)J(w)&\sim \frac{-1}{(z-w)^2},  & T(z)
                                      J(w)&\sim\frac{-1}{(z-w)^3}+\frac{J(w)}{(z-w)^2}+\frac{\partial
                                            J(w)}{z-w}\nonumber,\\
  J(z)\beta(w)&\sim \frac{\beta(w)}{z-w},&
  J(z)\gamma(w)&\sim \frac{-\gamma(w)}{z-w}.
\end{align}
Note that for the distinguished elements \(\beta,\ \gamma,\ J,\) and \(T\)
we suppress the field map symbol \(Y:\bgva \to \bgva\powser{z,z^{-1}}\). For
generic elements \(A\in\bgva\) we will use both \(Y(A,z)\) and \(A(z)\) to
denote the field corresponding to \(A\), depending on what is easier to read
in the given context.

We make frequent use of two automorphisms of $\bglie$. The first is spectral flow, which acts on the $\bglie$ modes as
\begin{equation}
\sfmod{\ell}{\beta_n} = \beta_{n-\ell}, \qquad \sfmod{\ell}{\gamma_n} = \gamma_{n+\ell}, \qquad \sfmod{\ell}{\wun} = \wun .
\end{equation}
The second is conjugation which is given by
\begin{equation}
\conjmod{\beta_n} = \gamma_n, \qquad \conjmod{\gamma_n} = -\beta_n, \qquad \conjmod{\wun} = \wun .
\end{equation}
These automorphisms satisfy the relation 
\begin{equation}\label{eq:dihrel}
\conjaut \sfaut^\ell = \sfaut^{-\ell} \conjaut.
\end{equation}
At the level of fields, these automorphisms act as 
  \begin{align}
       \sfmod{\ell}{\beta(z)} &=\beta(z)z^{-\ell},&
       \sfmod{\ell}{\gamma(z)}&=\gamma(z)z^{\ell},&\sfmod{\ell}{J(z)}&=J(z)+\ell\wun z^{-1},&
        \sfmod{\ell}{T(z)}&=T(z)- \ell J(z)z^{-1} - 
        \tfrac{1}{2} \ell (\ell -1)\wun z^{-2}\nonumber,\\
       \conjmod{\beta(z)} &=\gamma(z),&
       \conjmod{\gamma(z)}&=-\beta(z),&\conjmod{J(z)}&=-J(z)+\wun z^{-1},&
       \conjmod{T(z)}&=T(z)+\partial J(z)+ J(z) z^{-1}.
  \end{align}
The primary utility of the conjugation and spectral flow automorphisms
lies in constructing new modules from known ones through twisting.

\begin{defn}
  Let \(\Mmod\) be a \(\bgva\)-module and \(\alpha\) an
  automorphism. The \(\alpha\)-twisted module \(\alpha \Mmod\) is defined
  to be \(\Mmod\) as a vector space, but with the action of \(\bgva\)
  redefined to be
  \begin{equation}
    A(z)\cdot_\alpha m= \alpha^{-1}(A(z)) m,\qquad A\in \bgva,\ m\in \Mmod,
  \end{equation}
  where the action of \(\bgva\) on the \rhs{} is the original untwisted action of \(\bgva\) on \(\Mmod\).
\end{defn}

\begin{rmk}
  Due to being algebra automorphisms, spectral flow and
  conjugation twists both define exact covariant functors. 
  Further, the respective
  ghost and conformal weights \(\sqbrac{j,h}\) of a vector \(m\) in a
  \(\bgva\)-Module \(\Mmod\) transform as follows under conjugation and
  spectral flow.
  \begin{align}\label{eq:twistwts}
    \sfaut^\ell: \sqbrac*{j, \ h} &\mapsto \sqbrac*{ j - \ell, \  h +
                                    \ell j -
                                    \tfrac{1}{2} \ell 
                                    \brac{\ell +
                                    1} }, \nonumber\\
    \conjaut: \sqbrac*{j, \ h} &\mapsto \sqbrac*{ 1 - j , \  h}.
  \end{align}
 Since \(\conjaut^2 \beta_n=-\beta_n\) and
   \(\conjaut^2 \gamma_n=-\gamma_n\), we have \(\conjaut^2 \Mmod\cong \Mmod\),
   for any \(\bgva\)-module \(\Mmod\). We shall later see that spectral
   flow has infinite order and thus the relations \eqref{eq:dihrel}
   imply that at the level of the module
   category spectral flow and conjugation generate the infinite
   dihedral group.
\end{rmk}

\begin{thm}\label{thm:specflow}
  For any \(\bgva\)-modules $\Mmod$ and $\Nmod$, conjugation and
  spectral flow are compatible with fusion products in the following sense.
  \begin{align}
    \nonumber
     \sfmod{\ell}{\Mmod} \fuse \sfmod{m}{\Nmod} &\cong \sfmod{\ell + m}{\brac*{\Mmod \fuse \Nmod}},   \\
    \conjmod{\Mmod}\fuse \conjmod{\Nmod} &\cong
   \conjmod{{\sfmod{}{\brac*{\Mmod \fuse \Nmod}}}}.
  \end{align}
\end{thm}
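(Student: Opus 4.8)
The plan is to deduce both isomorphisms from the universal property characterising the $P(w)$-tensor products of \cite{HuaLog}: for any $\bgva$-modules $\Mmod$, $\Nmod$ and $\Wmod$, the space $\Homgrp{\Mmod \fuse \Nmod}{\Wmod}$ is naturally isomorphic to the space of intertwining operators of type $\itype{\Mmod}{\Nmod}{\Wmod}$. Granting this, to prove the spectral-flow identity it suffices to produce, for each $\ell,m \in \ZZ$, an isomorphism from the space of intertwining operators of type $\itype{\Mmod}{\Nmod}{\Wmod}$ to that of type $\itype{\sfmod{\ell}{\Mmod}}{\sfmod{m}{\Nmod}}{\sfmod{\ell+m}{\Wmod}}$ that is natural in $\Wmod$; the Yoneda lemma, together with the fact that $\sfaut^{\ell+m}$ is an equivalence of categories, then yields $\sfmod{\ell}{\Mmod} \fuse \sfmod{m}{\Nmod} \cong \sfmod{\ell+m}{(\Mmod \fuse \Nmod)}$. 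The conjugation identity is obtained the same way, replacing the two spectral flows on the inputs by conjugations and the output flow $\sfaut^{\ell+m}$ by $\conjaut\sfaut$.

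The core of the argument is the explicit construction of this twisting map. Given an intertwining operator $\mathcal{Y}(\cdot,z)$ of type $\itype{\Mmod}{\Nmod}{\Wmod}$, I would define a candidate $\widetilde{\mathcal{Y}}$ on the twisted modules (which share the underlying vector spaces of $\Mmod$, $\Nmod$, $\Wmod$) by composing $\mathcal{Y}$ with these identifications and inserting the monomial factors in $z$ prescribed by the spectral-flow action $\sfmod{\ell}{\beta(z)} = \beta(z)z^{-\ell}$, $\sfmod{\ell}{\gamma(z)} = \gamma(z)z^{\ell}$ on the generating fields; the resulting prefactor is governed by the ghost-weight ($J_0$) grading. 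One must then check that $\widetilde{\mathcal{Y}}$ satisfies the intertwining-operator axioms — lower truncation, the $L_{-1}$-derivative property, and the Jacobi identity — and that $\mathcal{Y} \mapsto \widetilde{\mathcal{Y}}$ is a bijection, with inverse given by the opposite twists. Because $\bgva$ is strongly generated by $\beta$ and $\gamma$, it is enough to verify the Jacobi identity against these two fields, where the $z^{\mp\ell}$ and $z^{\mp m}$ factors attached to the three slots must combine with the delta-function identities to reproduce the untwisted Jacobi identity; this bookkeeping is exactly what forces the output flow to be $\sfaut^{\ell+m}$.

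I expect this verification to be the main obstacle, and its difficulty is inseparable from the reason the formulae take the stated form. Spectral flow and conjugation do not fix the conformal vector $T$ — they shift it by current and central terms — so $\widetilde{\mathcal{Y}}$ is naturally an intertwining operator only for a twisted conformal structure, and matching this against the honest grading on the output is precisely what produces the additive $\ell+m$ in the first case and the extra $\sfaut$ in the second. A reliable guide to the correct prefactors, and a useful consistency check, is conservation of ghost weight under fusion combined with the weight transformations \eqref{eq:twistwts}: the $J_0$-eigenvalues agree on both sides of each claimed isomorphism — for instance the conjugation identity sends inputs of ghost weights $j_1, j_2$ to output ghost weight $2 - j_1 - j_2$ on either side, which fails for $\conjaut$ alone and is repaired precisely by the extra $\sfaut$. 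Since $\beta$ and $\gamma$ carry integer conformal weights and the twists introduce only integer powers of $z$ on the generating fields, no new logarithmic subtleties enter and the Jacobi check reduces to a finite monomial computation.

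Finally, for the conjugation identity I would build the twisting map from $\conjmod{\beta(z)} = \gamma(z)$ and $\conjmod{\gamma(z)} = -\beta(z)$, verify as above that the output is an intertwining operator of type $\itype{\conjmod{\Mmod}}{\conjmod{\Nmod}}{\conjmod{\sfmod{}{\Wmod}}}$, and read off $\conjmod{\Mmod} \fuse \conjmod{\Nmod} \cong \conjmod{\sfmod{}{(\Mmod \fuse \Nmod)}}$ from the universal property. The relation \eqref{eq:dihrel} is the compatibility that these two families of isomorphisms should respect, reflecting that $\sfaut$ and $\conjaut$ generate the infinite dihedral group acting on the module category.
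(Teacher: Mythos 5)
Your high-level strategy---trading the two isomorphisms for natural bijections between spaces of intertwining operators and then invoking the universal property of the $P(w)$-tensor product together with invertibility of the twist functors---is sound, and it is in fact how this statement is established in the literature. Note, though, that the paper gives no independent argument of its own: it defers the spectral-flow case to \cite[Proposition 2.4]{LiSup97} (observing that the finite-dimensionality hypothesis there is never used, as in \cite[Proposition 3.1]{AdaBG19}) and the conjugation case to \cite[Proposition 2.1]{AdaBG19}. So what has to be judged is your from-scratch construction of the twisting map $\mathcal{Y}\mapsto\widetilde{\mathcal{Y}}$, and there the proposal has a genuine gap.

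The map you describe---reinterpret $\mathcal{Y}$ on the twisted underlying spaces and multiply by monomials in $z$ governed by the ghost weight---captures only the zero-mode part of the twist and provably fails the intertwining-operator axioms. The obstruction you yourself flag (spectral flow shifts $T$) is fatal to this ansatz, not mere bookkeeping: on $\sfmod{\ell}{\Mmod}$ the translation operator is the original $L_{-1}+\ell J_{-1}$, so for $\widetilde{\mathcal{Y}}(u,z)=z^{\ell j(u)}\mathcal{Y}(u,z)$ (or any other monomial exponent) the $L_{-1}$-derivative axiom forces $\mathcal{Y}(J_{-1}u,z)$ to be proportional to $z^{-1}\mathcal{Y}(u,z)$; but the Jacobi identity gives $\mathcal{Y}(J_{-1}u,z)=\normord{J(z)\mathcal{Y}(u,z)}$, which contains all the nonzero modes of $J$ acting on source and target, and these terms do not cancel. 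The correct twist is Li's $\widetilde{\mathcal{Y}}(u,z)=\mathcal{Y}(\Delta(z)u,z)$, where (up to conventions) $\Delta(z)=z^{\ell J_0}\exp\brac*{\ell\sum_{k\ge1}\tfrac{(-1)^{k+1}}{k}J_k z^{-k}}$: the exponential tail in the positive modes of $J$ is precisely what supplies the cancelling terms and cannot be dropped. Two further points are glossed over. First, Li's construction twists the inserted factor and the target simultaneously, giving a bijection between operators of type $\itype{\Mmod}{\Nmod}{\Wmod}$ and of type $\itype{\sfmod{\ell}{\Mmod}}{\Nmod}{\sfmod{\ell}{\Wmod}}$; flowing the second input as well requires iterating this with skew-symmetry/braiding, which is where $\sfaut^{\ell+m}$ actually arises. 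Second, for conjugation no Heisenberg $\Delta$-operator exists at all, since $\conjaut$ sends $J(z)\mapsto -J(z)+\wun z^{-1}$, so ``building the map from $\conjmod{\beta(z)}=\gamma(z)$, $\conjmod{\gamma(z)}=-\beta(z)$'' meets the same $L_{-1}$ obstruction with no stated resolution; that resolution is exactly the content of \cite[Proposition 2.1]{AdaBG19}. Your ghost-weight consistency check correctly predicts the shape of both formulae, but it is a necessary condition only and not a substitute for these constructions.
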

The behaviour of spectral flow under fusion was proven for \va{s} with finite dimensional conformal weight spaces in \cite[Proposition 2.4]{LiSup97}. However, the proof does not rely on this fact, and so we can apply the result to $\bgva$-modules, as in \cite[Proposition 3.1]{AdaBG19}.
The behaviour of conjugation under fusion was proven in \cite[Proposition
2.1]{AdaBG19}, where conjugation was denoted by $\sigma$ and spectral flow by
$\rho_{\ell}$. There the automorphism $g$ corresponds to $\sfaut^{-1} \conjaut
= \conjaut \sfaut$ here. 
These formulae mean that the fusion of modules twisted by
spectral flow is determined by the fusion of untwisted modules, a
simplification we shall make frequent use of.

\subsection{Module category}\label{sec:modcat}

Every \(\bgva\)-module is a \(\bglie\)-module, however, the
converse is not true (consider for example the adjoint \(\bglie\)-module). The
category of smooth \(\bglie\)-modules consists of precisely those modules
which are also \(\bgva\)-modules. Such modules are also commonly called
weak
\(\bgva\)-modules and we shall use these terms interchangeably. Unfortunately the category of all smooth
modules is at present too unwieldy to analyse and so we must invariably
consider some subcategory.

In this section we define the module category, which we believe to
be the natural one from the perspective of conformal field theory, because it
is compatible with the following two necessary conformal field theoretic conditions.
\begin{enumerate}
\item Non-degeneracy of \(n\)-point conformal blocks (chiral
  correlation functions) on the sphere.
  \label{itm:nondeg}
\item Well-definedness of conformal blocks at higher genera.
  \label{itm:genus}
\end{enumerate}
Condition \ref{itm:nondeg} can be reduced to the
non-degeneracy of two and three-point conformal
blocks. The non-degeneracy of two-point conformal blocks requires
the module category to be closed under taking restricted duals,
while non-degeneracy of three-point conformal blocks requires the
module category to be closed under fusion (as, for example, constructed by the \(P(w)\)-tensor
product of Huang-Lepowsky-Zhang). Conformal blocks at
higher genera can be constructed from those on the sphere provided
there is a well-defined action of the modular group on
characters. Thus Condition \ref{itm:genus} requires characters to be well-defined, that is,
for all modules to decompose into direct sums of finite
dimensional simultaneous generalised \(J_0\) and 
\(L_0\) eigenspaces. On any simple such module 
both \(L_0\) and \(J_0\) will act semisimply. Further, the action of \(J_0\) is
semisimple on a fusion product if \(J_0\) acts
semisimply on both factors of the product. We can therefore restrict ourselves
to a category of \(J_0\)-semisimple modules without endangering closure under
fusion. We cannot, however, assume that \(L_0\) will act semisimply in general.

The main tool for identifying and classifying \voa{} modules is Zhu's algebra. Sadly Zhu's
algebra is only sensitive to modules containing vectors annihilated
by all positive modes. Any simple such module is a simple module in the
category called \(\catR\) below. We will see that \(\catR\) is closed
under taking restricted duals, however, as can be seen later in
\cref{sec:fusion}, category \(\catR\) is not closed under fusion. Further, it was
shown in \cite{RidBos14} that the action of the modular group does
not close on its characters. Thus a larger category is needed, which will be
denoted \(\catFLE\) below.
It was shown in \cite{RidBos14} that the action of the modular group closes on the
characters of \(\catFLE\) and strong evidence was presented that fusion does as well.
We will see in \cref{sec:fusion} that category \(\catFLE\) is indeed closed
under fusion and that it satisfies numerous other nice properties.

The definition of the module categories mentioned above
requires the following choice of parabolic triangular decomposition of \(\bglie\).
\begin{equation}
  \bglie^{\pm}=\cspn\{\beta_{\pm n}, \gamma_{\pm n}\st n\geq 1\},\qquad
  \bglie^0=\cspn\{\wun,\beta_0,\gamma_0\}.
\end{equation}
This decomposition is parabolic, because \(\bglie^0\) is not abelian
and thus not a choice of Cartan subalgebra. The role of the Cartan
subalgebra will instead be played by \(\cspn\{\wun, J_0\}\), which
is technically a subalgebra of the completion of \(\UEA{\bglie}\)
rather than \(\bglie\).

\begin{defn}\leavevmode{}
  \begin{enumerate}
  \item Let \(\bgva\)-WMod be the category of smooth weight
    \(\bglie\)-modules, that is the category whose objects are all
    smooth (or weak) 
    \(\bgva\)-modules \(\Mmod\) (we follow the conventions of \cite{FreVer01}
    regarding smooth modules) which in addition satisfy
    that \(J_0\) acts semisimply
    and whose arrows are all \(\bglie\)-module homomorphisms.
  \item Let \(\catR\) be the full subcategory of \(\bgva\)-WMod
    consisting of those modules \(\Mmod\in \bgva\)-WMod satisfying
    \begin{itemize}
    \item \(\Mmod\) is finitely generated,
    \item \(\bglie^+\) acts locally nilpotently, that is, for all
      \(m\in \Mmod\), \(\UEA{\bglie^+}m\) is finite dimensional.
    \end{itemize}
  \item Let \(\catFLE\) be the full subcategory of \(\bgva\)-WMod
    consisting all finite length extensions of arbitrary spectral
    flows of modules in \(\catR\) with real \(J_0\) weights.
  \end{enumerate}
\end{defn}
 
The \(\weyl\)-modules of \cref{def:weylmods} induce to 
modules in category \(\catR\).
\begin{defn}
  \label{def:inds}
  Let \(\Mmod\) be a $\weyl$-module, then we induce \(\Mmod\) to a
  \(\bgva\)-module \(\Ind{}{}{\Mmod}\) in \(\catR\) by having \(\bglie^+\) act
  trivially on \(\Mmod\), \(\beta_0\) and \(\gamma_0\) act as \(-p\) and
  \(q\), respectively, and \(\bglie^-\) act freely. We denote
  \begin{enumerate}
  \item \(\VacMod\cong \Ind{}{}{\finite{\VacMod}}\), the vacuum module or bosonic ghost \va{} as a module over
    itself.
    \label{itm:vac}
  \item \(\conjmod{\VacMod}\cong \sfmod{-1}{\VacMod}\cong \Ind{}{}{\conjmod{\finite{\VacMod}}}\), the
    conjugation twist of the vacuum module.
    \label{itm:cvac}
  \item $\Typ{\lambda}\cong \Ind{}{}{\finite{\Typ{\lambda}}}$ with
    $\lambda \in \CC/\ZZ$, $\lambda \neq \ZZ$.
    \label{itm:rel}
  \item $\Typ{0}^\pm\cong\Ind{}{}{\finite{\Typ{0}^\pm}}$.
    \label{itm:indrel}
  \end{enumerate}
\end{defn}
Note that due to the simple
nature of the \(\bglie\) commutation relations \eqref{eq:bgcommrel}
\(\Ind{}{}{\Mmod}\) is simple whenever \(\Mmod\) is, that is, the modules
listed in parts \ref{itm:vac} -- \ref{itm:rel} are simple.

\begin{prop}
    \label{thm:simclass}
    \leavevmode
    \begin{enumerate}
    \item Any simple module in \(\catR\) is isomorphic to one of those
      listed in Parts \ref{itm:vac} -- \ref{itm:rel} of
      \cref{def:inds}.
      \label{itm:simps}
    \item Any simple module in \(\catFLE\) is isomorphic to one of the
      following mutually inequivalent modules.
      \begin{equation}
        \sfmod{\ell}{\VacMod},\qquad \sfmod{\ell}{\Typ{\lambda}},\qquad
        \ell\in \ZZ,\ \lambda\in \RR/\ZZ,\ \lambda\neq \ZZ.
      \end{equation}
    \item The conjugation twists of simple modules in \(\catFLE\) satisfy
      \begin{equation}
        \conjmod{\sfmod{\ell}{\VacMod}}\cong
        \sfmod{-1-\ell}{\VacMod},\qquad
        \conjmod{\sfmod{\ell}{\Typ{\lambda}}}\cong \sfmod{-\ell}{\Typ{-\lambda}},\qquad
        \ell\in \ZZ,\ \lambda\in \RR/\ZZ,\ \lambda\neq \ZZ.
      \end{equation}
    \item The indecomposable modules \(\Typ{0}^{\pm}\) satisfy the non-split exact sequences
      \begin{subequations}\label{eq:Weqseq}
        \begin{align}
          &\dses{\VacMod}{}{\Typ{0}^{+}}{}{\sfmod{-1}{\VacMod}}, \label{eq:Weqseq1} \\
          &\dses{\sfmod{-1}{\VacMod}}{}{\Typ{0}^{-}}{}{{\VacMod}}.
            \label{eq:Weqseq2}
        \end{align}
      \end{subequations}
    \end{enumerate}
  \end{prop}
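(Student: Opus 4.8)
The plan is to prove the four parts in order; the first carries essentially all of the content and the rest follow formally. For Part~\ref{itm:simps} the main tool is Zhu's algebra, which for $\bgva$ is the Weyl algebra $\weyl$. Given a simple $\Mmod\in\catR$, I would first observe that $\bglie^+$ is abelian, since $[\gamma_m,\beta_n]=\delta_{m+n,0}\wun$ vanishes for $m,n\geq 1$; together with local nilpotency and finite generation this produces a nonzero subspace $\Mmod^{\mathrm{top}}$ annihilated by all of $\bglie^+$, which moreover sits in the lowest conformal weights because $\bglie^+$ strictly lowers conformal weight. This $\Mmod^{\mathrm{top}}$ is a module over the copy of $\weyl$ generated by $\beta_0=-p$ and $\gamma_0=q$, and the semisimplicity of $J_0$ imposed in $\bgva$-WMod translates into semisimplicity of $N=qp$ there. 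A grading argument by conformal weight shows $\Mmod=\UEA{\bglie^-}\Mmod^{\mathrm{top}}$ and that $\Mmod^{\mathrm{top}}$ is simple as a $\weyl$-module, so Block's classification in \cref{thm:weylmods} forces $\Mmod^{\mathrm{top}}$ to be one of $\finite{\VacMod}$, $\conjmod{\finite{\VacMod}}$, $\finite{\Typ{\mu}}$. Since $\Ind{}{}{\Mmod^{\mathrm{top}}}$ is simple whenever $\Mmod^{\mathrm{top}}$ is and surjects onto $\Mmod$, this identifies $\Mmod$ with $\VacMod$, $\conjmod{\VacMod}$ or $\Typ{\lambda}$.

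For Part~2 I would use that spectral flow is an exact, invertible functor and hence preserves simplicity, so a simple object of $\catFLE$ is a spectral flow $\sfmod{\ell}{S}$ of a simple $S\in\catR$ obeying the real-weight constraint, which restricts the dense modules to $\lambda\in\RR/\ZZ$. The identity $\conjmod{\VacMod}\cong\sfmod{-1}{\VacMod}$ from \cref{def:inds}\ref{itm:cvac} absorbs $\conjmod{\VacMod}$ into the vacuum family, leaving exactly $\sfmod{\ell}{\VacMod}$ and $\sfmod{\ell}{\Typ{\lambda}}$. Mutual inequivalence I would read off from \eqref{eq:twistwts}: the coset in $\RR/\ZZ$ of the $J_0$-eigenvalues separates the dense modules for distinct $\lambda$ and separates them all from the integer-graded vacuum family, while within each family the conformal-weight spectrum distinguishes distinct $\ell$; this comparison simultaneously shows spectral flow has infinite order.

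Parts~3 and~4 are then bookkeeping. For the conjugation twists I would combine the dihedral relation \eqref{eq:dihrel}, which gives $\conjmod{\sfmod{\ell}{\Mmod}}\cong\sfmod{-\ell}{\conjmod{\Mmod}}$ for every $\Mmod$, with the two base cases $\conjmod{\VacMod}\cong\sfmod{-1}{\VacMod}$ and $\conjmod{\Typ{\lambda}}\cong\Typ{-\lambda}$. The latter holds because conjugation maps $\bglie^{\pm}\to\bglie^{\pm}$ and $\bglie^0\to\bglie^0$, hence preserves $\catR$ and sends the simple dense module $\Typ{\lambda}$ to a simple dense module whose $J_0$-coset, by the second line of \eqref{eq:twistwts}, is $-\lambda$. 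For Part~4 I would apply the exact induction functor $\Mmod\mapsto\Ind{}{}{\Mmod}$ to the two defining non-split sequences of $\finite{\Typ{0}^{\pm}}$ in \cref{def:weylmods}, rewriting the end terms via $\Ind{}{}{\conjmod{\finite{\VacMod}}}\cong\conjmod{\VacMod}\cong\sfmod{-1}{\VacMod}$ to obtain \eqref{eq:Weqseq1} and \eqref{eq:Weqseq2}; non-splitness is inherited, since splitting the induced sequence would restrict on the lowest-conformal-weight spaces to a splitting of the original Weyl sequence.

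I expect the main obstacle to be Part~\ref{itm:simps}: turning the abstract hypotheses defining $\catR$ into the concrete statement that $\Mmod^{\mathrm{top}}$ is nonzero, simple as a $\weyl$-module, and generates $\Mmod$ freely, so that $\Mmod\cong\Ind{}{}{\Mmod^{\mathrm{top}}}$ rather than a proper quotient. Once this Zhu-theoretic correspondence is in place the remaining parts reduce to manipulations with spectral flow, conjugation and the exactness of induction.
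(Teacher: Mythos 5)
Your proposal is correct and takes essentially the same approach as the paper, which offers no argument beyond citing \cite{RidBos14} and noting that Part 1 is an immediate consequence of Block's classification (\cref{thm:weylmods}) --- exactly the reduction your top-space argument carries out. The remaining parts, handled in your proposal via exactness and invertibility of spectral flow and conjugation, the dihedral relation \eqref{eq:dihrel}, and exactness of induction applied to the sequences in \cref{def:weylmods}, are the standard details that the paper leaves to the citation.
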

This proposition was originally given in \cite[Proposition 1]{RidBos14}, however, Part \ref{itm:simps} is an immediate
consequence of Block's classification of simple Weyl modules \cite{BloIrr79}.
We shall show in \cref{thm:VacExt} that, up to spectral flow twists, the indecomposable modules
\(\Typ{0}^\pm\) are the only indecomposable length 2 extensions of
spectral flows of the vacuum module.
In \cref{sec:indecomp} we extend the indecomposable modules \(\Typ{0}^\pm\) to
infinite families of indecomposable modules.

\subsection{Restricted duals}\label{sec:rdual}

As mentioned above, conformal field theories require their representation categories to be
  closed under taking restricted duals. They are also an essential tool for the
  computation of fusion products using the Huang-Lepowsky-Zhang (HLZ) double dual
  construction \cite[Part IV]{HuaLog}, also called the $P(w)$-tensor product, and so we
  record the necessary definitions here.
\begin{defn}
  \label{def:dual}
  Let $\Mmod$ be a weight \(\bgva\)-module. The restricted dual (or contragredient)
  module
  is defined to be
  \begin{equation}
    \Mmod' = \bigoplus_{h,j\in\CC} \Homgrp{\Mmod^{(j)}_{[h]}}{\CC}, \qquad
    \Homgrp{\Mmod^{(j)}_{[h]}}{\CC}=\{m\in \Mmod \st (J_0-j)m=0, \ (L_0-h)^nm=0, \ n\gg 0\},
  \end{equation}
 where the action of \(\bgva\) is characterised by
  \begin{equation}
    \ang{Y(A,z) \psi, m} = \ang{\psi, Y(A,z)^{\opp} m}, \qquad A\in\bgva,\ \psi \in \Mmod',\ m\in \Mmod,
  \end{equation}
  and where $Y(A,z)^{\opp}$ is given by the formula 
  \begin{equation}
    Y(A,z)^{\opp} =Y\brac*{\ee^{z L_1}\brac*{-z^{-2}}^{L_0}A,z^{-1}}.
  \end{equation}
\end{defn}

\begin{prop} \label{thm:duallist}
 The \va{} $\bgva$ and its modules have the following properties.
  \begin{enumerate}
  \item The modes of the generating fields and the Heisenberg field satisfy
    \begin{equation}
      \beta_{n}^{\opp} = -\beta_{-n}, \qquad \gamma_{n}^{\opp} = \gamma_{-n}, \qquad J_{n}^{\opp} = \delta_{n,0} - J_{-n}.
    \end{equation}
    \label{itm:bgopps}
  \item The restricted duals of spectral flows of the indecomposable
    modules in \cref{def:inds} can
    be identified as
    \begin{equation}
      \brac*{\sfmod{\ell}{\VacMod}}' \cong
      \sfmod{-1-\ell}{\VacMod},\qquad
      \brac*{\sfmod{\ell}{\Typ{\lambda}}}' \cong
      \sfmod{-\ell}{\Typ{-\lambda}},\qquad
      \brac*{\sfmod{\ell}{\Typ{0}}^\pm}' \cong \sfmod{-\ell}{\Typ{0}}^\pm.
    \end{equation}
    \label{itm:resdual}
  \item Denote by \({}^\ast\) the composition of twisting by
    \(\conjaut\) and taking the restricted dual, then
    \begin{equation}
      \brac*{\sfmod{\ell}{\VacMod}}^\ast \cong
      \sfmod{\ell}{\VacMod},\qquad
      \brac*{\sfmod{\ell}{\Typ{\lambda}}}^\ast \cong
      \sfmod{\ell}{\Typ{\lambda}},\qquad
      \brac*{\sfmod{\ell}{\Typ{0}}^\pm}^\ast \cong \sfmod{\ell}{\Typ{0}}^\mp.
    \end{equation}
    \label{itm:strdual}
  \item Let \(\Amod,\module{B}\in \catFLE\) and \(\ell\in\ZZ\), then the homomorphism and first
    extension groups satisfy
    \begin{align}
      \Homgrp{\Amod}{\module{B}}&=\Homgrp{\conjmod{\Amod}}{\conjmod{\module{B}}}=
      \Homgrp{\sfmod{\ell}{\Amod}}{\sfmod{\ell}{\module{B}}}=\Homgrp{\module{B}'}{\Amod'}=\Homgrp{\module{B}^\ast}{\Amod^\ast},\nonumber\\
      \Extgrp{}{\Amod}{\module{B}}&=\Extgrp{}{\conjmod{\Amod}}{\conjmod{\module{B}}}=
      \Extgrp{}{\sfmod{\ell}{\Amod}}{\sfmod{\ell}{\module{B}}}=\Extgrp{}{\module{B}'}{\Amod'}=\Extgrp{}{\module{B}^\ast}{\Amod^\ast}.
    \end{align}\label{itm:extgrps}
  \end{enumerate}
\end{prop}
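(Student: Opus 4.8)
The plan is to prove \cref{thm:duallist} by establishing Part \ref{itm:bgopps} directly from the definitions, and then bootstrapping Parts \ref{itm:resdual}--\ref{itm:extgrps} from it using the transformation data already recorded in \eqref{eq:twistwts}. First I would compute the opposite modes. Since \(\beta\) has conformal weight \(1\) and \(\gamma\) has conformal weight \(0\), I would apply the formula \(Y(A,z)^{\opp}=Y\brac*{\ee^{zL_1}\brac*{-z^{-2}}^{L_0}A,z^{-1}}\) to the generating vectors \(\beta_{-1}\vac\) and \(\gamma_0\vac\). Because these are conformal primaries of weights \(1\) and \(0\) (so \(L_1\) annihilates them), the exponential drops out and only the scaling factor \(\brac*{-z^{-2}}^{L_0}\) and the substitution \(z\mapsto z^{-1}\) contribute. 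Extracting modes then yields \(\beta_n^{\opp}=-\beta_{-n}\) and \(\gamma_n^{\opp}=\gamma_{-n}\); the sign on \(\beta\) comes from the factor \((-1)^{L_0}\) acting with weight \(1\). For \(J_n^{\opp}\) I would use \(J=\normord{\beta\gamma}\), apply the normal-ordered product rule for opposite modes (equivalently quasi-primary plus derivative corrections, since \(J\) is not primary as recorded in the \ope{} \(T(z)J(w)\sim\tfrac{-1}{(z-w)^3}+\cdots\)), and simplify to \(J_n^{\opp}=\delta_{n,0}-J_{-n}\). The anomalous \(\delta_{n,0}\) term is exactly what the \(z^{-3}\) pole in the \(TJ\) \ope{} predicts.

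Next I would deduce Part \ref{itm:resdual}. Taking restricted duals sends a module to one on which the modes act by their opposites, so using Part \ref{itm:bgopps} the dual action of \(\beta_n,\gamma_n\) matches \(-\beta_{-n},\gamma_{-n}\). I would identify the resulting module by comparing ghost and conformal weights: the restricted dual reverses the grading, and the weight transformation data together with the explicit module definitions in \cref{def:inds} pin down the isomorphism type. For \(\Typ{\lambda}\) the \(J_0\)-weight \(\lambda\) gets negated, giving \(\Typ{-\lambda}\); for \(\VacMod\) one checks the dual is the conjugate vacuum \(\sfmod{-1}{\VacMod}\), consistent with \propref{thm:simclass}\ref{itm:simps}. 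The spectral-flow shift follows since under \(\sfaut^\ell\) the opposite-mode computation picks up the shift \(\ell\mapsto-\ell\) in the \(J_0\)-grading by \eqref{eq:twistwts}; alternatively, duality is contravariant and intertwines spectral flow with its inverse. Part \ref{itm:strdual} is then immediate: composing with conjugation \(\conjaut\) (which sends \([j,h]\mapsto[1-j,h]\)) undoes the negation of the \(J_0\)-weight, so \({}^\ast\) fixes each simple module's spectral-flow orbit up to the expected swap \(\pm\leftrightarrow\mp\) on \(\Typ{0}^\pm\).

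Finally, Part \ref{itm:extgrps} follows formally. The equalities involving \(\conjaut\) and \(\sfaut^\ell\) hold because these are exact autoequivalences of the module category (as noted in the remark following the twisting definition), hence they induce isomorphisms on \(\Hom\) and \(\Ext\) groups. The equalities \(\Homgrp{\Amod}{\module{B}}=\Homgrp{\module{B}'}{\Amod'}\) and the analogue for \(\Ext\) hold because restricted duality is a contravariant exact involution on \(\catFLE\); contravariance reverses the arguments and exactness preserves extension groups degreewise. The statement for \({}^\ast\) then combines the conjugation and duality cases.

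The main obstacle will be the careful computation of \(J_n^{\opp}\). Unlike \(\beta\) and \(\gamma\), the field \(J\) is not a conformal primary, so \(L_1 J\neq 0\) and the \(\ee^{zL_1}\) factor in the opposite-mode formula genuinely contributes; one must track the normal-ordering correction terms (equivalently the subleading poles in the \(TJ\) \ope{}) to recover the anomalous constant \(\delta_{n,0}\) rather than merely \(-J_{-n}\). Everything downstream is bookkeeping with the weight-transformation rules \eqref{eq:twistwts} and the formal properties of exact (co/contravariant) functors, so once Part \ref{itm:bgopps} is correctly established the remainder should go through cleanly.
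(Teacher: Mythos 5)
Your Parts 1 and 4, and the simple-module cases of Parts 2 and 3, are correct and essentially follow the paper's own route. Part 1 is the direct $\opp$-mode computation (the paper simply says it follows from \cref{def:dual}; your tracking of the $\ee^{zL_1}$-correction producing the $\delta_{n,0}$ anomaly for $J$, tied to the cubic pole in the $TJ$ \ope{}, is exactly the right calculation), and Part 4 is the same formal argument from exact invertible (co/contra)variant functors. For the simple modules the paper is somewhat more careful than your sketch: rather than ``comparing weights'' it first notes that $\brac*{\sfmod{\ell}{\VacMod}}'$ and $\brac*{\sfmod{\ell}{\Typ{\lambda}}}'$ are again simple (since ${}'$ is an invertible exact contravariant functor), then uses local nilpotence of the appropriate modes $\beta_n,\gamma_m$ to place the dual in the correct spectral-flow twist of $\catR$, and finally uses $J_0^{\opp}=1-J_0$ (so ghost weight $j\mapsto 1-j$, which equals $-\lambda$ only modulo $\ZZ$) to fix the label. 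Your sketch fills in to this argument without real difficulty, and your alternative observation that ${}'$ intertwines $\sfaut^{\ell}$ with $\sfaut^{-\ell}$ is also correct.

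There is, however, a genuine gap in your treatment of the reducible indecomposables $\sfmod{\ell}{\Typ{0}^{\pm}}$, which are part of the statement in both Parts 2 and 3. Your stated method --- identify the dual by comparing ghost and conformal weights against \cref{def:inds} --- cannot work for these modules: $\Typ{0}^{+}$, $\Typ{0}^{-}$ and the direct sum $\VacMod\oplus\sfmod{-1}{\VacMod}$ all have the same composition factors and hence identical weight-space data, so no amount of weight bookkeeping distinguishes them. The missing idea (and the paper's argument) is to apply the exact contravariant functor ${}'$ to the non-split exact sequences \eqref{eq:Weqseq}: dualising reverses the arrows, and using $\VacMod'\cong\sfmod{-1}{\VacMod}$ and $\brac*{\sfmod{-1}{\VacMod}}'\cong\VacMod$ one finds that $\brac*{\Typ{0}^{\pm}}'$ satisfies the same defining sequence as $\Typ{0}^{\pm}$, still non-split because ${}'$ is invertible; the identification then follows because these non-split sequences uniquely characterise $\Typ{0}^{\pm}$ (one-dimensional extension groups). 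The same mechanism, with the covariant functor $\conjaut$ swapping the roles of $\VacMod$ and $\sfmod{-1}{\VacMod}$ in the sequences, is what actually produces the swap $\brac*{\sfmod{\ell}{\Typ{0}}^{\pm}}^{\ast}\cong\sfmod{\ell}{\Typ{0}}^{\mp}$ in Part 3, which you assert as ``expected'' but do not derive; note also that \cref{thm:simclass} only records conjugation twists of simple modules, so this case needs the sequence argument and cannot be read off from that proposition alone.
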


\begin{proof}
Part \ref{itm:bgopps} follows immediately from \cref{def:dual}.

  Part \ref{itm:resdual}: Since \(\sfmod{\ell}{\VacMod}\) is simple,
  \(\brac*{\sfmod{\ell}{\VacMod}}'\) is too, due to taking duals being an invertible
  exact contravariant functor. Further, by the action given in \cref{def:dual}
  it is easy to see that \(\beta_n,\ n\ge\ell+1\) and \(\gamma_m,\ m\ge-\ell\)
  act locally nilpotently and therefore \(\brac*{\sfmod{\ell}{\VacMod}}'\) is an
  object of both \(\sfmod{-\ell}{\catR}\) and
  \(\sfmod{-1-\ell}{\catR}\). Thus, \(\brac*{\sfmod{\ell}{\VacMod}}' \cong
  \sfmod{-1-\ell}{\VacMod}\). 
  
  Similarly, since \(\sfmod{\ell}{\Typ{\lambda}}\) is simple,
  \(\brac*{\sfmod{\ell}{\Typ{\lambda}}}'\) is too. The modes \(\beta_n,\ n\ge\ell+1\) and \(\gamma_m,\ m\ge1-\ell\) act locally nilpotently and therefore \(\brac*{\sfmod{\ell}{\Typ{\lambda}}}'\) is an
  object of \(\sfmod{-\ell}{\catR}\). Further, for \(J_0\) homogeneous \(m\in
  \sfmod{\ell}{\Typ{\lambda}}\) and \(\psi\in
 \brac*{\sfmod{\ell}{\Typ{\lambda}}}'\), consider
  \begin{equation}
    \ang{J_0\psi, m} = \ang{\psi, (1-J_0)m}.
  \end{equation}
  Thus, \(\brac*{\sfmod{\ell}{\Typ{\lambda}}}' \cong \sfmod{-\ell}{\Typ{-\lambda}}.\)

  Finally, the duals of \(\sfmod{\ell}{\Typ{0}}^\pm\)
  follow from that fact that the duality functor is exact and
  contravariant, and by applying it to
  the exact sequences \eqref{eq:Weqseq}.

  Part \ref{itm:strdual} follows from composing the formulae of Part \ref{itm:resdual}
  with the conjugation twist formulae of \cref{thm:simclass}.

  Part \ref{itm:extgrps} follows from \(\conjaut\), \(\sfaut\) and \({}'\)
  being exact invertible functors, the first two covariant and the last contravariant.
\end{proof}

\subsection{Free field realisation}\label{sec:ffr}

We present two embeddings of $\bgva$ into a rank 1 lattice
algebra constructed from a rank 2 Heisenberg \va{}. We refer to
\cite{DoLGVA93} for a comprehensive discussion of Heisenberg and lattice \va{s}.

Let \(\hva\) be the rank 2 Heisenberg \va{} with choice of generating fields
\(\psi,\theta\) normalised such that they satisfy the defining \ope{s}
\begin{equation}
  \psi(z)\psi(w)\sim\frac{1}{(z-w)^2},\qquad
  \theta(z)\theta(w)\sim\frac{-1}{(z-w)^2},\qquad \psi(z)\theta(w)\sim 0.
\end{equation}
By a slight abuse of notation we also use \(\psi\) and \(\theta\) to denote a basis of
a rank 2 lattice \(L_\ZZ=\zspn{\psi,\theta}\) with symmetric bilinear lattice form corresponding to the
above \ope{s}, that is \((\psi,\psi)=-(\theta,\theta)=1\) and
\((\psi,\theta)=0\). Let \(L= \rspn{\psi,\theta}\) be the extension of scalars
of \(L_{\ZZ}\) by \(\RR\). We denote the Fock spaces of \(\hva\) by
\(\Fock{\lambda},\ \lambda\in L\), where the zero mode of a Heisenberg
\va{} field \(a(z),\ a\in L\) acts as scalar multiplication by
\(\pair{a}{\lambda}\). We assign to the highest weight vector \(\ket{\lambda}\) of
\(\Fock{\lambda}\) the vertex operators
\(\vop{\lambda}{z}:\Fock{\mu}\to\Fock{\mu}\powser{z,z^{-1}}z^{\pair{\lambda}{\mu}}\)
given by the expansion
\begin{equation}\label{eq:vopformula}
  \vop{\lambda}{z}=\ee^\lambda z^{\lambda_0}\prod_{m\ge1}\exp\brac*{\frac{\lambda_{-m}}{m}z^m}\exp\brac*{-\frac{\lambda_m}{m}z^{-m}},
\end{equation}
where \(\ee^{\lambda}\in \CC[L]\) is the basis element in the group algebra
of \(L\) corresponding to \(\lambda\in L\) and satisfies the relations
\begin{equation}
\comm{b_n}{\ee^{\lambda}}=\delta_{n,0}\pair{b}{\lambda}\ee^{\lambda},
\qquad
\ee^{\lambda}\ket{\mu}=\ket{\lambda+\mu}.
\end{equation}
Finally, let \(\latva\) be the lattice \va{} extension
of \(\hva\) along the indefinite rank 1 lattice \(K=\zspn{\psi+\theta}\).
The simple modules of \(\latva\) are given by
\begin{equation}
  \LFock{\Lambda}=\bigoplus_{\lambda\in\Lambda}\Fock{\lambda},\qquad
  \Lambda\in L/K\ \text{and}\ \pair{\Lambda}{\psi+\theta}\in\ZZ.
\end{equation}
Note that the pairing \(\pair{\Lambda}{\psi+\theta}\) is well-defined,
since it does not depend in the choice of representative \(\lambda\in \Lambda\).
It will occasionally be convenient to label the lattice modules by a
representative $\lambda \in \Lambda$ rather than the coset itself, that is
$\LFock{\lambda} = \LFock{\Lambda}$. Note also that our notation differs from
conventions common in theoretical physics literature. There, for \(a\in L\),
\(\vop{a}{z}\) would be denoted by \(\normord{\ee^{a(z)}}\)
and \(a(z)\) by \(\partial a(z)\).

\begin{prop}\label{thm:ffr}
  \leavevmode
  \begin{enumerate}
  \item The assignment
    \begin{equation}\label{eq:bosonisation}
      \beta(z)\mapsto \vop{\theta+\psi}{z},\qquad \gamma(z)\mapsto \normord{\psi(z)\vop{-\theta-\psi}{z}}
    \end{equation}
    induces an embedding \(\phi_1:\bgva\to \latva\). Restricting to
    the image of this embedding, \(\latva\)-modules can be identified
    with \(\bgva\)-modules as
    \begin{equation}\label{eq:FFR}
      \LFock{\ell\psi} \cong \sfmod{\ell+1}{\Typ{0}^-}, \qquad
      \LFock{\Lambda} \cong \sfmod{\pair{\Lambda}{\psi+\theta}+1}
      {\Typ{\pair{\Lambda}{\psi}}},\quad \Lambda\in L/K, \
      \pair{\Lambda}{\psi+\theta}\in\ZZ
      \ \text{and}\ \pair{\Lambda}{\psi}\neq\ZZ,
    \end{equation}
    where \(\pair{\Lambda}{\psi}\) is the coset in \(\RR/\ZZ\) formed
    by pairing all representatives of \(\Lambda\) with \(\psi\).
    \label{itm:ffr1}
  \item The assignment
    \begin{equation}\label{eq:bosonisation2}
      \beta(z)\mapsto \normord{\psi(z)\vop{\theta +\psi}{z}} ,\qquad \gamma(z)\mapsto \vop{-\theta-\psi}{z}
    \end{equation}
    induces an embedding \(\phi_2:\bgva\to\latva\). Restricting to
    the image of this embedding, \(\latva\)-modules can be identified
    with \(\bgva\)-modules as
    \begin{equation}
      \LFock{\ell\psi} \cong \sfmod{\ell}{\Typ{0}^+}, \qquad
      \LFock{\Lambda} \cong \sfmod{\pair{\Lambda}{\psi+\theta}}{\Typ{\pair{\Lambda}{\psi}}},\quad \Lambda\in L/K, \
      \pair{\Lambda}{\psi+\theta}\in\ZZ
      \ \text{and}\ \pair{\Lambda}{\psi}\neq\ZZ,
    \end{equation}
    where \(\pair{\Lambda}{\psi}\) is the coset in \(\RR/\ZZ\) formed
    by pairing all representatives of \(\Lambda\) with \(\psi\).
    \label{itm:ffr2}
  \end{enumerate}
\end{prop}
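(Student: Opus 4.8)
The plan is to reduce the embedding statements to \ope{} computations and then to identify the restricted modules by matching gradings. Since $\bgva$ is by definition strongly generated by $\beta,\gamma$ subject only to the \ope{s} \eqref{eq:bgoperel} and the vertex algebra axioms, it follows from this defining universal property that a vertex algebra homomorphism $\bgva\to\latva$ exists as soon as one exhibits mutually local fields of $\latva$ reproducing the singular \ope{s} \eqref{eq:bgoperel}; mutual locality is automatic since all fields of a vertex algebra are mutually local, and the proposed images are genuine fields of $\latva$ because $\pm(\theta+\psi)\in K$. As $\bgva$ is simple as a module over itself (it is the simple module $\VacMod$ of \cref{def:inds}), the kernel of such a homomorphism is $0$ or all of $\bgva$; since $\beta\mapsto\vop{\theta+\psi}{z}\neq0$ it is injective, hence an embedding. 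Thus it suffices to verify the \ope{s}.

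The computation turns on $\theta+\psi$ being null, $\pair{\theta+\psi}{\theta+\psi}=0$. For $\phi_1$, the lattice \ope{} $\vop{\theta+\psi}{z}\vop{\theta+\psi}{w}\sim(z-w)^{\pair{\theta+\psi}{\theta+\psi}}\normord{\cdots}$ then carries no singular part, giving $\beta(z)\beta(w)\sim0$ immediately. In $\gamma(z)\beta(w)$ the vertex operator factors again contribute no pole, while the prefactor $\psi(z)$ contracts with $\vop{\theta+\psi}{w}$ through $\pair{\psi}{\theta+\psi}=1$ to produce precisely the pole $\tfrac1{z-w}$, recovering $\gamma(z)\beta(w)\sim\tfrac1{z-w}$ once the $2$-cocycle on $\CC[L]$ is normalised so that $\vop{\theta+\psi}{z}$ pairs with $\vop{-\theta-\psi}{z}$ to the identity. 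For $\gamma(z)\gamma(w)$ one expands $\normord{\psi(z)\vop{-\theta-\psi}{z}}\normord{\psi(w)\vop{-\theta-\psi}{w}}$ by Wick's theorem and checks that the potentially singular pieces—the $\psi$--$\psi$ double contraction and the two single contractions of a $\psi$ with a vertex operator—cancel, which they do because $\pair{\psi}{-\theta-\psi}=-1$ and $\pair{-\theta-\psi}{-\theta-\psi}=0$. Keeping track of these cancellations and the attendant cocycle signs is the most delicate part of this half.

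Next I would compute the images of $J=\normord{\beta\gamma}$ and $T$. For $\phi_1$ one finds $J\mapsto-\theta$: this is the unique Heisenberg field $a\psi+b\theta$ with $\pair{a\psi+b\theta}{\theta+\psi}=1$ and square length $-1$, as forced by $J(z)\beta(w)\sim\beta(w)/(z-w)$ and $J(z)J(w)\sim-(z-w)^{-2}$. Likewise $T$ maps to the rank $2$ Heisenberg conformal vector $\tfrac12\normord{\psi\psi}-\tfrac12\normord{\theta\theta}$ corrected by a background charge term proportional to $\partial\theta$, the coefficient being fixed by the anomalous $TJ$ \ope{} recorded earlier. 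Consequently $J_0$ acts on $\Fock{\lambda}$ by the scalar $-\pair{\theta}{\lambda}$ and $L_0$ acts on $\ket{\lambda}$ by $\tfrac12\pair{\lambda}{\lambda}$ shifted by the background charge; writing $\lambda=a\psi+b\theta$ this gives ghost weight $b$ and a conformal weight depending on $a,b$. Since the ghost grading is constant on each $\Fock{\lambda}$, these eigenvalues determine the full ghost and conformal weight decomposition of $\LFock{\Lambda}=\bigoplus_{\lambda\in\Lambda}\Fock{\lambda}$.

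Finally I would identify each $\LFock{\Lambda}$. Twisting by the spectral flow $\sfaut^{-k}$ that renders $\bglie^+$ locally nilpotent places the module in $\catR$, after which I read off the action of the Weyl generators $\beta_0,\gamma_0$ on the subspace annihilated by $\bglie^+$. When $\pair{\Lambda}{\psi}\notin\ZZ$ this subspace is the simple Weyl module $\finite{\Typ{\pair{\Lambda}{\psi}}}$, whose induced module is simple; comparing the graded characters of $\LFock{\Lambda}$ and $\sfmod{k}{\Typ{\pair{\Lambda}{\psi}}}$ shows the restriction equals this simple module, and the weight data from the previous step fixes $k=\pair{\Lambda}{\psi+\theta}+1$ (and $k=\pair{\Lambda}{\psi+\theta}$ for $\phi_2$) by way of \cref{thm:simclass}. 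When $\pair{\Lambda}{\psi}\in\ZZ$, that is $\Lambda=\ell\psi+K$, the ground-level Weyl action is instead on $\CC[x,x^{-1}]$, the indecomposable Weyl module, so the restriction is a spectral flow of the indecomposable $\Typ{0}^{\pm}$. I expect this degenerate case to be the main obstacle: one must establish indecomposability of the restriction—a priori a simple $\latva$-module may split over the subalgebra $\bgva$—and match it to the correct non-split extension \eqref{eq:Weqseq}, the choice of $\Typ{0}^{+}$ against $\Typ{0}^{-}$ and the precise spectral flow index being dictated by which of $p,q$ acts as the raising operator under the given embedding. Part (2) is handled by the same scheme; alternatively, as $\phi_2$ differs from $\phi_1$ by conjugation $\conjaut$ together with the lattice automorphism $\lambda\mapsto-\lambda$, one may deduce it from Part (1) using the conjugation and duality formulae of \cref{thm:simclass,thm:duallist}.
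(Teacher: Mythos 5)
Your overall strategy matches the paper's own: the paper's proof is essentially a citation (the embeddings are declared ``well known'' and the module identifications are said to ``follow by comparing characters'', deferring to \cite[Proposition 4.7]{WoOsp19} and \cite[Proposition 4.1]{AdaBG19}), and your proposal is a fleshed-out version of exactly that argument --- \ope{} verification plus the universal property and simplicity of \(\bgva\) for the embedding, then matching of ghost/conformal weight data and characters against the classification in \cref{thm:simclass} for the identifications. You also correctly isolate the two genuinely delicate points: the cancellation in \(\gamma(z)\gamma(w)\), which indeed rests on \(\pair{\psi}{-\theta-\psi}=-1\) and the nullity of \(\theta+\psi\); and the fact that for \(\Lambda=\ell\psi+K\) character comparison cannot distinguish \(\Typ{0}^{+}\), \(\Typ{0}^{-}\) and the split extension, so one must exhibit the non-split action on the ground states, which is what the cited references do.

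There is, however, one concrete error: your claimed image of \(T\). You assert that the correction to the rank-2 Heisenberg conformal vector is ``proportional to \(\partial\theta\), the coefficient being fixed by the anomalous \(TJ\) \ope{}''. The correct image, recorded later in the paper in \eqref{eq:fieldids}, is
\[
\phi_1\brac*{T(z)} \;=\; \frac{\normord{\psi(z)^2}-\normord{\theta(z)^2}}{2}\;-\;\partial\,\frac{\psi(z)-\theta(z)}{2},
\]
whose linear term involves \(\partial\psi\) as well. The \(TJ\) \ope{} is structurally unable to detect the \(\partial\psi\) component, because \(\phi_1(J)=-\theta\) and \(\pair{\psi}{\theta}=0\); it fixes only the \(\partial\theta\) coefficient. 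The \(\partial\psi\) term is not optional: with your form of \(T\), the conformal weight of \(\ket{\lambda}\) would be \(\tfrac12\pair{\lambda}{\lambda}-\tfrac12\pair{\theta}{\lambda}\), which assigns \(\phi_1(\beta)=\vop{\psi+\theta}{z}\) conformal weight \(\tfrac12\) instead of \(1\) and \(\phi_1(\gamma)\) weight \(\tfrac12\) instead of \(0\). Since these weights (together with the \(J_0\)-eigenvalues) are precisely the data you invoke to pin down the spectral flow indices \(\ell+1\) and \(\pair{\Lambda}{\psi+\theta}+1\), the slip would propagate into wrong indices if carried through literally. The repair is routine: compute \(\phi_1(T)=-\normord{\phi_1(\beta)\,\partial\phi_1(\gamma)}\) directly, or fix the full background charge by demanding that \(\phi_1(\beta)\) and \(\phi_1(\gamma)\) have conformal weights \(1\) and \(0\) (equivalently, by the \(TT\) \ope{} with \(c=2\)) rather than by the \(TJ\) \ope{} alone. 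A similar, milder caveat applies to your uniqueness argument for \(\phi_1(J)\): a priori \(\phi_1(J)\) need not lie in \(\rspn{\psi,\theta}\) (the weight-one space of \(\latva\) is larger), though the direct computation of \(\normord{\phi_1(\beta)\phi_1(\gamma)}\) immediately gives \(-\theta\).
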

The embeddings are well known and the identifications of \(\latva\)-modules with \(\bgva\)-modules follow by comparing characters and was shown in \cite[Proposition 4.7]{WoOsp19} and \cite[Proposition 4.1]{AdaBG19}.

\begin{thm}
  \leavevmode
  \begin{enumerate}
  \item Let \(\scr{1}=\Res \vop{\psi}{z}\), then
    \(\ker \brac*{\scr{1} :\latva\to \LFock{\psi}}=\img \phi_1\),
    where \(\phi_1: \bgva\to \latva\) is the embedding of
    \cref{thm:ffr}.\ref{itm:ffr1}, that is, \(\scr{1}\) is a screening
    operator for the free field realisation \(\phi_1\) of
    \(\bgva\). Further the sequence
    \begin{equation}\label{eq:felder1}
            {\cdots \overset{\scr{1}}{\lra} \LFock{-\psi}
              \overset{\scr{1}}{\lra} \LFock{0}
              \overset{\scr{1}}{\lra} \LFock{\psi}
              \overset{\scr{1}}{\lra} \cdots} 
    \end{equation}
    is exact and is therefore a Felder complex.
    \label{itm:scr1}
  \item Let \(\scr{2}=\Res \vop{-\psi}{z}\), then
    \(\ker \brac*{\scr{2} :\latva\to \LFock{-\psi}}=\img \phi_2\),
    where \(\phi_2: \bgva\to \latva\) is the embedding of
    \cref{thm:ffr}.\ref{itm:ffr2}, that is, \(\scr{2}\) is a screening
    operator for the free field realisation \(\phi_2\) of
    \(\bgva\). Further the sequence
    \begin{equation}
            {\cdots \overset{\scr{2}}{\lra} \LFock{\psi}
              \overset{\scr{2}}{\lra} \LFock{0}
              \overset{\scr{2}}{\lra} \LFock{-\psi}
              \overset{\scr{2}}{\lra} \cdots} 
    \end{equation}
    is exact and is therefore a Felder complex.
    \label{itm:scr2}
  \end{enumerate}
\end{thm}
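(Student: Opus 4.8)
The plan is to deduce exactness from the module structure already available through the free field realisation, avoiding a direct homological computation with Fock spaces. By Part~\ref{itm:ffr1} of \cref{thm:ffr} we have $\LFock{n\psi}\cong\sfmod{n+1}{\Typ{0}^-}$ for every $n\in\ZZ$. Applying the exact spectral flow functor $\sfaut^{n+1}$ to the non-split sequence \eqref{eq:Weqseq2} shows that each such module has length $2$, with simple socle $\sfmod{n}{\VacMod}$ and simple top $\sfmod{n+1}{\VacMod}$. Since the modules $\sfmod{j}{\VacMod}$ are pairwise inequivalent by \cref{thm:simclass}, each $\LFock{n\psi}$ is uniserial: its only submodules are $0$, the socle $\sfmod{n}{\VacMod}$, and itself.

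Next I would verify that $\scr{1}=\Res\vop{\psi}{z}$ restricts to a nonzero $\bgva$-module homomorphism $\LFock{n\psi}\to\LFock{(n+1)\psi}$. Well-definedness is immediate: on the summand $\Fock{n\psi+k(\psi+\theta)}$ the field $\vop{\psi}{z}$ expands in integer powers of $z$, because $\pair{\psi}{n\psi+k(\psi+\theta)}=n+k\in\ZZ$, so the residue extracts a genuine mode which lands in $\LFock{(n+1)\psi}$. The homomorphism property is precisely the \emph{screening property} of $\vop{\psi}{z}$. The operator product $\vop{\psi}{z}\vop{\psi+\theta}{w}$ is regular since $\pair{\psi}{\psi+\theta}=1\ge0$, so $\scr{1}$ commutes with every mode of $\beta(w)=\vop{\psi+\theta}{w}$; the operator product of $\vop{\psi}{z}$ with $\gamma(w)=\normord{\psi(w)\vop{-\psi-\theta}{w}}$ does have a pole, and the key point is to check that its singular part is a total $w$-derivative, so that $\scr{1}$ also commutes with every mode of $\gamma$. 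Finally $\scr{1}$ is nonzero: on the lowest-weight vector $\ketb{n\psi+k(\psi+\theta)}$ with $k=-n-1$ all positive Heisenberg modes act trivially and the leading power of $z$ is exactly $z^{-1}$, giving $\scr{1}\ketb{n\psi+k(\psi+\theta)}=\ketb{(n+1)\psi+k(\psi+\theta)}\neq0$.

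It then remains to assemble these facts. A nonzero homomorphism $f\colon\sfmod{n+1}{\Typ{0}^-}\to\sfmod{n+2}{\Typ{0}^-}$ cannot be injective, for otherwise its image would be a length-$2$ submodule of the target isomorphic to the source, forcing the two modules to agree --- impossible, as their composition factors differ ($\sfmod{n}{\VacMod}$ against $\sfmod{n+2}{\VacMod}$). Hence $\ker f=\sfmod{n}{\VacMod}=\soc\bigl(\sfmod{n+1}{\Typ{0}^-}\bigr)$ and $\img f\cong\sfmod{n+1}{\VacMod}=\soc\bigl(\sfmod{n+2}{\Typ{0}^-}\bigr)$. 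Since the kernel of the next screening $\scr{1}\colon\LFock{(n+1)\psi}\to\LFock{(n+2)\psi}$ is likewise its socle $\sfmod{n+1}{\VacMod}$, we get $\img\bigl(\scr{1}|_{\LFock{n\psi}}\bigr)=\ker\bigl(\scr{1}|_{\LFock{(n+1)\psi}}\bigr)$ for all $n$, which is exactness (and in particular $\scr{1}^2=0$). Taking $n=0$ yields $\ker\bigl(\scr{1}\colon\latva\to\LFock{\psi}\bigr)=\soc(\LFock{0})=\VacMod$. Because $\scr{1}$ annihilates the vacuum $\ketb{0}$ (the expansion of $\vop{\psi}{z}\ketb{0}$ carries no $z^{-1}$ term) and commutes with $\bgva$, the simple submodule $\img\phi_1\cong\VacMod$ lies in this kernel, and simplicity of the socle forces $\img\phi_1=\ker\scr{1}$, as required.

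Part~\ref{itm:scr2} follows by the same argument with $(\phi_1,\scr{1},\Typ{0}^-)$ replaced by $(\phi_2,\scr{2},\Typ{0}^+)$: Part~\ref{itm:ffr2} of \cref{thm:ffr} gives $\LFock{n\psi}\cong\sfmod{n}{\Typ{0}^+}$, and applying spectral flow to \eqref{eq:Weqseq1} shows this module is uniserial of length $2$ with socle $\sfmod{n}{\VacMod}$ and top $\sfmod{n-1}{\VacMod}$, so the weight-lowering screening $\scr{2}=\Res\vop{-\psi}{z}$ realises the connecting maps in exactly the same way. I expect the main obstacle to be the screening property isolated in the second paragraph, namely confirming that the singular part of $\vop{\psi}{z}\gamma(w)$ (respectively $\vop{-\psi}{z}\beta(w)$) is a total $w$-derivative; granted this, both exactness statements are formal consequences of the uniserial structure supplied by \cref{thm:simclass} and the free field realisations of \cref{thm:ffr}.
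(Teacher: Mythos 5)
Your overall strategy is essentially the paper's own proof: identify $\LFock{n\psi}\cong\sfmod{n+1}{\Typ{0}^-}$ via the free field realisation, use the resulting length-two structure to reduce exactness of the complex to the non-vanishing of each arrow, and verify non-vanishing by an explicit residue computation on suitable kets (your $\scr{1}\ket{n\psi+k(\psi+\theta)}$ with $k=-n-1$ is the paper's computation $\scr{1}\ket{-\psi+m\theta}=\ket{m\theta}$ in a different parametrisation). Your uniserial/socle bookkeeping is a more explicit version of the paper's ``compare composition factors'' step, and your derivation of $\img\phi_1\subseteq\ker\scr{1}$ from $\scr{1}\ket{0}=0$ together with commutation is equivalent to the paper's.

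There is, however, one concrete error, and it sits precisely in the step you defer and call the main obstacle. With the screening current in the \emph{first} slot, the condition for $\scr{1}=\Res_z\vop{\psi}{z}$ to commute with all modes of $\gamma(w)$ is that the singular part of $\vop{\psi}{z}\gamma(w)$ be a total \emph{$z$}-derivative --- equivalently, that it have no first-order pole in $(z-w)$, so that $\Res_z$ annihilates it. This is what holds here: $\vop{\psi}{z}\gamma(w)\sim-\vop{-\theta}{w}/(z-w)^2=\partial_z\bigl(\vop{-\theta}{w}/(z-w)\bigr)$, which is exactly the paper's check. It is \emph{not} a total $w$-derivative: writing $-\vop{-\theta}{w}/(z-w)^2=\partial_w\bigl(\sum_{k\ge1}c_k(w)(z-w)^{-k}\bigr)$ and matching coefficients forces $c_1$ to be a constant multiple of the identity and then requires $\vop{-\theta}{w}$, up to that constant, to be $\partial_w$-exact, which is false since $\Res_w\vop{-\theta}{w}$ acts non-trivially (for instance on $\ket{-\theta}$). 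The total-$w$-derivative criterion is the correct one only when the screening current occupies the \emph{second} slot, i.e.\ for the OPE $\gamma(z)\vop{\psi}{w}$; by locality the two formulations are equivalent, but each is attached to a specific ordering of the fields. So the verification you propose, as literally stated, cannot succeed; once you replace ``total $w$-derivative'' by ``total $z$-derivative'' (or swap the order of the two fields in the OPE), the check goes through and the rest of your argument is sound.
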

\begin{proof}
  We prove part \ref{itm:scr1} only, as part \ref{itm:scr2} follows
  analogously. The \ope{} of \(\vop{\psi}{z}\) with the images of
  \(\beta\) and \(\gamma\) in \(\latva\) are
  \begin{equation}
    \vop{\psi}{z}\beta(w)\sim0,\qquad
    \vop{\psi}{z}\gamma(w)\sim -\frac{\vop{-\theta}{w}}{(z-w)^2},
  \end{equation}
  which are total derivatives in \(z\) implying that \(\scr{1}=\Res
  \vop{\psi}{z}\) is a screening operator and that \(\img
  \phi_1\subset \ker \scr{1}\). Therefore, \(\scr{1}\) commutes with \(\bgva\)
  and hence defines a \(\bgva\)-module map \(\LFock{0}\to\LFock{\psi}\). The
  identification \eqref{eq:FFR} 
  implies \(\LFock{0}\cong\sfmod{}{\Typ{0}^-}\) and
  \(\LFock{\psi}\cong \sfmod{2}{\Typ{0}^-}\). By comparing
  composition factors we see that the kernel must be either \(\img
  \phi_1\cong\VacMod\) or all of \(\LFock{0}\), so it is sufficient to show that the
  map \(\scr{1}:\LFock{0}\to\LFock{\psi}\) is non-trivial. A quick
  calculation reveals that
  \begin{equation}
    \scr{1}\ \ket{-\psi-\theta}=\ket{-\theta},
  \end{equation}
  and thus \(\scr{1}\) is not trivial. By comparing the composition
  factors of the sequence \eqref{eq:felder1} we also see that the
  sequence is an exact complex if each arrow is non-zero. Finally, the
  arrows are non-zero because
  \begin{equation}
    \scr{1} \ket{-\psi+m\theta}=\ket{m\theta},\qquad \forall m\in\ZZ.
  \end{equation}
\end{proof}

\begin{rmk}
  The existence of Felder complexes will not specifically be needed for any of the
  results that follow, however, it is interesting to note that the bosonic ghosts admit such
  complexes. These complexes were crucial in \cite{RidBos14} for
  computing the character formulae needed for the standard module formalism via resolutions of simple modules.
\end{rmk}

\section{Projective modules}
\label{sec:proj}

In this section we construct reducible yet indecomposable modules $\Stag$
on which the \(L_0\) operator has rank 2 Jordan blocks. We further
prove that the modules \(\sfmod{\ell}{\Stag}\) and
\(\sfmod{\ell}{\Typ{\lambda}}\) are both projective and injective, and
that in particular the \(\sfmod{\ell}{\Stag}\) are projective covers
and injective hulls of $\sfmod{\ell}{\VacMod}$ for any $\ell \in
\ZZ$. We refer readers unfamiliar with homological algebra concepts such as
injective and projective modules or extension groups to the book
\cite{HilHom97} and recall the following result for later use.
\begin{prop} \label{thm:homo}
  For a module $\module{R}$ which is both projective and injective, the Hom-Ext sequences terminate.
  That is, if we have the short exact sequence
  \begin{equation}
    \dses{\module{A}}{}{\module{R}}{}{\module{B}},
  \end{equation}
  for modules $\module{A}$, $\module{B}$.
  then this implies that the following two sequences are exact, for any module $\module{M}$.
  \begin{align}
    0 \lra {\Homgrp{\module{M}}{\module{A}}} \lra {\Homgrp{\module{M}}{\module{R}}} \lra {\Homgrp{\module{M}}{\module{B}}} \lra {\Extgrp{}{\module{M}}{\module{A}}} \lra 0, \\
    0 \lra {\Homgrp{\module{B}}{\module{M}}} \lra {\Homgrp{\module{R}}{\module{M}}} \lra {\Homgrp{\module{A}}{\module{M}}} \lra {\Extgrp{}{\module{B}}{\module{M}}} \lra 0.
  \end{align}
  Furthermore, $\Homgrp{\module{R}}{-}$ and $\Homgrp{-}{\module{R}}$ are exact covariant and exact contravariant functors respectively.
\end{prop}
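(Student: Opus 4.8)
The plan is to derive both four-term exact sequences from the standard long exact sequences for the Hom bifunctor, using the characterisations of projectivity and injectivity in terms of the vanishing of first extension groups. Recall from \cite{HilHom97} that a module is projective precisely when $\Homgrp{\module{R}}{-}$ is exact, equivalently when $\Extgrp{}{\module{R}}{-}$ vanishes identically, and injective precisely when $\Homgrp{-}{\module{R}}$ is exact, equivalently when $\Extgrp{}{-}{\module{R}}$ vanishes identically. Since $\module{R}$ is assumed to be both projective and injective, we therefore have $\Extgrp{}{\module{M}}{\module{R}}=0$ and $\Extgrp{}{\module{R}}{\module{M}}=0$ for every module $\module{M}$.

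For the first sequence, I would apply the covariant functor $\Homgrp{\module{M}}{-}$ to the short exact sequence $\dses{\module{A}}{}{\module{R}}{}{\module{B}}$, obtaining the long exact sequence
\begin{equation}
0 \lra \Homgrp{\module{M}}{\module{A}} \lra \Homgrp{\module{M}}{\module{R}} \lra \Homgrp{\module{M}}{\module{B}} \lra \Extgrp{}{\module{M}}{\module{A}} \lra \Extgrp{}{\module{M}}{\module{R}} \lra \cdots .
\end{equation}
Injectivity of $\module{R}$ gives $\Extgrp{}{\module{M}}{\module{R}}=0$, so exactness forces the connecting map $\Homgrp{\module{M}}{\module{B}}\to\Extgrp{}{\module{M}}{\module{A}}$ to be surjective, and truncating at this point yields exactly the first claimed four-term sequence.

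For the second sequence, I would instead apply the contravariant functor $\Homgrp{-}{\module{M}}$ to the same short exact sequence; this reverses all the arrows and produces
\begin{equation}
0 \lra \Homgrp{\module{B}}{\module{M}} \lra \Homgrp{\module{R}}{\module{M}} \lra \Homgrp{\module{A}}{\module{M}} \lra \Extgrp{}{\module{B}}{\module{M}} \lra \Extgrp{}{\module{R}}{\module{M}} \lra \cdots .
\end{equation}
This time projectivity of $\module{R}$ gives $\Extgrp{}{\module{R}}{\module{M}}=0$, which truncates the sequence after $\Extgrp{}{\module{B}}{\module{M}}$ as required. The final assertion of the proposition is then immediate from the very characterisations quoted above: exactness of $\Homgrp{\module{R}}{-}$ is the statement that $\Extgrp{}{\module{R}}{-}$ vanishes, i.e.\ projectivity, and exactness of $\Homgrp{-}{\module{R}}$ is the statement that $\Extgrp{}{-}{\module{R}}$ vanishes, i.e.\ injectivity. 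There is no genuine obstacle in this argument, as it is a direct application of the Hom--Ext long exact sequences; the only point requiring a little care is pairing the correct Ext-vanishing with the correct sequence, namely injectivity for the covariant case and projectivity for the contravariant one.
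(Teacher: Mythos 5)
Your proof is correct. The paper itself offers no proof of this proposition — it is recalled as a standard fact of homological algebra with a citation to Hilton–Stammbach — and your argument (apply $\Homgrp{\module{M}}{-}$ and $\Homgrp{-}{\module{M}}$ to the short exact sequence, then truncate the resulting long exact sequences using $\Extgrp{}{\module{M}}{\module{R}}=0$ from injectivity and $\Extgrp{}{\module{R}}{\module{M}}=0$ from projectivity) is exactly the standard proof the citation points to, including the correct pairing of injectivity with the covariant sequence and projectivity with the contravariant one.
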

This proposition assists with the calculation of Hom and Ext groups,
when all but one of the dimensions in the sequence are known. Using
the fact that the Euler characteristic (the alternating sum of the
dimensions of the coefficients) of an exact sequence vanishes, there is only one possibility for the remaining group.

\begin{prop}\label{thm:VacExt}
  The first extension groups of simple modules in \(\catFLE\)
  satisfy
  \begin{equation}
    \label{eq:VextDim}
    \dExt{\sfmod{k}{\VacMod}}{\sfmod{\ell}{\VacMod}}=
    \begin{cases}
      1,& |k-\ell|=1\\
      0,& \text{otherwise}
    \end{cases},\qquad
    \dExt{\sfmod{k}{\Typ{\lambda}}}{\Mmod}=\dExt{\Mmod}{\sfmod{k}{\Typ{\lambda}}}=0,
    \end{equation}
    where \(\lambda\in \RR/\ZZ,\ \lambda\neq \ZZ,\ k,\ell\in \ZZ\)
    and \(\Mmod\) is any module in \(\catFLE\). In particular the
    simple modules \(\sfmod{k}{\Typ{\lambda}}\) are both projective
    and injective in \(\catFLE\).
\end{prop}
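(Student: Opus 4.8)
The plan is to compute these extension groups by exploiting the interplay between the three functors that are already known to be exact and to preserve/exchange $\operatorname{Ext}$-groups, namely spectral flow $\sfaut$, conjugation $\conjaut$, and the restricted dual ${}'$, together with the short exact sequences \eqref{eq:Weqseq} realising $\Typ{0}^{\pm}$. By \cref{thm:duallist}.\ref{itm:extgrps} the whole computation reduces to a small number of base cases. For the vacuum-module extensions, I would first reduce by spectral flow: since $\Extgrp{}{\sfmod{k}{\VacMod}}{\sfmod{\ell}{\VacMod}}\cong\Extgrp{}{\VacMod}{\sfmod{\ell-k}{\VacMod}}$, only the dependence on $n=\ell-k$ matters, so it suffices to compute $\dExt{\VacMod}{\sfmod{n}{\VacMod}}$ and show it equals $1$ when $|n|=1$ and $0$ otherwise.

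\emph{The key step} for the vacuum case is to read off the $|n|=1$ extensions directly from the defining non-split sequences \eqref{eq:Weqseq1} and \eqref{eq:Weqseq2}: these exhibit nontrivial classes in $\Extgrp{}{\sfmod{-1}{\VacMod}}{\VacMod}$ and $\Extgrp{}{\VacMod}{\sfmod{-1}{\VacMod}}$, giving the lower bound $\dExt{\VacMod}{\sfmod{\pm 1}{\VacMod}}\ge 1$. To show the dimension is exactly $1$ and that all other vacuum-vacuum extensions vanish, I would argue at the level of the Zhu/weight data: any length-$2$ self-extension of spectral flows of $\VacMod$ is built from a weight vector annihilated by the appropriate shifted positive modes, and the parabolic structure from \cref{sec:modcat} together with the induction picture of \cref{def:inds} constrains where a singular-vector-type gluing can occur. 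The conformal-weight shifts recorded in \eqref{eq:twistwts} show that $\sfmod{k}{\VacMod}$ and $\sfmod{\ell}{\VacMod}$ can only be glued when their graded pieces are compatible, which forces $|k-\ell|=1$; for $|k-\ell|\ge 2$ the weight spaces are too far apart to support a rank-$2$ Jordan block or a nontrivial gluing, giving vanishing.

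\emph{For the $\Typ{\lambda}$ statements}, the claim is that $\sfmod{k}{\Typ{\lambda}}$ is both projective and injective, equivalently that all first extensions into and out of it vanish. Here I would use that $\Typ{\lambda}$ is induced from the simple dense Weyl module $\finite{\Typ{\lambda}}$ of \cref{def:weylmods}.\ref{itm:dense}, on which $p$ and $q$ both act bijectively (multiplication by $x$ and $\partial/\partial x$ are invertible on $\CC[x,x^{-1}]x^{\lambda}$ for $\lambda\notin\ZZ$). This bijectivity means there are no highest- or lowest-weight vectors, so no singular vector can appear through which a nontrivial extension could be glued; concretely, any short exact sequence $\dses{\Amod}{}{\module{E}}{}{\sfmod{k}{\Typ{\lambda}}}$ splits because a preimage of the cyclic generator can be chosen consistently using the invertibility of the $\beta_0,\gamma_0$ action, and dually for sequences with $\sfmod{k}{\Typ{\lambda}}$ on the left. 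By spectral-flow invariance of $\operatorname{Ext}$ it suffices to treat $k=0$, and by \cref{thm:duallist}.\ref{itm:extgrps} the two directions are interchanged by duality, so only one needs a direct argument. Projectivity plus injectivity then follows formally from the vanishing of $\operatorname{Ext}^1$ against every object of $\catFLE$.

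\emph{The main obstacle} I anticipate is the upper-bound half of the vacuum computation: exhibiting the nontrivial classes for $|k-\ell|=1$ is immediate from \eqref{eq:Weqseq}, but proving $\dim\le 1$ there and proving outright vanishing for $|k-\ell|\ge 2$ requires genuinely controlling the possible gluings of two simple modules, rather than just producing one. I would handle this by combining the weight-space bookkeeping from \eqref{eq:twistwts} with the induced-module description, showing that a nonsplit extension forces a specific singular vector whose ghost and conformal weights can only match an adjacent spectral-flow image; the invertibility arguments used for $\Typ{\lambda}$ will not be available for $\VacMod$, so this case must be done by hand.
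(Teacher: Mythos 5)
Your overall strategy (reduce by the $\sfaut$, $\conjaut$, ${}'$ functors, extract the $|k-\ell|=1$ classes from \eqref{eq:Weqseq}, and kill the rest by weight bookkeeping on induced modules) is the same route the paper takes, but there is a genuine gap: nowhere do you invoke the $J_0$-semisimplicity that defines $\catFLE$, and without it the ``diagonal'' cases cannot be closed. Your claim that weight compatibility ``forces $|k-\ell|=1$'' silently skips $k=\ell$: there the graded pieces of the two factors coincide exactly, so no separation argument can apply, and yet the proposition asserts $\dExt{\sfmod{k}{\VacMod}}{\sfmod{k}{\VacMod}}=0$. Likewise your argument that sequences ending in $\sfmod{k}{\Typ{\lambda}}$ split ``because a preimage of the cyclic generator can be chosen consistently using the invertibility of the $\beta_0,\gamma_0$ action'' is false as stated: take $\finite{\Typ{\lambda}}\otimes\CC[\epsilon]/(\epsilon^2)$, thought of as $\CC[x,x^{-1}]x^{\lambda+\epsilon}$, with $q=x$ and $p\,x^{\lambda+n+\epsilon}=(\lambda+n+\epsilon)x^{\lambda+n-1+\epsilon}$. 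Both $p$ and $q$ act bijectively (since $\lambda+n+\epsilon$ is a unit in $\CC[\epsilon]/(\epsilon^2)$ for $\lambda\notin\ZZ$), yet this is a non-split self-extension of $\finite{\Typ{\lambda}}$ because $N=qp$ has rank $2$ Jordan blocks; indeed $\Extgrp{}{\finite{\Typ{\lambda}}}{\finite{\Typ{\lambda}}}\cong\CC$ over $\weyl$. Such extensions are excluded from $\catFLE$ \emph{only} by the weight condition, so any correct splitting proof must use it. The paper's mechanism is exactly this: once positive modes are shown to annihilate the chosen representative $\omega$, any candidate gluing element $U\in\UEA{\bglie}$ of ghost and conformal weight $[0,0]$ expands into monomials with equal numbers of $\beta_0$'s and $\gamma_0$'s, i.e.\ $U\omega=f(J_0)\omega$ for a polynomial $f$; semisimplicity of $J_0$ on objects of $\catFLE$ then forces $f(J_0)\omega\propto\omega$, which splits the sequence.

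The same ingredient is also what delivers the two estimates you flag as the main obstacle. For the upper bound $\dExt{\VacMod}{\sfmod{}{\VacMod}}\le 1$, the paper shows $\beta_n\omega=\gamma_{n+1}\omega=0$ for $n\ge1$ by weight-space vanishing, so a gluing element reduces to $U\omega=f(J_0)\gamma_1\omega=f(0)\gamma_1\omega$, and the extension class is pinned down by the single value of $\gamma_1\omega$ in the one-dimensional $[-1,-1]$ weight space of $\sfmod{}{\VacMod}$. For extensions of $\Typ{\lambda}$ by $\sfmod{\ell}{\Typ{\lambda}}$ with $\ell\cdot\lambda=\ZZ$ (the only non-trivial off-diagonal case, which your sketch does not isolate), the bijectivity of $\gamma_0$ on the conformal-weight-zero space enters exactly as you anticipate, but in tandem with a PBW rearrangement: writing $Uv=\sum_i U^{(i)}\gamma_i\gamma_0\tilde{v}=\sum_i U^{(i)}\gamma_0\gamma_i\tilde{v}=0$ where $\gamma_0\tilde v=v$. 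Your proposal is repairable, but the repair is precisely to add the semisimple-$J_0$ argument for all weight-coincident cases; the weight-separation and invertibility ideas alone cannot do it.
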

\begin{proof}
  To conclude that \(\sfmod{k}{\Typ{\lambda}}\) is projective in
  \(\catFLE\) it is sufficient to show that
  \(\dExt{\Typ{\lambda}}{\Mmod}=0\) for all simple objects \(\Mmod\in\catFLE\). Injectivity in \(\catFLE\) then
  follows by applying the \({}^\ast\) functor and noting that
  \({\Typ{\lambda}}^\ast\cong \Typ{\lambda}\). Let
  \(\Mmod\in\catFLE\) be simple, then a necessary condition for the
  short exact sequence
  \begin{equation}
    \dses{\Mmod}{}{\Nmod}{}{\Typ{\lambda}},\qquad \Mmod\in\catFLE
  \end{equation}
  being non-split is that the respective ghost and conformal weights
  of \(\Typ{\lambda}\) and \(\Mmod\) differ only by integers. For
  simple \(\Mmod\) this rules out \(\Mmod=\sfmod{\ell}{\VacMod}\)
  or \(\Mmod=\sfmod{\ell}{\Typ{\mu}}\), \(\mu\neq \lambda\). So we
  consider \(\Mmod=\sfmod{\ell}{\Typ{\lambda}}\). Assume \(\ell=0\),
  let \(j\in\lambda\) and let \(v\) be a non-zero vector in the ghost
  and conformal weight \([j,0]\) space of the submodule
  \(\Mmod = \Typ{\lambda}\subset\Nmod\) and let \(w\in\Nmod\) be a
  representative of a non-zero coset in the \([j,0]\) weight space of
  the quotient \(\Nmod/\Typ{\lambda}\). Without loss of generality, we
  can assume that \(w\) is a \(J_0\)-eigenvector and a generalised
  \(L_0\)-eigenvector. A necessary condition for the indecomposability
  of \(\Nmod\), is the existence of an element \(U\) in the universal
  enveloping algebra 
  \(\UEA{\bglie}\) such that \(Uv=w\).
  Since \(v\) has minimal generalised conformal weight all
  positive modes annihilate \(v\), thus \(Uv\) can be expanded as a
  sum of products of \(\beta_0\) and \(\gamma_0\) with each summand
  containing as many \(\beta_0\) as \(\gamma_0\) factors, that is,
  \(Uv=f(J_0)v\) can be expanded as a polynomial in \(J_0\) acting on
  \(v\). Since \(\Nmod\in\catFLE\), \(J_0\) acts semisimply, hence
  \(f(J_0)v\propto v\). Since \(v\) is not a scalar multiple of \(w\),
  this contradicts the indecomposability
  of \(\Nmod\). Thus the exact sequence splits or, equivalently, the
  corresponding extension group vanishes.

  Assume $\Mmod = \sfmod{\ell}{\Typ{\lambda}}$ with \(\ell\neq0\), then by applying the \({}^\ast\) and
  \(\sfaut\) functors, we have
  \(\Extgrp{}{\Typ{\lambda}}{\sfmod{\ell}{\Typ{\lambda}}}=
    \Extgrp{}{\sfmod{\ell}{\Typ{\lambda}}}{\Typ{\lambda}}=
    \Extgrp{}{\Typ{\lambda}}{\sfmod{-\ell}{\Typ{\lambda}}}\).
  Thus the sign of \(\ell\) can be chosen at will and we can assume
  without loss of generality that \(\ell\ge1\). Further, from the formulae for the conformal weights of
  spectral flow twisted modules \eqref{eq:twistwts}, the conformal
  weights of \(\Typ{\lambda}\) and \(\sfmod{\ell}{\Typ{\lambda}}\)
  differ by integers if and only if \(\ell\cdot \lambda = \ZZ\).
  Let \(j\in \lambda\) be the minimal representative satisfying that the
  space of ghost weight \(j\) in \(\sfmod{\ell}{\Typ{\lambda}}\) has
  positive least conformal weight. The least conformal weight of
  the ghost weight \(j-1\) space is a negative integer, which we
  denote by \(-k\). See \cref{fig:projproof} for an illustration of how the
  weight spaces are arranged. Let \(v\in
  \Nmod\) be a non-zero vector of ghost weight \(j\) and
  generalised \(L_0\) eigenvalue 0, and hence a representative of
  a non-trivial coset of ghost and conformal weight \([j,0]\) in
  \(\Typ{\lambda}\cong
  \Nmod/\sfmod{\ell}{\Typ{\lambda}}\). Further let \(w\in
  \sfmod{\ell}{\Typ{\lambda}}\subset \Nmod\) be a non-zero vector
  of ghost and conformal weight \([j-1,-k]\). Both \(v\) and \(w\)
  lie in one-dimensional weight spaces and hence span them. If
  \(\Nmod\) is indecomposable, then there must exist an element
  \(U\) of ghost and conformal weight \([-1,-k]\) in \(\UEA{\bglie}\), such
  that \(U v=w\). We pick a \PBW{} ordering such that generators with larger
  mode index are placed to the right of those with lesser index
  and \(\gamma_n\) is placed to the right of \(\beta_n\) for any
  \(n\in\ZZ\). Thus \(Uv = \sum_{i=1}^k U^{(i)} \gamma_{i} v\),
  where \(U^{(i)}\) is an element of \(\UEA{\bglie}\) of ghost and
  conformal weight \([0,i-k]\). In \(\Typ{\lambda}\), \(\gamma_0\)
  acts bijectively on the space of conformal weight 0 vectors,
  hence there exists a \(\tilde{v} \in \Nmod\) such that \(\gamma_0
  \tilde{v} = v\). Since at ghost weight \(j\) the conformal
  weights of \(\Nmod\) are non-negative, we have \(\gamma_n
  \tilde{v}=0\), \(n\ge 1\) and thus \(Uv=\sum_{i=1}^k U^{(i)}
  \gamma_{i} \gamma_0\tilde{v}
  =\sum_{i=1}^k U^{(i)} \gamma_0\gamma_{i} \tilde{v}=0\),
  contradicting the indecomposability of \(\Nmod\).

  Next we consider the extensions of spectral flows of the vacuum module.
  By judicious application of the \({}^\ast\) and \(\sfaut\) functors,
  we can identify
  \(\Extgrp{}{\sfmod{k}{\VacMod}}{\sfmod{\ell}{\VacMod}}=
  \Extgrp{}{\VacMod}{\sfmod{k-\ell}{\VacMod}}=\Extgrp{}{\VacMod}{\sfmod{\ell-k}{\VacMod}}\). So
  without loss of generality, it is sufficient to consider the extension groups
  \(\Extgrp{}{\VacMod}{\sfmod{\ell}{\VacMod}}\) or equivalently short exact sequences of the form
  \begin{equation}
    \dses{\sfmod{\ell}{\VacMod}}{}{\Mmod}{}{\VacMod},\qquad \ell\in\ZZ_{\ge0}, \ \Mmod\in\catFLE.
  \end{equation}
  Let \(\sfmod{\ell}{\vac}\in\sfmod{\ell}{\VacMod}\subset \Mmod\)
  denote the the spectral flow image of the highest weight vector of
  \(\VacMod\) and let \(\omega\in\Mmod\) be a \(J_0\)-eigenvector and a choice of
  representative of the highest weight
  vector in \(\VacMod\cong \Mmod/\sfmod{\ell}{\VacMod}\).
  We first show that these sequences necessarily  split if
  \(\ell\neq1\).
  Assume \(\ell=0\), then the exact sequence can only be non-split
  if there exists a ghost and conformal weight \([0,0]\) element \(U\)
  in \(\UEA{\bglie}\) such that \(U \omega=
  a\sfmod{\ell}{\vac}-b\omega\), \(a,b\in\CC\), \(a\neq 0\). Without loss of
  generality we can replace \(U\) by \(\tilde{U}=U-b\wun\) to obtain
  \(\tilde{U}\omega=a\sfmod{\ell}{\vac}\). 
  Since the conformal weights of \(\VacMod\) are
  bounded below by 0, they satisfy the same bound in \(\Mmod\) and
  \(\beta_{n}\omega=\gamma_{n}\omega=0\), \(n\ge 1\), so \(\tilde{U}\omega\)
  can be expanded as a sum of products of \(\beta_0\) and \(\gamma_0\) acting
  on \(\omega\), with
  each summand containing the same number of \(\beta_0\) and
  \(\gamma_0\) factors. Equivalently, \(\tilde{U}\omega\) can be
  expanded as a polynomial in \(J_0\) acting on \(\omega\). Since
  \(\omega\) is a \(J_0\)-eigenvector \(\tilde{U}\omega\propto \omega\).
  Since \(\omega\) is not a scalar multiple of \(\sfmod{\ell}{\vac}\), \(\tilde{U}\omega=0\)
  contradicting indecomposability, and the exact sequence splits.

  Assume \(\ell \ge2\). The ghost and conformal weights of
  \(\sfmod{\ell}{\vac}\) are
  \([-\ell,-\frac{\ell(\ell+1)}{2}]\). Further, from the spectral flow
  formulae \eqref{eq:twistwts}, one can see that the weight spaces of
  ghost and conformal weight \([-1,h]\) of \(\sfmod{\ell}{\VacMod}\)
  vanish for \(h<\frac{(\ell+1)(\ell-2)}{2}\) and similarly the
  \([1,h]\) weight spaces  of \(\sfmod{\ell}{\VacMod}\) vanish for \(h<\frac{(\ell+1)(\ell+2)}{2}\). Since we are assuming
  \(\ell\ge 2\), \(\frac{(\ell+1)(\ell\pm2)}{2}\ge0\). Thus
  \(\gamma_n\omega=\beta_n\omega=0\), \(n\ge1\). If \(\Mmod\) is indecomposable,
  there must exist a ghost and conformal weight \([-\ell,-\frac{\ell(\ell+1)}{2}]\) element \(U\) in 
  \(\UEA{\bglie}\) such that \(U \omega=
  \sfmod{\ell}{\vac}\). Since the conformal
  weight of \(U\) is \(-\ell\), every summand of the expansion of
  \(U\omega\) into \(\beta\) and \(\gamma\) modes must contain factors
  of \(\gamma_n\) or \(\beta_n\) with \(n\ge1\) and we can choose a
  \PBW{} ordering where these modes are placed to the right. Thus \(U \omega=0\),
  contradicting indecomposability and the exact sequence splits.

  Assume \(\ell=1\), then \(\sfmod{}{\Typ{0}^+}\) provides an example
  for which the exact sequence does not split and the dimension of the
  corresponding extension group is at least 1. We show that it is also
  at most 1. Let \(\omega\) and \(\sfmod{}{\vac}\) be defined as for
  \(\ell\ge2\). By arguments analogous
  to those for \(\ell\ge2\), it follows that the \([1,h]\) weight space
  vanishes for \(h<0\) and the \([-1,h]\) weight space vanishes for
  \(h<-1\). Thus \(\beta_{n}\omega=\gamma_{n+1}\omega=0\), \(n\ge1\).
  The \([-1,-1]\) weight space of \(\sfmod{}{\VacMod}\) is
  one-dimensional and is hence spanned by \(\sfmod{}{\vac}\). If
  \(\Mmod\) is indecomposable, there must exist a ghost and conformal weight \([-1,-1]\) element \(U\) 
  in \(\UEA{\bglie}\) such that \(U \omega=
  \sfmod{}{\vac}\). Thus, \(U\omega\) can be
  expanded as
  \(f(J_0)\gamma_{1}\omega=f(0)\gamma_1\omega=a\vac\), where
  \(f(J_0)\) is a polynomial. Hence the
  isomorphism class of \(\Mmod\) is determined by the value of
  \(\gamma_1\omega\) in the one-dimensional \([-1,-1]\) weight space
  and \(\dExt{\VacMod}{\sfmod{}{\VacMod}}=1\).

\end{proof}

 \begin{figure}[h]
	\centering
	\resizebox{!}{0.3\textwidth}{
		\begin{tikzpicture}[vec/.style={circle,draw=black,fill=black,inner sep=2pt,minimum size=5pt}]
		\draw[gray] (-0.5,-2) -- (1.5,2) ;
		\draw[gray] (2,0)  -- (-2,0);
		\path (-1,0) node[vec] (a) {} + (-0.1,0.2) node {$\tilde{v}$};
		\path (0,0) node[vec] (b) {}  + (-0.1,0.2) node {$v$};
		\path (1,0) node[vec] (c) {};
		\path (0,-1) node[vec] (d) {};
		\path (1,1) node[vec] (e) {} + (-0.1,0.2) node {$w$};
		\draw[->,thick] (a) to[out=-45,in=-135] node[below] {$\gamma_0$} (b);
		\draw[->,dashed] (b) to[out=90,in=180] node[above] {$U$} (e);
		\path (1.5,0.5) node {$\sfmod{\ell}{\Typ{\lambda}}$};
		\path (-1,-1) node {$\Typ{\lambda}$};
		\draw[fill=gray, opacity=0.2] (-2,0) -- (-2,-2) -- (2,-2) -- (2,0);
		\draw[fill=gray, opacity=0.2] (-0.5,-2) -- (2,-2) -- (2,2) -- (1.5,2);
		\end{tikzpicture}}
	\caption{This diagram is a visual aid for the proof of the
          inextensibility of the simple module \(\Typ{\lambda} \in \catFLE\),
          \(\lambda\in \RR/\ZZ, \lambda\neq \ZZ\). Here
          $\ell \ge 1$, $\ell \cdot \lambda = \ZZ$. The nodes
          represent the (spectral flows of) relaxed highest weight
          vectors of each module. Weight spaces are filled in
          grey. Conformal weight increases from top to bottom and
          ghost weight increases from right to left. 
	}
	\label{fig:projproof}
\end{figure}

Armed with the above results on extension groups, we can construct
indecomposable modules \(\sfmod{\ell}{\Stag}\in\catFLE\), which will
turn out to be projective covers and injective hulls of \(\sfmod{\ell}{\VacMod}\).
\begin{prop}\label{thm:projconstr}
  Recall that by the first free field realisation \(\phi_1\) of \cref{thm:ffr}, we can
  identify \(\LFock{\ell\psi}\cong\sfmod{\ell+1}{\Typ{0}^-}\). Define
  the \(\scr{1}\)-twisted action of \(\bgva\) on \(\LFock{-\psi}\oplus \LFock{0}\)
  by assigning
  \begin{equation}
    \label{eq:scrtwist}
    \beta(z) \mapsto \phi_1(\beta(z))=\vop{\psi+\theta}{z},\qquad \gamma(z)\mapsto \phi_1(\gamma(z))-\frac{\vop{-\theta}{z}}{z}=\normord{\psi(z)\vop{-\psi-\theta}{z}}-\frac{\vop{-\theta}{z}}{z},
  \end{equation}
  and determining the action of all other fields in \(\bgva\) through
  normal ordering and taking derivatives, where any vertex operator
  \(\vop{\lambda}{z}\) whose Heisenberg weight \(\lambda\) is in the
  coset \([\psi]=[-\theta]\) is defined to act as \(0\) on
  \(\LFock{0}\) and as usual
  on \(\LFock{-\psi}\).
  \begin{enumerate}
  \item The assignment is well-defined, that is, it represents the
    \ope{s} of \(\bgva\), and hence defines an action of
    \(\bgva\) on \(\LFock{-\psi}\oplus \LFock{0}\), where
    \(\oplus\) is meant as a direct sum of vector spaces without
    considering the module structure. Denote the
    module with this \(\scr{1}\)-twisted action by \(\Stag\).
    \label{itm:twist}
  \item The composite fields \(J(z)=\normord{\beta(z)\gamma(z)}\),
    \(T(z)=-\normord{\beta(z)\partial\gamma(z)}\) act as
    \begin{align}\label{eq:fieldids}
      J(z)\mapsto \phi_1(J(z))=-\theta(z),\qquad T(z)\mapsto
      \phi_1(T(z))+\frac{\vop{\psi}{z}}{z}=
\frac{\normord{\psi(z)^2}-\normord{\theta(z)^2}}{2}-\partial\frac{\psi(z)-\theta(z)}{2}+\frac{\vop{\psi}{z}}{z}.
    \end{align}
    The zero mode \(J_0\) therefore acts semisimply and \(L_0\) has
    rank 2 Jordan blocks. The vectors \(\ket{-\psi}, \ket{-\psi-\theta},
    \ket{\theta}, \ket{0}\in\Stag\) satisfy the relations
    \begin{equation}\label{eq:algaction}
    \beta_0\ket{-\psi}=\ket{\theta},\quad
    \gamma_1\ket{-\psi}=-\ket{-\psi-\theta},\quad
    \gamma_0\ket{\theta}=-\ket{0},\quad
    \beta_{-1}\ket{-\psi-\theta}=\ket{0},\quad
    L_0\ket{-\psi}=\ket{0}.
  	\end{equation}
    \label{itm:fieldformulae}
  \item The module \(\Stag\) is indecomposable and satisfies the
    non-split exact sequences
    \begin{subequations}
    \begin{align}
      &\dses{\sfmod{}{\Typ{0}^-}}{}{\Stag}{}{\Typ{0}^{-}},\label{eq:Psecs1}\\
      &\dses{{\Typ{0}^+}}{}{\Stag}{}{\sfmod{}{\Typ{0}^{+}}},\label{eq:Psecs2}
    \end{align}
    \end{subequations}
    which implies that its composition factors are
    \(\sfmod{\pm1}{\VacMod}\) and \(\VacMod\) with multiplicities 1
    and 2, respectively.
    \label{itm:moddecom}
  \item \(\Stag\) is an object in \(\catFLE\).
    \label{itm:FLEmod}
  \end{enumerate}
\end{prop}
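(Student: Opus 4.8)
The backbone of the argument is to regard the correction term $D(z):=\vop{-\theta}{z}/z$ appearing in the deformed field $\gamma(z)$ of \eqref{eq:scrtwist} as a strictly off-diagonal nilpotent operator: by the stated vanishing convention $D(z)$ acts as $0$ on $\LFock{0}$ and carries $\LFock{-\psi}$ into $\LFock{0}$, so that $D(z)D(w)=0$ identically on $\Stag$. With this in hand, Part~(1) is a direct check of the three defining \ope{s} of $\bgva$. Since $\beta(z)=\phi_1(\beta(z))$ is undeformed, $\beta(z)\beta(w)\sim0$ is automatic. For $\gamma(z)\beta(w)$ the only new contribution is $D(z)\phi_1(\beta(w))=\vop{-\theta}{z}\vop{\psi+\theta}{w}/z$, and since the lattice pairing $(-\theta,\psi+\theta)=1$ is positive this operator product is regular in $z-w$; hence the pole produced by $\phi_1(\gamma(z))\phi_1(\beta(w))$ survives unaltered and $\gamma(z)\beta(w)\sim(z-w)^{-1}$. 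For $\gamma(z)\gamma(w)$, expanding $\gamma(z)=\phi_1(\gamma(z))-D(z)$ and using both $D(z)D(w)=0$ and $\phi_1(\gamma(z))\phi_1(\gamma(w))\sim0$ leaves only the two cross terms $\phi_1(\gamma(z))D(w)+D(z)\phi_1(\gamma(w))$.

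I expect this last cancellation to be the main obstacle, and it is the point where the construction genuinely works for a reason. The two cross terms are built from the mutually local lattice fields $\vop{-\theta-\psi}{}$ and $\vop{-\theta}{}$, whose pairing $(-\theta-\psi,-\theta)=-1$ is an odd integer; this odd parity fixes the relative sign in the locality relation interchanging the two vertex operators, so that their simple poles appear with opposite residues and cancel, leaving $\gamma(z)\gamma(w)\sim0$. Everything downstream is bookkeeping resting on this single numerical coincidence.

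Part~(2) is then an evaluation of the composite fields through the mode formulas of the lattice algebra. The deformation of $J=\normord{\beta\gamma}$ is controlled by $\normord{\vop{\psi+\theta}{z}\vop{-\theta}{z}}=Y\brac*{\ketb{\psi+\theta}_{(-1)}\ketb{-\theta},z}$; because $(\psi+\theta,-\theta)=1$ one has $\ketb{\psi+\theta}_{(-1)}\ketb{-\theta}=0$, so $J$ is undeformed, equals $\phi_1(J(z))=-\theta(z)$, and $J_0=-\theta_0$ acts semisimply. Repeating the computation for $T=-\normord{\beta\,\partial\gamma}$ instead selects the $(-2)$-product $\ketb{\psi+\theta}_{(-2)}\ketb{-\theta}=\ketb{\psi}$, which is exactly the origin of the extra summand $\vop{\psi}{z}/z$ in \eqref{eq:fieldids}. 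The relations \eqref{eq:algaction} follow by applying the deformed modes to the four Fock vacua, and $L_0\ketb{-\psi}=\ketb{0}$ exhibits the rank~$2$ Jordan block.

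For Part~(3) the module structure can be read off from the off-diagonal nature of $D$. Since $D$ vanishes on $\LFock{0}$, this subspace is a $\bgva$-submodule carrying the undeformed action, so $\LFock{0}\cong\sfmod{}{\Typ{0}^-}$ by \cref{thm:ffr}, while the induced action on $\Stag/\LFock{0}\cong\LFock{-\psi}$ is again $\phi_1$ and yields $\Typ{0}^-$; this is \eqref{eq:Psecs1}. The sequence is non-split, and hence $\Stag$ is indecomposable, because $L_0\ketb{-\psi}=\ketb{0}$ shows $L_0$ acts non-semisimply whereas it acts semisimply on any direct sum of the (non-logarithmic) modules $\Typ{0}^\pm$. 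The composition factors then follow from the filtrations of $\Typ{0}^-$ and $\sfmod{}{\Typ{0}^-}$ recorded in \propref{thm:simclass}, giving $\VacMod$ with multiplicity $2$ and $\sfmod{\pm1}{\VacMod}$ each with multiplicity $1$. The companion sequence \eqref{eq:Psecs2} I would obtain by applying the contravariant $\ast$-functor to \eqref{eq:Psecs1}, using $(\Typ{0}^-)^\ast\cong\Typ{0}^+$ and $(\sfmod{}{\Typ{0}^-})^\ast\cong\sfmod{}{\Typ{0}^+}$ from \cref{thm:duallist} together with the self-duality $\Stag\cong\Stag^\ast$ (verifiable by matching \eqref{eq:algaction} against its contragredient), or else by exhibiting the $\Typ{0}^+$ submodule directly from the vectors in \eqref{eq:algaction}. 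Finally, Part~(4) is immediate from the preceding: $\Stag$ has finite length with every composition factor a spectral flow of $\VacMod\in\catR$ of integer, hence real, $J_0$-weight, and $J_0$ acts semisimply by Part~(2), so $\Stag$ is an object of $\catFLE$.
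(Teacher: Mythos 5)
Your overall strategy is sound and, except in Part (1), it tracks the paper's own proof: the paper disposes of well-definedness by citing the general logarithmic-deformation-by-screenings procedure of \cite{FjeLog02}, whereas you check the \ope{s} directly. Your check is correct in its essential mechanism: the $\gamma\beta$ cross term is regular because $(-\theta,\psi+\theta)=1>0$; the two $\gamma\gamma$ cross terms involve $\vop{-\psi-\theta}{}$ and $\vop{-\theta}{}$ with odd pairing $(-\psi-\theta,-\theta)=-1$, so the lattice two-cocycle signs of the two orderings are opposite and the simple poles cancel; and the corrections to $J$ and $T$ are governed by $\ketb{\psi+\theta}_{(-1)}\ketb{-\theta}=0$ and $\ketb{\psi+\theta}_{(-2)}\ketb{-\theta}\propto\ketb{\psi}$, respectively. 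Parts (2), (4) and the first sequence of Part (3) coincide with the paper's arguments.

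There are, however, two genuine soft spots in Part (3). First, ``the sequence is non-split, and hence $\Stag$ is indecomposable'' is a non sequitur: non-semisimplicity of $L_0$ only rules out decompositions all of whose summands carry a semisimple $L_0$, such as $\sfmod{}{\Typ{0}^{-}}\oplus\Typ{0}^{-}$; it does not rule out, say, $\Stag\cong\Amod\oplus\sfmod{\pm1}{\VacMod}$ with $\Amod$ a length-three module carrying the Jordan block (no classification of indecomposables is available at this stage of the paper). To close the gap you need more: the rank-two $L_0$ block on the two-dimensional $[0,0]$ weight space forces both $\VacMod$ factors into a single summand; the putative complement then has factors among $\sfmod{\pm1}{\VacMod}$ and is semisimple by \cref{thm:VacExt}; and $\sfmod{\pm1}{\VacMod}$ cannot be direct summands because, by \eqref{eq:Psecs1} and the (uniserial) structure of $\Typ{0}^{-}$ and $\sfmod{}{\Typ{0}^{-}}$, every simple submodule of $\Stag$ is isomorphic to $\VacMod$ or $\sfmod{-1}{\VacMod}$, while every simple quotient is isomorphic to $\VacMod$ or $\sfmod{}{\VacMod}$, and a simple direct summand would have to be both. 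Second, your preferred route to \eqref{eq:Psecs2} via the $^\ast$ functor presupposes $\Stag\cong\Stag^\ast$, which the paper only establishes \emph{after} both sequences are in hand (in the proof of \cref{thm:pproj}, using injectivity of $\Stag$ and one-dimensionality of the relevant Ext groups); ``matching \eqref{eq:algaction} against its contragredient'' does not substantiate it, since those relations constrain only four vectors and do not determine a module isomorphism. Your fallback route --- exhibiting the submodule generated by $\ketb{\theta}$, observing it is a non-split self-extension pattern because $\gamma_0\ketb{\theta}=-\ketb{0}\neq0$, identifying it with $\Typ{0}^{+}$ via the one-dimensional Ext group of \cref{thm:VacExt}, and checking the quotient is $\sfmod{}{\Typ{0}^{+}}$ --- is exactly the paper's argument and should be promoted to your main one.
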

See \cref{fig:weights} for an illustration of how the composition
factors of \(\Stag\) are linked by the action of \(\bgva\).

  \begin{figure}[h]
	\centering
	\vspace{3mm}
	\resizebox{!}{0.35\textwidth}{
		\begin{tikzpicture}[scale=1.7, vec/.style={circle,draw=black,fill=black,inner sep=2pt,minimum size=5pt}]
		\clip (-2.25,-1.5) rectangle + (4.5,3);
		\path (0,0.1) node[vec] (w) {} + (0.3,0.15) node {$\ket{-\psi}$};
		\path (0,-0.1) node[vec] (x) {} + (0.09,-0.21) node {$\ket{0}$};
		\path (-1,0) node[vec] (y1) {} + (-0.1,-0.2) node {$\ket{\theta}$};
		\path (1,1) node[vec] (x2) {} + (-0.2,0.2) node {$\ket{-\psi-\theta}$};
		\path (-1.75,-0.75) node {$\sfmod{-1}{\VacMod}$};
		\path (1.75,0.75) node {$\sfmod{}{\VacMod}$};
		\path (0.75,-0.75) node {${\VacMod}$};
		\draw[gray, opacity = 0.5] (3,0.1) -- (w) -- (-3.1,-3)
		(3,-0.1) -- (x) -- (-2.85,-3)
		(-4,0) -- (y1) -- (2,-3)
		(-1,-3) -- (x2) -- (3,3);
		\draw[->,thick] (y1) to[out=-45,in=-165] node[above] {$\gamma_0$} (x);
		\draw[->,thick] (x2) to[out=-90,in=-15]  (x);
		\path (1.1,0.3) node {$\beta_{-1}$};
		\draw[->,thick] (w) to[out=165,in=45] node[above] {$\beta_{0}$} (y1);
		\draw[->,thick] (w) to[out=75,in=-165] node[above] {$\gamma_{1}$} (x2);
		\end{tikzpicture}}
	\caption{The composition factors of $\Stag$ with the nodes
		representing the spectral flows of the highest weight vectors of $\sfmod{\ell}{\VacMod}$ for $-1 \le \ell \le 1$. The arrows give the
		action of $\bglie$ modes on the \hwvs{} of each factor. In this
		diagram, ghost weight increases to the left and conformal weight increases
		downwards. Note that there are two copies of $\VacMod$,
		illustrated by a small vertical shift in their weights. 
	}
	\label{fig:weights}
\end{figure}

\begin{proof}
  Part \ref{itm:twist} follows from \cite{FjeLog02}, where a general procedure was
  given for twisting actions by screening operators.
  The field identifications \eqref{eq:fieldids} of Part \ref{itm:fieldformulae} follow by
  evaluating definitions introduced there, while the relations \eqref{eq:algaction} follow by applying
  the field identifications.

  To conclude the first exact sequence of Part \ref{itm:moddecom} note
  that the action of \(\beta\) and \(\gamma\) closes on
  \(\LFock{0}\cong\sfmod{}{\Typ{0}^{-}}\), because \(\vop{-\theta}{z}\) acts trivially and
  quotienting by \(\LFock{0}\) leaves only \(\LFock{-\psi}\cong
  \Typ{0}^{-}\).

  To conclude the second exact sequence, let \(\vac\) be the highest
  weight vector of \(\VacMod\) and let \(\sfmod{\ell}{\vac}\) be the
  spectral flow images of \(\vac\). Then \(\ket{0}\in \LFock{0}\cong \sfmod{-1}{\Typ{0}^-}\) can
  be identified with \(\vac\) in the \(\VacMod\) composition factor of
  \(\sfmod{-1}{\Typ{0}^-}\) and \(\ket{-\psi-\theta}\) can be
  identified with \(\sfmod{}{\vac}\) in the \(\sfmod{}{\VacMod}\)
  composition factor. Further, 
  \(\ket{-\psi}\in\LFock{-\psi}\cong \Typ{0}^-\) can
  be identified with \(\vac\) in the \(\VacMod\) composition factor
  and \(\ket{\theta}\) can be
  identified with \(\sfmod{-1}{\vac}\) in the \(\sfmod{-1}{\VacMod}\)
  composition factor. 
  See \cref{fig:weights} for a diagram of the action of \(\beta\) and \(\gamma\)
  modes on \(\Stag\) and how they connect the different composition factors. It therefore
  follows that \(\ket{0}\) generates an indecomposable module whose
  composition factors are \(\sfmod{-1}{\VacMod}\) and \(\VacMod\),
  with \(\VacMod\) as a submodule and \(\sfmod{-1}{\VacMod}\) as a
  quotient. The module therefore satisfies the same non-split exact
  sequence \eqref{eq:Weqseq} as \(\Typ{0}^+\) does and since the extension
  groups in \eqref{eq:VextDim} are one-dimensional, this submodule is
  isomorphic to \(\Typ{0}^+\). After quotienting by the submodule
  generated by \(\ket{\theta}\), the formulae above imply that the
  quotient is isomorphic to \(\sfmod{}{\Typ{0}^+}\) and the second
  exact sequence of Part \ref{itm:moddecom} follows.

  Part \ref{itm:FLEmod} follows because \(J_0\) acts diagonalisably
  on \(\Stag\) and because \(\Stag\) has only finitely many
  composition factors all of which lie in \(\catR\) or \(\sfmod{}{\catR}\).
\end{proof}

\begin{thm} \label{thm:pproj}
  For every \(\ell \in \ZZ\) the indecomposable module $\sfmod{\ell}{\Stag}$
  is projective and injective in \(\catFLE\), and hence is a projective
  cover and an injective hull of the simple module \(\sfmod{\ell}{\VacMod}\).
\end{thm}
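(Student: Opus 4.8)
The plan is to reduce everything to the case \(\ell=0\) and then establish, in order, that \(\Stag\) is self-dual under the contravariant functor \({}^{\ast}\), that \(\Stag\) is projective, and finally that it is the projective cover and injective hull of \(\VacMod\). Since \(\sfaut^{\ell}\) is an exact auto-equivalence of \(\catFLE\), it preserves projectivity, injectivity, projective covers and injective hulls; applying it to the \(\ell=0\) statements transports them to all \(\sfmod{\ell}{\Stag}\) and \(\sfmod{\ell}{\VacMod}\). It therefore suffices to treat \(\Stag\) itself, whose head and socle are both \(\VacMod\) by the defining sequences \eqref{eq:Psecs1} and \eqref{eq:Psecs2}.

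First I would record a self-duality. Applying the exact contravariant functor \({}^{\ast}\) of \cref{thm:duallist} to \eqref{eq:Psecs1} and using \((\sfmod{\ell}{\Typ{0}}^{-})^{\ast}\cong\sfmod{\ell}{\Typ{0}}^{+}\) turns it into a short exact sequence of the same shape as \eqref{eq:Psecs2}. As \({}^{\ast}\) is an equivalence, \(\Stag^{\ast}\) is indecomposable, so this sequence is non-split; together with the self-dual Loewy structure (head, socle \(\VacMod\), middle \(\sfmod{-1}{\VacMod}\oplus\sfmod{}{\VacMod}\), all fixed by \({}^{\ast}\)) and the fact that the relevant extension is one-dimensional — which one checks from \eqref{eq:VextDim} — this forces \(\Stag^{\ast}\cong\Stag\). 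The payoff is that, by the identity \(\Extgrp{}{\Amod}{\module{B}}=\Extgrp{}{\module{B}^{\ast}}{\Amod^{\ast}}\) of \cref{thm:duallist}, one has \(\Extgrp{}{\Stag}{S}\cong\Extgrp{}{S^{\ast}}{\Stag}\) for every simple \(S\), with \(S^{\ast}\) again simple. Hence \(\Stag\) is projective if and only if it is injective, and it is enough to prove projectivity.

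Because \(\catFLE\) is a finite-length category, \(\Stag\) is projective precisely when \(\Extgrp{1}{\Stag}{S}=0\) for every simple \(S\) (dévissage along a composition series via the long exact sequence). For \(S=\sfmod{k}{\Typ{\lambda}}\) this is immediate from \cref{thm:VacExt}. The remaining case \(S=\sfmod{k}{\VacMod}\) is the heart of the argument, and I would handle it by a direct module-theoretic computation in the spirit of \cref{thm:VacExt}. Given a non-split sequence \(\dses{\sfmod{k}{\VacMod}}{}{\Nmod}{}{\Stag}\), lift the cyclic generator \(\ket{-\psi}\) of \(\Stag\) to a \(J_{0}\)-eigenvector \(\tilde{\omega}\in\Nmod\); non-splitness forces \(\sfmod{k}{\VacMod}\subseteq\UEA{\bglie}\tilde{\omega}\), so \(\tilde{\omega}\) generates \(\Nmod\). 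The generator satisfies the explicit relations \eqref{eq:algaction} together with \(\beta_{n}\ket{-\psi}=0\ (n\ge1)\) and \(\gamma_{n}\ket{-\psi}=0\ (n\ge2)\); in \(\Nmod\) these hold for \(\tilde{\omega}\) only up to correction terms lying in \(\sfmod{k}{\VacMod}\). Using that, in each fixed ghost-weight sector, the conformal weights of both \(\Stag\) and \(\sfmod{k}{\VacMod}\) are bounded below (computed from \eqref{eq:twistwts}), I would show these correction terms either vanish for weight reasons or can be absorbed by adjusting \(\tilde{\omega}\) within its weight space, producing a submodule of \(\Nmod\) isomorphic to \(\Stag\) and splitting the sequence.

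The main obstacle is exactly this last computation for \(k=\pm1\). Here \(\sfmod{\pm1}{\VacMod}\) shares ghost and conformal weights with the middle composition factors of \(\Stag\), so the corrections cannot be dismissed by the coarse weight bounds alone; one must instead show that \(\Stag\) already realises the maximal self-extension among spectral flows of the vacuum, i.e. that the one-dimensional groups \(\Extgrp{1}{\VacMod}{\sfmod{\pm1}{\VacMod}}\) of \eqref{eq:VextDim} are entirely accounted for inside \(\Stag\) and admit no further lift. Once \(\Extgrp{1}{\Stag}{S}=0\) holds for all simple \(S\), projectivity follows, injectivity follows from the self-duality above, the identification of \(\Stag\) as the projective cover and injective hull of \(\VacMod\) is immediate from its simple head and simple socle, and the general case \(\sfmod{\ell}{\Stag}\) follows by applying \(\sfaut^{\ell}\).
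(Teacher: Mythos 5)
Your overall architecture --- reduce to \(\ell=0\) by spectral flow, use the duality \({}^\ast\) to make projectivity and injectivity equivalent, dispose of the typical simples \(\sfmod{k}{\Typ{\lambda}}\) via \cref{thm:VacExt}, and attack the vacuum-flow simples by an explicit module computation --- is essentially the paper's, but you run the two main steps in the opposite order, and each of them has a gap as written.

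First, your proof of \(\Stag^{\ast}\cong\Stag\) does not stand up. Uniqueness of the middle term of a non-split sequence of the shape \eqref{eq:Psecs2} requires \(\dExt{\sfmod{}{\Typ{0}^+}}{\Typ{0}^+}=1\), an extension group between two \emph{length-two} modules; this group is not among those computed in \eqref{eq:VextDim}, and the six-term Hom--Ext sequences that reduce it to \eqref{eq:VextDim} only bound its dimension by \(2\), not \(1\), because nothing in \eqref{eq:VextDim} controls the relevant connecting maps. (Having the same Loewy diagram does not by itself force two indecomposables to be isomorphic; the one-dimensionality of the Ext group is exactly what is needed.) In the paper this one-dimensionality is deduced \emph{from} the injectivity of \(\Stag\), which makes the Hom--Ext sequences terminate (\cref{thm:homo}); that is, the paper proves injectivity first by direct computation and self-duality second --- the reverse of your order. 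Your plan could be repaired by reordering (complete the direct projectivity computation first, then use \cref{thm:homo} to get the Ext uniqueness, self-duality, and finally injectivity), but the self-duality step cannot be used as an input the way you use it.

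Second, and decisively, you explicitly leave open the case \(k=\pm1\) of the Ext vanishing against \(\sfmod{k}{\VacMod}\), and your fallback (``show that \(\Stag\) already realises the maximal self-extension \dots{} and admits no further lift'') is a restatement of the goal, not an argument. This case is the heart of the theorem. The paper settles it (in the dual orientation, for sequences \(\dses{\Stag}{}{\Mmod}{}{\sfmod{\ell}{\VacMod}}\) with \(\abs{\ell}\le2\)) by concrete computations with the relations \eqref{eq:algaction}: for \(\ell=1\), a gluing onto the factor generated by \(\ket{-\psi}\) would force \(\beta_{-1}\sfmod{}{\vac}=a\ket{-\psi}\) with \(a\neq0\), whence \(a\ket{\theta}=\beta_0\beta_{-1}\sfmod{}{\vac}=\beta_{-1}\beta_0\sfmod{}{\vac}=0\), a contradiction; while a gluing onto the factor generated by \(\ket{0}\) is undone by checking that \(\sfmod{}{\vac}-a\ket{-\psi-\theta}\) generates a splitting direct summand. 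Note also that your ``coarse weight bounds'' do not actually dispose of \(k=0,\pm2\): the ghost and conformal weights overlap there as well, and the paper needs the same kind of explicit mode manipulation for those cases too (e.g.\ \(\beta_{-1}\beta_{-2}\sfmod{2}{\vac}=\beta_{-2}\beta_{-1}\sfmod{2}{\vac}\) landing in a vanishing weight space). So all five cases \(k\in\set{0,\pm1,\pm2}\) require the detailed structure of \(\Stag\) supplied by \cref{thm:projconstr}, and none of them is completed in your proposal.
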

\begin{proof}
  Since spectral flow is an exact invertible functor, it is sufficient
  to prove projectivity and injectivity of \(\Stag\), rather than all spectral
  flow twists of \(\Stag\).
  We first show that \(\Stag\) is injective by showing that
  \(\dExt{\Wmod}{\Stag}=0\) for any simple module
  \(\Wmod\in\catFLE\).  Following that we will show
  \(\Stag^\ast=\Stag\), which, since \({}^\ast\) is an exact
  invertible contravariant functor, implies \(\Stag\) is also projective.

  A necessary condition for the
  non-triviality of such an extension is ghost weights differing only
  by integers.
  We therefore need not consider extensions by 
  $\sfmod{\ell}{\Typ{\lambda}}$, $\lambda \neq \ZZ$, so we 
  restrict our attention to
  short exact sequences of the form
  \begin{equation}
    \dses{\Stag}{}{\Mmod}{}{\sfmod{\ell}{\VacMod}}.
  \end{equation}
If the above extension is non-split, then there must exist a
  subquotient of \(\Mmod\) which is a non-trivial extension of
  \(\sfmod{\ell}{\VacMod}\) by one of the composition factors of
  \(\Stag\). By \cref{thm:VacExt} the above sequence must split if
  \(|\ell|\ge3\) and we therefore only consider \(|\ell|\le 2\).

  If \(\ell=2\), then the composition factor of \(\Stag\) non-trivially
extending \(\sfmod{2}{\VacMod}\) must be \(\sfmod{}{\VacMod}\). If the
extension is non-trivial, then this subquotient must be isomorphic to
\(\sfmod{2}{\Typ{0}^-}\). Further, if \(\sfmod{2}{\vac}\) is the
spectrally flowed highest weight vector of \(\sfmod{2}{\VacMod}\)
and \(\ket{-\psi-\theta}\in\Stag\) (see \cref{fig:weights}) is the
spectrally flowed highest weight vector of the \(\sfmod{}{\VacMod}\)
composition factor of \(\Stag\), then \(\beta_{-2}\sfmod{2}{\vac}=a
\ket{-\psi-\theta}\), \(a\in \CC\setminus\{0\}\).
The relations \eqref{eq:algaction} thus imply
\begin{equation}
  a\ket{0} = a\beta_{-1}\ket{-\psi-\theta}=a\beta_{-1}\beta_{-2}\sfmod{2}{\vac}=a\beta_{-2}\beta_{-1}\sfmod{2}{\vac}.
\end{equation}
However, \(\beta_{-1}\sfmod{2}{\vac}\) has conformal and ghost weight
\([-1,-2]\) and this weight space vanishes for both \(\Stag\) and
\(\sfmod{2}{\VacMod}\). Thus \(\beta_{-1}\sfmod{2}{\vac}\) and hence
\(a=0\), which is a contradiction. 

If \(\ell=1\), then the composition factor of \(\Stag\) non-trivially
extending \(\sfmod{}{\VacMod}\) must be \(\VacMod\). There are two
such composition factors in \(\Stag\). Any such non-trivial extension
must be isomorphic to \(\sfmod{}{\Typ{0}^-}\). If the non-trivial extension
involves the composition factor whose spectrally flowed highest weight
vector is represented by \(\ket{-\psi}\), then \(\beta_{-1}\sfmod{}{\vac}=a
\ket{-\psi}\), \(a\in \CC\setminus\{0\}\). The relations
\eqref{eq:algaction} thus imply
\begin{equation}
  a\ket{\theta} = a\beta_{0}\ket{-\psi-}=a\beta_{0}\beta_{-1}\sfmod{}{\vac}=a\beta_{-1}\beta_{0}\sfmod{}{\vac}.
\end{equation}
However, \(\beta_{0}\sfmod{}{\vac}=0\), so
\(a=0\), which is a contradiction.
If the non-trivial extension
involves the composition factor whose spectrally flowed highest weight
vector is represented by \(\ket{0}\), then there would exist \(a\in
\CC\setminus\{0\}\) such that
\(\beta_{-1}\sfmod{}{\vac}=a\ket{0}\). But then, by the relations
\eqref{eq:algaction}, \(\beta_{-1}\brac*{\sfmod{}{\vac}-a}
\ket{-\psi-\theta}=0\). Hence \(\brac*{\sfmod{}{\vac}-a}
\ket{-\psi-\theta}\) generates a direct summand isomorphic to
\(\sfmod{}{\VacMod}\), making the extension trivial.

If \(\ell=0\), then the composition factor of \(\Stag\) non-trivially
extending \(\VacMod\) must be \(\sfmod{}{\VacMod}\) or
\(\sfmod{-1}{\VacMod}\).
If there is a subquotient isomorphic to a non-trivial extension of
\(\VacMod\) by \(\sfmod{-1}{\VacMod}\), that is, isomorphic to
\(\Typ{0}^-\), then there exists \(a\in
\CC\setminus\{0\}\) such that
\(\beta_{0}\vac=a\ket{\theta}\).
But then, by the relations
\eqref{eq:algaction}, \(\beta_{0}\brac*{\vac-a}
\ket{\theta}=0\). Hence \(\brac*{\vac-a}\ket{\theta}\) generates a direct summand isomorphic to
\(\VacMod\), making the extension trivial. An analogous argument rules
out the existence of subquotient isomorphic a non-trivial extension of
\(\VacMod\) by \(\sfmod{-1}{\VacMod}\).

The cases \(\ell=-2\) and \(\ell=-1\) follow the same reasoning as
\(\ell=2\) and \(\ell=1\), respectively.

Now that we have established that \(\Stag\) is injective, we can apply
the functors \(\Homgrp{\Typ{0}^-}{-}\) and \(\Homgrp{\sfmod{}{\Typ{0}^+}}{-}\)
  to the short exact sequences \eqref{eq:Psecs1} and \eqref{eq:Psecs2}, respectively, to deduce
\(\dExt{\Typ{0}^-}{\sfmod{}{\Typ{0}^-}}=1=\dExt{\sfmod{}{\Typ{0}^+}}{\Typ{0}^+}\).
The indecomposable module \(\Stag\) is therefore the unique module
making the short exact sequences \eqref{eq:Psecs1} and \eqref{eq:Psecs2} non-split. By applying the
functor \(^\ast\) to these exact sequences, we see that \(\Stag^\ast\)
also satisfies these same sequences and hence \(\Stag\cong
\Stag^\ast\). This in turn implies \(\Extgrp{}{\Stag}{-}=0\) and hence
that \(\sfmod{\ell}{\Stag}\) is projective for all \(\ell\in \ZZ\).
\end{proof}

\section{Classification of indecomposables}
\label{sec:indecomp}

In this section, we give a classification of all indecomposable
modules in category $\catFLE$.
We already know any simple module is isomorphic to either $\sfmod{m}{\Typ{\lambda}}$ or
$\sfmod{m}{\Vmod{}}$, and we also know that the
$\sfmod{m}{\Typ{\lambda}}$ are inextensible due to being injective and
projective. We now complete the classification by
finding all the reducible indecomposables which can be built as finite
length extensions with composition factors isomorphic to spectral
flows of \(\VacMod\). To unclutter formulae, we use the notation $\module{M}_{n} = \sfmod{n}{\module{M}}$ for any module $\module{M}$.
The classification of indecomposable modules in $\catFLE$ closely
resembles the classification of indecomposable modules over the
Temperley-Lieb algebra with parameter at roots of unity given in
\cite{BelTem18}. Conveniently, the majority of the reasoning in
\cite{BelTem18} also applies to the $\bgva$-modules, with only minor
modifications --- primarily, that there are no exceptional cases to consider
for the bosonic ghost modules. 

The reducible yet indecomposable modules constituting the classification are the
spectral flows of the projective module $\Stag$, and two infinite
families. 
These two families, denoted $\Bmod{m}{n}$ and $\Tmod{m}{n}$, \(m,n\in\ZZ\), \(n\ge1\), are dual to each
other with respect to \(^\ast\), that is,
$\brac*{\Bmod{m}{n}}^\ast=\Tmod{m}{n}$, and further satisfy the
identifications $\Bmod{1}{} = \Tmod{1}{} = \Vmod{}$, $\Bmod{2}{} =
\sfmod{}{\Typ{0}^-}$ and $\Tmod{2}{} = \sfmod{}{\Typ{0}^+}$. The superscript \(m\)
is the number of composition factors or length of the module.
As a visual aid, we represent these indecomposable modules using Loewy diagrams.

\begin{center} 
  \parbox{0.3\textwidth}{
    \begin{tikzpicture}[thick,>=latex,
      nom/.style={circle,draw=black!20,fill=black!20,inner sep=1pt}
      ]
      \node (top) at (0,1.5) [] {\(\Vmod{}\)};
      \node (left) at (-1.5,0) [] {\(\Vmod{-1}\)};
      \node (right) at (1.5,0) [] {\(\Vmod{1}\)};
      \node (bot) at (0,-1.5) [] {\(\Vmod{}\)};
      \node at (0,0) [nom] {\(\Stmod{}\)};
      \draw[->] (top) -- (left);
      \draw[->] (top) -- (right);
      \draw[->] (left) -- (bot);
      \draw[->] (right) -- (bot);
    \end{tikzpicture}
  }
  \parbox{0.2\textwidth}{
    \begin{tikzpicture}[thick,>=latex,
      nom/.style={circle,draw=black!20,fill=black!20,inner sep=1pt}
      ]
      \node (0) at (0,1.5) [] {\(\Vmod{}\)};
      \node (1) at (1.5,0) [] {\(\Vmod{1}\)};
      \node at (0,0) [nom] {\(\Tmod{2}{}\)};
      \draw[->] (0) -- (1);
    \end{tikzpicture}
  }
  \parbox{0.2\textwidth}{
    \begin{tikzpicture}[thick,>=latex,
      nom/.style={circle,draw=black!20,fill=black!20,inner sep=1pt}
      ]
      \node (0) at (0,0) [] {\(\Vmod{}\)};
      \node (1) at (1.5,1.5) [] {\(\Vmod{1}\)};
		\node at (0,1.5) [nom] {\(\Bmod{2}{}\)};
		\draw[->] (1) -- (0);
              \end{tikzpicture}
            }
\end{center}
Here the edges indicate the action of \(\bgva\) and the vertices represent
  the composition factors.

The indecomposable modules $\Bmod{m}{n}$ and $\Tmod{m}{n}$ can have either an even or an odd number of composition factors which are constructed inductively by different extensions.
Each chain is the result of extending either $\VacMod$, $\Bmod{2}{}$ or
$\Tmod{2}{}$ repeatedly by the length two indecomposables $\Bmod{2}{n}$ or
$\Tmod{2}{n}$, as either quotients or submodules.
For example, the even length module $\Bmod{2m}{}$ is constructed by repeatedly extending $\Bmod{2}{} = \sfmod{}{\Typ{0}^{-}}$ by spectrally flowed copies of itself, as submodules, as outlined in the diagram below.
\begin{center}
		\begin{tikzpicture}[thick,>=latex,
		nom/.style={circle,draw=black!20,fill=black!20,inner sep=1pt}
		]
		\node (0) at (0,0) [] {\(\Vmod{}\)};
		\node (1) at (1.5,1.5) [] {\(\Vmod{1}\)};
		\node (2) at (3,0) [] {\(\Vmod{2}\)};
		\node (3) at (4.5,1.5) [] {\(\Vmod{3}\)};
		\node (4) at (6,0) [] {\(\Vmod{4}\)};
		\node (5) at (7.5,1.5) [] {\(\Vmod{5}\)};
		\node at (0.25,1.25) [] {\(\Bmod{2}{}\)};
		\node at (3.25,1.25) [] {\(\Bmod{2}{2}\)};
		\node at (6.25,1.25) [] {\(\Bmod{2}{4}\)};
		\draw[->] (1) -- (0);
		\draw[->, dashed] (1) -- (2);
		\draw[->] (3) -- (2);
		\draw[densely dotted,rounded corners=5] (0,0.4) -- (1.5,1.9) -- (1.9,1.5)  -- (0,-0.4) -- (-0.4,0) -- (0,0.4);
		\draw[densely dotted,rounded corners=5] (3,0.4) -- (4.5,1.9) -- (4.9,1.5)  -- (4.5,1.1)  -- (3,-0.4) -- (2.6,0) -- (3,0.4);
		\draw[->] (5) -- (4);
		\draw[->, dashed] (3) -- (4);
		\draw[densely dotted,rounded corners=5] (6,0.4) -- (7.5,1.9) -- (7.9,1.5)  -- (7.5,1.1)  -- (6,-0.4) -- (5.6,0) -- (6,0.4);
		\node (6) at (9,0) [] {};
		\draw[->, dashed] (5) -- (6);
		\node at (-1,0.75) [nom] {$\Bmod{2m}{}$};
		\end{tikzpicture}
\end{center}
The dotted boxes separate the component modules and the dashed lines
indicate a non-trivial action of the algebra present only in the extended modules.
Similarly for the odd length module $\Bmod{2m+1}{}$, the chain is built by repeated extensions of $\Bmod{1}{} = \Vmod{}$ by spectrally flowed copies of $\Tmod{2}{} = \sfmod{}{\Typ{0}^+}$ as quotients.
\begin{center}
  \begin{tikzpicture}[thick,>=latex,
    nom/.style={circle,draw=black!20,fill=black!20,inner sep=1pt}
    ]
    \node (0) at (0,0) [] {\(\Vmod{}\)};
    \node (1) at (1.5,1.5) [] {\(\Vmod{1}\)};
    \node (2) at (3,0) [] {\(\Vmod{2}\)};
    \node (3) at (4.5,1.5) [] {\(\Vmod{3}\)};
    \node (4) at (6,0) [] {\(\Vmod{4}\)};
    \node (5) at (7.5,1.5) [] {\(\Vmod{5}\)};
    \node (6) at (9,0) [] {\(\Vmod{6}\)};
    \node (7) at (10.5,1.5) [] {};
    \node at (1.75,0.25) [] {\(\Tmod{2}{1}\)};
    \node at (4.75,0.25) [] {\(\Tmod{2}{3}\)};
    \node at (7.75,0.25) [] {\(\Tmod{2}{5}\)};
    \draw[->,dashed] (1) -- (0);
    \draw[->] (1) -- (2);
    \draw[->, dashed] (3) -- (2);
    \draw[->] (3) -- (4);
    \draw[->] (5) -- (6);
    \draw[->, dashed] (5) -- (4);
    \draw[densely dotted,rounded corners=5] (0.2,0.2) -- (0,0.4) -- (-0.4,0) -- (0,-0.4) -- (0.4,0) -- (0.2,0.2);
    \draw[densely dotted,rounded corners=5]  (4.9,1.5) -- (6,0.4) -- (6.4,0) -- (6,-0.4) -- (5.6,0) -- (4.1,1.5) -- (4.5,1.9) -- (4.9,1.5);
    \draw[densely dotted,rounded corners=5]  (1.9,1.5) -- (3,0.4) -- (3.4,0) -- (3,-0.4) -- (2.6,0) -- (1.1,1.5) -- (1.5,1.9) -- (1.9,1.5);
    \draw[densely dotted,rounded corners=5]  (7.9,1.5) -- (9,0.4) -- (9.4,0) -- (9,-0.4) -- (8.6,0) -- (7.1,1.5) -- (7.5,1.9) -- (7.9,1.5);
    \draw[->, dashed] (7) -- (6);
    \node at (-1,0.75) [nom] {$\Bmod{2m+1}{}$};
  \end{tikzpicture}
\end{center}

When applying the \(^\ast\) functor, the composition factors stay the same, however,
all of the arrows corresponding to the action of the algebra between the
composition factors are reversed. Thus the top and bottom row are switched
in the Loewy diagram and \(\Bmod{}{}\) type indecomposables become
\(\Tmod{}{}\) type indecomposables. 
The letters \(\Tmod{}{}\) and \(\Bmod{}{}\) indicate the composition factor isomorphic to
\(\VacMod\) being either in the top or bottom row, respectively.
\begin{center}
  \parbox{0.45\textwidth}{
    \begin{tikzpicture}[thick,>=latex,
      nom/.style={circle,draw=black!20,fill=black!20,inner sep=1pt}
      ]
      \node (0) at (0,1.5) [] {\(\Vmod{}\)};
      \node (1) at (1.5,0) [] {\(\Vmod{1}\)};
      \node (2) at (3,1.5) [] {\(\Vmod{2}\)};
      \node (3) at (4.5,0) [] {\(\Vmod{3}\)};
      \node (4) at (6,1.5) [] {\(\Vmod{4}\)};
      \node at (0,0) [nom] {\(\Tmod{5}{}\)};
      \draw[->] (0) -- (1);
      \draw[->] (2) -- (1);
      \draw[->] (2) -- (3);
      \draw[->] (4) -- (3);		
    \end{tikzpicture}
  }
  \parbox{0.45\textwidth}{
    \begin{tikzpicture}[thick,>=latex,
      nom/.style={circle,draw=black!20,fill=black!20,inner sep=1pt}
      ]
      \node (0) at (0,0) [] {\(\Vmod{}\)};
      \node (1) at (1.5,1.5) [] {\(\Vmod{1}\)};
      \node (2) at (3,0) [] {\(\Vmod{2}\)};
      \node (3) at (4.5,1.5) [] {\(\Vmod{3}\)};
      \node (4) at (6,0) [] {\(\Vmod{4}\)};
      \node at (0,1.5) [nom] {\(\Bmod{5}{}\)};
      \draw[->] (1) -- (0);
      \draw[->] (1) -- (2);
      \draw[->] (3) -- (2);
      \draw[->] (3) -- (4);
    \end{tikzpicture}
  }
\end{center}

Recall from \cref{thm:VacExt} that non-split extensions only exist between composition factors $\Vmod{i}$, $\Vmod{j}$ with $\abs{i-j} = 1$. This explains the sequential order of the spectral flows of composition factors in the chains.  
We will show that these Loewy diagrams uniquely characterise the reducible
indecomposable modules, that is, no two non-isomorphic indecomposables
have the same Loewy diagram. This is essentially due to certain extension groups
being one-dimensional. These diagrams therefore provide a convenient
way for reading off all submodules and quotients of a given
indecomposable module and hence provide a shortcut for computing the
dimensions of \(\Hom\) groups.

\begin{thm} \label{thm:classify}
	\begin{enumerate}
	\item
	The initial identifications $\Bmod{1}{} = \Tmod{1}{} = \Vmod{}$, $\Bmod{2}{} = \sfmod{}{\Typ{0}^-}$ and $\Tmod{2}{} = \sfmod{}{\Typ{0}^+}$, along with the non-split short exact sequences below, uniquely characterise the modules $\Bmod{n}{}$ and $\Tmod{n}{}$.
	\begin{subequations}\label{eq:defseq}
	\begin{align} 
	&\dses{\Bmod{2n-1}{}}{}{\Bmod{2n+1}{}}{}{\Tmod{2}{2n-1}}, \label{eq:defseq1} \\ 
	&\dses{\Bmod{2}{2n-2}}{}{\Bmod{2n}{}}{}{\Bmod{2n-2}{}}, \label{eq:defseq2} \\ 
	&\dses{\Bmod{2}{2n-1}}{}{\Tmod{2n+1}{}}{}{\Tmod{2n-1}{}},\label{eq:defseq3} \\
	&\dses{\Tmod{2n-2}{}}{}{\Tmod{2n}{}}{}{\Tmod{2}{2n-2}}. \label{eq:defseq4}
	\end{align}
	\end{subequations}
\item
	Any reducible indecomposable module in $\catFLE$ is isomorphic to one of the following.
\begin{equation}\label{eq:indlist}
\Stmod{m} = \sfmod{m}{\Stmod{}}, \qquad
\Bmod{n}{m} = \sfmod{m}{\Bmod{n}{}}, \qquad
\Tmod{n}{m} = \sfmod{m}{\Tmod{n}{}}, \qquad m,n \in \ZZ, \ n\ge2.
\end{equation}
\end{enumerate}
\end{thm}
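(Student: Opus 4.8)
The plan is to follow the strategy used for Temperley--Lieb modules in \cite{BelTem18}, organising everything around the extension data of \cref{thm:VacExt}. That proposition shows that, among the simple modules of \(\catFLE\), only the spectral flows \(\sfmod{\ell}{\VacMod}\) admit non-split mutual extensions, and then only between neighbours \(\sfmod{\ell}{\VacMod}\) and \(\sfmod{\ell\pm1}{\VacMod}\), each extension group being one-dimensional. In the language of blocks this means the \(\sfmod{\ell}{\VacMod}\) all lie in a single block whose Ext-quiver is the infinite line with vertices \(\Vmod{\ell}\), \(\ell\in\ZZ\), while each \(\sfmod{k}{\Typ{\lambda}}\) has vanishing \(\ext^1\) with every module and hence forms a block on its own. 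The first consequence I would record is therefore a reduction: since a simple object forming its own block cannot be a composition factor of a reducible indecomposable, every reducible indecomposable \(\Mmod\in\catFLE\) has all of its composition factors among the \(\sfmod{\ell}{\VacMod}\). Because spectral flow is an exact autoequivalence, I may also normalise by a spectral flow twist at will, which is what lets all the modules in \eqref{eq:indlist} be written as \(\sfmod{m}{(-)}\).

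For Part~(1) I would construct \(\Bmod{n}{}\) and \(\Tmod{n}{}\) inductively as the middle terms of the sequences \eqref{eq:defseq}, the point at each stage being that the relevant extension group is exactly one-dimensional. This is where \cref{thm:homo} does the work: applying the Hom and Ext functors to the defining sequences of the shorter modules already built and feeding in the known dimensions from \cref{thm:VacExt}, the vanishing of the Euler characteristic of each resulting exact sequence pins the new \(\ext^1\) down to dimension at most one, while the inductive construction exhibits a non-split extension, so the dimension is exactly one. One-dimensionality then yields uniqueness: any two non-split extensions realising a given sequence differ by a nonzero scalar and so have isomorphic middle terms, and a short check that the prescribed socle and head leave the middle term no nontrivial direct summand shows it is genuinely indecomposable. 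The duality \(\brac*{\Bmod{m}{n}}^\ast=\Tmod{m}{n}\) halves this bookkeeping.

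For Part~(2) I would take an arbitrary reducible indecomposable \(\Mmod\) with vacuum-type factors and induct on its length. The length-two case is exactly \(\Bmod{2}{m}\) and \(\Tmod{2}{m}\), the unique non-split neighbour extensions. For the inductive step I would read off the radical and socle layers: since each neighbouring extension group is one-dimensional, a factor \(\Vmod{\ell}\) can be glued nontrivially to \(\Vmod{\ell-1}\) and to \(\Vmod{\ell+1}\) in at most one way on each side, so the Loewy diagram is forced to be a zigzag of the type drawn for \(\Bmod{n}{}\) and \(\Tmod{n}{}\) --- unless some copy of \(\sfmod{\ell}{\VacMod}\) is glued on the same side to both of its neighbours, producing the diamond configuration of \(\Stag\). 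In that branching case the projectivity and injectivity of \(\sfmod{\ell}{\Stag}\) from \cref{thm:pproj} (it is the projective cover and injective hull of \(\sfmod{\ell}{\VacMod}\)) force a copy of \(\sfmod{\ell}{\Stag}\) to split off, so indecomposability gives \(\Mmod\cong\sfmod{\ell}{\Stag}\); in the non-branching case, peeling off an end of the zigzag exhibits \(\Mmod\) as the middle term of one of the sequences \eqref{eq:defseq}, and the inductive hypothesis together with Part~(1) identifies it with a \(\Bmod{n}{m}\) or \(\Tmod{n}{m}\).

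The hard part will be the structural bookkeeping in Part~(2): converting the local constraint ``neighbours glue in at most one way'' into the global statement that the only admissible Loewy diagrams are the listed zigzags and the \(\Stag\) diamond, and in particular ruling out any other branching or gluing pattern. Because the Ext-quiver is supported on a line, there are no valid cyclic strings and hence --- as in \cite{BelTem18}, though here without any of the exceptional cases that complicate the Temperley--Lieb analysis --- no continuous families to eliminate. What remains is to argue carefully that a branching can occur only through the projective-injective \(\Stag\), equivalently that no reducible indecomposable other than \(\sfmod{\ell}{\Stag}\) carries a rank-two \(L_0\)-Jordan block; I expect this to follow, exactly as in the shorter computations of \cref{thm:pproj}, from repeated use of \cref{thm:homo} against the projectivity of \(\Stag\) and of the \(\sfmod{k}{\Typ{\lambda}}\).
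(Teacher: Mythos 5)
Your block reduction and your Part~(1) are essentially the paper's own argument: the paper likewise pins down the middle terms of \eqref{eq:defseq} by showing the relevant extension groups are one-dimensional via Hom-Ext sequences (\cref{thm:homo}) fed with the data of \cref{thm:VacExt}. The genuine gap is in the inductive step of Part~(2), and it is twofold. First, your stated dichotomy is false: in the zigzags themselves an interior factor \emph{is} glued ``on the same side'' to both of its neighbours --- in \(\Bmod{5}{}\) the head factors \(\Vmod{1}\) and \(\Vmod{3}\) each map onto both of their socle neighbours --- and this neither produces the diamond nor forces a copy of \(\Stag\) to split off. What actually distinguishes \(\Stag\) is not branching but a four-cycle: two copies of the \emph{same} factor \(\Vmod{\ell}\) joined through both \(\Vmod{\ell-1}\) and \(\Vmod{\ell+1}\).

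Second, and more seriously, one-dimensionality of the extension groups between neighbouring simples only constrains adjacent pairs and does not force the alternating zigzag shape. The uniserial configuration with head \(\Vmod{0}\), middle \(\Vmod{1}\) and socle \(\Vmod{0}\) (two consecutive gluings in the same direction) satisfies every pairwise constraint you impose, has no branching and involves no \(L_0\) Jordan block, yet it does not exist: it is excluded only by the computation \(\dExt{\Vmod{0}}{\Bmod{2}{0}}=0\), an Ext group against a \emph{length-two} module. Likewise, the amalgam with head \(\Vmod{0}\oplus\Vmod{0}\) and socle \(\Vmod{1}\) is excluded only by \(\dExt{\Vmod{0}}{\Tmod{2}{0}}=0\). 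Your proposed substitute --- that no reducible indecomposable besides \(\sfmod{\ell}{\Stag}\) carries a rank-two \(L_0\) Jordan block --- neither implies nor follows from these exclusions, since the offending configurations carry no Jordan block at all. This is precisely the hole the paper fills by a different mechanism: it computes the full Hom and Ext tables for \(\Vmod{m}\), \(\Bmod{2}{m}\), \(\Tmod{2}{m}\), \(\Stmod{m}\) (\cref{thm:homexttabs}), imports from \cite{BelTem18} the projective covers, injective hulls and presentations of all \(\Bmod{n}{m}\), \(\Tmod{n}{m}\) (\cref{thm:covhulpres}), and then runs Euler-characteristic arguments on Hom-Ext sequences to show that every non-trivial extension of a listed module by a simple decomposes into listed modules; closure under such extensions, by induction on length, is what yields Part~(2). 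To repair your argument you would have to carry out those same Ext computations against the longer indecomposables --- equivalently, to prove that every non-alternating or amalgamated string configuration corresponds to a vanishing extension group --- which is exactly the content your sketch defers.
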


\cref{thm:classify} follows by first computing dimensions of Hom and
Ext groups to prove the existence of all indecomposables listed above,
and then showing that the list is closed under extensions by simple modules.

For a module $\Mmod$, we recall the following two well known substructures. The
first is the maximal semisimple submodule of $\Mmod$, called the socle and
which we denote $\soc\Mmod$. The second, called the head, is the maximal semisimple quotient of
$\Mmod$, defined to be the quotient of $\Mmod$ by its radical (the intersection of its maximal proper submodules), which we denote $\hd\Mmod$.
We also let $\Inje{\Mmod}$ and $\Proj{\Mmod}$ denote the injective hull and the projective cover of $\Mmod$ respectively.

\begin{prop}
  For any module \(\Mmod\in\catFLE\), we have
  \begin{equation}\label{eq:indechom}
    \Homgrp{\Vmod{n}}{\Mmod} \cong \Homgrp{\Vmod{n}}{\soc \Mmod}, \qquad \Homgrp{\Mmod}{\Vmod{n}} \cong \Homgrp{\hd \Mmod}{\Vmod{n}},
  \end{equation}
  and
  \begin{equation} \label{eq:sochd}
    \Inje{\Mmod} \cong \Inje{\soc \Mmod}, \qquad \Proj{\Mmod} \cong \Proj{\hd \Mmod } .
  \end{equation}
\end{prop}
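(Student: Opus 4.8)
The plan is to treat the two pairs of identities separately, deducing everything from the simplicity of $\Vmod{n}$ together with the finite-length structure of $\catFLE$ and the existence of enough projectives and injectives established in \cref{sec:proj}.

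First, for the Hom identities I would argue directly. Since $\Vmod{n}$ is simple, any nonzero $f\colon\Vmod{n}\to\Mmod$ has trivial kernel and is therefore injective, so its image is a simple submodule of $\Mmod$ and hence lies in $\soc\Mmod$; thus $f$ factors through the inclusion $\soc\Mmod\ira\Mmod$, and postcomposition with this monomorphism induces the isomorphism $\Homgrp{\Vmod{n}}{\soc\Mmod}\cong\Homgrp{\Vmod{n}}{\Mmod}$. Dually, any nonzero $g\colon\Mmod\to\Vmod{n}$ is surjective, so $\ker g$ is a maximal submodule and therefore contains $\rad\Mmod$; hence $g$ factors through the canonical surjection $\Mmod\sra\hd\Mmod=\Mmod/\rad\Mmod$, yielding $\Homgrp{\hd\Mmod}{\Vmod{n}}\cong\Homgrp{\Mmod}{\Vmod{n}}$. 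This second identity may alternatively be obtained from the first by applying the exact contravariant duality ${}^{\ast}$ of \cref{thm:duallist}, using $\Vmod{n}^{\ast}\cong\Vmod{n}$ and the fact that ${}^{\ast}$ interchanges the head with the socle.

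Second, for the injective hull I would use that $\soc\Mmod$ is essential in $\Mmod$: since every object of $\catFLE$ has finite length, each nonzero submodule of $\Mmod$ contains a simple submodule and so meets $\soc\Mmod$ nontrivially. The composite of the essential monomorphisms $\soc\Mmod\ira\Mmod\ira\Inje{\Mmod}$ is again essential, so $\Inje{\Mmod}$ is an injective object containing $\soc\Mmod$ essentially, that is, an injective hull of $\soc\Mmod$; uniqueness of injective hulls then gives $\Inje{\Mmod}\cong\Inje{\soc\Mmod}$. The projective-cover statement is the ${}^{\ast}$-dual of this: because ${}^{\ast}$ is an exact contravariant autoequivalence interchanging injectives with projectives and socle with head, and $(\Inje{N})^{\ast}$ is consequently a projective cover of $N^{\ast}$, one gets $\Proj{\Mmod}\cong(\Inje{\Mmod^{\ast}})^{\ast}\cong(\Inje{\soc(\Mmod^{\ast})})^{\ast}\cong(\Inje{(\hd\Mmod)^{\ast}})^{\ast}\cong\Proj{\hd\Mmod}$; equivalently, one checks directly that the epimorphism $\Proj{\Mmod}\sra\Mmod$ has superfluous kernel, so that $\hd\Proj{\Mmod}\cong\hd\Mmod$ and $\Proj{\Mmod}$ already serves as a projective cover of $\hd\Mmod$.

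None of these steps is genuinely deep; the content is purely formal once one works in a finite-length abelian category with enough projectives and injectives. The one place requiring care — and the closest thing to an obstacle — is confirming that injective hulls and projective covers exist and are unique in $\catFLE$, which rests on \cref{thm:pproj} and the explicit list of indecomposable projective-injectives $\sfmod{\ell}{\Stag}$ and $\sfmod{\ell}{\Typ{\lambda}}$, together with verifying that the duality ${}^{\ast}$ really does interchange socle with head and injective hull with projective cover.
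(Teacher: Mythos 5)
Your proof is correct. Note that the paper itself offers no proof of this proposition at all: it is stated as a recollection of well-known facts about finite-length categories, so your argument supplies exactly the standard justification the paper implicitly relies on. Concretely, your three steps are the textbook ones: simplicity of $\Vmod{n}$ forces any nonzero map out of it to have simple image (hence land in $\soc\Mmod$) and any nonzero map into it to kill $\rad\Mmod$ (hence factor through $\hd\Mmod$); essentiality of $\soc\Mmod\ira\Mmod$ in a finite-length category plus uniqueness of injective hulls gives $\Inje{\Mmod}\cong\Inje{\soc\Mmod}$; and the projective-cover statement follows either by your superfluous-kernel argument or by applying ${}^{\ast}$. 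The only inputs specific to $\catFLE$ — finite length of its objects, existence of hulls and covers for simples (\cref{thm:VacExt,thm:pproj}), and the fact that ${}^{\ast}$ is an exact contravariant autoequivalence exchanging socle with head and injectives with projectives (\cref{thm:duallist}) — are all established in the paper, and you invoke them correctly.
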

This proposition allows us to find the Hom groups of indecomposables by examining their submodule and quotient structure and applying \eqref{eq:indechom}, and we can use Hom-Ext exact sequences to fill in the gaps. 
We also know that $\Proj{\Vmod{n}} = \Inje{\Vmod{n}} = \Stmod{n}$,
 and therefore knowledge of the submodules and quotients of the indecomposable modules immediately determines the injective hull and projective cover. Once these are known, we can construct injective and projective presentations which we use with the Hom-Ext exact sequence to determine the remaining Ext groups.
The dimensions of Hom and Ext groups involving indecomposables of
large length can be computed inductively from short length
indecomposables and so we prepare these here.
\begin{prop}\label{thm:homexttabs}
  The dimensions of Hom groups for the indecomposable modules
  \(\Vmod{m}\), \(\Bmod{2}{m}\), \(\Tmod{2}{m}\), \(\Stmod{m}\) are
  given by the following table.
  \begin{center}
    \bgroup
    \def\arraystretch{1.5}
    \begin{tabular}{cccccc}
      \multicolumn{1}{l}{}                             & \multicolumn{1}{l}{}                                & \multicolumn{4}{c}{$\Mmod$}     \\ \cline{2-6} 
      \multicolumn{1}{c|}{\multirow{5}{*}{$\Nmod$}} & \multicolumn{1}{c|}{$\dHom{\Nmod}{\Mmod}$} & \multicolumn{1}{c|}{\textbf{$\Vmod{m}$}} & \multicolumn{1}{c|}{\textbf{$\Tmod{2}{m}$}}    & \multicolumn{1}{c|}{\textbf{$\Bmod{2}{m}$}}    & \multicolumn{1}{c|}{\textbf{$\Stmod{m}$}}                      \\ \cline{2-6} 
      \multicolumn{1}{c|}{}                            & \multicolumn{1}{c|}{\textbf{$\Vmod{n}$}}           & \multicolumn{1}{c|}{$\delta_{n,m}$}                & \multicolumn{1}{c|}{$\delta_{n,m+1}$}               & \multicolumn{1}{c|}{$\delta_{n,m}$}                 & \multicolumn{1}{c|}{$\delta_{n,m}$}                                   \\ \cline{2-6} 
      \multicolumn{1}{c|}{}                            & \multicolumn{1}{c|}{\textbf{$\Tmod{2}{n}$}}    & \multicolumn{1}{c|}{$\delta_{n,m}$}                & \multicolumn{1}{c|}{$\delta_{n,m} +\delta_{n,m+1}$} & \multicolumn{1}{c|}{$\delta_{n,m}$}                 & \multicolumn{1}{c|}{$\delta_{n,m-1} +\delta_{n,m}$}                   \\ \cline{2-6} 
      \multicolumn{1}{c|}{}                            & \multicolumn{1}{c|}{\textbf{$\Bmod{2}{n}$}}    & \multicolumn{1}{c|}{$\delta_{n,m-1}$}              & \multicolumn{1}{c|}{$\delta_{n,m}$}                 & \multicolumn{1}{c|}{$\delta_{n,m-1} +\delta_{n,m}$} & \multicolumn{1}{c|}{$\delta_{n,m-1} +\delta_{n,m}$}                   \\ \cline{2-6} 
      \multicolumn{1}{c|}{}                            & \multicolumn{1}{c|}{\textbf{$\Stmod{n}$}}    & \multicolumn{1}{c|}{$\delta_{n,m}$}                & \multicolumn{1}{c|}{$\delta_{n,m} +\delta_{n,m+1}$} & \multicolumn{1}{c|}{$\delta_{n,m} +\delta_{n,m+1}$} & \multicolumn{1}{c|}{$\delta_{n,m-1} + 2\delta_{n,m} +\delta_{n,m+1}$} \\ \cline{2-6} 
    \end{tabular}
    \egroup
  \end{center}
\vspace{5mm}
  Further, the dimensions of Ext groups are given by the following table.
  \begin{center}
    \bgroup
    \def\arraystretch{1.5}
    \begin{tabular}{ccccc}
      \multicolumn{1}{l}{}                             & \multicolumn{1}{l}{}                                               & \multicolumn{3}{c}{$\Mmod$}                                                                                                                                      \\ \cline{2-5} 
      \multicolumn{1}{c|}{\multirow{4}{*}{$\Nmod$}} &
                                                         \multicolumn{1}{c|}{$\dExt{\Nmod}{\Mmod}$}                                      & \multicolumn{1}{c|}{\textbf{$\Vmod{m}$}}  & \multicolumn{1}{c|}{\textbf{$\Tmod{2}{m}$}}      & \multicolumn{1}{c|}{\textbf{$\Bmod{2}{m}$}}      \\ \cline{2-5} 
      \multicolumn{1}{c|}{}                            & \multicolumn{1}{c|}{\textbf{$\Vmod{n}$}}                          & \multicolumn{1}{c|}{$\delta_{n,m-1} +\delta_{n,m+1}$} & \multicolumn{1}{c|}{$\delta_{n,m+2}$}                 & \multicolumn{1}{c|}{$\delta_{n,m-1}$}                 \\ \cline{2-5} 
      \multicolumn{1}{c|}{}                            & \multicolumn{1}{c|}{\textbf{$\Tmod{2}{n}$}}                   & \multicolumn{1}{c|}{$\delta_{n,m+1}$}               & \multicolumn{1}{c|}{$\delta_{n,m+1} +\delta_{n,m+2}$} & \multicolumn{1}{c|}{0}                                \\ \cline{2-5} 
      \multicolumn{1}{c|}{}                            & \multicolumn{1}{c|}{\textbf{$\Bmod{2}{n}$}}                   & \multicolumn{1}{c|}{$\delta_{n,m-2}$}               & \multicolumn{1}{c|}{0}                                & \multicolumn{1}{c|}{$\delta_{n,m-2} +\delta_{n,m-1}$} \\ \cline{2-5} 
    \end{tabular}
    \egroup
  \end{center}
\end{prop}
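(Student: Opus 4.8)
The plan is to fill in both tables by a bootstrap that pins down socle and head first, then computes every Hom group, and only afterwards the Ext groups. Throughout I would lean on two facts established earlier: that $\Stmod{m}=\Proj{\Vmod{m}}=\Inje{\Vmod{m}}$ is projective-injective (\cref{thm:pproj}), so that \cref{thm:homo} applies to any short exact sequence with a middle term $\Stmod{\bullet}$; and that the $\ast$-duality of \cref{thm:duallist} sends $\Tmod{2}{n}\mapsto\Bmod{2}{n}$ while fixing $\Vmod{m}$ and $\Stmod{m}$, giving $\dHom{\Nmod}{\Mmod}=\dHom{\Mmod^\ast}{\Nmod^\ast}$ and the analogous Ext identity, so that roughly half of each table follows from the other half.

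First I would read the socle and head of each module off the defining short exact sequences (the spectral-flow images of those in \cref{thm:simclass} and \cref{thm:projconstr}): $\soc\Tmod{2}{m}=\Vmod{m+1}$, $\hd\Tmod{2}{m}=\Vmod{m}$, $\soc\Bmod{2}{m}=\Vmod{m}$, $\hd\Bmod{2}{m}=\Vmod{m+1}$, together with $\soc\Vmod{m}=\hd\Vmod{m}=\Vmod{m}$ and $\soc\Stmod{m}=\hd\Stmod{m}=\Vmod{m}$. The $\Vmod{n}$-row and $\Vmod{m}$-column of the Hom table then follow immediately from the preceding proposition (equation \eqref{eq:indechom}) and Schur's lemma $\dHom{\Vmod{n}}{\Vmod{m}}=\delta_{n,m}$, while the $\Stmod{n}$-row and $\Stmod{m}$-column follow from projectivity and injectivity of $\Stmod{m}$, which give $\dHom{\Stmod{n}}{\Mmod}=[\Mmod:\Vmod{n}]$ and $\dHom{\Nmod}{\Stmod{m}}=[\Nmod:\Vmod{m}]$, i.e. plain composition multiplicities.

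The remaining $2\times 2$ block of Hom groups among the length-two modules I would compute structurally: a nonzero map has image that is simultaneously a quotient of its source and a submodule of its target, and since $\Tmod{2}{m}$ and $\Bmod{2}{m}$ are uniserial of length two with distinct head and socle, the only available subquotients are the head, the socle, or the whole module, so matching these leaves exactly the tabulated coincidences (each one-dimensional, as the relevant endomorphism rings are $\CC$). With the Hom table complete I would compute the Ext groups. The structural input is free: equations \eqref{eq:Psecs1} and \eqref{eq:Psecs2} of \cref{thm:projconstr}, together with their spectral-flow and $\ast$-images, are precisely the projective covers and injective hulls of the length-two modules,
\begin{equation*}
\dses{\Tmod{2}{n-1}}{}{\Stmod{n}}{}{\Tmod{2}{n}}
\qquad\text{and}\qquad
\dses{\Bmod{2}{n+1}}{}{\Stmod{n+1}}{}{\Bmod{2}{n}},
\end{equation*}
plus their $\ast$-duals. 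As the middle term is projective-injective, \cref{thm:homo} converts each into a four-term exact Hom--Ext sequence; since the three Hom dimensions occurring are all already known, vanishing of the Euler characteristic pins down the single unknown Ext dimension. Running $\Homgrp{-}{\Mmod}$ against the projective-cover sequences (and using \eqref{eq:VextDim} for the diagonal entry $\dExt{\Vmod{n}}{\Vmod{m}}$) fills in the whole Ext table, with duality as a consistency check, e.g. $\dExt{\Bmod{2}{n}}{\Bmod{2}{m}}=\dExt{\Tmod{2}{m}}{\Tmod{2}{n}}$.

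The genuinely non-formal step, and the one I would be most careful about, is the structural identification underlying both the $2\times 2$ Hom block and the four short exact sequences: that the kernels and cokernels are the specific modules $\Tmod{2}{\bullet}$, $\Bmod{2}{\bullet}$, and not merely objects sharing their composition factors. This rests on the explicit mode action \eqref{eq:algaction} describing the submodule lattice of $\Stmod{}$ and on the one-dimensionality of the relevant extension groups from \cref{thm:VacExt}, which forces the length-two subquotients to be the expected uniserial ones. Everything downstream is Euler-characteristic bookkeeping, the only practical hazard being the careful tracking of spectral-flow indices.
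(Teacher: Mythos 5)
Your proposal is correct and is essentially the paper's own proof: the paper's one-line argument invokes exactly the ingredients you spell out, namely the structural sequences \eqref{eq:Weqseq} and \eqref{eq:Psecs1}--\eqref{eq:Psecs2} from \cref{thm:projconstr}, the simple-module extension groups of \cref{thm:VacExt}, and Euler-characteristic bookkeeping with the Hom--Ext sequences of \cref{thm:homo}. Your organisational devices (socle/head reduction via \eqref{eq:indechom}, the composition-multiplicity formulas for Hom groups involving \(\Stmod{n}\), and \(\ast\)-duality to halve the work) are the same ones the paper sets up in the text immediately preceding the proposition.
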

\begin{proof}
  These dimensions follow from the exact sequences \eqref{eq:Weqseq},
  \cref{thm:VacExt} and \cref{thm:projconstr}, and judicious
  application of \cref{thm:homo}.
\end{proof}

The classification of indecomposable Temperley-Lieb algebra modules in
\cite{BelTem18} parametrises modules by finite sets of integers. The analogue
here is the subscript \(m\) in \eqref{eq:indlist} parametrising spectral flow,
which is an infinite index set. However, away from the end points of these
finite sets of integers the dimensions of Hom and Ext
groups for short length indecomposable modules in \cite[Propositions 2.17 and 2.18]{BelTem18} are equal to those for \(\bgva\)-modules after making the
identifications in the following table. 
\begin{center}
  \bgroup
  \def\arraystretch{1.5}
  \begin{tabular}{|l|l|l|l|l|l|l|}
    \hline
    \(\bgva\) & $\Vmod{m}$ & $\Bmod{n}{m}$   & $\Tmod{n}{m}$   & $\Bmod{2}{m}$ & $\Tmod{2}{m}$ & $\Stmod{m}$         \\ \hline
    \textbf{TL} & $\ImodTL{m}$ & $\BmodTL{n-1}{m}$ & $\TmodTL{n-1}{m}$ & $\CmodTL{m}$    & $\SmodTL{m}$    & $\PmodTL{m}, \ \JmodTL{m}$ \\ \hline
  \end{tabular}
  \egroup
\end{center}\vspace{2mm}
The dimensions of the remaining Hom and Ext groups, and therefore the
classification, follows from the same homological algebra reasoning as in \cite{BelTem18}, with no need for exceptions at the boundaries of the finite
sets in \cite{BelTem18}.
\begin{prop}{{\cite[Corollary 3.3, Propositions 3.6 and 3.7]{BelTem18}}}
  \label{thm:covhulpres}
The indecomposable modules \(\Bmod{n}{m}\), \(\Tmod{n}{m}\) have the
  following projective covers and hulls.
  \begin{center}
    \bgroup
    \def\arraystretch{1.5}
    \begin{tabular}{|c|c|c|c|c|}
      \hline
      $\module{M}$ & $\Bmod{2k+1}{m}$ & $\Bmod{2k}{m}$ & $\Tmod{2k+1}{m}$ & $\Tmod{2k}{m}$ \\
      \hline
      $\Proj{\module{M}}$ & $\bigoplus_{i=0}^{k-1} \Stmod{m+2i+1}$ & $\bigoplus_{i=0}^{k-1} \Stmod{m+2i+1}$ & $\bigoplus_{i=0}^k \Stmod{m+2i}$ & $\bigoplus_{i=0}^{k-1} \Stmod{m+2i}$ \\
      \hline
      $\Inje{\module{M}}$ & $\bigoplus_{i=0}^k \Stmod{m+2i}$ & $\bigoplus_{i=0}^{k-1} \Stmod{m+2i}$ & $\bigoplus_{i=0}^{k-1} \Stmod{m+2i+1}$ & $\bigoplus_{i=0}^{k-1} \Stmod{m+2i+1}$ \\
      \hline 
    \end{tabular}
    \egroup
  \end{center}\vspace{1mm}
  Further, projective and injective presentations are characterised by the following.
  \begin{center}
    \bgroup
    \def\arraystretch{1.5}
    \begin{tabular}{|c|c|c|c|c|}
      \hline
		$\module{M}$ & $\Bmod{2k+1}{m}$ & $\Bmod{2k}{m}$ & $\Tmod{2k+1}{m}$ & $\Tmod{2k}{m}$ \\
      \hline
      $\ker{\brac*{\Proj{\Mmod} \ra \Mmod}}$ & $\Bmod{2k-1}{m}$ & $\Bmod{2k}{m+1}$ & $\Tmod{2k+2}{m}$ & $\Tmod{2k}{m-1}$ \\
      \hline
      $\coker{\brac*{\Mmod \ra \Inje{\Mmod}}}$ & $\Bmod{2k+1}{m}$ & $\Bmod{2k}{m-1}$ & $\Tmod{2k}{m}$ & $\Tmod{2k}{m+1}$ \\
      \hline 
    \end{tabular}
    \egroup
  \end{center}
\end{prop}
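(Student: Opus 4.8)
The plan is to reduce every entry of both tables to the socle and head of the modules in the list and then to invoke the projective--injective machinery already in place, exploiting the duality $(\Bmod{n}{m})^\ast\cong\Tmod{n}{m}$ to halve the work. First I would record $\soc$ and $\hd$ of each indecomposable. From the Loewy diagrams underpinning \cref{thm:classify} one reads off that the socle and head are the two alternating rows of the characteristic zigzag; writing the factors of $\Bmod{2k+1}{m}$ and $\Tmod{2k+1}{m}$ as $\Vmod{m},\dots,\Vmod{m+2k}$, this gives $\soc\Bmod{2k+1}{m}=\hd\Tmod{2k+1}{m}=\bigoplus_{i=0}^{k}\Vmod{m+2i}$ and $\hd\Bmod{2k+1}{m}=\soc\Tmod{2k+1}{m}=\bigoplus_{i=0}^{k-1}\Vmod{m+2i+1}$, with the analogous, one-row-shorter, statements in the even-length case. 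These can be verified independently by computing $\dHom{\Vmod{n}}{\Mmod}$ and $\dHom{\Mmod}{\Vmod{n}}$ inductively from the defining sequences \eqref{eq:defseq} and \cref{thm:homexttabs} via \eqref{eq:indechom}. Granting this, the first table is immediate: \eqref{eq:sochd} gives $\Proj{\Mmod}\cong\Proj{\hd\Mmod}$ and $\Inje{\Mmod}\cong\Inje{\soc\Mmod}$, and since $\Proj{\Vmod{n}}=\Inje{\Vmod{n}}=\Stmod{n}$ by \cref{thm:pproj} and both constructions are additive over the semisimple head and socle, the asserted direct sums of $\Stmod{\bullet}$ fall out, the $\Tmod{}{}$-rows following from the $\Bmod{}{}$-rows by applying the exact contravariant functor $^\ast$.

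For the presentations I would work in the Grothendieck group and then rigidify. Fixing $\Mmod$ and its cover $P=\Proj{\Mmod}$, the sequence $\dses{K}{}{P}{}{\Mmod}$ forces $[K]=[P]-[\Mmod]$, and since $\Stmod{n}$ has composition factors $\Vmod{n-1},\Vmod{n},\Vmod{n},\Vmod{n+1}$ (\cref{thm:projconstr}) this pins down the multiset of factors of $K$. To upgrade this to an identification of $K$ as an actual indecomposable I would use that $K\subseteq P$ forces $\soc K\subseteq\soc P=\bigoplus_n\Vmod{n}$, compute $\soc K$, and then appeal to the uniqueness half of \cref{thm:classify}: an indecomposable with the computed composition factors and socle can only be the stated member of the $\Bmod{}{}$ or $\Tmod{}{}$ family. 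This is organised most cleanly as an induction on the length, since the sequences \eqref{eq:defseq} present each $\Bmod{n}{}$ and $\Tmod{n}{}$ as an extension of shorter members: a horseshoe argument then relates the length-$n$ presentation to the length-$(n-2)$ one, while \cref{thm:homo} (applicable because every $\Stmod{n}$ is projective--injective) supplies the residual dimension counts. Finally, because $^\ast$ is exact, contravariant, and invertible with $\Stmod{n}^\ast\cong\Stmod{n}$, it carries a projective presentation of $\Mmod$ to an injective presentation of $\Mmod^\ast$ and hence turns every kernel computation for the $\Bmod{}{}$-family into the corresponding cokernel computation for the $\Tmod{}{}$-family, so only the two kernel entries require genuine proof.

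The main obstacle is this last rigidification rather than any of the bookkeeping: composition factors never determine a module and the socle alone does not either, so the full strength of the uniqueness statement in \cref{thm:classify} is needed to conclude that each kernel and cokernel is precisely the claimed indecomposable carrying the correct $\sfaut$-index. The delicate points are tracking how that spectral-flow index shifts as one passes through the sequences \eqref{eq:defseq}, and treating the even- and odd-length cases separately, since there $\hd$ and $\soc$ have different lengths and hence the covers and hulls have different numbers of $\Stmod{\bullet}$ summands. These are exactly the boundary configurations at which the Temperley--Lieb analysis of \cite{BelTem18} was forced to introduce exceptional cases; the point worth stressing is that the uniformity of the extension groups in \cref{thm:VacExt}---in particular that every relevant $\Extgrp{}{\Vmod{k}}{\Vmod{\ell}}$ with $|k-\ell|=1$ is one-dimensional---makes those exceptions disappear for $\bgva$.
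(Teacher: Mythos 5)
Your approach is genuinely different from the paper's: the paper never proves this proposition internally, it imports it from \cite{BelTem18} through the dictionary of Hom/Ext dimensions surrounding \cref{thm:homexttabs}, whereas you propose a self-contained derivation inside \(\catFLE\). Your treatment of the first table is sound: the head and socle formulas read off the Loewy diagrams are correct, and \eqref{eq:sochd} together with \(\Proj{\Vmod{n}}=\Inje{\Vmod{n}}=\Stmod{n}\) (\cref{thm:pproj}) and the \(^\ast\)-duality does produce the stated covers and hulls. The serious problem is with the second table, and it is one your own plan should have exposed: four of its eight entries are false as printed, and the very first step you describe---the Grothendieck-group count \([K]=[\Proj{\Mmod}]-[\Mmod]\)---refutes them rather than confirms them. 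Concretely, \(\Proj{\Tmod{2k+1}{m}}=\bigoplus_{i=0}^{k}\Stmod{m+2i}\) has \(4(k+1)\) composition factors while \(\Tmod{2k+1}{m}\) has \(2k+1\), so the kernel has length \(2k+3\) and cannot be \(\Tmod{2k+2}{m}\); it is \(\Tmod{2k+3}{m-1}\). Likewise the kernel of \(\Stmod{m+1}\twoheadrightarrow\Bmod{3}{m}\) is the socle \(\Vmod{m+1}\) of \(\Stmod{m+1}\), not the printed \(\Bmod{1}{m}=\Vmod{m}\), which is not even isomorphic to a submodule of \(\Stmod{m+1}\); in general this kernel is \(\Bmod{2k-1}{m+1}\). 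Dually, the cokernel entries in the odd-length columns must read \(\Bmod{2k+3}{m-1}\) and \(\Tmod{2k-1}{m+1}\); only the even-length columns are correct as printed. Your own duality step is a second check that would have caught this: applying \(^\ast\) must exchange the kernel entries of the \(\Bmod{}{}\) columns with the cokernel entries of the \(\Tmod{}{}\) columns, and the printed table violates this. These are transcription errors in the paper---its worked example immediately below the proposition uses the presentations \(\dses{\Vmod{1}}{}{\Stmod{1}}{}{\Bmod{3}{}}\) and \(\dses{\Bmod{3}{}}{}{\Stmod{}\oplus \Stmod{2}}{}{\Bmod{5}{-1}}\), i.e.\ exactly the corrected values---but a proposal announcing that the stated entries ``fall out'' of a computation that in fact contradicts them shows the computation was never actually carried out.

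Second, even aiming at the correct targets, your rigidification step has a gap. In this category, composition factors together with socle and head do \emph{not} determine a module: \(\Bmod{5}{m+1}\) and \(\Bmod{3}{m+1}\oplus\Tmod{2}{m+4}\) have identical composition factors, identical head and identical socle. So the phrase ``an indecomposable with the computed composition factors and socle'' presupposes exactly what must be proved, namely that \(\ker\brac*{\Proj{\Mmod}\to\Mmod}\) is indecomposable, equivalently that the extensions arising in your horseshoe induction are non-split. That is where the substance of the argument lies: one needs either explicit control of the surjection \(\Proj{\Mmod}\to\Mmod\) restricted to each \(\Stmod{}\) summand (consecutive images overlap in socle factors of \(\Mmod\), and this overlap is what links the summands of the kernel into a single zigzag), or the Hom--Ext bookkeeping of \cref{thm:homo} and \cref{thm:homexttabs} applied to rule out every decomposable configuration. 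An appeal to the uniqueness clause of \cref{thm:classify} alone cannot do this work.
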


This data now suffices to show that the extension groups corresponding
to the exact sequences \eqref{eq:defseq} of \cref{thm:classify} are
one-dimensional and hence uniquely characterise the indecomposable
\(\Bmod{}{}\) and \(\Tmod{}{}\) modules. The data can also be used to
show that any non-trivial extension of these indecomposable modules by
spectral flows of \(\VacMod\) will be a direct sum of modules in the
list \eqref{eq:indlist}. Hence \cref{thm:classify} follows.

For example, consider all possible extensions involving $\Bmod{3}{}$ and $\Vmod{n}$, starting with $\Extgrp{}{\Vmod{n}}{\Bmod{3}{}}$. Using the tables above, we start with the following injective presentation of $\Bmod{3}{}$
\begin{equation}
  \dses{\Bmod{3}{}}{}{\Stmod{}\oplus \Stmod{2}}{}{\Bmod{5}{-1}}.
\end{equation}
Applying the functor $\Homgrp{\Vmod{n}}{-}$, \cref{thm:homo} gives the Hom-Ext exact sequence
\begin{equation}
  0 \lra {\Homgrp{\Vmod{n}}{\Bmod{3}{}}} \lra {\Homgrp{\Vmod{n}}{\Stmod{}\oplus \Stmod{2}}} \lra {\Homgrp{\Vmod{n}}{\Bmod{5}{-1}}} \lra {\Extgrp{}{\Vmod{n}}{\Bmod{3}{}}} \lra 0.
\end{equation}
We can use \eqref{eq:indechom} to calculate these Hom groups, and the vanishing Euler characteristic implies $\dExt{\Vmod{n}}{\Bmod{3}{}} = \delta_{n,-1} + \delta_{n,1} + \delta_{n,3}$. These extensions are given by $\Bmod{4}{-1}$, $\Bmod{2}{}\oplus\Tmod{2}{1}$ and $\Bmod{4}{}$ for $n=-1$, 1 and 3, respectively.
Similarly apply the functor $\Homgrp{-}{\Vmod{n}}$ to the projective presentation
\begin{equation}
  \dses{\Vmod{1}}{}{\Stmod{1}}{}{\Bmod{3}{}}.
\end{equation}
The vanishing Euler characteristic then implies $\dExt{\Bmod{3}{}}{\Vmod{n}} = \delta_{n,1}$ with the extension being given by $\Stmod{1}$. Therefore we see that all extensions involving $\Bmod{3}{}$ and $\Vmod{n}$ return direct sums of classified indecomposable modules.

We end this section with some properties of the classified
indecomposable modules which will prove helpful in later sections.	
\begin{prop} \label{thm:dual}
  The evaluation of the \(^\ast\) functor of \cref{thm:duallist} on reducible indecomposable modules is given by
  \begin{align}
    \brac*{\Stmod{n}}^*  \cong \Stmod{n}, \qquad
    \brac*{\Bmod{m}{n}}^*  \cong \Tmod{m}{n}, \qquad
    \brac*{\Tmod{m}{n}}^*  \cong \Bmod{m}{n}.
  \end{align}
\end{prop}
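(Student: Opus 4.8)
The plan is to exploit that $^\ast$ is an exact, contravariant, invertible functor which commutes with spectral flow, so that the entire statement reduces to the unflowed modules $\Stag$, $\Bmod{m}{}$ and $\Tmod{m}{}$, after which I argue by induction on the length $m$ using the defining sequences \eqref{eq:defseq}. First I would record that $^\ast=\conjaut\circ{}'$ commutes with $\sfaut$: conjugation anti-commutes with spectral flow by the dihedral relation \eqref{eq:dihrel}, giving $\conjmod{\sfmod{m}{\Mmod}}\cong\sfmod{-m}{\conjmod{\Mmod}}$, while the contragredient reverses the $J_0$- and $L_0$-gradings and hence satisfies $\brac*{\sfmod{m}{\Mmod}}'\cong\sfmod{-m}{\Mmod'}$, consistent with \cref{thm:duallist}.\ref{itm:resdual}; composing these cancels the sign reversal and yields $\brac*{\sfmod{m}{\Mmod}}^\ast\cong\sfmod{m}{\Mmod^\ast}$. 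Since $\Stmod{n}=\sfmod{n}{\Stag}$ and $\Stag^\ast\cong\Stag$ was already established in the proof of \cref{thm:pproj}, this immediately gives $\brac*{\Stmod{n}}^\ast\cong\sfmod{n}{\Stag^\ast}\cong\Stmod{n}$. By the same commutation it then suffices to prove $\brac*{\Bmod{m}{}}^\ast\cong\Tmod{m}{}$ and $\brac*{\Tmod{m}{}}^\ast\cong\Bmod{m}{}$ for the unflowed modules, the general spectral-flowed identities in \eqref{eq:indlist} following by applying $\sfmod{n}{-}$.

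I would prove the two unflowed families together by a single induction on $m$. The base cases $m=1,2$ are precisely the identifications $\Bmod{1}{}=\Tmod{1}{}=\VacMod$ with $\VacMod^\ast\cong\VacMod$, and $\Bmod{2}{}=\sfmod{}{\Typ{0}^-}$, $\Tmod{2}{}=\sfmod{}{\Typ{0}^+}$ with $\brac*{\sfmod{}{\Typ{0}^\pm}}^\ast\cong\sfmod{}{\Typ{0}^\mp}$, all read off from \cref{thm:duallist}.\ref{itm:strdual}. For the inductive step I apply the exact contravariant functor $^\ast$ to the defining sequences \eqref{eq:defseq}, noting that it sends a short exact sequence $\dses{A}{}{B}{}{C}$ to $\dses{C^\ast}{}{B^\ast}{}{A^\ast}$. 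The four sequences pair up: applying $^\ast$ to \eqref{eq:defseq3}, and using the commutation relation ($\brac*{\Bmod{2}{2n-1}}^\ast\cong\Tmod{2}{2n-1}$) together with the inductive hypothesis $\brac*{\Tmod{2n-1}{}}^\ast\cong\Bmod{2n-1}{}$, produces a non-split extension of $\Tmod{2}{2n-1}$ by $\Bmod{2n-1}{}$ with middle term $\brac*{\Tmod{2n+1}{}}^\ast$, i.e.\ exactly the sequence \eqref{eq:defseq1} characterising $\Bmod{2n+1}{}$; symmetrically $^\ast$ turns \eqref{eq:defseq1} into \eqref{eq:defseq3}, and \eqref{eq:defseq2} and \eqref{eq:defseq4} into each other. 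As $^\ast$ is invertible it preserves non-splitness, so in each case the middle term occupies the defining non-split sequence of the opposite family, and the uniqueness asserted in \cref{thm:classify} forces the desired isomorphism at length $m$ for both families simultaneously.

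The main obstacle is making this last step rigorous: concluding an honest isomorphism of modules from the bare fact that, say, $\brac*{\Tmod{2n+1}{}}^\ast$ sits in the middle of the same non-split sequence as $\Bmod{2n+1}{}$. This is exactly what the uniqueness in \cref{thm:classify} delivers, and it rests on the relevant first extension groups being one-dimensional (established via \cref{thm:homexttabs} and the discussion preceding this proposition): once the sub- and quotient modules are identified by induction and the extension is known to be non-split, the middle term is pinned down up to isomorphism. The only other point needing care is the commutation of $^\ast$ with spectral flow, which is a direct consequence of \eqref{eq:dihrel} and \cref{def:dual}; as a sanity check, the $\Tmod{m}{n}$ statement can alternatively be obtained from the $\Bmod{m}{n}$ statement by applying $^\ast$ once more and invoking $^{\ast\ast}\cong\id$, valid on the finite-length weight modules of $\catFLE$ since $\conjaut^2\Mmod\cong\Mmod$ and $\Mmod''\cong\Mmod$ there.
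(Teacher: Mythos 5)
Your proposal is correct and takes essentially the same route as the paper: an induction on length in which the exact contravariant functor \(^\ast\) interchanges the defining sequences \eqref{eq:defseq1}--\eqref{eq:defseq4}, combined with the uniqueness of these non-split extensions from \cref{thm:classify}, the base cases \(\brac*{\Typ{0}^{\pm}}^\ast\cong\Typ{0}^{\mp}\) from \cref{thm:duallist}, and the self-duality of \(\Stag\) already obtained from its two defining sequences in \cref{thm:projconstr} (as used in \cref{thm:pproj}). Your explicit verification that \(^\ast\) commutes with spectral flow is left implicit in the paper but is exactly the right justification for reducing to the unflowed modules.
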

\begin{proof}
The action of the \({}^\ast\) functor on the $\Bmod{}{}$ and $\Tmod{}{}$ modules
follows from their defining sequences \eqref{eq:defseq1} -- \eqref{eq:defseq4} being dual to each other, these sequences
uniquely characterising the \(\Bmod{}{}\) and \(\Tmod{}{}\) modules,
and proceeding by induction, starting with
$\brac*{\Typ{0}^{\pm}}^*=\Typ{0}^{\mp}$ from
\cref{thm:duallist}.\ref{itm:strdual}.
The self duality of $\Stmod{}$ is a consequence of \cref{thm:projconstr}.\ref{itm:moddecom}.
\end{proof}

\begin{cor}\label{thm:indseqs}
  The \(\Bmod{}{}\) and \(\Tmod{}{}\) indecomposable modules satisfy the
  non-split exact sequences.
  \begin{subequations}
    \begin{align}
      &\dses{\Vmod{}}{}{\Bmod{n}{}}{}{\Tmod{n-1}{1}},  \label{eq:xseq1}\\
      &\dses{\Vmod{2n}}{}{\Bmod{2n+1}{}}{}{\Bmod{2n}{}}, \label{eq:dseq1}\\
      &\dses{\Bmod{2n-1}{}}{}{\Bmod{2n}{}}{}{\Vmod{2n-1}}, \label{eq:dseq2}\\
      &\dses{\Bmod{n-2}{2}}{}{\Bmod{n}{}}{}{\Bmod{2}{}}, \label{eq:xseq3}\\
      &\dses{\Bmod{n-1}{1}}{}{\Tmod{n}{}}{}{\Vmod{}}, \label{eq:xseq2}\\
      &\dses{\Tmod{2n}{}}{}{\Tmod{2n+1}{}}{}{\Vmod{2n}}, \label{eq:dseq3}\\
      &\dses{\Vmod{2n-1}}{}{\Tmod{2n}{}}{}{\Tmod{2n-1}{}}. \label{eq:dseq4}
     \end{align}
  \end{subequations}
\end{cor}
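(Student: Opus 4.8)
The plan is to establish only the four sequences whose middle term is a $\Bmod{}{}$-type module --- \eqref{eq:xseq1}, \eqref{eq:dseq1}, \eqref{eq:dseq2} and \eqref{eq:xseq3} --- and then to deduce the three $\Tmod{}{}$-type sequences \eqref{eq:xseq2}, \eqref{eq:dseq3} and \eqref{eq:dseq4} by applying the duality functor $^\ast$. Since $^\ast$ is exact and contravariant (\cref{thm:duallist}), it turns any short exact sequence $\dses{A}{}{B}{}{C}$ into $\dses{C^\ast}{}{B^\ast}{}{A^\ast}$. Feeding in the identifications $\brac*{\Bmod{m}{n}}^\ast \cong \Tmod{m}{n}$ of \cref{thm:dual} and $\brac*{\Vmod{\ell}}^\ast \cong \Vmod{\ell}$ of \cref{thm:duallist}.\ref{itm:strdual}, the $^\ast$-images of \eqref{eq:xseq1}, \eqref{eq:dseq1} and \eqref{eq:dseq2} are precisely \eqref{eq:xseq2}, \eqref{eq:dseq3} and \eqref{eq:dseq4}. (The $^\ast$-dual of \eqref{eq:xseq3} is the true but unlisted sequence $\dses{\Tmod{2}{}}{}{\Tmod{n}{}}{}{\Tmod{n-2}{2}}$.) In every case non-splitness is automatic, since the middle term is a reducible indecomposable $\Bmod{n}{}$ or $\Tmod{n}{}$ with $n \ge 2$ and so cannot split as the direct sum of the two nonzero outer modules.

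For the $\Bmod{}{}$-sequences I would read the relevant submodule straight off the Loewy diagram of $\Bmod{n}{}$, whose composition factors are $\Vmod{0}, \Vmod{1}, \dots, \Vmod{n-1}$, with the even-indexed factors spanning the socle, the odd-indexed factors the head, and each adjacent pair $\Vmod{i},\Vmod{i+1}$ joined by the unique nonsplit extension of \cref{thm:VacExt}. Thus: in \eqref{eq:xseq1} the socle factor $\Vmod{0}=\Vmod{}$ is a simple submodule and the quotient sits on $\Vmod{1},\dots,\Vmod{n-1}$; in \eqref{eq:dseq1} the extremal socle factor $\Vmod{2n}$ of $\Bmod{2n+1}{}$ is a simple submodule with quotient on $\Vmod{0},\dots,\Vmod{2n-1}$; in \eqref{eq:dseq2} the extremal head factor $\Vmod{2n-1}$ of $\Bmod{2n}{}$ is a simple quotient with kernel on $\Vmod{0},\dots,\Vmod{2n-2}$; and in \eqref{eq:xseq3} the factors $\Vmod{2},\dots,\Vmod{n-1}$ form a submodule --- once $\Vmod{0}$ and $\Vmod{1}$ are removed no arrow leaves this set --- with complementary quotient on $\Vmod{0},\Vmod{1}$, namely $\Bmod{2}{} = \sfmod{}{\Typ{0}^-}$. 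The base cases $n\le 2$ reduce directly to the identifications $\Bmod{2}{}=\sfmod{}{\Typ{0}^-}$, $\Tmod{2}{}=\sfmod{}{\Typ{0}^+}$ and the defining extensions \eqref{eq:Weqseq}. Matching socle and head parities then shows the complementary subquotient carries exactly the Loewy diagram of the asserted target module. A tidier alternative, avoiding any hand-drawn diagrams, is a single induction on $n$ using the defining sequences \eqref{eq:defseq}: these present $\Bmod{n}{}$ as an extension of a shorter $\Bmod{}{}$-module by a length-two block $\Bmod{2}{k}=\sfmod{k+1}{\Typ{0}^-}$ or $\Tmod{2}{k}$ whose own two-step filtration is \eqref{eq:Weqseq}, and refining these filtrations peels off one simple factor at a time to reassemble \eqref{eq:xseq1}--\eqref{eq:xseq3}.

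The one genuine difficulty is pinning down the \emph{isomorphism type} of each complementary subquotient, not merely its multiset of composition factors. This is exactly where the classification carries the load: the pertinent extension groups among spectral flows of $\VacMod$ are at most one-dimensional by \cref{thm:VacExt} (equivalently by the Ext table of \cref{thm:homexttabs}), so at each stage there is a unique nonsplit module with the prescribed adjacent factors, and the uniqueness clause of \cref{thm:classify} then forces the subquotient to be the claimed $\Bmod{}{}$- or $\Tmod{}{}$-module. With the four $\Bmod{}{}$-sequences secured in this way, the remaining three sequences cost nothing beyond the formal $^\ast$-dualisation described above.
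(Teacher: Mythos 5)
Your proposal is correct and follows essentially the same route as the paper, whose entire proof is a one-line appeal to the Loewy diagrams: you read the required submodules and quotients off those diagrams and invoke the one-dimensional Ext groups together with the uniqueness clause of \cref{thm:classify} to pin down the subquotient isomorphism types, which is precisely the content the paper leaves implicit. Your use of the $^\ast$ functor to deduce the three $\Tmod{}{}$-sequences from the $\Bmod{}{}$-sequences is a tidy organizational shortcut within the same framework, and your dualisation bookkeeping (including the observation that the dual of \eqref{eq:xseq3} is a true but unlisted sequence) is accurate.
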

\begin{proof}
  The above sequences being non-split is intuitively clear from the
  Loewy diagrams of the \(\Bmod{}{}\) and \(\Tmod{}{}\) indecomposables.
\end{proof}

\section{Rigid tensor category}
\label{sec:rigid}

In this section we prove that fusion furnishes category $\catFLE$ with the
structure of a rigid tensor category and define evaluation and coevaluation
maps for the simple projective modules to verify that these modules and maps
satisfy the conditions required for rigidity. We refer readers unfamiliar with
tensor categories or related notions such as rigidity to \cite{EtiTen15}.

\begin{thm} \label{thm:vtencat}
  Category \(\catFLE\) with the tensor structures defined by
  fusion is a braided tensor category.
\end{thm}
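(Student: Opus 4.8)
The plan is to apply the logarithmic tensor category theory of Huang, Lepowsky and Zhang \cite{HuaLog}, whose main theorem equips a suitable category of generalised \(\bgva\)-modules with a vertex tensor category structure, and in particular a braided tensor structure, whose tensor bifunctor is the \(P(w)\)-tensor product, provided that a standard list of standing hypotheses is verified. First I would record that \(\catFLE\) is an abelian category all of whose objects are grading-restricted generalised \(\bgva\)-modules: this is immediate from the definition of \(\catFLE\) as the finite-length extensions of spectral flows of modules in \(\catR\) with real \(J_0\)-weights, since each such module decomposes into finite-dimensional simultaneous generalised \(L_0\)- and \(J_0\)-eigenspaces whose conformal weights are bounded below on each \(J_0\)-eigenspace, and finite generation is built into the definition of \(\catR\). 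The closure of \(\catFLE\) under the contragredient (restricted dual) functor, another of the standing requirements, is exactly the content of \cref{thm:duallist}.

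With these foundational assumptions in hand, the substance of the theorem reduces to producing the tensor product bifunctor together with associativity, unit and braiding isomorphisms satisfying the pentagon and hexagon axioms. In the HLZ framework the unit and braiding data are obtained from the construction itself once the associativity isomorphisms are available, the braiding arising from the natural skew-symmetry of the \(P(w)\)-tensor product; the tensor products so constructed are automatically grading-restricted generalised modules, so that the closure of \(\catFLE\) under fusion follows, with the explicit direct sum decompositions confirming this deferred to \cref{sec:fusion}. The one genuinely hard input, and the main obstacle, is the \emph{convergence and extension property} for products and iterates of intertwining maps, which is precisely what underlies the existence of the associativity isomorphisms in \cite{HuaLog}. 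The usual sufficient conditions for this property, such as \(C_2\)-cofiniteness of the vertex algebra or \(C_1\)-cofiniteness assumptions of the kind used in much of the literature, are unavailable for the bosonic ghosts.

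To supply the convergence and extension property I would therefore invoke \cref{thm:gradconext}, the adaptation of Yang's argument \cite{Yang18} established in \cref{sec:suffconvext}, which yields convergence and extension for the relevant compositions of intertwining maps under grading hypotheses that the objects of \(\catFLE\) are seen to satisfy, without appealing to the cofiniteness conditions that fail here. Feeding \cref{thm:gradconext} into the HLZ machinery then produces the associativity isomorphisms, and verifying the coherence axioms completes the proof that \(\catFLE\) is a braided tensor category. Consequently the principal labour is concentrated in matching the grading structure of \(\catFLE\) against the hypotheses of \cref{thm:gradconext}, rather than in any of the subsequent categorical bookkeeping.
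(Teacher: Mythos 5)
Your overall strategy --- verify the standing hypotheses of the HLZ logarithmic tensor theory \cite{HuaLog}, with the convergence and extension property supplied by \cref{thm:gradconext} --- is the same route the paper takes via \cref{thm:huang}. But there is a genuine gap, and it is located exactly where you wave it away: you write that ``the tensor products so constructed are automatically grading-restricted generalised modules, so that the closure of \(\catFLE\) under fusion follows.'' This is precisely what is \emph{not} automatic for a non-\(C_2\)-cofinite vertex algebra, and it cannot be deferred to the explicit computations of \cref{sec:fusion}, since those computations presuppose that the tensor structure on \(\catFLE\) already exists. The HLZ double dual construction produces a priori only a weak module, and one must prove it lies in \(\catFLE\); closure of this kind genuinely fails for the smaller category \(\catR\), as the paper points out. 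This requirement is formalised as Condition \ref{itm:cycmods} of \cref{thm:huang}, imported from Huang's applicability paper \cite{HuaApp17}: any lower-bounded doubly graded module generated by a compatible functional must be strongly graded and an object of the category. The paper's verification of this is a substantial piece of work, namely \cref{thm:closurecomps} --- a mode-nilpotence analysis based on the Jacobi identity for intertwining operators, a decomposition into four commuting Weyl algebras, and the classification of indecomposables to bound lengths --- combined with a reduction, via right exactness of the \(P(w)\)-tensor product and the existence of enough projectives, to the case where both fusion factors are indecomposable projectives. Nothing in your proposal substitutes for this step.

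A second, related gap: you assert that the hypotheses of \cref{thm:gradconext} ``are seen to be satisfied'' by objects of \(\catFLE\), but those hypotheses are not mere grading conditions. They require each module to be graded \(C_1\)-cofinite as a module over a vertex subalgebra \(\overline{\VOA{V}}\subset\VOA{V}^{(0)}\), and verifying this is the actual content of \cref{thm:conext}: one takes \(\overline{\VOA{V}}=\bgva^{(0)}\cong W_{1+\infty}\cong W_{3,-2}\otimes\mathcal{H}\), identifies the ghost-weight homogeneous spaces of all modules in \(\catFLE\) as singlet-module-times-Fock-space via the free field realisation, and then invokes \(C_1\)-cofiniteness results for the singlet algebra \(W_{3,-2}\). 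Without this identification, the appeal to \cref{thm:gradconext} has no content. (A minor omission in the same spirit: Condition \ref{itm:grading} of \cref{thm:huang} also demands that all logarithmic intertwining operators be grading compatible, which the paper deduces from the Jacobi identity applied to \(J_0\); this is easy, but it must be said, since it is what makes the double grading usable at all.)
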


This theorem follows by verifying certain conditions which were proved to
be sufficient in \cite{HuaLog}, and \cite{HuaApp17}. To this end, we recall
some necessary definitions and results.

\begin{defn}
  \label{def:strong}
  Let \(\VOA{V}\) be a \va{} and let \(\Mmod\) be a module
  over \(\VOA{V}\). Let $A\le B$ be abelian groups.
  \begin{enumerate}
  \item The module $\Mmod$ is called {\em doubly-graded} 
    if both $\Mmod$ and \(\VOA{V}\) are 
    equipped with second gradations, in addition to conformal weight
    $h\in \CC$, which take values in $B$ and $A$, respectively. We will use
    the notations $\Mmod^{(j)}$ and 
    $\Mmod_{[h]}$ to denote the homogeneous spaces with respect to the
    additional grading or generalised conformal weight, respectively, 
    and denote the simultaneous homogeneous space by
    \(\Mmod^{(j)}_{[h]}=\Mmod^{(j)}\cap\Mmod_{[h]}\).
    The action of \(\VOA{V}\) on \(\Mmod\) is required to be
    compatible with the \(A\) and \(B\) gradation, that is,
    \begin{equation}
      v_n \VOA{V}^{(i)} \subset \VOA{V}^{(i+j)}, \quad  v_n \Mmod^{(k)} \subset \Mmod^{(j+k)}, \qquad v \in \VOA{V}^{(j)}, \ n \in \ZZ, \ i,j \in A, \ k \in B,
    \end{equation}
    and
    \begin{equation}
      \wun \in \VOA{V}^{(0)}_{[0]}, \qquad \omega \in \VOA{V}^{(0)}_{[2]},
    \end{equation}
    where $\wun$ is the vacuum vector and $\omega$ is the conformal vector.
  \item The module \(\Mmod\) is called \emph{lower bounded} if it is
      doubly graded and if for each \(j\in B\), \(\Mmod_{[h]}^{(j)}=0\) for
      \(\Re h\) sufficiently negative.
  \item The module \(\Mmod\)
    is called {\em strongly graded with respect to $B$} 
    if it is doubly graded; it is the direct sum of its homogeneous spaces,
    that is, 
    \begin{equation}
      \Mmod=\bigoplus_{\substack{h\in \CC\\ j\in B}}  \Mmod_{[h]}^{(j)},
    \end{equation}
    where the homogeneous spaces
    $\Mmod^{(j)}_{[h]}$ are all finite dimensional; and for fixed
    \(h\) and \(j\), \(\Mmod^{(j)}_{[h+k]}=0\), whenever \(k \in \ZZ \) is
    sufficiently negative.  The \va{} \(\VOA{V}\) is called {\em strongly
      graded with respect to $A$} if it is strongly graded as a module over
    itself. 
\item The module $\Mmod$ is called \emph{discretely strongly graded with
      respect to $B$} if all conformal weights are real and for any \(j\in
  B\), \(h\in \RR\) the space
  \begin{equation}
    \bigoplus_{\substack{\tilde{h}\in \RR\\ \tilde{h}\le h}}\Mmod^{(j)}_{[\tilde{h}]}
  \end{equation}
  is finite dimensional.
  \item For \(j\in B\), let $C_1(\Mmod)^{(j)}=\spn{u_{-h}w\in \Mmod^{(j)}\st u  \in
      \VOA{V}_{[h]}, h>0,\ w \in \Mmod}$. 
    A strongly graded module \(\Mmod\) is called \emph{graded $C_1$-cofinite}
    if $(\Mmod/C_1(\Mmod))^{(j)}$ is finite dimensional for all \(j\in B\).
  \end{enumerate}
\end{defn}

\begin{defn}
  Let \(A\le B\) be abelian groups.
  Let \(\VOA{V}\) be a \va{} graded by \(A\) and let \(\Mmod_1, \Mmod_2\) and
  \(\Mmod_3\) be modules over \(\VOA{V}\), graded by \(B\).
  Denote by \(\Mmod_3\set{x}\sqbrac*{\log x}\) the space of formal power
  series in \(x\) and \(\log x\) with coefficient in \(\Mmod_3\), where the
  exponents of \(x\) can be arbitrary complex numbers and with only finitely
  many \(\log x\) terms.
  A \emph{grading compatible logarithmic intertwining operator of type
  \(\itype{\Mmod_1}{\Mmod_2}{\Mmod_3}\)} is a linear map
  \begin{align}
    \mathcal{Y}:\Mmod_1&\to \Hom(\Mmod_2,\Mmod_3)\set{x}\sqbrac*{\log
                                        x}\nonumber\\
    m_1&\mapsto
                    \mathcal{Y}\brac*{m_1,x}=\sum_{\substack{s\ge0\\t\in
    \CC}}\brac*{m_1}_{t,s}x^{-t-1}\brac*{\log x}^s
  \end{align}
  satisfying the following properties.
  \begin{enumerate}
  \item Truncation: For any \(m_i\in \Mmod_i, i=1,2\), and \(s\ge0\)
	\begin{equation}
	\brac*{m_1}_{t+k,s}m_2 = 0
	\end{equation}   
	for sufficiently large $k \in \ZZ$. 
  \item \(L_{-1}\)-derivation: For any \(m_1\in \Mmod_1\),
    \begin{equation}
      \mathcal{Y}(L_{-1}m_1,x)=\frac{\dd}{\dd x}\mathcal{Y}(m_1,x).
    \end{equation}
  \item Jacobi identity:
    \begin{align}
      x_0^{-1} \delta \brac*{\frac{x_1-x_2}{x_0}} Y(v,x_1) \iop{m_1}{x_2}
      m_2 = & x_0^{-1} \delta \brac*{\frac{-x_2 +x_1}{x_0}} \iop{m_1}{x_2}
              Y(v,x_1) m_2 \nonumber\\
            &
              + x_2^{-1} \delta \brac*{\frac{x_1 - x_0}{x_2}} \iop{Y(v,x_0)
              m_1}{x_2} m_2 ,
                    \label{eq:intjac}
    \end{align}
    where \(Y\) denotes field map encoding the action of \(\VOA{V}\) on either
    \(\Mmod_1,\Mmod_2\) or \(\Mmod_3\) and \(\delta\) denotes the algebraic
    delta distribution, that is the formal power series
    \begin{equation}
      \delta
      \brac*{\frac{y-x}{z}} = \sum_{\substack{r \in \ZZ \\s \ge 0}}
      \binom{r}{s} (-1)^s x^s y^{r-s} z^{-r}.
    \end{equation}
  \item Grading compatibility: For any \(m_i\in \Mmod_i^{(j_i)},\ j_i\in
      B,\ i=1,2\), \(t\in \CC\) and \(s\ge0\) 
	\begin{equation}
	\brac*{m_1}_{t,s}m_2 \in \Mmod_3^{(j_1+j_2)}.
	\end{equation}  
  \end{enumerate}
\end{defn}

\begin{defn}\label{def:HLZddual}
  Let \(A\le B\) be abelian groups.
  Let \(\VOA{V}\) be a \va{} graded by \(A\) and let \(\Mmod_1\) and
  \(\Mmod_2\) be modules over \(\VOA{V}\), graded by \(B\). 
  We define the following properties for functionals \(\psi\in \Hom(\Mmod_1\otimes\Mmod_2,\mathbb{C})\).
  \begin{enumerate}
  \item \(P(w)\)-compatibility:
    \begin{enumerate}
    \item \emph{Lower truncation}: For any \(v\in \VOA{V}\),
      \(v_n\psi=0\), for any sufficiently large \(n\in\ZZ\).
    \item For any \(v\in \VOA{V}\) and \(f\in \CC[t,t^{-1},(t^{-1}-w)^{-1}]\) the identity
      \begin{equation}\label{eq:pzcomp}
        v f(t)\psi= v \iota_+\brac*{f(t)}\psi
      \end{equation}
      holds. Here \(\iota_+\) means expanding about \(t=0\) such that the
      exponents of \(t\) are bounded below and the action of \(\VOA{V}\otimes
      \CC[t,t^{-1},(t^{-1}-w)^{-1}]\) or \(\VOA{V}\otimes
      \CC((t))\) on \(\psi\) is characterised by
      \begin{equation}
        \dpair{vg(t)\psi}{m_1\otimes m_2}=\dpair{\psi}{\iota_+\circ
          T_w\brac*{v^{\opp}g(t^{-1})} m_1\otimes
            m_2}+ \dpair{\psi}{m_1\otimes\iota_+\brac*{v^{\opp} g(t^{-1})}
            m_2},
          \label{eq:ddaction}
        \end{equation}
        where \(m_i\in\Mmod_i\), \(v\in\VOA{V}\), \(g\in
        \CC[t,t^{-1},(t^{-1}-w)^{-1}]\),  
        \(T_w\) replaces
        \(t\) by \(t+w\), \(v^{\opp}=\ee^{t^{-1}L_1}(-t^2)^{L_0}vt^{-2}\), and (assuming \(v\)
        has conformal weight \(h\)) \(vt^n m_i= v_{n-h+1}m_i\). 
      \end{enumerate}
      Denote by \(\comp{\Mmod_1}{\Mmod_2}\) the vector space of all \(P(w)\)-compatible functionals.
  \item \(P(w)\)-local grading restriction:
    \begin{enumerate}
    \item The functional \(\psi\) is a finite sum of vectors that are both
      \(B\)-homogeneous and \(L_0\) generalised
      eigenvectors.
    \item Denote the smallest subspace of
      \(\Hom(\Mmod_1\otimes\Mmod_2,\mathbb{C})\) containing \(\psi\) and 
      stable under \(V\otimes \CC[t,t^{-1}]\) by \(\Mmod_\psi\). Then
      \(\Mmod_\psi\) must satisfy for any \(r\in \CC, b\in B\)
      \begin{equation}
        \dim\brac*{{\Mmod_\psi}_{\sqbrac*{r}}^{\brac*{b}}}< \infty,\qquad \text{and}\qquad \dim\brac*{{\Mmod_\psi}_{\sqbrac*{r+k}}^{\brac*{b}}}=0,
      \end{equation}
      for sufficiently large \(k\in\ZZ\).
    \end{enumerate}
    Denote by \(\lgr{\Mmod_1}{\Mmod_2}\) the vector space of all
    \(P(w)\)-local grading restricted functionals.
  \end{enumerate}
  Define \(\Mmod_1 \ddfuse\Mmod_2=\comp{\Mmod_1}{\Mmod_2}\cap \lgr{\Mmod_1}{\Mmod_2}\).
\end{defn}

\begin{rmk}
The variable $w$ in $P(w)$ denotes the insertion point of the tensor product
constructed in \cite{HuaLog}, where it is usually denoted $z$ and hence the
tensor product is referred to as the $P(z)$-tensor product.
\end{rmk}

\begin{thm}[{Huang-Lepowsky-Zhang \cite[Part IV, Theorem 5.44, 5.45, 5.50]{HuaLog}}]
  Let \(A\le B\) be abelian groups. Let \(\VOA{V}\) be a \va{} graded by \(A\)
  with a choice of module category \(\categ{C}\) which is closed under
  restricted duals
  and let \(\Mmod_1, \Mmod_2\in\categ{C}\) be graded by \(B\). Then
  \(\comp{\Mmod_1}{\Mmod_2}\) and \(\Mmod_1\ddfuse \Mmod_2\) are modules over
  \(\VOA{V}\). Further, if \(\Mmod_1\ddfuse \Mmod_2 \in \categ{C}\), then
  \(\Mmod_1\fuse \Mmod_2\cong \brac*{\Mmod_1\ddfuse \Mmod_2}^\prime.\)
  \label{thm:fusiondef}
\end{thm}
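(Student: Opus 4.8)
The plan is to follow the double-dual strategy: realise all three spaces as subspaces of the full algebraic dual $(\Mmod_1 \otimes \Mmod_2)^*$, equip this dual with the $\VOA{V}$-action implicitly defined by \eqref{eq:ddaction}, and then recover $\Mmod_1 \fuse \Mmod_2$ as a restricted dual. First I would verify that \eqref{eq:ddaction}, which transfers the action of $\VOA{V} \otimes \CC[t,t^{-1},(t^{-1}-w)^{-1}]$ onto a functional by splitting it across the two tensor factors (shifting the first by $T_w$ and using the opposite modes $v^{\opp}$ on both), genuinely defines a representation of this algebra on $(\Mmod_1 \otimes \Mmod_2)^*$. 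This is a direct computation: one expands $v^{\opp} = \ee^{t^{-1}L_1}(-t^2)^{L_0} v t^{-2}$, applies two such operators in succession, and checks that the resulting bracket matches the one obtained from the module structures of $\Mmod_1$ and $\Mmod_2$, using only the commutator form of the Jacobi identity for $\VOA{V}$.

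For Part 1, I would show that $\comp{\Mmod_1}{\Mmod_2}$ is stable under this action. The content of $P(w)$-compatibility is that the rational action of $v f(t)$ agrees with its expansion $\iota_+$; the key observation is that this condition is precisely equivalent to lower truncation together with the Jacobi identity for the candidate module map $v \mapsto Y(v,x)$. Thus membership in $\comp{\Mmod_1}{\Mmod_2}$ both forces stability (compatibility of $v_n \psi$ follows from that of $\psi$ by manipulating the delta-function identities) and simultaneously verifies the module axioms. I would then check that $\lgr{\Mmod_1}{\Mmod_2}$ is preserved: the local grading restriction conditions are finiteness and lower-boundedness of the $B$-graded, generalised $L_0$-eigenspaces of the smallest stable subspace $\Mmod_\psi$, and these persist because the action is $A$-graded and lower-truncated, so applying $v_n$ cannot destroy them. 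Intersecting shows that $\Mmod_1 \ddfuse \Mmod_2$ is a submodule, hence itself a $\VOA{V}$-module.

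For Part 2, I would establish the universal property of $\Mmod_1 \fuse \Mmod_2$ through an adjunction. There is a canonical pairing $P(w)$-intertwining map $\Mmod_1 \otimes \Mmod_2 \to \overline{(\Mmod_1 \ddfuse \Mmod_2)'}$ sending $m_1 \otimes m_2$ to evaluation against functionals. Given any $\Wmod \in \categ{C}$ and any $P(w)$-intertwining map $I \colon \Mmod_1 \otimes \Mmod_2 \to \overline{\Wmod}$, its adjoint $w' \mapsto \langle w', I(-)\rangle$ yields a $\VOA{V}$-module homomorphism $\Wmod' \to \comp{\Mmod_1}{\Mmod_2}$; since $\categ{C}$ is closed under restricted duals, $\Wmod'$ lies in $\categ{C}$ and is grading-restricted, forcing the image into $\lgr{\Mmod_1}{\Mmod_2}$ and hence into $\Mmod_1 \ddfuse \Mmod_2$. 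Assuming $\Mmod_1 \ddfuse \Mmod_2 \in \categ{C}$, dualising once more and using $\Wmod'' \cong \Wmod$ produces a module map $(\Mmod_1 \ddfuse \Mmod_2)' \to \Wmod$, and one checks this correspondence is a natural bijection. This is exactly the defining universal property of the $P(w)$-tensor product, giving $\Mmod_1 \fuse \Mmod_2 \cong (\Mmod_1 \ddfuse \Mmod_2)'$.

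The hard part is the verification underlying Part 1: showing that $P(w)$-compatibility of $\psi$ is preserved under the action and is equivalent to the Jacobi identity for $Y(v,x)$ on functionals. This requires careful bookkeeping with the three-variable delta-function Jacobi identity, the substitution $T_w$, and the opposite vertex operators, and it is where the insertion point $w$ genuinely enters. The grading arguments and the adjunction in Part 2 are comparatively formal once this analytic core is in place.
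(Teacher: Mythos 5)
This theorem is not proved in the paper at all --- it is quoted directly from Huang--Lepowsky--Zhang --- and your proposal reconstructs essentially the same route as the cited HLZ proof: define the action \eqref{eq:ddaction} on functionals, show that the $P(w)$-compatibility and local grading restriction conditions are stable under it so that $\comp{\Mmod_1}{\Mmod_2}$ and $\Mmod_1\ddfuse\Mmod_2$ become $\VOA{V}$-modules (their Theorems 5.44/5.45), and then obtain $\Mmod_1\fuse\Mmod_2\cong\brac*{\Mmod_1\ddfuse\Mmod_2}'$ from the adjunction between $P(w)$-intertwining maps $\Mmod_1\otimes\Mmod_2\to\overline{\Wmod}$ and module maps $\Wmod'\to\Mmod_1\ddfuse\Mmod_2$, using closure of $\categ{C}$ under restricted duals and $\Wmod''\cong\Wmod$ (their Theorem 5.50). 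The only imprecision worth flagging is your opening claim that \eqref{eq:ddaction} defines a representation on all of $\Hom(\Mmod_1\otimes\Mmod_2,\CC)$ --- the Jacobi identity is available only on $P(w)$-compatible functionals, which is precisely why the compatibility condition is imposed --- but since your Part 1 restricts to $\comp{\Mmod_1}{\Mmod_2}$ before invoking any module axioms, this does not damage the argument.
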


In \cite{HuaLog} \(\Mmod_1\ddfuse \Mmod_2\) is originally defined as the image
of all intertwining operators with \(\Mmod_1\) and  \(\Mmod_2\) as factors,
but it is then shown that this is equivalent to the definition given above.
The construction of fusion products through \cref{def:HLZddual} is
sometimes called the HLZ double dual construction. In addition to the primary
reference \cite{HuaLog}, the authors also recommend the survey \cite{KanRid18},
which relates this construction of fusion to others in the literature.

\begin{thm}[{Huang-Lepowsky-Zhang \cite[Part VIII, Theorem 12.15]{HuaLog}, Huang
  \cite[Theorem 3.1]{HuaApp17}}] \label{thm:huang}
  For any \va{} and module category $\categ{C}$ satisfying the
    conditions below, fusion equips category \(\categ{C}\) with the structures
    of an additive braided tensor category.
  \begin{enumerate}
  \item The \va{} and all its modules in \(\categ{C}\) are strongly graded and all logarithmic
    intertwining operators are grading compatible.
    \cite[Part III, Assumption 4.1]{HuaLog}.\label{itm:grading}
  \item $\categ{C}$ is a full subcategory of the category of 
    strongly graded modules and is closed under the contragredient functor and
    under taking finite direct sums 
    \cite[Part IV, Assumption 5.30]{HuaLog}.
    \label{itm:sums}
  \item All objects in $\categ{C}$ have real weights and the non-semisimple part of $L_0$ acts on them nilpotently \cite[Part V, Assumption 7.11]{HuaLog}.\label{itm:real}
  \item  \(\categ{C}\) is closed under images {of module homomorphisms}  \cite[Part VI, Assumption 10.1.7]{HuaLog}.\label{itm:closure}
  \item The convergence and extension properties for either
    products or iterates holds \cite[Part VII, Theorem 11.4]{HuaLog}. \label{itm:prodsits}
  \item For any objects $\Mmod_1, \Mmod_2 \in \categ{C}$, let $\Mmod_{v}$ be
    the doubly graded $\VOA{V}$-module generated 
    by a \(B\)-homogeneous generalised \(L_0\) eigenvector
    $v \in \comp{\Mmod_1}{\Mmod_2}$.
    If $\Mmod_{v}$ is 
lower bounded
    then $\Mmod_{v}$ is 
 strongly graded
    and an object in 
    $\categ{C}$ \cite[Theorem 
    3.1]{HuaApp17}. Here the action of $L_0$ is defined by 
    \eqref{eq:ddaction}, 
    given in \cref{def:HLZddual}. 
    \label{itm:cycmods}
  \end{enumerate}
\end{thm}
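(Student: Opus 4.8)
The plan is to reconstruct the braided tensor category structure on $\categ{C}$ along the lines of \cite{HuaLog}: assemble the tensor bifunctor and unit object, build the associativity and braiding isomorphisms out of products and iterates of logarithmic intertwining operators, and then force the coherence axioms from the uniqueness of analytic continuation. Each of the six hypotheses enters at a specific stage.

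First I would fix the tensor product. For $\Mmod_1,\Mmod_2\in\categ{C}$, set $\Mmod_1\fuse\Mmod_2=\brac*{\Mmod_1\ddfuse\Mmod_2}'$ as in \cref{thm:fusiondef}, with $\Mmod_1\ddfuse\Mmod_2=\comp{\Mmod_1}{\Mmod_2}\cap\lgr{\Mmod_1}{\Mmod_2}$. Hypothesis \ref{itm:sums} (closure under contragredients, and under finite direct sums, giving additivity) makes the dual available, while hypotheses \ref{itm:closure} and \ref{itm:cycmods} guarantee that every cyclic submodule generated by a $B$-homogeneous generalised $L_0$-eigenvector in $\comp{\Mmod_1}{\Mmod_2}$ is lower bounded, hence strongly graded and an object of $\categ{C}$; it follows that $\Mmod_1\ddfuse\Mmod_2$, and therefore $\Mmod_1\fuse\Mmod_2$, lands back in $\categ{C}$. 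Functoriality in each variable is the naturality of this construction under module homomorphisms. The unit object is $\VOA{V}$ itself, with unit isomorphisms supplied by the vacuum and the $L_{-1}$-derivative axiom for intertwining operators; hypothesis \ref{itm:grading} ensures all of this respects the $A$- and $B$-gradings.

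The heart of the argument is the associativity isomorphism, which I would extract from the universal property of $\fuse$: module maps $\Mmod_1\fuse\Mmod_2\to\Mmod_3$ correspond to logarithmic intertwining operators of type $\itype{\Mmod_1}{\Mmod_2}{\Mmod_3}$. Under this dictionary the two iterated products $\brac*{\Mmod_1\fuse\Mmod_2}\fuse\Mmod_3$ and $\Mmod_1\fuse\brac*{\Mmod_2\fuse\Mmod_3}$ correspond, respectively, to products and to iterates of pairs of intertwining operators. The associator is then the isomorphism realising the identity that rewrites a product as a convergent sum of iterates, and conversely. This is precisely where hypothesis \ref{itm:prodsits}, the convergence and extension property, is indispensable: it guarantees that the formal series defining these composites converge on the appropriate domains of the insertion points and extend to the same multivalued four-point functions, so that the two bracketings are genuinely identified. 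Hypothesis \ref{itm:real} (real weights, $L_0$ nilpotent on its non-semisimple part) keeps the logarithmic part under control, ensuring only finitely many powers of $\log$ occur and the expansions are honest iterated series.

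Finally I would produce the braiding from the skew-symmetry of intertwining operators, realised geometrically as the monodromy $w\mapsto \ee^{\pm\pi\ii}w$ of the insertion point, which is well-defined on the finite logarithmic expansions thanks to hypotheses \ref{itm:grading} and \ref{itm:real}. The coherence conditions would then be checked by analytic continuation: the pentagon axiom amounts to the equality of two continuations of a single five-point correlation function, and the hexagon axioms to the compatibility of the monodromy with the associator on four-point functions, both forced by uniqueness of analytic continuation. The main obstacle is hypothesis \ref{itm:prodsits}: establishing that products and iterates of logarithmic intertwining operators converge and continue to common analytic functions is the deep analytic input underlying the entire program, and it is the single step from which all the algebraic coherence is subsequently automatic.
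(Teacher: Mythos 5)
The paper does not actually prove this statement: \cref{thm:huang} is imported verbatim from Huang--Lepowsky--Zhang \cite[Part VIII, Theorem 12.15]{HuaLog} and Huang \cite[Theorem 3.1]{HuaApp17}, and the paper's own work lies entirely in \emph{verifying} the six hypotheses for \(\catFLE\) (the proof of \cref{thm:vtencat} together with \cref{thm:conext,thm:closurecomps,thm:gradconext}). So your proposal is not in competition with anything internal to the paper; judged as a summary of the cited proof's architecture, it is essentially faithful: the tensor product via the double dual \(\brac*{\Mmod_1\ddfuse\Mmod_2}'\) of \cref{thm:fusiondef}, associativity built from the convergence and extension properties for products and iterates, braiding from moving the insertion point, and coherence from uniqueness of analytic continuation is indeed how the HLZ program proceeds.

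One genuine slip in your account of the hypotheses: you write that conditions \ref{itm:closure} and \ref{itm:cycmods} ``guarantee that every cyclic submodule generated by a \(B\)-homogeneous generalised \(L_0\)-eigenvector in \(\comp{\Mmod_1}{\Mmod_2}\) is lower bounded,'' but this reverses the logic of condition \ref{itm:cycmods}, which is conditional: lower-boundedness of \(\Mmod_v\) is its \emph{antecedent}, not its conclusion. Lower-boundedness comes instead from the \(P(w)\)-local grading restriction in \cref{def:HLZddual} for those functionals actually belonging to \(\Mmod_1\ddfuse\Mmod_2\), and condition \ref{itm:cycmods} then upgrades lower bounded to strongly graded and membership in \(\categ{C}\). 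This distinction matters in practice: it is exactly the point the paper must work at when verifying condition \ref{itm:cycmods} for \(\catFLE\) in the proof of \cref{thm:vtencat}, where the hard step is showing that a cyclic module \(\Mmod_\nu\) generated by a compatible functional lands in \(\catFLE\) (via \cref{thm:closurecomps} and \cite[Part IV, Proposition 5.24]{HuaLog}) rather than assuming it. Aside from this misattribution, which concerns how the hypothesis is consumed rather than the structure of the HLZ proof itself, your outline matches the cited argument.
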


Conditions \ref{itm:grading} -- \ref{itm:closure} of \cref{thm:huang} hold by
construction for category \(\catFLE\), so all that remains is verifying Conditions
\ref{itm:prodsits} and \ref{itm:cycmods}.

\begin{thm}
    \label{thm:gradconext}
  Let \(A\le B\) be abelian groups, let \(\VOA{V}\) be a doubly
  \(A\)-graded \voa{} and let \(\overline{\VOA{V}}\) be a vertex subalgebra
  of \(\VOA{V}^{(0)}\). Further, let \(\Wmod_i,\ i=0,1,2,3,4\) be doubly
  \(B\)-graded \(\VOA{V}\)-modules. Finally let \(\mathcal{Y}_1\),
  \(\mathcal{Y}_2\), \(\mathcal{Y}_3\) and \(\mathcal{Y}_4\) be logarithmic
  grading compatible intertwining operators
  of types \(\binom{\Wmod_0}{\Wmod_1,\ \Wmod_4}\), \(\binom{\Wmod_4}{\Wmod_2,\ \Wmod_3}\),
  \(\binom{\Wmod_0}{\Wmod_4,\ \Wmod_3}\) and \(\binom{\Wmod_4}{\Wmod_1,\ \Wmod_2}\)
  respectively.
  If the modules  \(\Wmod_i,\ i=0,1,2,3\) (note \(i=4\) is excluded) are
  discretely strongly graded, and graded \(C_1\)-cofinite as
  \(\overline{\VOA{V}}\)-modules,
  then \(\mathcal{Y}_1\), \(\mathcal{Y}_2\) satisfy the convergence and
  extension property for products and \(\mathcal{Y}_3\), \(\mathcal{Y}_4\)
  satisfy the convergence and extension property for iterates.
\end{thm}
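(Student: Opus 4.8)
The plan is to follow the strategy of Yang \cite{Yang18}, itself an adaptation of Huang's method for extracting convergence and extension from $C_1$-cofiniteness, and to reduce the whole statement to the existence of systems of ordinary differential equations with regular singular points satisfied by the relevant matrix coefficients. Recall that the convergence and extension property for products of $\mathcal{Y}_1$ and $\mathcal{Y}_2$ concerns the series obtained by expanding
\[
  \langle w_0', \mathcal{Y}_1(w_1, z_1)\mathcal{Y}_2(w_2, z_2) w_3\rangle, \qquad w_0'\in \Wmod_0',\ w_i\in\Wmod_i,
\]
in the region \(|z_1|>|z_2|>0\): one must show these converge absolutely and extend, as \(z_1\to z_2\), to multivalued analytic functions with the prescribed pole-and-logarithm singularity structure. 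The property for iterates of \(\mathcal{Y}_3\) and \(\mathcal{Y}_4\) is the analogous statement for \(\langle w_0', \mathcal{Y}_3(\mathcal{Y}_4(w_1, z_1-z_2)w_2, z_2) w_3\rangle\) in the region \(|z_2|>|z_1-z_2|>0\). Since the local solutions of an ODE with regular singular points automatically have such convergent power-times-logarithm expansions, both properties follow once the equations are in hand, so the heart of the proof is their construction.

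To build the differential equations I would regard each correlation function as an element of a module over \(R=\CC[z_1^{\pm1},z_2^{\pm1},(z_1-z_2)^{-1}]\), generated by the correlation functions obtained by inserting elements of the subalgebra \(\overline{\VOA{V}}\) into the various slots. The crucial use of the hypotheses is that graded \(C_1\)-cofiniteness of the \emph{external} modules \(\Wmod_0,\Wmod_1,\Wmod_2,\Wmod_3\) as \(\overline{\VOA{V}}\)-modules, together with the \(L_{-1}\)-derivative property and the Jacobi identity for intertwining operators, forces this \(R\)-module to be finitely generated. Differentiating with respect to \(z_1\) (and \(z_2\)) then expresses sufficiently high derivatives of a matrix coefficient in terms of lower ones over \(R\), which is exactly a system of ODEs. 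The reason one works with \(\overline{\VOA{V}}\) rather than \(\VOA{V}\) is precisely to remove the standing assumption that \(\VOA{V}\) itself be cofinite: for the bosonic ghosts \(\VOA{V}\) is not \(C_1\)-cofinite, but the modules in \(\catFLE\) are graded \(C_1\)-cofinite over a well-behaved subalgebra, and only insertions from that subalgebra are needed to drive the recursion.

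The reason \(\Wmod_4\) is excluded from the hypotheses is structural: the intermediate module enters only as the target of \(\mathcal{Y}_2\) and the source of \(\mathcal{Y}_1\), whereas the finiteness that terminates the recursion comes entirely from the four external legs. No cofiniteness of \(\Wmod_4\) is ever invoked, which is the point that makes the theorem applicable in situations where the fusion intermediate is large or unknown. The discretely strongly graded hypothesis then supplies the analytic input: real conformal weights bounded below on each \(B\)-graded piece guarantee that the indicial roots of the resulting equations are real and bounded below, so that the formal Frobenius solutions are genuine convergent expansions of the shape \(\sum_i z^{r_i}(\log z)^{s_i}(\text{holomorphic})\), matching the form demanded by the extension property. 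One checks that the singular points occur only at the expected loci (where two insertion points collide or run off to infinity) and that each is regular singular; the iterate statement for \(\mathcal{Y}_3,\mathcal{Y}_4\) then follows from the same equations expanded about the diagonal singular point rather than about the product region.

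I expect the main obstacle to be not the formal derivation of the equations but faithfully carrying through Yang's argument after stripping away its ambient module-category assumptions. Concretely, one must verify that the finite-generation step and the regular-singularity estimates survive when the modules are assumed only discretely strongly graded and graded \(C_1\)-cofinite over \(\overline{\VOA{V}}\), and that the logarithmic terms carried by the intertwining operators do not spoil regularity of the singular points. Keeping the two gradings (the \(A\)-grading of \(\VOA{V}\) and the \(B\)-grading of the modules) aligned through every step of the recursion, so that grading compatibility is preserved and the weight bookkeeping controlling the indicial roots remains valid, is the delicate part of the argument.
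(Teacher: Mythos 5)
Your proposal is correct and follows essentially the same route as the paper: the paper's appendix likewise reproduces Yang's argument, forming the Noetherian $R$-module of formal coefficients modulo Jacobi-identity relations, using graded $C_1$-cofiniteness over $\overline{\VOA{V}}$ of the four external modules (never $\Wmod_4$) to get finite generation and hence differential equations via the $L_{-1}$-derivative property, and then verifying regularity of the singular points at $z_1=z_2$ and $(z_1-z_2)/z_2=0$ before invoking the theory of regular singular points (with the logarithmic/Jordan-block caveat handled as in Huang--Lepowsky--Zhang). The subtleties you flag at the end — finite generation and regularity surviving without Yang's categorical assumptions, and the double grading bookkeeping — are exactly the points the paper's reworking addresses.
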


The above theorem follows from the proof of \cite[Theorem 7.2]{Yang18},
however, in \cite{Yang18} some assumptions are made on the category of
strongly graded modules (see \cite[Assumption 7.1, Part 3]{Yang18}) which do not
hold for \(\bgva\). Fortunately, the proof of \cref{thm:gradconext} does not
depend at all on any categorical considerations or even on the details of the
intertwining operators \(\mathcal{Y}_i\) beyond their types. It merely depends on certain
finiteness properties of the modules \(\Wmod_i\). We reproduce the proof of Yang
in \cref{sec:suffconvext}, with some minor tweaks to the arguments, to show that the
conclusion of \cref{thm:gradconext} holds, without making any assumptions on the
category of all strongly graded modules.

\begin{lem} \label{thm:conext}
  The convergence and extension properties for products and iterates holds for $\catFLE$.
\end{lem}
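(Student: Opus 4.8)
The plan is to deduce this from \cref{thm:gradconext} in combination with \cref{thm:huang}. Conditions \ref{itm:grading}--\ref{itm:closure} of \cref{thm:huang} already hold for \(\catFLE\) by construction, so this lemma is precisely the content of Condition \ref{itm:prodsits}. By \cref{thm:gradconext} it therefore suffices to exhibit a vertex subalgebra \(\finite{\bgva}\subseteq\bgva^{(0)}\) such that every module occurring as one of the \(\Wmod_i\) is discretely strongly graded and graded \(C_1\)-cofinite as a \(\finite{\bgva}\)-module, where the grading group \(B\) is the group of ghost weights (\(J_0\)-eigenvalues) and \(A=\ZZ\). I would take \(\finite{\bgva}\) to be the subalgebra generated by the Heisenberg field \(J\) and the conformal vector \(\omega\).

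The main obstacle is the graded \(C_1\)-cofiniteness, and it is exactly here that the choice of \(\finite{\bgva}\) matters: since \(\bgva\) is not \(C_2\)-cofinite, the modules of \(\catFLE\) are not \(C_1\)-cofinite over the rank \(1\) Heisenberg generated by \(J\) alone, because fixing a ghost weight leaves an infinite-dimensional multiplicity space of Heisenberg highest-weight vectors. Adjoining \(\omega\) repairs this. Using the first free field realisation \(\phi_1\) of \cref{thm:ffr} together with the field identifications (cf.\ \eqref{eq:fieldids}, dropping the \(\scr{1}\)-twist correction) one computes \(\phi_1(J)=-\theta\) and \(\phi_1(\omega)=\tfrac12\normord{\psi\psi}-\tfrac12\normord{\theta\theta}-\tfrac12\partial(\psi-\theta)\). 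Since the \(\theta\)-part is already generated by \(\phi_1(J)\), the image \(\phi_1(\finite{\bgva})\) is the tensor product of the rank \(1\) Heisenberg vertex algebra in \(\theta\) with the Virasoro vertex algebra generated by the background-charge stress tensor \(\tfrac12\normord{\psi\psi}-\tfrac12\partial\psi\) in \(\psi\).

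Next I would verify the cofiniteness on the lattice modules and then propagate. Under \(\phi_1\) (and \(\phi_2\)) each \(\LFock{\Lambda}\) of \cref{thm:ffr} restricts, over the rank \(2\) Heisenberg \(\hva\), to a sum of Fock modules \(\Fock{\lambda}=\Fock{\lambda_\theta}\otimes\Fock{\lambda_\psi}\); because distinct \(K\)-coset representatives carry distinct \(\theta\)-charges, fixing a ghost weight \(j\in B\) singles out exactly one such \(\Fock{\lambda}\). A Heisenberg Fock module is \(C_1\)-cofinite over its Heisenberg algebra, while the \(\psi\)-Fock module \(\Fock{\lambda_\psi}\) is cyclic over the \(\psi\)-Virasoro algebra — its graded character coincides with that of a Virasoro Verma module, so it is generated by its ground state and hence \(C_1\)-cofinite. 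As \(C_1\)-cofiniteness (equivalently, graded finite generation) passes to tensor products, each graded piece is \(C_1\)-cofinite over \(\phi_1(\finite{\bgva})\); this shows that the modules \(\sfmod{\ell}{\Typ{\lambda}}\) and \(\sfmod{\ell}{\Typ{0}^\pm}\) are graded \(C_1\)-cofinite. Every remaining object of \(\catFLE\) is built from these: \(\sfmod{\ell}{\VacMod}\) is a quotient of \(\sfmod{\ell}{\Typ{0}^-}\) by \eqref{eq:Weqseq2}, and the indecomposables \(\Stag\), \(\Bmod{m}{n}\), \(\Tmod{m}{n}\) are finite extensions with these composition factors. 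Since graded finite generation is inherited by quotients and by finite extensions, all of \(\catFLE\) is graded \(C_1\)-cofinite over \(\finite{\bgva}\).

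Finally, discrete strong gradedness is immediate: every module in \(\catFLE\) has real conformal weights, and on each fixed ghost-weight sector \(\Mmod^{(j)}\) the conformal weights are bounded below with finite-dimensional weight spaces, so each \(\bigoplus_{\tilde h\le h}\Mmod^{(j)}_{[\tilde h]}\) is finite dimensional. With both hypotheses of \cref{thm:gradconext} verified for all modules that can appear as the \(\Wmod_i\), the convergence and extension properties for products and iterates follow. I expect the genuinely delicate point to be the cyclicity (hence \(C_1\)-cofiniteness) of the \(\psi\)-Fock module over the background-charge Virasoro algebra, that is, confirming that \(\omega\) supplies enough of the \(\psi\)-direction to collapse the multiplicity space left behind by \(J\) alone.
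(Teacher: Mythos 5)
Your reduction of the lemma to \cref{thm:gradconext} is exactly the paper's strategy, and your discrete strong gradedness argument is fine; the gap is in your choice of subalgebra and in the claim you yourself flag as the delicate point. Taking \(\finite{\VOA{V}}\) to be generated by \(J\) and \(\omega\) gives, under the free field realisation, \(\mathcal{H}_\theta\otimes\mathrm{Vir}\), where \(\mathrm{Vir}\) is the \(c=-2\) background-charge Virasoro algebra inside the \(\psi\)-Heisenberg. Your justification that the \(\psi\)-Fock modules are cyclic over \(\mathrm{Vir}\) --- equality of graded characters with a Verma module --- is a non sequitur: characters do not detect module structure, and the claim is false precisely at the integer charges that your base case requires (the lattice modules \(\LFock{\ell\psi}\cong\sfmod{\ell+1}{\Typ{0}^-}\), hence \(\Typ{0}^{\pm}\), \(\VacMod\), \(\Stag\) and all of the \(\Bmod{m}{n}\), \(\Tmod{m}{n}\)). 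Concretely, for \(T_\psi=\tfrac{1}{2}\normord{\psi\psi}-\tfrac{1}{2}\partial\psi\) one computes \(L_{-1}\ket{\lambda}=\lambda\,\psi_{-1}\ket{\lambda}\), so \(L_{-1}\ket{0}=0\): the Virasoro submodule of the charge-zero \(\psi\)-Fock space generated by its ground state has trivial weight-one subspace and misses \(\psi_{-1}\ket{0}\), so this Fock module is not cyclic over \(\mathrm{Vir}\).

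Moreover, no repaired cyclicity argument can rescue this choice of subalgebra, because graded \(C_1\)-cofiniteness genuinely fails over \(\mathrm{Vir}\otimes\mathcal{H}_\theta\). The ghost-weight-zero space of the vacuum module is \(\VacMod^{(0)}=\bgva^{(0)}\cong W_{3,-2}\otimes\mathcal{H}\), and the singlet algebra \(W_{3,-2}\) decomposes as a Virasoro module into an \emph{infinite} direct sum of irreducible highest weight modules, with primaries at conformal weights \(0,3,10,21,\dots\) (the weight-\(3\) singlet generator being the first non-trivial one). The operators spanning \(C_1\) preserve each summand and strictly increase conformal weight, so they never reach the primary generators; hence \(W_{3,-2}/C_1(W_{3,-2})\) is infinite dimensional and \(\VacMod^{(0)}\) is not \(C_1\)-cofinite over your subalgebra. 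This is exactly why the paper takes \(\overline{\VOA{V}}=\bgva^{(0)}\cong W_{1+\infty}\cong W_{3,-2}\otimes\mathcal{H}\), that is, adjoins all of the higher-weight generators \(\normord{\beta\,\partial^{n}\gamma}\) rather than just \(\omega\): over the singlet, all finite-length modules are \(C_1\)-cofinite \cite{CreLog16}, and the homogeneous spaces of objects of \(\catFLE\) are identified as finite-length \(W_{3,-2}\otimes\mathcal{H}\)-modules using \cite{Wan98,AdaW01,CreFal13}. If you replace your subalgebra by \(\bgva^{(0)}\) and keep your propagation step (quotients and finite extensions of the lattice-module case), you recover the paper's proof.
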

\begin{proof}
  If, in the assumptions of \cref{thm:gradconext}, we set $\VOA{V} = \bgva$ and
  grade by ghost weight, so that $A = \ZZ$, then the modules of \(\catFLE\)
  are graded by \(B=\RR\). We further choose $\overline{\VOA{V}}
  = \bgva^{(0)}$, that is, the vertex subalgebra given by the ghost weight
  0 subspace of \(\bgva\). The lemma then follows by verifying that all
  modules in \(\catFLE\) are discretely strongly graded and graded
  \(C_1\)-cofinite as modules over \(\overline{\VOA{V}}\).
  
  All modules in \(\catFLE\) are discretely strongly graded by ghost 
  weight $j \in \RR$. To prove this, we need to check that the
  simultaneous ghost and conformal weight spaces are finite
  dimensional and that every ghost weight homogeneous space has lower bounded
  conformal weights. The simultaneous ghost and conformal weight spaces of
  objects in \(\catR\) and therefore also those of
  \(\sfmod{\ell}{\catR}\) are finite dimensional by
  construction. Thus, since the objects of \(\catFLE\) are finite
  length extensions of those in \(\sfmod{\ell}{\catR}\), the objects
  of \(\catFLE\) also have finite dimensional simultaneous ghost and
  conformal weight spaces. Similarly we have that the objects in
  $\catFLE$ are graded lower bounded and therefore discretely strongly graded. 
  
  Next we need to decompose objects of $\catFLE$ as
  $\overline{\VOA{V}}$-modules,. It is known that $\overline{\VOA{V}}$ is
  generated by $\set{\normord{\beta(z) \brac*{ \partial^n \gamma(z)}}, \ n
    \ge 0}$ and is isomorphic to $W_{1+\infty} \cong W_{3,-2} \otimes
  \mathcal{H}$ where $W_{3,-2}$ is the singlet algebra at $c=-2$ and
  $\mathcal{H}$ is a rank 1 Heisenberg algebra \cite{Wan98, Lin09}. Note
    that the conformal vector of \(W_{1+\infty}\) is usually chosen so as
    to have a central charge of \(1\). Since we require \(\overline{\VOA{V}}\) to
    embed conformally into \(\bgva\), that is, to have the same conformal
    vector as \(\bgva\) and the central charge of \(\bgva\) is \(2\), we
    choose conformal vector of our Heisenberg algebra \(\mathcal{H}\) so
    that its central charge is 4 (the conformal structure of \(W_{3,-2}\)
      is unique). Fortunately, this does not complicate
    matters, as the simple modules over \(\mathcal{H}\) are just Fock
    spaces regardless of the central charge or conformal vector.
    The tensor factors of \(W_{1+\infty}\) decompose nicely
    with respect to the free field realisation of
    \cref{thm:ffr}.\ref{itm:ffr2}. The Heisenberg algebra $\mathcal{H}$ is
    generated by $\theta(z)$ and the singlet algebra $W_{3,-2}$ is a vertex subalgebra
    of the Heisenberg algebra generated by $\psi(z)$.  
  
 We denote Fock spaces over the rank 1 Heisenberg algebras generated by
  \(\psi\) and \(\theta\), respectively, by the same symbol \(\Fock{\mu}\),
  where, the index \(\mu\in\CC\) indicates the respective eigenvalues of the
  zero modes \(\psi_0\) and \(\theta_0\).
  All simple $\overline{\VOA{V}} \cong W_{1 + \infty}$ modules can be
    constructed via its free field realisation as 
  $\Vmod{\pair{\lambda}{\psi}} \otimes \Fock{\pair{\lambda}{\theta}}$
  \cite[Corollary 6.1]{AdaW01}, where $\Vmod{\pair{\lambda}{\psi}}$, as a $W_{3,-2}$-module, is the simple
  quotient of the submodule of $\Fock{\pair{\lambda}{\psi}}$ generated by the
  highest weight vector. 
  The homogeneous space $\brac*{\sfmod{\ell}{\VacMod}}^{(j)}$ is simple,
  as a $\overline{\VOA{V}}$-module \cite[Lemma 4.1]{Wan98}, see also
  \cite{Kac95, Mat94}. 
 Recall from \cref{thm:ffr}.\ref{itm:ffr2} that with \(K=\zspn{\psi,\theta}\)
  and \(\Lambda\in L/K\), we can construct the simple projective \(\bgva\)-modules as \({\sfmod{\pair{\Lambda}{\psi+\theta}}{\Typ{\pair{\Lambda}{\psi}}}} \cong {\LFock{\Lambda}}\).
  To identify the homogeneous space $\brac*{\sfmod{\pair{\Lambda}{\psi+\theta}}{\Typ{\pair{\Lambda}{\psi}}}}^{(j)}$ as a $\overline{\VOA{V}}$-module,
    we use the fact that $J(z) = - \theta(z)$, thus the \(\RR\)-grading on
      \(\LFock{\Lambda}\) is given by the eigenvalue of
      \(-\theta_0\). Therefore, for \(j\in\RR\),
  \begin{equation}
  \brac*{\sfmod{\pair{\Lambda}{\psi+\theta}}{\Typ{\pair{\Lambda}{\psi}}}}^{(j)}
  \cong \LFock{\Lambda}^{(j)}\cong \brac*{\bigoplus_{\lambda \in \Lambda}
    \Fock{\pair{\lambda}{\psi}} \otimes \Fock{\pair{\lambda}{\theta}}}^{(j)} =
  \begin{dcases}
    \Fock{\pair{\Lambda}{\psi+\theta}+j} \otimes \Fock{-j}, & j \in
    \pair{\Lambda}{\psi},\\
    0,& j \notin \pair{\Lambda}{\psi}.
  \end{dcases}
  \end{equation}
For $\pair{\Lambda}{\psi+\theta}+j \notin \ZZ$,
$\Fock{\pair{\Lambda}{\psi+\theta}+j}$  
is irreducible as a
  $\mathcal{W}_{3,-2}$ module, by \cite[Section 3.2]{CreFal13}, see also
  \cite[Section 5]{Ada07}. Thus,
  $\brac*{\sfmod{\pair{\Lambda}{\psi+\theta}}{\Typ{\pair{\Lambda}{\psi}}}}^{(j)}
  \cong \Fock{\pair{\Lambda}{\psi+\theta}+j}
  \otimes \Fock{-j}=\Vmod{\pair{\Lambda}{\psi+\theta}+j} \otimes \Fock{-j}
  $. The finite length modules of $W_{3,-2}$ are all $C_1$-cofinite
  \cite[Corollary 14]{CreLog16}, as are $\mathcal{H}$-modules, since Fock
  spaces have \(C_1\)-codimension 1.
  Therefore all $\overline{\VOA{V}}$-modules appearing as the homogeneous spaces of modules
  in \(\catFLE\) are $C_1$-cofinite and the lemma follows. 
\end{proof}

\begin{lem} \label{thm:closurecomps}
  Let \(\Mmod_1,\Mmod_2\) be modules in \(\catFLE\), let \(\Wmod\) be an
  indecomposable smooth (or weak)
  \(\bgva\) module
  and let \(\mathcal{Y}\) be a surjective logarithmic intertwining operator of
  type \(\binom{\Wmod}{\Mmod_1,\ \Mmod_2}\).
  \begin{enumerate}
  \item The logarithmic intertwining operator $\mathcal{Y}$ is grading
          compatible and the module \(\Wmod\) is doubly
          graded. \label{itm:gradcomp}
      \item  If \(\Mmod_1\in \sfmod{k}{\catR} ,\Mmod_2 \in \sfmod{\ell}{\catR}\),
    then 
    \(\Wmod \in \catFLE\)
    and \(\Wmod\) has composition factors only in $\sfmod{k+\ell}{\catR}$ and
     $\sfmod{k+\ell-1}{\catR}$.
    \label{itm:projfuscalc}
  \item If \(\Mmod_1\) has composition factors only in \(\sfmod{k}\catR\) and
    \(\sfmod{k-1}{\catR}\), and has composition factors only in
    \(\sfmod{\ell}\catR\) and 
    \(\sfmod{\ell-1}{\catR}\), 
    \(\Wmod\in \catFLE\) and \(\Wmod\)
    has composition factors only in
    \(\sfmod{k+\ell+i}{\catR}, -3\le i\le 0\). 
    \label{itm:closureprt}
  \end{enumerate}
\end{lem}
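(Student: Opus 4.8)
The plan is to establish the three parts in order, with Part~\ref{itm:gradcomp} supplying the two gradings, Part~\ref{itm:projfuscalc} doing the real work, and Part~\ref{itm:closureprt} following by an induction on length. For Part~\ref{itm:gradcomp} I would use that $\bgva$ is $\ZZ$-graded by ghost weight (the eigenvalue of $J_0$), playing the role of $A$, while the modules of $\catFLE$ are $\RR$-graded by ghost weight, playing the role of $B$. Extracting the appropriate components of the Jacobi identity \eqref{eq:intjac} with $v=J$ and with $v=\omega$ yields the commutation relations
\begin{equation}
  \comm{J_0}{\mathcal{Y}(m_1,x)} = \mathcal{Y}(J_0 m_1,x), \qquad
  \comm{L_0}{\mathcal{Y}(m_1,x)} = \mathcal{Y}(L_0 m_1,x) + x\frac{\dd}{\dd x}\mathcal{Y}(m_1,x),
\end{equation}
the second also invoking the $L_{-1}$-derivation property. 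The first relation shows that $m_1\in\Mmod_1^{(j_1)}$ and $m_2\in\Mmod_2^{(j_2)}$ force every coefficient of $\mathcal{Y}(m_1,x)m_2$ into $\Wmod^{(j_1+j_2)}$, which is exactly grading compatibility and in particular gives $\Wmod$ real ghost weights. As $\mathcal{Y}$ is surjective and $J_0$ acts semisimply on $\Mmod_1,\Mmod_2$, the vectors $\mathcal{Y}(m_1,x)m_2$ with $J_0$-homogeneous $m_1,m_2$ span $\Wmod$ and are $J_0$-eigenvectors, so $J_0$ acts semisimply on $\Wmod$; the second relation similarly transports the generalised $L_0$-eigenspace decomposition from $\Mmod_1,\Mmod_2$ to $\Wmod$. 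Hence $\Wmod$ is doubly graded.

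For Part~\ref{itm:projfuscalc} I would first reduce to $k=\ell=0$: spectral flow acts as an exact invertible twist under which intertwining operators transform covariantly with the ghost gradings adding (the mechanism behind \cref{thm:specflow}), so an operator of type $\itype{\sfmod{k}{\Mmod_1'}}{\sfmod{\ell}{\Mmod_2'}}{\Wmod}$ corresponds to one of type $\itype{\Mmod_1'}{\Mmod_2'}{\sfmod{-k-\ell}{\Wmod}}$ with $\Mmod_1',\Mmod_2'\in\catR$, and proving the result for the latter and applying $\sfmod{k+\ell}$ recovers the former. Next I would reduce to $\Mmod_1,\Mmod_2$ simple: restricting $\mathcal{Y}$ to a submodule of $\Mmod_1$ produces an intertwining operator onto a submodule of $\Wmod$, and passing to quotients realises every composition factor of $\Wmod$ inside a subquotient carrying a surjective intertwining operator one of whose inputs is a composition factor of $\Mmod_1$; iterating on both inputs leaves the cases $\Mmod_1,\Mmod_2\in\{\VacMod,\sfmod{-1}{\VacMod},\Typ{\lambda}\}$. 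The organising observation is that a simple module of $\catFLE$ lies in $\catR\cup\sfmod{-1}{\catR}$ precisely when $\gamma_n$ for $n\ge1$ and $\beta_n$ for $n\ge2$ act locally nilpotently on it, which one reads off from the spectral-flow-twisted locally nilpotent subalgebras of the simple modules classified in \cref{thm:simclass}. It therefore suffices to prove that these modes act locally nilpotently on all of $\Wmod$.

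To see the local nilpotency I would track both gradings at once: for $J_0$- and $L_0$-homogeneous $w\in\Wmod$ of ghost weight $j$ and conformal weight $h$, the vector $\gamma_n^{\,r}w$ sits at ghost weight $j-r$ and conformal weight $h-rn$, and $\beta_n^{\,r}w$ at ghost weight $j+r$ and conformal weight $h-rn$. The crux is to show that the least conformal weight $H_{\Wmod}(j')$ present in $\Wmod^{(j')}$ is finite for each $j'$ and that its slope is $\le 0$ for $j'\ll 0$ and $\ge -1$ for $j'\gg 0$. Granting this, the rays traced by $\gamma_n^{\,r}w$ (for $n\ge1$) and by $\beta_n^{\,r}w$ (for $n\ge2$) eventually fall strictly below $H_{\Wmod}$, forcing the vectors to vanish; together with the finiteness inherited from $\Mmod_1,\Mmod_2$ this also bounds the length, so $\Wmod\in\catFLE$. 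Finally, for Part~\ref{itm:closureprt} I would induct: writing $\Mmod_1,\Mmod_2$ as finite extensions with factors in $\{\sfmod{k}{\catR},\sfmod{k-1}{\catR}\}$ and $\{\sfmod{\ell}{\catR},\sfmod{\ell-1}{\catR}\}$ respectively and applying the subquotient argument, the factors of $\Wmod$ come from the four pairings, each sent by Part~\ref{itm:projfuscalc} into $\sfmod{a+b}{\catR}\cup\sfmod{a+b-1}{\catR}$; the union over $(a,b)\in\{k,k-1\}\times\{\ell,\ell-1\}$ is $\bigcup_{-3\le i\le0}\sfmod{k+\ell+i}{\catR}$, as required.

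I expect the genuine difficulty to be the conformal weight estimate underlying Part~\ref{itm:projfuscalc}. Because $\Wmod$ need not be lower bounded as a whole (for instance $\Typ{\lambda}\fuse\Typ{\mu}$ acquires a summand in $\sfmod{-1}{\catR}$, whose conformal weights are unbounded below), the slopes of $H_{\Wmod}$ cannot be read off from a single minimal conformal weight; instead one must control the largest power of $x$ that $\mathcal{Y}(m_1,x)m_2$ can produce at each total ghost weight. The cleanest route is to compute this leading-order behaviour through the free field realisation of \cref{thm:ffr}, where the simple modules of $\catR$ become lattice Fock modules and the leading conformal weights reduce to an explicit quadratic computation on the rank-$2$ lattice.
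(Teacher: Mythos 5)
Your Part (1) and your d\'evissage deducing Part (3) from Part (2) are sound and broadly parallel to the paper (the paper argues in the reverse order, proving Part (3) directly and noting Part (2) is a simplified version, which is immaterial). The genuine gap is in the engine you propose for Part (2). Your plan is: establish that each ghost weight space \(\Wmod^{(j')}\) has conformal weights bounded below, with asymptotic slope estimates on \(j'\mapsto H_{\Wmod}(j')\); deduce local nilpotence of \(\gamma_n\) (\(n\ge1\)) and \(\beta_n\) (\(n\ge2\)) by the ray argument; then invoke your ``organising observation''. But you supply no valid mechanism for the first step. For an \emph{arbitrary} surjective logarithmic intertwining operator into an \emph{arbitrary} indecomposable weak module, the top power of \(x\) that \(\iop{m_1}{x}m_2\) can produce at a given ghost weight is not determined by \(\Mmod_1\) and \(\Mmod_2\): bounding it is literally equivalent to the lower-boundedness of conformal weights in the corresponding weight space of \(\Wmod\), i.e.\ to what is being proven. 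The free field realisation of \cref{thm:ffr} cannot supply this bound: it controls modules restricted from \(\latva\) and the specific intertwining operators built from lattice vertex operators, whereas \(\Wmod\) is a priori just a weak \(\bgva\)-module with no relation to the lattice, and excluding such ``exotic'' targets is the entire content of the lemma. The paper avoids weight estimates on \(\Wmod\) altogether: it multiplies the Jacobi identity by \(x_0^kx_1^{n+h-1}\), takes residues to obtain \eqref{eq:specjac}, and combines this with the lower truncation property of \(\mathcal{Y}\) (a leading-term induction) to transfer local nilpotence of \(\gamma_n\) (\(n\ge1\)), \(J_n\) (\(n\ge1\)) and \(\beta_n\) (\(n\ge2\), resp.\ \(n\ge4\) in Part (3)) from \(\Mmod_1,\Mmod_2\) to \(\Wmod\) purely algebraically.

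There is a second gap: even granting the nilpotence you aim for, your ``organising observation'' fails for the modules to which you need to apply it. The composition factors of \(\Wmod\) are not known in advance to be simple objects of \(\catFLE\), and for a general simple smooth weight module, local nilpotence of \(\gamma_n\) (\(n\ge1\)) and \(\beta_n\) (\(n\ge2\)) does \emph{not} force membership of \(\catR\cup\sfmod{-1}{\catR}\). Concretely, let the two commuting Weyl algebras \((\beta_0,\gamma_0)\) and \((\beta_1,\gamma_{-1})\) act on \(\finite{\Typ{\mu}}\otimes\finite{\Typ{\nu}}\), let \(\gamma_n\) (\(n\ge1\)) and \(\beta_n\) (\(n\ge2\)) act as zero there, and induce freely over the remaining negative modes: the result is a simple smooth weight module satisfying your nilpotence conditions on which neither \(\beta_1\) nor \(\gamma_0\) acts locally nilpotently, so it lies in no \(\sfmod{\ell}{\catR}\) whatsoever. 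What excludes such factors in the paper's proof is precisely the additional local nilpotence of the \(J_n\): on the subspace annihilated by \(\beta_{n+1},\gamma_n\) (\(n\ge1\)) one has \(J_1=\beta_1\gamma_0\), and nilpotence of \(J_1\) forces one of \(\beta_1\), \(\gamma_0\) to act nilpotently on each composition factor. Your argument never establishes or uses the \(J_n\)-nilpotence, so this case is not excluded. Both gaps would be repaired by replacing your weight-estimate step with the paper's Jacobi-identity and leading-term argument together with the commuting-Weyl-algebra analysis.
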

\begin{proof}
  Due to the compatibility of fusion with spectral flow, see
  \cref{thm:specflow}, it is sufficient to only consider \(k=\ell=0\).
 We prove Part \ref{itm:gradcomp} first.
  Let \(\Mmod_1,\Mmod_2\), be
  modules in \(\catFLE\). 
  Let \(v\in \bgva\) be the vector corresponding to the field \(J(z)\)
  and take the residue with respect
    to \(x_0\) and \(x_1\) in the Jacobi identity \eqref{eq:intjac}. This yields
    \begin{equation}
      J_0\iop{m_1}{x_2} m_2=\iop{m_1}{x_2} J_0 m_2 + \iop{J_0 m_1}{x_2}m_2.
    \end{equation}
    Hence, since the fusion factors \(\Mmod_i\) are graded by ghost weight, the
    fusion product will be too. This means that the intertwining operator will
    be grading compatible and \(\Wmod\) must be doubly graded.
  
    Next we prove Part \ref{itm:closureprt}.
    Assume that \(\Mmod_1,\Mmod_2\) have composition factors only in \(\catR\)
    and \(\sfmod{-1}{\catR}\).
    Note that \(J_n,\ n\ge1\) acts locally nilpotently on any object in
  \(\catFLE\) and that \(\beta_{n-\ell},\ \gamma_{n+\ell},\ n\ge1\) act
  locally nilpotently on any object in \(\sfmod{\ell}{\catR}\) (recall that
  local nilpotence is one of the defining properties of \(\sfmod{\ell}{\catR}\)).
  We first show that \(J_{n},\ \beta_{n+1},\ \gamma_n,\ n\ge 1\) acting
  locally nilpotently on \(\Mmod_1, \Mmod_2\) implies that \(J_{n},\ \beta_{n+3},\ \gamma_n,\ n\ge 1\) act
  locally nilpotently on \(\Wmod\). 
  Let \(h\) be the conformal weight of \(v=\beta,\gamma\) or \(J\),
  multiply both sides of the Jacobi identity \eqref{eq:intjac} by \(x_0^kx_1^{n+h-1},\ 
  n,k\in\ZZ\) and take residues with respect to \(x_0\) and \(x_1\). This yields
  \begin{align}
    \sum_{s \ge 0} \binom{k}{s} (-1)^s x_2^s v_{n-s} \iop{m_1}{x_2}
    m_2 =  & \sum_{s \ge 0} \binom{k}{s} (-1)^s x_2^{k-s}
             \iop{m_1}{x_2} v_{n-k+s} m_2 \nonumber\\
           & + \sum_{s \ge 0} \binom{s-n+k-h}{s} (-1)^s x_2^{n-k+h-s-1}
             \iop{v_{s-h+k+1} m_1}{x_2} m_2 .
                 \label{eq:specjac}
  \end{align}
  Set \(v=\gamma\)
  (and thus \(h=0\)) and \(k=0\) in
  \eqref{eq:specjac} to obtain
  \begin{align}
    \gamma_{n} \iop{m_1}{x_2} m_2 =  \iop{m_1}{x_2} \gamma_{n} m_2  + \sum_{s = 0}^{n} \binom{s-n}{s} (-1)^s x_2^{n-s-1} \iop{\gamma_{s+1} m_1}{x_2} m_2 .
  \end{align}
  This implies the local nilpotence of \(\gamma_n,\ n\ge
  1\) on \(\iop{m_1}{x_2} m_2\) from its local nilpotence on \(m_1\) and \(m_2\).
  Next consider \(v=J\)
  (and thus \(h=1\)) and 
  \(k=1\) in
  \eqref{eq:specjac} to obtain
  \begin{align}
    \brac{J_{n} - x_2 J_{n-1}} \iop{m_1}{x_2} m_2 =  \iop{m_1}{x_2} \brac{J_{n} - x_2 J_{n-1}} m_2  + \sum_{s = 0}^{n} \binom{s-n}{s} (-1)^s x_2^{n-s-1} \iop{J_{s+1} m_1}{x_2} m_2.
  \end{align}
  Since \(J_k,\ k\ge1\) is nilpotent on both \(m_1\) and \(m_2\), we
  see that \(J_{n}-x_2 J_{n-1}\) is nilpotent for \(n\ge 2\). Recall
  that the series expansion of the intertwining operator
  \begin{equation}
    \iop{m_1}{x_2}m_2 = \sum_{\substack{t \in\CC \\s \ge 0}} (m_1)_{(t,s)}m_2 x_2^{-t-1} (\log x_2)^s
  \end{equation}
  satisfies a lower truncation condition, that is, for fixed \(s\), if there exists a \(u\in \CC\)
  satisfying \(m_{(u,s)}\neq0\), then there exists a minimal representative
  \(t\in u+\ZZ\) such that \(m_{(t,s)}\neq0\) and \(m_{(t',s)}=0\) for
  all \(t'<t\). Since \(J_{n}-x_2 J_{n-1}\) is nilpotent on
  \(\iop{m_1}{x_2}m_2\) it is also nilpotent on the leading term
  \(m_{(t,s)}\). By comparing coefficients of \(x_2\) and \(\log x_2\)
  it then follows that \(J_n,\ n\ge 2\) acts nilpotently on
  \(m_{(t,s)}\) and by induction also on all coefficients of higher
  powers of \(x_2\). To show that \(J_1\) acts locally nilpotently,
  assume that \(m_1\) has \(J_0\)-eigenvalue
  \(j\) and set \(n=1, \ k=0\) in \eqref{eq:specjac}
  to obtain
  \begin{align}
    J_1 \iop{m_1}{x_2} m_2 =  \iop{m_1}{x_2} J_1 m_2 +
    x_2 j \iop{m_1}{x_2}m_2 + \sum_{s\ge 1}(-1)^s\binom{s-2}{s}x_2^{1-s}\iop{J_s m_1}{x_2} m_2.
  \end{align}
  Thus \(J_1-x_2 j\) is nilpotent, which by the previous leading
  term argument implies that \(J_1\) is too.
  Finally, consider \(v=\beta\)
  (and thus \(h=1\)) and 
  \(k=2\) in \eqref{eq:specjac} to obtain
  \begin{align}
    \brac{\beta_{n} - 2x_2 \beta_{n-1}+x_2^2 \beta_{n-2}} \iop{m_1}{x_2}
    m_2&=  \iop{m_1}{x_2} \brac{\beta_{n} - 2x_2 \beta_{n-1}+ x_2^2
         \beta_{n-2}} m_2  \nonumber\\
    &\qquad+ \sum_{s \ge 0} \binom{s-n+1}{s} (-1)^s x_2^{n-s-2} \iop{\beta_{s+2} m_1}{x_2} m_2.
  \end{align}
  By leading term arguments analogous to those used for \(J_n\), this
  implies that \(\beta_n\) acts locally nilpotently for \(n\ge 4\).
  
  Consider the subspace \(V\subset \Wmod\) annihilated by
  \(\beta_{n+3}\), \(\gamma_n\), \(n\ge1\). Then \(V\) is a module over four commuting copies of the Weyl
  algebra respectively generated by the pairs \((\beta_0,\gamma_0),\
  (\beta_1,\gamma_{-1}),\ (\beta_2,\gamma_{-2}),\
  (\beta_3,\gamma_{-3})\). Further, \(V\) is closed under the action
  of \(J_n,\ n\ge 1\) and restricted to acting on \(V\), the first few
  \(J_n\) modes expand as
  \begin{align}
    \label{eq:Jexp}
    J_3= \beta_3\gamma_0,\qquad J_2=\beta_2\gamma_0 +
    \beta_3\gamma_{-1},\qquad J_1= \beta_1\gamma_0+ \beta_2\gamma_{-1}+\beta_{3}\gamma_{-2}.
  \end{align}
  We show that on any composition factor of
  \(V\) at least three of the four Weyl algebras have a generator
  acting nilpotently and that thus the induction of such a composition
  factor is an object in one of the categories \(\sfmod{i}{\catR},\
  -3\le i\le 0\). Let \(C_0\otimes C_1\otimes C_2\otimes C_3\) be
  isomorphic to a
  composition factor of \(V\), where \(C_i\) is a simple module over
  the Heisenberg algebra generated by the pair
  \((\beta_i,\gamma_{-i})\). Since \(J_1,J_2,J_3\) act locally
  nilpotently on \(V\) they must also do so on \(C_0\otimes C_1\otimes
  C_2\otimes C_3\) using the expansions \eqref{eq:Jexp}. If we assume
  that neither \(\beta_3\) nor \(\gamma_0\) act locally nilpotently on
  \(C_3\) and \(C_0\), respectively, that is there exist \(c_3\in
  C_3\) and \(c_0\in C_0\) such that \(\UEA{\beta_3}c_3\)
  and \(\UEA{\gamma_0}c_0\) are both infinite dimensional, and choose
  \(c_1,c_2,\) to be non-zero vectors in \(C_1\) and \(C_2\),
  respectively. Then \(\UEA{J_3} (c_0\otimes c_1\otimes c_2\otimes
  c_3)\) will be infinite dimensional contradicting the local
  nilpotence of \(J_3\). So assume \(\beta_3\) acts locally
  nilpotently but \(\gamma_0\) does not, and let \(c_3\in C_3\) be
  annihilated by \(\beta_3\) and \(c_0,c_1,c_2\) be non-zero vectors
  in \(C_0,C_2,C_3\), respectively. On this vector \(J_2\) evaluates
  to
  \begin{equation}
    J_2 (c_0\otimes c_1\otimes c_2\otimes c_3)= \gamma_0 c_0\otimes
    c_1\otimes \beta_2 c_2\otimes c_3.
  \end{equation}
  By the same reasoning as before, unless either \(\beta_2\) or \(\gamma_0\)
  act nilpotently, we have a contradiction to the nilpotence of
  \(J_2\), so \(\beta_2\) must act nilpotently on \(c_2\). Repeating
  this argument for \(J_1\) and assuming \(\beta_2 c_2=0\) we have a
  contradiction to the nilpotence of \(J_1\) unless \(\beta_1\) acts
  nilpotently. The composition factor isomorphic to \(C_0\otimes
  C_1\otimes C_2\otimes C_3\) thus induces to an object in
  \(\catR\). Repeating the previous arguments, assuming that
  \(\gamma_0\) acts locally nilpotently but \(\beta_3\) does not,
  implies that \(\gamma_{-1}\) and \(\gamma_{-2}\) must act locally
  nilpotently to avoid contradictions to the local nilpotence of
  \(J_1,J_2,J_3\). Such a composition factor would induce to a module
  in \(\sfmod{-3}{\catR}\). Finally assume both \(\beta_3\) and
  \(\gamma_0\) act locally nilpotently, then analogous arguments to
  those used above applied to the action of \(J_1\) imply
  that at least one of \(\beta_2\) or \(\gamma_{-1}\) act locally
  nilpotently. Such a composition factor would induce to an object in
  \(\sfmod{-2}{\catR}\) or \(\sfmod{-1}{\catR}\), respectively.

  The final potential obstruction to \(\Wmod\)
  lying in
  \(\catFLE\) is that such a submodule might not be finite
  length. However, if \(\Wmod\) had infinite length, it would have
  to admit indecomposable subquotients of arbitrary finite length, yet by the
  classification of indecomposable modules in \cref{thm:classify},
  a finite length indecomposable module with composition factors only in
  \(\sfmod{i}{\catR},\ -3\le i\le 0\) has length at most 5. Therefore 
  \(\Wmod\in\catFLE\).

  Part \ref{itm:projfuscalc} follows by a similar but simplified version of the
  above arguments. \(J_n\) and \(\gamma_n\) continue to satisfy the
  same nilpotence conditions as above, however for \(\beta\) one
  needs to reconsider \eqref{eq:specjac} with \(k=1\) to conclude
  that \(\beta_n,\ n\ge2\) is nilpotent. The remainder of the
  argument follows analogously.
\end{proof}

\begin{proof}[Proof of \cref{thm:vtencat}.]
  We verify that the assumptions of \cref{thm:huang} hold, in numerical
  order. \cref{thm:huang} thus implies that category \(\catFLE\) is an additive braided tensor
  category. Additionally, since category \(\catFLE\) is abelian, it is a braided tensor category. 
  \begin{enumerate}
  	\item All modules in category \(\catFLE\) are strongly graded by ghost 
  	weight $j \in \RR$. Further, by \cref{thm:closurecomps}.\ref{itm:gradcomp}, all logarithmic intertwining operators are grading compatible.
  	\item By \cref{thm:duallist}, category $\catFLE$ is closed
  	under taking restricted duals. Closure under finite direct
  	sums holds by construction, since category \(\catFLE\) is abelian.
  	\item The modules in $\catFLE$ have real conformal weights by definition. The only modules on which the non semi-simple part of $L_0$ acts non-trivially are $\sfmod{m}{\Stmod{n}}$, for which it squares to zero.
  	\item Closure under images of module homomorphisms holds by
          construction, since category $\catFLE$ is abelian.
 	\item The convergence and extension properties hold by \cref{thm:conext}.
 	\item Since the \(P(w)\)-tensor product is right exact, by \cite[Part IV, Proposition
          4.26]{HuaLog}, and since category \(\catFLE\) has sufficiently many 
          projectives, that is, every module can be realised as a quotient of a direct
          sum of indecomposable projectives, we can without loss of generality
          assume \(\Mmod_1\) and \(\Mmod_2\) are indecomposable projective
          modules, as Condition \ref{itm:cycmods} holding for projective
          modules implies that it also holds for their quotients. Further, due
          to the compatibility of fusion with spectral
          flow, we can pick \(\Mmod_1\) and \(\Mmod_2\) to be isomorphic to
          \(\Typ{\lambda}\) or \(\Stag\).
          Let \(\nu\in \comp{\Mmod_1}{\Mmod_2}\) be doubly homogenous and
            assume that the module \(\Mmod_\nu\) generated by \(\nu\) is lower
            bounded. By assumption, the functional \(\nu\) therefore satisfies all the
            properties of \(P(w)\)-local 
            grading restriction except for the
            finite dimensionality of the doubly homogeneous spaces of
            \(\Mmod_\nu\). We need to show the finite dimensionality of these
            doubly homogeneous spaces and that \(\Mmod_\nu\) is an object
            in \(\catFLE\). Since \(\Mmod_\nu\) is finitely generated (cyclic
            even) it is at most a finite direct sum.
            To see this, assume the module admits an infinite
            direct sum. Then the partial sums define an ascending filtration
            whose union is the entire module. Hence after some finite number of
            steps all generators must appear within this filtration, but if this
            finite sum contains all generators, it must be equal to the entire
            module and hence all later direct summands must be zero.
            Denote the direct summands by \(\Mmod_{\nu,i}, \ i \in I\), where
            \(I\) is some finite index set.
            By \cite[Part IV, Proposition 5.24]{HuaLog} there exists a
            smooth 
            \(\bgva\) module \(\Wmod_{\nu,i}\) such that
            \(\Wmod_{\nu,i}^\prime\cong \Mmod_{\nu,i}\) and a surjective intertwiner
            of type \(\binom{\Wmod_{\nu,i}}{\Mmod_1,\ \Mmod_2}\). Hence, by
            \cref{thm:closurecomps}, \(\Wmod_{\nu,i}\in\catFLE\). In particular,
            since category \(\catFLE\) is closed under taking restricted duals and
            all its objects have finite dimensional
            doubly homogeneous spaces, we have \(\Mmod_{\nu,i}\in \catFLE\)
            and \(\Mmod_\nu\in\catFLE\).
  \end{enumerate}
\end{proof}

\begin{rmk}
  Note that the above proof did not make any use of \(\Mmod_\nu\) being lower bounded to conclude that
  \(\Mmod_\nu\in\catFLE\) and that membership of category \(\catFLE\) implies
  lower boundedness.
\end{rmk}

\begin{lem}\label{thm:stagfusion}
  For \(\lambda\in \RR/\ZZ\), \(\lambda \neq \ZZ,\) the fusion product
  \(\sfmod{\ell}{\Typ{\lambda}} \fuse \sfmod{k}{\Typ{-\lambda}}\) has exactly
  one direct summand isomorphic to \(\sfmod{\ell+k-1}{\Stag}\).
\end{lem}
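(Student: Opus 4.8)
The plan is to reduce to the single product $\Typ{\lambda}\fuse\Typ{-\lambda}$ and identify it completely, after which the multiplicity statement is immediate. By \cref{thm:specflow} we have $\sfmod{\ell}{\Typ{\lambda}}\fuse\sfmod{k}{\Typ{-\lambda}}\cong\sfmod{\ell+k}{\brac*{\Typ{\lambda}\fuse\Typ{-\lambda}}}$, and since $\sfmod{\ell+k}$ is an exact invertible functor carrying $\sfmod{-1}{\Stag}$ to $\sfmod{\ell+k-1}{\Stag}$, it suffices to show that $\Typ{\lambda}\fuse\Typ{-\lambda}$ has exactly one summand isomorphic to $\sfmod{-1}{\Stag}$. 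First I would constrain the composition factors. Applying \cref{thm:closurecomps} to the canonical (surjective) intertwining operator of type $\itype{\Typ{\lambda}}{\Typ{-\lambda}}{\Typ{\lambda}\fuse\Typ{-\lambda}}$, with $\Typ{\pm\lambda}\in\catR$, confines all factors to $\catR$ and $\sfmod{-1}{\catR}$, while adding the $J_0$-weights of the two factors shows every factor has integral $J_0$-weight. The only simple modules meeting both conditions are $\VacMod$, $\sfmod{-1}{\VacMod}$ and $\sfmod{-2}{\VacMod}$, so by \cref{thm:classify} the indecomposable summands are restricted to these three simples, the short $\Bmod{}{}$ and $\Tmod{}{}$ modules supported on them, and $\sfmod{-1}{\Stag}$, the last being the unique projective–injective indecomposable on this list.

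Next I would compute the head of the product. The defining universal property of the $P(w)$-tensor product (\cref{thm:fusiondef}, \cite{HuaLog}) identifies $\Homgrp{\Typ{\lambda}\fuse\Typ{-\lambda}}{\Wmod}$ with the space of grading-compatible logarithmic intertwining operators of type $\itype{\Typ{\lambda}}{\Typ{-\lambda}}{\Wmod}$. Feeding in the dimensions of these spaces among simple modules, computed in \cite{AdaBG19} and matching the Verlinde prediction of \cite{RidBos14}, the only simple $\Wmod$ admitting a nonzero intertwiner is $\sfmod{-1}{\VacMod}$, with a one-dimensional space (the $\Typ{\mu}$-tops being excluded by the $J_0$-weight argument above). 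Hence $\hd\brac*{\Typ{\lambda}\fuse\Typ{-\lambda}}\cong\sfmod{-1}{\VacMod}$ is simple, so, $\catFLE$ being of finite length, the product is indecomposable and is a quotient of its projective cover $\Proj{\sfmod{-1}{\VacMod}}\cong\sfmod{-1}{\Stag}$ (\cref{thm:pproj}).

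Dually I would pin down the socle. The contravariant involution ${}^{\ast}$ of \cref{thm:duallist} fixes $\Typ{\pm\lambda}$ and $\sfmod{-1}{\VacMod}$, and I would check that it commutes with fusion by combining the conjugation compatibility of \cref{thm:specflow} with the compatibility of restricted duals and the $P(w)$-tensor product. The product is then ${}^{\ast}$-self-dual, so its socle is the ${}^{\ast}$-dual of its head, namely $\sfmod{-1}{\VacMod}$ again, and the product also embeds in the injective hull $\Inje{\sfmod{-1}{\VacMod}}\cong\sfmod{-1}{\Stag}$. A quotient of $\sfmod{-1}{\Stag}$ whose head and socle are both the simple module $\sfmod{-1}{\VacMod}$ must be either $\sfmod{-1}{\VacMod}$ itself or the whole of $\sfmod{-1}{\Stag}$, because the middle Loewy layer of $\sfmod{-1}{\Stag}$ contains no copy of $\sfmod{-1}{\VacMod}$, so no intermediate quotient has simple socle of that type.

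The main obstacle is therefore to exclude the degenerate collapse $\Typ{\lambda}\fuse\Typ{-\lambda}\cong\sfmod{-1}{\VacMod}$, equivalently to show the product has length greater than one. I expect this to be the only genuinely computational point, since it cannot be read off from the intertwining dimensions among simple modules alone: the distinguishing feature of $\sfmod{-1}{\Stag}$ is precisely the non-semisimple, rank-two action of $L_0$. I would establish it directly, either by a Nahm--Gaberdiel--Kausch style computation of the leading coefficients of the intertwining operator realising the product, exhibiting a rank-two $L_0$ Jordan block, or by computing enough of the graded character of the double dual $\Typ{\lambda}\ddfuse\Typ{-\lambda}$ to detect a composition factor other than $\sfmod{-1}{\VacMod}$. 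Once collapse is excluded the product equals $\sfmod{-1}{\Stag}$; being indecomposable it has exactly one summand isomorphic to $\sfmod{-1}{\Stag}$, and applying $\sfmod{\ell+k}$ gives the lemma.
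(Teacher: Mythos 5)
Your reductions are sound: \cref{thm:specflow} reduces everything to $\Typ{\lambda}\fuse\Typ{-\lambda}$, \cref{thm:closurecomps} confines the composition factors to $\VacMod$, $\sfmod{-1}{\VacMod}$, $\sfmod{-2}{\VacMod}$, and the head computation correctly yields that the product is an indecomposable quotient of its projective cover $\sfmod{-1}{\Stag}$ (though note the paper invokes \cite{AdaBG19} only for $\lambda+\mu\notin\ZZ$; in the degenerate case you should instead derive the needed dimensions from the adjoint and skew-symmetry isomorphisms for intertwining operators together with the unit property, which give $\dim\Homgrp{\Typ{\lambda}\fuse\Typ{-\lambda}}{\sfmod{j}{\VacMod}}=\delta_{j,-1}$). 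The genuine gap is that the decisive step --- ruling out the proper quotients of $\sfmod{-1}{\Stag}$ --- is exactly the step you defer to ``an NGK-style computation'' or ``a character computation'', and that deferred computation \emph{is} the paper's proof. The paper works inside the HLZ double dual $\Typ{\lambda}\ddfuse\Typ{-\lambda}$: using Zhang's generator criterion (\cref{thm:pzcompatgens}) for $P(w)$-compatibility and the relations \eqref{eq:rhwrels1}--\eqref{eq:rhwrels4}, it computes the subspace $K$ of functionals annihilated by $\beta_n,\gamma_{n+1}$ ($n\ge1$), $J_1$ and $J_0$, shows $K$ is exactly two dimensional and that $L_0$ acts on it with a rank-two Jordan block; since $\Stag$ is the unique indecomposable with factors in $\catR\cup\sfmod{}{\catR}$ admitting a Jordan block, and each such summand contributes two dimensions to $K$, exactly one summand follows. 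A proposal that stops short of this has omitted the mathematical core of the lemma; none of the head/socle scaffolding detects non-semisimplicity of $L_0$.

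Two further points. First, your socle step is circular as written: the compatibility $\brac{\Mmod\fuse\Nmod}^\ast\cong\Mmod^\ast\fuse\Nmod^\ast$ is \cref{thm:stardual}, which the paper deduces from rigidity (\cref{thm:rigid}), and rigidity rests on \cref{thm:rigidprojsimples}, whose proof invokes this very lemma; your substitute justification --- ``compatibility of restricted duals and the $P(w)$-tensor product'' --- is not a theorem, as the contragredient functor does not commute with fusion absent rigidity. This matters because your character-counting fallback genuinely needs the socle: extra composition factors alone cannot distinguish $\sfmod{-1}{\Stag}$ from its length-two or length-three quotients, so only the Jordan-block route avoids the circularity. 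Second, the gap can in fact be closed without any double-dual computation: once the head argument shows the product $P$ is an indecomposable quotient of $\sfmod{-1}{\Stag}$, pick $\mu$ with $\mu,\lambda+\mu\notin\ZZ$ and compute $\Typ{\mu}\fuse P$ two ways. Associativity (\cref{thm:vtencat}) together with \cref{thm:nonstagfusion} (whose proof is independent of this lemma) gives
\begin{equation}
  \Typ{\mu}\fuse\brac*{\Typ{\lambda}\fuse\Typ{-\lambda}}\cong
  \brac*{\Typ{\mu}\fuse\Typ{\lambda}}\fuse\Typ{-\lambda}\cong
  \Typ{\mu}\oplus 2\,\sfmod{-1}{\Typ{\mu}}\oplus\sfmod{-2}{\Typ{\mu}},
\end{equation}
which has length four, while right-exactness of fusion and $\sfmod{j}{\VacMod}\fuse\Typ{\mu}\cong\sfmod{j}{\Typ{\mu}}$ bound the length of $\Typ{\mu}\fuse P$ above by the length of $P$; hence $P$ has length at least four and must be all of $\sfmod{-1}{\Stag}$.
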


We will prove the above lemma by showing that
  \( \Typ{\lambda} \ddfuse \Typ{-\lambda}\) has
  exactly one submodule isomorphic to \(\Stag\). This requires finding linear
  functionals which satisfy \(P(w)\) compatibility. This is very difficult
  to do in practice, since \eqref{eq:pzcomp} needs to be checked for every
  vector \(v\in V\). Fortunately there
  is a result by Zhang which cuts this down to generators. Zhang originally
  formulated the theorem below for a related type of fusion product called the
  \(Q(z)\)-tensor product, so we have translated his result to the
  \(P(w)\)-tensor product, which we use here.
  
\begin{thm}[{Zhang \cite[Theorem 4.7]{ZhaHLZgen08}}]
  Let \(A\le B\) be abelian groups.
  Let \(\VOA{V}\) be a \va{} graded by \(A\) with a set of strong generators
  \(S\) and let \(\Mmod_1\) and 
  \(\Mmod_2\) be modules over \(\VOA{V}\), graded by \(B\). A functional \(\psi\in
  \Hom(\Mmod_1\otimes\Mmod_2,\mathbb{C})\) is said to satisfy the \emph{strong lower
  truncation condition for a vector \(v\in \VOA{V}\)}, if there exists an \(N\in
  \NN\) such that for all \(n,m\in \ZZ\), with \(m\ge N\), we have
  \begin{equation}
    vt^{m+n}(t^{-1}-w)^n\psi=0.
  \end{equation}
  Then \(\psi\in
  \Hom(\Mmod_1\otimes\Mmod_2,\mathbb{C})\) satisfies the
  \(P(w)\)-compatibility condition if and only if it satisfies the strong lower
  truncation condition for all elements of \(S\).
  \label{thm:pzcompatgens}
\end{thm}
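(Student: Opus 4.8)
The statement is an equivalence between two conditions on a fixed functional $\psi$, and I would treat the two implications separately; the substance lies entirely in the reverse implication, which is precisely the assertion that $P(w)$-compatibility can be tested on a strong generating set. Throughout I would work with the $\VOA{V}\otimes\CC[t,t^{-1},(t^{-1}-w)^{-1}]$-action recorded in \eqref{eq:ddaction}, using that $\iota_+$ is an algebra homomorphism fixing every Laurent polynomial. For the forward direction, assume $\psi$ is $P(w)$-compatible. Since $(t^{-1}-w)^{n}=t^{-n}(1-wt)^{n}$ and $\iota_+$ expands $(1-wt)^{n}$ as a power series about $t=0$, the expansion $\iota_+\brac*{t^{m+n}(t^{-1}-w)^{n}}=t^{m}(1-wt)^{n}$ has lowest $t$-power $t^{m}$ for every $n\in\ZZ$. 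Applying the compatibility identity \eqref{eq:pzcomp} to $f(t)=t^{m+n}(t^{-1}-w)^{n}$ therefore rewrites $vt^{m+n}(t^{-1}-w)^{n}\psi$ as a sum of terms $vt^{m+j}\psi$ with $j\ge0$, all annihilated once $m$ exceeds the single lower truncation bound for $v$. Hence every $v\in\VOA{V}$, and in particular every element of $S$, satisfies the strong lower truncation condition.

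For the reverse direction I must recover, for every $v\in\VOA{V}$, both clauses of $P(w)$-compatibility. The lower truncation clause $v_{n}\psi=0$ for $n\gg0$ is simply the $n=0$ instance of strong lower truncation, so the real work is the identity \eqref{eq:pzcomp}. Because $f\mapsto vf(t)\psi$ and $\iota_+$ are both linear and agree on Laurent polynomials, it is enough to verify \eqref{eq:pzcomp} on the pole directions $t^{k}(t^{-1}-w)^{-j}$ at $t^{-1}=w$, which is exactly the data constrained by strong lower truncation. For a single vector $v$ satisfying that condition I would establish \eqref{eq:pzcomp} by clearing the denominator: both $vt^{k}(t^{-1}-w)^{-1}\psi$ and the candidate expansion $\sum_{r\ge0}w^{r}vt^{k+r+1}\psi$ reduce to $vt^{k}\psi$ upon multiplication by $(t^{-1}-w)$, the latter through a telescoping cancellation, while strong lower truncation guarantees that the expansion terminates and that their difference carries no residual contribution supported at the singularity; an induction on $j$ then covers the higher-order poles. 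This part is a careful but essentially routine formal-variable computation.

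It remains to show that the set $T=\set*{v\in\VOA{V}\st v\text{ satisfies strong lower truncation}}$ is all of $\VOA{V}$ once it contains $S$. As $S$ strongly generates $\VOA{V}$, every vector arises from $S$ by iterated modes $u_{-n}$ with $u\in S$ and $n\ge1$, so it suffices to show that $T$ is a subspace closed under $v\mapsto u_{-n}v$ and under $L_{-1}$. Closure under $L_{-1}$ follows from the $L_{-1}$-derivation property, which only shifts mode indices. For the products I would derive the commutator and iterate (weak associativity) formulas for the dual action directly from \eqref{eq:ddaction} and the vertex algebra axioms, expressing each mode of $u_{-n}v$ as a finite $\CC$-combination of composites of the dual-action modes of $u$ and $v$, and then feed in the uniform bounds supplied by strong lower truncation of $u$ and $v$ to produce a uniform bound for $u_{-n}v$. \emph{This propagation step is the main obstacle}: the iterate formula introduces index shifts and binomial coefficients that interact nontrivially with the $(t^{-1}-w)^{-1}$ expansions, and the delicate point is to track these through the Jacobi identity so that a single uniform truncation index survives the passage to products. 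Once $T=\VOA{V}$ is in hand, the single-vector argument of the previous paragraph upgrades strong lower truncation on all of $\VOA{V}$ to full $P(w)$-compatibility, completing the equivalence.
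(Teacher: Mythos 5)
A preliminary remark: the paper contains no proof of this statement --- it is imported from Zhang \cite[Theorem 4.7]{ZhaHLZgen08}, translated from the \(Q(z)\)- to the \(P(w)\)-setting --- so your attempt can only be compared against Zhang's argument. Your forward direction is correct, with the uniform bound supplied by the lower truncation clause. Your single-vector step is essentially sound, though not as literally phrased: one cannot ``multiply the equation by \((t^{-1}-w)\)'', because \eqref{eq:ddaction} only defines a linear map on \(\VOA{V}\otimes\CC[t,t^{-1},(t^{-1}-w)^{-1}]\); the space of functionals carries no module structure over that ring through which denominators could be cleared. The correct mechanism is the one your parenthetical gestures at: the exact ring identity
\begin{equation*}
  t^{k}(t^{-1}-w)^{-j}=\sum_{r=0}^{R-1}\binom{j-1+r}{r}w^{r}\,t^{k+j+r}+t^{k+R}Q_{R}(t)\,(t^{-1}-w)^{-j},
  \qquad Q_{R}\in\CC[t],
\end{equation*}
whose remainder is a finite combination of elements \(t^{m+n}(t^{-1}-w)^{n}\) with \(n=-j\) and \(m\ge k+j+R\). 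Linearity of the action in the ring variable, strong lower truncation applied to the remainder for \(R\) large, and its \(n=0\) case applied to the tail of \(\iota_+\brac*{t^{k}(t^{-1}-w)^{-j}}\) together give \eqref{eq:pzcomp} for that single vector, including the lower truncation clause.

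The genuine gap is the propagation step, and it is not merely an omitted computation --- the route you announce for it would fail. You propose to ``derive the commutator and iterate formulas for the dual action directly from \eqref{eq:ddaction} and the vertex algebra axioms'' and then feed in the truncation bounds. On an arbitrary functional the dual action satisfies the commutator formula, but it does \emph{not} satisfy the iterate (weak associativity) formula: the failure of the Jacobi identity on general elements of \(\Hom(\Mmod_1\otimes\Mmod_2,\CC)\) is precisely why the compatibility condition is imposed in \cref{def:HLZddual} in the first place (see \cite[Part IV]{HuaLog}). So the iterate formula is not an unconditional consequence of the axioms and cannot be invoked to express the modes of \(u_{q}v\) acting on \(\psi\) in terms of those of \(u\) and \(v\) across the \((t^{-1}-w)^{-1}\)-directions. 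The viable order --- and the actual content of Zhang's proof --- is the reverse of yours: first use the single-vector equivalence to upgrade strong lower truncation for \(u\) and for \(v\) to the delta-function form of \(P(w)\)-compatibility with respect to \(u\) and \(v\) individually, and only then carry out the Jacobi-type rearrangement, which this partial compatibility licenses, to extract a uniform truncation bound for \(u_{q}v\); induction over the strong generating set then finishes. You flag exactly this closure step as ``the main obstacle'' and leave it unproven; since the reduction to generators is the entire content of the theorem, what you have is a correct outer shell (both easy directions and the single-vector reduction) with the central lemma missing, and with its announced derivation resting on an identity that does not hold in the required generality.
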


We further prepare some helpful identities.
\begin{lem}
  Let \(\Mmod_1, \Mmod_2\in\catFLE\), \(m_i\in \Mmod_i,\ i=1,2\), and
  \(\psi\in\comp{\Mmod_1}{\Mmod_2}\), then we have the identities.
\begin{align}
  \ang*{J_n\psi,m_1\otimes m_2}&=\delta_{n,0}\ang*{\psi,m_1\otimes
                                 m_2}-\sum_{i\ge0}\binom{-n}{i}w^{-n-i}\ang*{\psi,J_i
                                 m_1\otimes m_2}-\ang*{\psi, m_1 \otimes
     J_{-n}m_2},\quad n\in\ZZ,\label{eq:rhwrels1}\\
  \ang*{L_0\psi,m_1\otimes m_2}&=\ang*{\psi,L_0m_1\otimes m_2}+w\ang*{\psi,L_{-1}m_1\otimes m_2}+\ang*{\psi, m_1 \otimes L_0 m_2},\label{eq:rhwrelsL0}\\
    \ang*{\beta t^{k+n}(t^{-1}-w)^{n}\psi,m_1\otimes m_2}&=-\sum_{i\ge0}\binom{-k-n}{i}w^{-k-n-i}\ang*{\psi,
       \beta_{n+i}m_1\otimes m_2} \nonumber \\ &\qquad -\sum_{i\ge 0}
       \binom{n}{i}(-w)^{n-i}\ang*{\psi,m_1\otimes \beta_{i-k-n}m_2},\quad
 k,n\in\ZZ,                                                          \label{eq:rhwrels3}\\
 \ang*{\gamma t^{k+n}(t^{-1}-w)^{n}\psi,m_1\otimes m_2}&=\sum_{i\ge0}\binom{-k-n-2}{i}w^{-k-n-2-i}\ang*{\psi,
	\gamma_{n+i+1}m_1\otimes m_2} \nonumber \\ & \qquad +\sum_{i\ge0}
\binom{n}{i}(-w)^{n-i} \ang*{\psi,m_1\otimes
	\gamma_{i-k-n-1}m_2},\quad k,n\in\ZZ.
\label{eq:rhwrels4}
\end{align}
\end{lem}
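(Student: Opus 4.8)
The plan is to obtain all four identities directly from the defining formula \eqref{eq:ddaction} for the action of \(\VOA{V}\otimes\CC[t,t^{-1},(t^{-1}-w)^{-1}]\) on a functional \(\psi\in\comp{\Mmod_1}{\Mmod_2}\), by specialising the vector \(v\) and the rational function \(g(t)\) so that \(vg(t)\psi\) is the left-hand side of the identity in question. Throughout I would use the mode dictionary \(vt^m\,m_i = v_{m-h+1}m_i\) of \cref{def:HLZddual}, where \(h\) is the conformal weight of \(v\); concretely \(J_n = Jt^n\) (weight \(1\)), \(L_0 = \omega t\) (weight \(2\)), while \(\beta t^m\) realises \(\beta_m\) (weight \(1\)) and \(\gamma t^m\) realises \(\gamma_{m+1}\) (weight \(0\)). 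I would also record the auxiliary fact \(\wun t^j m_i = \delta_{j,-1}m_i\), which is what ultimately produces the \(\delta_{n,0}\) term.

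The only computational input needed is the value of \(v^{\opp}=\ee^{t^{-1}L_1}(-t^2)^{L_0}v\,t^{-2}\) for \(v=\beta,\gamma,J,\omega\), expressed as an operator in \(t\). Since \(\beta,\gamma\) are primary and \(\omega\) is Virasoro quasi-primary (so \(L_1\beta=L_1\gamma=L_1\omega=0\)), the exponential collapses and one finds \(\beta^{\opp}=-\beta\), \(\gamma^{\opp}=\gamma t^{-2}\) and \(\omega^{\opp}=t^2\omega\). The one genuinely nontrivial case is \(J\): as \(J\) is not primary but carries the anomaly \(L_1 J=-\wun\) (read off the \(T\)--\(J\) operator product expansion), the exponential contributes an extra summand and \(J^{\opp}=-J+\wun\,t^{-1}\). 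These operator forms are of course consistent with the mode formulae of \cref{thm:duallist}.\ref{itm:bgopps}, but it is the \(t\)-dependent form that feeds into \eqref{eq:ddaction}.

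With these in hand each identity is a substitution. For \eqref{eq:rhwrels1} I would set \(v=J\), \(g(t)=t^n\), so \(g(t^{-1})=t^{-n}\) and \(J^{\opp}g(t^{-1})=-Jt^{-n}+\wun t^{-n-1}\). In the \(m_2\)-slot (no \(T_w\)) the piece \(-Jt^{-n}\) gives \(-J_{-n}m_2\) while \(\wun t^{-n-1}m_2=\delta_{n,0}m_2\), producing both the contragredient term and the anomalous \(\delta_{n,0}\) term. In the \(m_1\)-slot one applies \(T_w\) and then \(\iota_+\), using \(\iota_+((t+w)^{-n})=\sum_{i\ge0}\binom{-n}{i}w^{-n-i}t^i\); here the \(\wun\)-piece drops out because \(\wun t^i m_1=0\) for \(i\ge0\), leaving exactly \(-\sum_{i\ge0}\binom{-n}{i}w^{-n-i}\ang*{\psi,J_i m_1\otimes m_2}\). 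Identity \eqref{eq:rhwrelsL0} follows identically with \(v=\omega\), \(g(t)=t\): there \(T_w(\omega t)=\omega(t+w)\) yields the \(L_0\) and \(wL_{-1}\) terms on \(m_1\), and no anomaly intrudes precisely because \(L_1\omega=0\). For \eqref{eq:rhwrels3} and \eqref{eq:rhwrels4} I would take \(v=\beta,\gamma\) with \(g(t)=t^{k+n}(t^{-1}-w)^n\), so \(g(t^{-1})=t^{-k-n}(t-w)^n\); applying \(T_w\) sends \((t-w)^n\mapsto t^n\) and \(t^{-k-n}\mapsto(t+w)^{-k-n}\), and the two binomial expansions \(\iota_+((t+w)^{-k-n})=\sum_i\binom{-k-n}{i}w^{-k-n-i}t^i\) (for the \(m_1\)-slot) and \(\iota_+((t-w)^n)=\sum_i\binom{n}{i}(-w)^{n-i}t^i\) (for the \(m_2\)-slot), combined with the mode dictionary, reproduce the two sums in each identity; the weight-\(0\) shift \(\gamma t^m=\gamma_{m+1}\) explains the indices \(n+i+1\) and \(i-k-n-1\) in \eqref{eq:rhwrels4}.

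The main obstacle is not depth but careful bookkeeping around the \(J\)-anomaly in \eqref{eq:rhwrels1}: one must follow the \(\wun t^{-1}\) summand of \(J^{\opp}\) through both slots and verify that it contributes \(\delta_{n,0}\) in the \(m_2\)-slot yet nothing in the \(m_1\)-slot after \(T_w\) and \(\iota_+\). Once the operator forms of \(v^{\opp}\) are fixed the remaining manipulations are routine, and all convergence concerns are purely formal, since \(\iota_+\) always returns power series in \(t\) that are bounded below.
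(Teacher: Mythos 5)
Your proposal is correct and is precisely the paper's argument: the paper's entire proof is the one-line remark that the identities follow by evaluating \eqref{eq:ddaction} for the fields \(\beta,\gamma,J\) and \(T\), and your substitutions \(v=J,\omega,\beta,\gamma\) with the appropriate \(g(t)\), the operator forms \(\beta^{\opp}=-\beta\), \(\gamma^{\opp}=\gamma t^{-2}\), \(\omega^{\opp}=t^{2}\omega\), \(J^{\opp}=-J+\wun t^{-1}\), and the two \(\iota_+\)-expansions carry out exactly that evaluation. Your tracking of the anomalous \(\wun t^{-1}\) term in \(J^{\opp}\) — surviving as \(\delta_{n,0}\) in the \(m_2\)-slot but killed by \(\wun t^{i}m_1=0\), \(i\ge0\), after \(T_w\) and \(\iota_+\) in the \(m_1\)-slot — is the one subtle point, and you have it right.
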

\begin{proof}
  These identities follow by evaluating \eqref{eq:ddaction} for the fields
  \(\beta,\gamma, J\) and \(T\).
\end{proof}

\begin{proof}[Proof of \cref{thm:stagfusion}]
  We shall use the HLZ double dual construction of \cref{def:HLZddual}.
  By the compatibility of fusion with spectral flow, \cref{thm:specflow}, it
  is sufficient to consider the case \(\ell=k=0\).
  Note since \(\sfmod{-1}{\Stag}\) is both projective and injective, it must
  be a direct summand if it appears as either a quotient or a subspace.
  Further, by \cref{thm:closurecomps}, all composition factors must lie in
  categories \(\sfmod{i}{\catR},\ i=-1,0\). This implies that the
  composition factors of \(\Typ{\lambda}\ddfuse \Typ{-\lambda}\) must all lie in
  \(\sfmod{i}{\catR},\ i=0,1\).
  Note further, that \((\sfmod{-1}{\Stag})'\cong \Stag\) and so we seek to
  find a copy of \(\Stag\) within \(\Typ{\lambda}\ddfuse \Typ{-\lambda}\). We do so
  by considering a certain characterising two dimensional subspace of
  \(\Stag\).
  For a \(\bgva\)-module \(\Mmod\) consider the subspace
  \begin{equation}
    K(\Mmod)=\set{m\in\Mmod\st \beta_{n}m=\gamma_{n+1}m=J_1m=J_0m=0,\ n\ge1}.
  \end{equation}
  From the expansions of \(T(z)\) and \(J(z)\) in terms of the fields \(\beta\)
  and \(\gamma\), it follows that for any \(m\in K(\Mmod)\), \(L_0^2 m=L_{n}m=J_n m=0,\
  n\ge1\). In particular, in the notation of \cref{fig:weights},
  \(K(\Stag)=\spn{\ket{0},\ket{-\psi}}\) and thus \(K(\Stag)\) is two
  dimensional and \(L_0\) has a rank 2 Jordan block of generalised eigenvalue 0
  on this space. Further, \(\Stag\) is the only indecomposable module with
  composition factors in categories \(\sfmod{i}{\catR}$, $i=0, 1\)
  admitting \(L_0\) Jordan blocks. The remaining indecomposable modules with
  composition factors in categories \(\sfmod{i}{\catR}$, $i=0, 1\) all
  have \(K(\Mmod)\) subspaces of dimension zero or one.

 Let \(\psi\in \Hom\brac*{\Typ{\lambda}\otimes\Typ{-\lambda},\CC}\) satisfy
  \(\beta t^{k+n}(t^{-1}-w)^n\psi=\gamma t^{k+n}(t^{-1}-w)^n\psi=0\) for all
  \(m\ge1\). Thus by \cref{thm:pzcompatgens}, \(\psi\) satisfies the
  \(P(w)\)-compatibility property and \(\beta_m\psi=\gamma_{m+1}\psi=0,\
  m\ge1\). If in addition \(\psi\) is doubly 
  homogeneous, then \(\psi\) lies in \(\Typ{\lambda}\ddfuse \Typ{-\lambda}\). By assumption
  the \lhs{s} of \eqref{eq:rhwrels3} and \eqref{eq:rhwrels4} vanish for $k\ge1$. These
  relations imply that the value of \(\psi\) on any vector in
  \(\Typ{\lambda}\otimes\Typ{-\lambda}\) is determined by its value on tensor products of
  relaxed highest weight vectors, because negative modes on one factor can be
  traded for less negative modes on the other factor. For example, for
  \(k=1, \ n=0\) in \eqref{eq:rhwrels3}, we have
  the relation
  \begin{equation}
    \ang{\psi, m_1\otimes
      \beta_{-1}m_2}=-\sum_{i\ge0}\binom{-1}{i}w^{-1-i}\ang*{\psi,\beta_i m_1\otimes m_2}.  
  \end{equation}
  Let \(u_{\pm j}\in\Typ{\pm\lambda},\ j\in\pm\lambda\) be a choice of
  normalisation of relaxed highest weight vectors satisfying
  \(u_{\pm j-1}=\gamma_0 u_{\pm j}\). This implies \(\beta_0 u_{\pm j}=\pm j
  u_{1\pm j}\).
  Since the negative \(\beta\) and \(\gamma\) modes act freely on the simple
  projective modules \(\Typ{\lambda}\) and \(\Typ{-\lambda}\), there are no
  relations in addition to those coming from 
  \(\beta t^{k+n}(t^{-1}-w)^n\psi=\gamma t^{k+n}(t^{-1}-w)^n\psi=0\) for all
  \(m\ge1\) .
  Thus there is a linear isomorphism
  \begin{equation}
    \set{\psi\in \Typ{\lambda}\ddfuse \Typ{-\lambda}\st
      \beta_n\psi=\gamma_{n+1}\psi=0,\ n\ge1}\overset{\cong}{\longrightarrow}
    \Hom\brac*{\spn{u_j\otimes u_{-i}},\CC}.
  \end{equation}
 Clearly, \(K\brac*{\Typ{\lambda}\ddfuse \Typ{-\lambda}}\) is a
  subspace of \(\set{\psi\in \Typ{\lambda}\ddfuse \Typ{-\lambda}\st
      \beta_n\psi=\gamma_{n+1}\psi=0,\ n\ge1}\) and so we impose the remaining
    two relations, the vanishing of \(J_0\) and \(J_1\), via
    \eqref{eq:rhwrels1}. The vanishing of \(J_0\psi\) implies
    \begin{equation}
      0=\ang*{J_0\psi,u_{j}\otimes u_{-i}}=\ang*{\psi,u_j\otimes
        u_{-i}}-\ang*{\psi,J_0 u_j\otimes u_{-i}}-\ang*{\psi,u_j\otimes J_{0}u_{-i}}
      =(1-j+i)\ang*{\psi,u_j\otimes u_{-i}}.
    \end{equation}
    Thus \(\psi\) vanishes on \(u_j\otimes u_{-i}\) unless \(i=j-1\). The
    vanishing of \(J_1\psi\) implies 
    \begin{equation} \label{eq:gamma0}
      (2j-1)\ang*{\psi,u_{j}\otimes u_{1-j}}-j\ang*{\psi,u_{j+1}\otimes u_{-j}}+(1-j)\ang*{\psi,u_{j-1}\otimes u_{2-j}}=0,
    \end{equation}
    where we have used \(J_{-1}u_{1-j}=\brac*{\gamma_{-1}\beta_0+\beta_{-1}\gamma_0}u_{1-j}\).
    Thus \(\psi\) is completely characterised by its value on a two
    pairs of relaxed highest weight vectors, say \(u_{j}\otimes u_{1-j}\) and \(u_{j+1}\otimes u_{-j}\).
    Therefore, the subspace \(K\brac*{\Typ{\lambda}\ddfuse\Typ{-\lambda}}\)
    is two dimensional. Next we show that
    that \(L_0\) has a rank two Jordan block on it when acting on this space.
    Let \(\psi\in
    K\brac*{\Typ{\lambda}\ddfuse\Typ{-\lambda}}\). If \(\psi\neq0\), then there exist
    \(a,b\in\CC\), not both zero, such that
    \begin{equation}
      \ang*{\psi,u_{j}\otimes u_{1-j}}=a,\qquad \ang*{\psi,u_{j+1}\otimes u_{-j}}=b.
    \end{equation}
    The evaluation of \(L_0\psi\) on \(u_{j}\otimes u_{1-j}\) and
    \(u_{j+1}\otimes u_{-j}\) is then
    \begin{equation}
      \ang*{L_0\psi,u_{j}\otimes u_{1-j}}=j(a-b),\qquad \ang*{L_0\psi,u_{j+1}\otimes u_{-j}}=-j(a-b).
    \end{equation}
    Therefore if \(a\neq b\) (a choice which we can make as $K\brac*{\Typ{\lambda}\ddfuse\Typ{-\lambda}}$ is two dimensional), the vectors \(\psi\) and \(L_0\psi\) are linearly
    independent and span \(K\brac*{\Typ{\lambda}\ddfuse\Typ{-\lambda}}\), which also
    shows that \(L_0\) has a rank two Jordan block.
  \end{proof}

\begin{rmk}\leavevmode
  In \cite[Section 7]{RidBos14} the above fusion product was computed using
  the NGK algorithm up to certain conjectured additional
  conditions. 
  In light of the 
  survey \cite{KanRid18} explaining the equivalence of
  the HLZ double dual construction and the NGK algorithm, the
  authors thought it appropriate to supplement the NGK calculation of
  \cite{RidBos14} with an HLZ double dual calculation here.
\end{rmk}

\begin{prop}\label{thm:rigidprojsimples}
  For all \(\ell\in\ZZ\) and \(\lambda\in \RR/\ZZ\), \(\lambda\neq\ZZ\), the simple module
  \(\sfmod{\ell}{\Typ{\lambda}}\) is rigid in category \(\catFLE\),
  with tensor dual given by \(\brac*{\sfmod{\ell}{\Typ{\lambda}}}^\vee=\sfmod{1-\ell}{\Typ{-\lambda}}\).
\end{prop}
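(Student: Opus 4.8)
The plan is to reduce to $\ell=0$ and then extract the duality data from the unique $\Stag$-summand supplied by \cref{thm:stagfusion}. Since spectral flow is an invertible functor compatible with fusion (\cref{thm:specflow}), applying $\sfaut^{\ell}$ to evaluation and coevaluation maps for $\Typ{\lambda}$ produces corresponding maps for $\sfmod{\ell}{\Typ{\lambda}}$ and transports the rigidity axioms; the compatibility $\sfmod{-\ell}{\Mmod}\fuse\sfmod{\ell}{\Nmod}\cong\Mmod\fuse\Nmod$ then forces the dual of $\sfmod{\ell}{\Typ{\lambda}}$ to be $\sfmod{-\ell}{\brac*{\Typ{\lambda}}^\vee}$. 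Hence it suffices to prove that $\Typ{\lambda}$ is rigid with dual $\brac*{\Typ{\lambda}}^\vee\cong\sfmod{1}{\Typ{-\lambda}}$ and tensor unit $\VacMod$, whereupon $\brac*{\sfmod{\ell}{\Typ{\lambda}}}^\vee\cong\sfmod{1-\ell}{\Typ{-\lambda}}$ as claimed.

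By \cref{thm:specflow}, $\Typ{\lambda}\fuse\sfmod{1}{\Typ{-\lambda}}\cong\sfmod{1}{\brac*{\Typ{\lambda}\fuse\Typ{-\lambda}}}$ and $\sfmod{1}{\Typ{-\lambda}}\fuse\Typ{\lambda}\cong\sfmod{1}{\brac*{\Typ{-\lambda}\fuse\Typ{\lambda}}}$, and \cref{thm:stagfusion} shows each contains a unique direct summand isomorphic to $\Stag$. A ghost-weight count confirms that every composition factor of these products is a spectral flow of $\VacMod$, since the non-integral ghost weights carried by the $\Typ{\mu}$, $\mu\neq\ZZ$, cannot appear in a product of modules whose ghost weights sum to an integer. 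Using the non-split sequences of \cref{thm:projconstr} that realise $\VacMod$ as both the socle and the head of $\Stag$, I would take the coevaluation to be the composite $\VacMod\cong\soc\Stag\hookrightarrow\Stag\hookrightarrow\Typ{\lambda}\fuse\sfmod{1}{\Typ{-\lambda}}$ and the evaluation to be $\sfmod{1}{\Typ{-\lambda}}\fuse\Typ{\lambda}\twoheadrightarrow\Stag\twoheadrightarrow\hd\Stag\cong\VacMod$, both well defined because $\Stag$ is a direct summand.

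It then remains to verify the two zig-zag identities. As $\Typ{\lambda}$ and $\sfmod{1}{\Typ{-\lambda}}$ are simple, Schur's lemma forces each zig-zag composite to be a scalar multiple of the identity, so rigidity is equivalent to these two scalars being nonzero; a rescaling of the coevaluation then gives the axioms exactly, while the braiding of \cref{thm:vtencat} together with the self-duality $\brac*{\Typ{\lambda}}^\ast\cong\Typ{\lambda}$ of \cref{thm:duallist} lets me deduce the second identity from the first. The main obstacle is precisely this nonvanishing. I expect to settle it by an explicit computation in the $P(w)$-functional picture of \cref{def:HLZddual}: realising the zig-zag through the associativity isomorphism and evaluating on a relaxed highest-weight vector $u_j\in\Typ{\lambda}$, I would propagate the computation using the free-field vertex operators $\vop{\lambda}{z}$ and the contiguity relations \eqref{eq:rhwrels1}, \eqref{eq:rhwrels3} and \eqref{eq:rhwrels4}, exactly as in the proof of \cref{thm:stagfusion}. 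This expresses the scalar as an explicit product of $\lambda$-dependent factors of the type appearing in \eqref{eq:gamma0}, and the delicate point is to check that this product is nonzero for every $\lambda\in\RR/\ZZ$ with $\lambda\neq\ZZ$.
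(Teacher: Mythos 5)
Your structural frame agrees with the paper's proof: reduce to a single spectral-flow representative via \cref{thm:specflow}, obtain evaluation and coevaluation through the unique \(\Stag\) summand supplied by \cref{thm:stagfusion} (any nonzero map \(\VacMod\to\Stag\) has image \(\soc\Stag\) and any nonzero map \(\Stag\to\VacMod\) factors through \(\hd\Stag\), so up to scalar your abstract maps are the ones the paper builds concretely), then use Schur's lemma to identify each zig-zag composite with a scalar, so that rigidity reduces to those scalars being equal and nonzero. The genuine gap is that you never prove the nonvanishing: your last paragraph is a plan, not an argument, and this nonvanishing is precisely the substance of the paper's proof. It is not a formal consequence of anything you have established --- in a non-semisimple tensor category the composite of a nonzero coevaluation with a nonzero evaluation can perfectly well be zero, which is exactly why rigidity is delicate for logarithmic, non-\(C_2\)-cofinite theories.

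Moreover, the tools you cite would not suffice to carry out the computation as described. The relations \eqref{eq:rhwrels1}, \eqref{eq:rhwrels3} and \eqref{eq:rhwrels4} belong to the \(P(w)\) double-dual (three-point) picture and constrain functionals on \(\Mmod_1\otimes\Mmod_2\); the zig-zag scalar is a four-point quantity that passes through the associativity isomorphism, i.e.\ through analytic continuation between products and iterates of intertwining operators, which that technology does not see. To make the scalar computable, the paper defines the evaluation concretely as \(\scr{1}\circ\mathcal{Y}\) and the coevaluation by an explicit screened vertex-operator formula in the free field realisation of \cref{thm:ffr}, so that each zig-zag composite becomes a double contour integral \(I(w_1,w_2)\) of correlators of vertex operators \(\vop{\cdot}{z}\) and screening currents; these are evaluated in terms of beta and hypergeometric functions and shown to be nonzero (and equal for the two compositions, since \(I=\tilde I\) as integrals) at the specialisation \(w_2=2w_1\). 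With your purely abstract summand-inclusion/projection maps there is no handle for any such computation. Finally, your claim that the second zig-zag identity follows from the first ``by braiding and self-duality'' is unsubstantiated as stated; a correct formal route exists (once the coevaluation is nonzero, an associativity regrouping shows the two Schur scalars coincide), but it needs to be spelled out, whereas braiding in the paper is used only to pass from left to right rigidity.
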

\begin{proof}
  Recall that an object \(\Mmod\) in a tensor category is \emph{rigid} if
  there exists an object \(\Mmod^\vee\) (called a tensor dual of \(\Mmod\)) and two morphisms $e_\Mmod:  \Mmod^\vee
  \fuse \Mmod \ra \VacMod$ and $i_\Mmod: \VacMod \ra \Mmod \fuse \Mmod^\vee$,
  respectively, called evaluation and coevaluation, such that
  the compositions
  \begin{subequations}\label{eq:rigid}
    \begin{align} 
      &{\Mmod \cong \VacMod \fuse \Mmod \overset{i_\Mmod \otimes 1}{\lra}
        \brac*{\Mmod \fuse_{w_2} \Mmod^\vee } \fuse_{w_1} \Mmod
        \overset{\mathcal{A}^{-1}
        }{\lra} \Mmod \fuse_{w_2} \brac*{ \Mmod^\vee  \fuse_{w_1} \Mmod } \overset{1 \otimes e_\Mmod}{\lra} \Mmod \fuse \VacMod \cong \Mmod},\label{eq:rigid1}\\
      &{\Mmod^\vee \cong  \Mmod^\vee \fuse \VacMod \overset{1 \otimes
        i_\Mmod}{\lra} \Mmod^\vee \fuse_{w_2} \brac*{ \Mmod \fuse_{w_1}
        \Mmod^\vee } \overset{\mathcal{A}
        }{\lra} \brac*{ \Mmod^\vee \fuse_{w_2} \Mmod } \fuse_{w_1} \Mmod^\vee \overset{e_\Mmod \otimes 1}{\lra} \VacMod \fuse \Mmod^\vee \cong \Mmod^\vee},\label{eq:rigid2}
    \end{align}
  \end{subequations}
  yield the identity maps $1_\Mmod$ and $1_{\Mmod^\vee}$, respectively. Here
  \(w_1,w_2\) are distinct non-zero complex numbers satisfying
  \(|w_2|>|w_1|\) and \(|w_2|>|w_2-w_1|\); \(\fuse_w\) indicates
  the relative positioning of insertion points of fusion factors, that
  is, the right most factor will be inserted at 0, the middle factor
  at \(w_1\) and the left most at \(w_2\); 
  Technically there exist distinct notions of left and right duals and the above
  properties are those for left duals. We prove below that
  \(\Mmod=\sfmod{\ell}{\Typ{\lambda}}\) is left rigid. Right rigidity follows
  from left rigidity due to category \(\catFLE\) being braided.

  For $\Mmod = \sfmod{\ell}{\Typ{\lambda}}$ we take the tensor dual to be
  $\Mmod^\vee = \sfmod{1-\ell}{\Typ{-\lambda}}$ and
  we will construct the evaluation and coevaluation morphisms using
  the first free field realisation \eqref{eq:bosonisation} given in \cref{thm:ffr}.\ref{itm:ffr1}.
  In particular, we have
  \begin{equation}
     \sfmod{\ell}{\Typ{\lambda}} \cong \LFock{\lambda(\theta+\psi) +
       (\ell-1)\psi}, \qquad \sfmod{1-\ell}{\Typ{-\lambda}} \cong
     \LFock{-\lambda(\theta+\psi) - \ell \psi}, \qquad
     \ell\in\ZZ,\ \lambda\in \RR/\ZZ,\ \lambda\neq\ZZ.
  \end{equation}
  We denote fusion over the lattice \va{} \(\latva\) of the
  free field realisation by \(\ffuse\) to distinguish it from fusion
  over \(\bgva\). Recall that the fusion product of Fock spaces over the lattice \va{}
  \(\latva\) of the free field realisation just adds Fock space weights. 
  Thus the fusion product over \(\latva\) of the modules corresponding to
  \(\sfmod{\ell}{\Typ{\lambda}}\) and \(\sfmod{1-\ell}{\Typ{-\lambda}}\)
  is given by
  \begin{equation}
    \LFock{-\lambda(\theta+\psi) - \ell \psi} \ffuse \LFock{\lambda(\theta+\psi)+ (\ell-1) \psi}  \cong \LFock{-\psi} \cong \Typ{0}^-.
  \end{equation}
  Therefore we have the \(\latva\)-module map $\mathcal{Y}:
   \LFock{-\lambda(\theta+\psi) - \ell \psi}  \ffuse
   \LFock{\lambda(\theta+\psi) +(\ell - 1) \psi}\ra \LFock{-\psi}$ given
  by the intertwining operator that maps the kets in the Fock space
  \(\LFock{\lambda(\theta+\psi) +(\ell - 1) \psi}\) to vertex operators, that is, operators of the form \eqref{eq:vopformula}. Since
  \(\latva\)-module maps are also \(\bgva\)-module maps by restriction
  and since the fusion product of two modules over a vertex subalgebra
  is a quotient of the fusion product over the larger \va{},
  \(\mathcal{Y}\) also defines a \(\bgva\)-module map
  \( \LFock{-\lambda(\theta+\psi) - \ell \psi}\fuse
  \LFock{\lambda(\theta+\psi)+ (\ell-1) \psi} 
   \ra \LFock{-\psi}
  \cong \Typ{0}^-\). Furthermore, the screening operator
  \(\scr{1}=\oint \vop{\psi}{z}\dd z\)
  defines a \(\bgva\)-module map $\scr{1}: \LFock{-\psi} \ra
  \LFock{0} $ with the image being the bosonic ghost \va{}
  \(\bgva\). Up to a normalisation factor, to be determined later, we define the
  evaluation map for \(\Mmod=\sfmod{\ell}{\Typ{\lambda}}\) to be the
  composition of \(\mathcal{Y}\) and the screening operator \(\scr{1}\).
  \begin{equation}
    e_\Mmod = \scr{1} \circ \mathcal{Y}: \Mmod^\vee \fuse \Mmod \ra \VacMod .
  \end{equation}
 To define the coevaluation we need to identify a submodule of \(\Mmod\fuse
  \Mmod^\vee\) isomorphic to \(\VacMod\). By \cref{thm:stagfusion}, we know
  that \(\Mmod\fuse \Mmod^\vee\) has a direct summand isomorphic to \(\Stag\),
  which by \cref{thm:projconstr} we know has a submodule isomorphic to
  \(\VacMod\). It is this copy of \(\VacMod\) which the coevaluation shall map to.
  Since \(\VacMod\) is the vector space underlying the \va{} \(\bgva\) and any
  \va{} is generated from its vacuum vector, we 
  characterise the coevaluation map by the image of the vacuum vector.
  \begin{equation}
    i_M:\ \vac \lra \ket{0} \overset{\scr{1}^{-1}}{\lra} \ket{-\psi}
    \overset{}{\lra} \vop{(j-1)\psi+(j-\ell)\theta}{w}\ket{-j\psi-(j-\ell)\theta}
    \overset{\scr{1}}{\lra}
    \oint_{w} \scr{1}\brac*{z} \vop{(j-1)\psi+(j-\ell)\theta}{w}\ket{-j\psi-(j-\ell)\theta} \dd z ,
  \end{equation}
  where the first arrow is the inclusion of \(\VacMod\) into \( \LFock{0} \cong \Typ{0}^- \subset \Stag\), \(\scr{1}^{-1}\) denotes picking
  preimages of \(\scr{1}\) and \(j\) the unique representative of the coset
  \(\lambda\) satisfying \(0< j< 1\). Note that the ambiguity of
  picking preimages of \(\scr{1}\) in the second arrow is undone by
  reapplying \(\scr{1}\) in the fourth arrow and hence the map is
  well-defined. This map maps to \(\LFock{0}\), which is a
  submodule of \(\Stag\) as shown in \cref{thm:projconstr}.

  Note that since the modules \(\Mmod\) and \(\Mmod^\vee\) considered here are
  simple, the compositions of coevaluations and evaluations
  \eqref{eq:rigid} are proportional to the identity by Schur's
  lemma. Rigidity therefore follows, if we can show that the
  proportionality factors for \eqref{eq:rigid1} and \eqref{eq:rigid2}
  are equal and non-zero.

  We determine the proportionality factor for \eqref{eq:rigid1} by
  applying the map to the ket \(\ket{(j-1)\psi+(j-\ell)\theta}\in\LFock{\lambda(\psi+\theta)+(\ell-1)\theta}\cong\sfmod{\ell}{\Typ{\lambda}}\).
  Following the sequence of maps in \eqref{eq:rigid1} we get
\begin{gather}
  \ket{(j-1)\psi+(j-\ell)\theta}\ra
  \ket{0}\fuse \ket{(j-1)\psi+(j-\ell)\theta}\ra
  \oint_{w_1,w_2}
  \scr{1}\brac*{z}\vop{(j-1)\psi-(j-\ell)\theta}{w_2}\vop{-j\psi-(j-\ell)\theta}{w_1}\ket{(j-1)\psi-(j-\ell)\theta}\dd
  z\nonumber\\
  \ra \oint_{0,w_1} \oint_{w_1,w_2} \scr{1}\brac*{z_2} \scr{1}\brac*{z_1} \vop{(j - 1) \psi + (j-\ell) \theta}{w_2} \vop{-j\psi - (j-\ell) \theta}{w_1} \ket{(j -1) \psi + (j-\ell) \theta} \dd z_1 \dd z_2,
\end{gather}
  where \(\oint_{0,w_2}\) denotes a contour about \(0\) and \(w_2\)
  but not \(w_1\), \(\oint_{w_1,w_2}\) denotes a contour about \(w_1\) and \(w_2\)
  but not \(0\).
  The proportionality factor is obtained by pairing the above with the
  dual of the Fock space highest weight vector, which we denote by an empty bra
  \(\bra{}\),
  and thus equal to the matrix element
  \begin{align}
    I(w_1,w_2)  &= \oint_{0,w_1} \oint_{w_1,w_2} \bra{} \scr{1}\brac*{z_2} \scr{1}\brac*{z_1} \vop{(j - 1) \psi + (j-\ell) \theta}{w_2} \vop{-j\psi - (j-\ell) \theta}{w_1} \ket{(j -1) \psi + (j-\ell) \theta} \dd z_1 \dd z_2 \nonumber\\
       &= f(w_1, w_2)\oint_{0,w_1} \oint_{w_1,w_2} (z_2 - z_1) z_{2}^{j -1}  (z_2 - w_2)^{j - 1} (z_2 - w_1)^{-j}  z_1^{j -1}(z_1 - w_2)^{j -1} (z_1 - w_1)^{-j}    \dd z_1 \dd z_2 \nonumber\\
       &= f(w_1, w_2) \bigg( \oint_{0,w_1} z^j (z - w_2)^{j-1}
         (z - w_1)^{-j} \dd z \oint_{w_1,w_2} z^{j-1} (z -
         w_2)^{j-1} (z - w_1)^{-j} \dd z  \nonumber\\
       & \qquad \qquad \qquad -  \oint_{0,w_1} z^{j-1} (z - w_2)^{j-1} (z - w_1)^{-j} \dd z \oint_{w_1,w_2} z^{j} (z - w_2)^{j-1} (z - w_1)^{-j} \dd z \bigg),  \label{eq:example}
  \end{align}
 where
  \begin{equation}
    f(w_1, w_2) = (w_2 - w_1)^{\ell^2+j(1-2\ell)} w_{2}^{(j -1)(2j-\ell-1)} w_{1}^{\ell^2+j(1-2\ell)}. 
  \end{equation}
  Note that the second equality of \eqref{eq:example} is where the
  associativity isomorphisms are used to pass from compositions (or
    products) of vertex operators to their operator product expansions (also
    called iterates). For intertwining operators,
  associativity amounts to the analytic continuation of their series 
  expansions and then reexpanding in a different domain. On the \lhs{} of the
  second equality the intertwining operators (or here specifically vertex
  operators) are in radial ordering, while on the \rhs{} they have been
  analytically continued and then reexpanded as operator product expansions.
  By an analogous argument the proportionality factor produced by the
  sequence of maps \eqref{eq:rigid2} is the matrix element
  \begin{align}
    \tilde{I}(w_1,w_2)  &= \oint_{0,w_1} \oint_{w_1,w_2} \bra{}
                          \scr{1}\brac*{z_2} \scr{1}\brac*{z_1}
                          \vop{-j\psi - (j-\ell) \theta}{w_2}\vop{(j -
                          1) \psi + (j-\ell) \theta}{w_1} \ket{-j \psi
                          - (j-\ell) \theta} \dd z_1 \dd
                          z_2\nonumber\\
    &=f(w_1, w_2) \bigg( \oint_{0,w_1} z^j (z - w_2)^{j-1}
         (z - w_1)^{-j} \dd z \oint_{w_1,w_2} z^{j-1} (z -
         w_2)^{j-1} (z - w_1)^{-j} \dd z  \nonumber\\
       & \qquad \qquad \qquad -  \oint_{0,w_1} z^{j-1} (z - w_2)^{j-1} (z - w_1)^{-j} \dd z \oint_{w_1,w_2} z^{j} (z - w_2)^{j-1} (z - w_1)^{-j} \dd z \bigg).
  \end{align}
  Since both matrix elements are equal,
  \(I(w_1,w_2)=\tilde{I}(w_1,w_2)\), rigidity follows by showing that
  they are non-zero.

  We evaluate the four integrals appearing in \(I(w_1,w_2)\). We
  simplify the first integral using the substitution \(z=w_1x\).
  \begin{align}
    \oint_{0,w_1} z^j (z - w_2)^{j-1} (z - w_1)^{-j} \dd z 
	& = - w_{2}^{j-1} w_1 \oint_{0,1} x^j (1-x)^{-j} \left(1- \frac{w_1}{w_2}x \right)^{j-1} \dd x \nonumber\\
	& = - \left( e^{2\pi i j } -1 \right) w_{2}^{j - 1} w_1 \int_{0}^{1} x^{j} (1-x)^{-j} \left( 1- \frac{w_1}{w_2}x \right)^{j -1} \dd x \nonumber\\
	& = - \left( e^{2\pi i j } -1 \right) w_{2}^{j - 1} w_1 \ \bfn{1 + j}{1- j} \ \tFo{1-j}{1+ j}{2} {\frac{w_1}{w_2}},
  \end{align}
where the second equality follows by deforming the contour about 0 and
1 to a dumbbell or dog bone contour, whose end points vanish because
the contributions from the end points are $O(\varepsilon^{1+j})$ and
$O(\varepsilon^{1-j})$ respectively, and \(0<j<1\); and the third equality is the
integral representation of the hypergeometric function and \(B\) is
the beta function. Similarly,
  \begin{align}
    \oint_{0,w_1} z^{j-1} (z - w_2)^{j-1} (z - w_1)^{-j} \dd z 
	& = - \left( e^{2\pi i j } -1 \right) w_{2}^{j - 1} \ \mathrm{B}(j, 1- j) \ \tFo{1-j}{j}{1} {\frac{w_1}{w_2}}.
  \end{align}
For the integrals with contours about \(w_1\) and \(w_2\) we use the
substitution \(z=w_2-(w_2-w_1)x\) and then again obtain integral representations of
the hypergeometric function.
\begin{align}
  \oint_{w_1,w_2} z^{j-1} (z -
         w_2)^{j-1} (z - w_1)^{-j} \dd z&=(-1)^j\brac*{e^{2\pi i j}-1}
                                          w_2^{j-1}\bfn{j}{1-j} \tFo{1-j}{j}{1}{\frac{w_2-w_1}{w_2}},\nonumber\\
   \oint_{w_1,w_2} z^{j} (z -
         w_2)^{j-1} (z - w_1)^{-j} \dd z&=(-1)^j\brac*{e^{2\pi i j}-1}
                                          w_2^{j}\bfn{j}{1-j}  \tFo{-j}{j}{1}{\frac{w_2-w_1}{w_2}}.
\end{align}
Note that for the three integrals above, the end point contributions of the contour also vanish due to being $O(\varepsilon^{j})$ and $O(\varepsilon^{1-j})$ for 0 and 1 respectively.
  
Making use of the hypergeometric and beta function identities
  \begin{align}
    \tFo{1-\mu}{1+ \mu}{2} {\frac{w_2}{w_1}} &= \frac{w_1}{w_2} \ \tFo{-\mu}{ \mu}{1} {1 -\frac{w_2}{w_1}} ,\nonumber\\
    \tFo{1-\mu}{\mu}{1} {1-\frac{w_2}{w_1}} &= \tFo{1-\mu}{\mu}{1} {\frac{w_2}{w_1}} ,\nonumber\\
    \mathrm{B} (1+ \mu, 1-\mu) = \mu \ \mathrm{B}(\mu, 1-\mu) &= \frac{\pi \mu}{\sin(\pi \mu)},
  \end{align}
  the proportionality factor \(I(w_1,w_2)\) simplifies to
  \begin{equation}
    I(w_1,w_2) = (-1)^{j}f(w_1, w_2) \left( e^{2\pi i j } -1 \right)^2 w_{2}^{2j-1} \frac{\pi^2 (j-1) }{\sin(\pi j)^2} \ \tFo{-j}{ j}{1} {\frac{w_2-w_1}{w_2}} \ \tFo{1-j}{j}{1}{\frac{w_2}{w_1}} .
  \end{equation}
  Since $j \notin \ZZ$, \(I(w_1,w_2)\) can only vanish, if one of the
  hypergeometric factors does.
  We specialise the complex numbers \(w_1,w_2\), such that
  \(w_2=2w_1\). Then,
  \begin{equation}
    \tFo{1-j}{j}{1}{\frac{w_1}{w_2}} = \tFo{1-j}{j}{1}{\tfrac{1}{2}} = \frac{\Gam{\frac{1}{2}}\Gam{1}}{\Gam{1 - \frac{j}{2}}\Gam{\frac{1}{2} + \frac{j}{2}}} \neq 0 ,
  \end{equation}
  and the relationship between contiguous functions implies
  \begin{align}
    \tFo{-j}{j}{1} {\frac{w_2-w_1}{w_2}} &= \tfrac{1}{2} \left( \tFo{1-j}{ j}{1} {\tfrac{1}{2}} + \tFo{-j}{1+ \mu}{1} {\tfrac{1}{2}}  \right)\\ 
                                             & = \frac{\Gam{\frac{1}{2}}\Gam{1}}{\Gam{1 - \frac{j}{2}}\Gam{\frac{1}{2} + \frac{j}{2}}}
                                               \neq 0.
  \end{align}
  Thus \(I(w_1,w_2)\neq 0\) and we can rescale the evaluation map by
  \(I(w_1,w_2)^{-1}\) so that the sequences of maps \eqref{eq:rigid}
  are equal to the identity maps on \(\Mmod\) and \(\Mmod^\vee\). Thus
  \(\sfmod{\ell}{\Typ{\lambda}}\) is rigid.
\end{proof}

\section{Fusion product formulae}
\label{sec:fusion}

In this section we determine the decomposition of all fusion products
in category \(\catFLE\). A complete list of fusion products among
representatives of each spectral flow orbit is collected in
\cref{thm:fusionlist}, while the proofs of these decomposition formulae
have been split into the dedicated Subsections \ref{sec:simplefusion}
and \ref{sec:fusindecomp}. To simplify some of the decomposition
formulae we introduce dedicated notation for certain sums of spectral
flows of the projective module \(\Stag\). Consider the polynomial of
spectral flows
\begin{equation}
f_n(\sfaut)=\sum_{k=1}^n \sfaut^{2k-1},\qquad n\in\NN,
\end{equation}
and let
\begin{equation}
\Stsum{n}{}=f_n(\sfaut)\Stag = \bigoplus_{k=1}^n\Stmod{2k-1}, \qquad n\in\NN.
\end{equation}
Further, let
\begin{equation} \label{eq:polys}
\Stsum{n}{k}=\sfmod{k}{\Stsum{n}{}}, \qquad
\Stsum{m,n}{k}=\sfaut^{k-1} f_m(\sfaut)\Stsum{n}{}
=\bigoplus_{r=1}^{m+n-1} N_{r}\; \Stmod{k + 2r-1},
        \quad N_{r} = \min\{r,m,n,m+n-r\},
\qquad
m,n\in \NN, \quad k \in \ZZ.
\end{equation}

\begin{thm}\label{thm:fusionlist}\leavevmode
  \begin{enumerate}
    \item Category \(\catFLE\) under fusion is a rigid braided tensor
      category.
      \label{itm:rigid}
    \item The following is a list of all non-trivial fusion products, those
      not involving the fusion unit (the vacuum module \(\VacMod\)), in
      category \(\catFLE\) among representatives for each 
      spectral flow orbit. All other fusion products are determined from
      these through spectral flow and the compatibility of spectral flow with
      fusion as given in \cref{thm:specflow}.
      
      Since \(\catFLE\) is rigid, the fusion product of a projective module \(\module{R}\) with any
      indecomposable module \(\module{M}\) is given by
\begin{equation}
  \module{R} \fuse \module{M} \cong \bigoplus_{\module{S}} [\module{M}:\module{S}]\; \module{R}\fuse\module{S} ,
\end{equation}
where the summation index runs over all isomorphism classes of
composition factors of \(\module{M}\) and \([\module{M}:\module{S}]\) is the multiplicity
of the composition factor $\module{S}$ in \(\module{M}\).

For all \(\lambda,\mu\in \RR/\ZZ\), $\lambda, \mu, \lambda + \mu \neq \ZZ$, 
  \begin{align}\label{eq:simpleprojfuse}
    \begin{split}
      {\Typ{\lambda}} \fuse {\Typ{\mu}} &\cong {\Typ{\lambda + \mu}} \oplus \sfmod{-1}{\Typ{\lambda + \mu}},\\
      {\Typ{\lambda}} \fuse {\Typ{-\lambda}} &\cong \sfmod{-1}{\Stag}.
    \end{split}
  \end{align}
For $m, n \in \ZZ$, $m \ge n$, such that the lengths of indecomposables below
are positive, we have the following fusion product formulae.
\begin{subequations}
\begin{align}
  \Bmod{2m+1}{} \fuse \Bmod{2n+1}{} &\cong \Bmod{2m + 2n +1}{} \oplus
                                      \Stsum{m,n}{1}&
  \Tmod{2m+1}{} \fuse \Tmod{2n+1}{} &\cong \Tmod{2m + 2n +1}{} \oplus \Stsum{m,n}{1}\nonumber\\
  \Bmod{2m+1}{} \fuse \Bmod{2n}{} &\cong \Bmod{2n}{} \oplus \Stsum{m,n}{1}&
  \Tmod{2m+1}{} \fuse \Tmod{2n}{} &\cong \Tmod{2n}{} \oplus \Stsum{m,n}{1}\nonumber\\
  \Bmod{2m}{} \fuse \Bmod{2n}{} &\cong \Bmod{2n}{2m-1} \oplus \Bmod{2n}{}
                                  \oplus \Stsum{m-1,n}{1}&
  \Tmod{2m}{} \fuse \Tmod{2n}{} &\cong \Tmod{2n}{2m-1} \oplus \Tmod{2n}{} \oplus \Stsum{m-1,n}{1} \label{eq:indfus1}\\
  \Tmod{2m+1}{} \fuse \Bmod{2n+1}{} &\cong \Tmod{2m-2n+1}{2n}  \oplus
                                      \Stsum{m+1,n}{}&
  \Bmod{2m+1}{} \fuse \Tmod{2n+1}{} &\cong \Bmod{2m-2n+1}{2n}  \oplus \Stsum{m+1,n}{}\nonumber\\
  \Tmod{2m}{} \fuse \Bmod{2n+1}{} &\cong \Tmod{2m}{2n}  \oplus \Stsum{m,n}{}&
  \Bmod{2m}{} \fuse \Tmod{2n+1}{} &\cong \Bmod{2m}{2n}  \oplus \Stsum{m,n}{} \nonumber\\
  \Tmod{2m}{} \fuse \Bmod{2n}{} &\cong \Stsum{m,n}{}&\Bmod{2m}{} \fuse \Tmod{2n}{} &\cong \Stsum{m,n}{}
                                                                                     \label{eq:indfus2}
\end{align}
\end{subequations}
\end{enumerate}
\end{thm}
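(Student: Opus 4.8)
The plan is to treat the rigidity assertion, Part~\ref{itm:rigid}, first — \cref{thm:vtencat} already provides the braided tensor structure, so only rigidity remains — and then to exploit rigidity to reduce every fusion product to fusions of simple modules, which I would compute directly. The skeleton is: (i) show that every indecomposable projective is rigid; (ii) propagate rigidity to all of \(\catFLE\); (iii) compute the simple fusion products \eqref{eq:simpleprojfuse} through the free field realisation; and (iv) bootstrap to arbitrary indecomposables using right-exactness of fusion and induction on length. For (i), the modules \(\sfmod{\ell}{\VacMod}\) are invertible: by \cref{thm:specflow} one has \(\sfmod{\ell}{\VacMod}\fuse\sfmod{-\ell}{\VacMod}\cong\VacMod\), so they are rigid with dual \(\sfmod{-\ell}{\VacMod}\); and the simple projectives \(\sfmod{\ell}{\Typ{\lambda}}\) are rigid by \cref{thm:rigidprojsimples}.

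It remains to handle \(\Stag\) and then to globalise. Setting \(\ell=k=0\) in \cref{thm:stagfusion} exhibits \(\sfmod{-1}{\Stag}\) as a direct summand of \(\Typ{\lambda}\fuse\Typ{-\lambda}\); the latter is a fusion of two rigid objects, hence rigid, and a direct summand of a rigid object is rigid, so \(\Stag\) — and, spectral flow being an invertible monoidal functor, every \(\sfmod{\ell}{\Stag}\) — is rigid. With the classification of projectives this makes \emph{every} indecomposable projective rigid. Passing from here to full rigidity is the conceptual heart and, I expect, the main obstacle: the retract-of-rigid argument cannot reach the non-projective indecomposables \(\Bmod{n}{m},\Tmod{n}{m}\), since every fusion product of the rigid objects in hand is either a spectral flow of a simple or a projective module, and a non-projective object is never a summand of a projective one. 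I would therefore invoke the categorical principle that a braided tensor category with enough projectives whose indecomposable projectives are all rigid is itself rigid — equivalently, that \(\bigoplus_{\ell}\sfmod{\ell}{\Stag}\oplus\bigoplus_{\lambda}\Typ{\lambda}\) is a rigid projective generator. Once duals exist for all objects, their explicit identification (for instance the tensor dual of \(\Bmod{m}{n}\) being \(\Tmod{m}{n}\)) follows from \cref{thm:dual,thm:duallist}.

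For step (iii) I would use the lattice realisation of \cref{thm:ffr}: fusion of Fock modules over \(\latva\) simply adds Heisenberg weights, so \(\LFock{\Lambda_1}\ffuse\LFock{\Lambda_2}\cong\LFock{\Lambda_1+\Lambda_2}\), and restriction along \(\phi_1\) turns this into a \(\bgva\)-module map. Then \cref{thm:closurecomps} confines the composition factors of \(\Typ{\lambda}\fuse\Typ{\mu}\) to \(\catR\) and \(\sfmod{-1}{\catR}\); as a fusion of projectives this product is projective, and the adjunction \(\dHom{\Mmod_1\fuse\Mmod_2}{\Nmod}\cong\dim\Hom(\Mmod_1,\Nmod\fuse\Mmod_2^\vee)\) together with Schur's lemma pins down the multiplicities, giving \(\Typ{\lambda+\mu}\oplus\sfmod{-1}{\Typ{\lambda+\mu}}\) for \(\lambda+\mu\neq\ZZ\) and, via \cref{thm:stagfusion} with a composition-factor count, \(\sfmod{-1}{\Stag}\) for \(\mu=-\lambda\); fusions with \(\sfmod{\ell}{\VacMod}\) are automatic as the latter are invertible.

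Finally, for step (iv), rigidity makes \(\Stag\) and \(\Typ{\lambda}\) exact tensorands, so for projective \(\module{R}\) the functor \(\module{R}\fuse(-)\) is exact and lands in projectives; since a projective object is determined by its composition-factor multiplicities, this yields \(\module{R}\fuse\module{M}\cong\bigoplus_{\module{S}}[\module{M}:\module{S}]\,\module{R}\fuse\module{S}\). For products of two non-projective indecomposables I would apply the right-exact functor \((-)\fuse\Bmod{m}{}\) to the defining sequences of \cref{thm:classify} and \cref{thm:indseqs}, inducting on \(n\) and reducing the base cases through \eqref{eq:Weqseq} to the simple products already computed. At each stage the projective \(\Stsum{m,n}{k}\)-summands split off because they are injective, and the residual non-projective indecomposable is pinned down by the \(\Hom\)-dimension data of \cref{thm:homexttabs}. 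The second genuine difficulty lives here: upgrading Grothendieck-group identities to honest direct-sum decompositions — ruling out non-split extensions and extracting the exact multiplicities \(N_r=\min\{r,m,n,m+n-r\}\) — which is where the bulk of the combinatorial bookkeeping will lie.
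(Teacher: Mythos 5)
Your proposal follows the paper's architecture (rigidity of the simple projectives, rigidity of \(\Stag\) as a retract of \(\Typ{\lambda}\fuse\Typ{-\lambda}\), a bootstrap to rigidity of all of \(\catFLE\), then exactness of fusion plus induction along defining exact sequences with the \(\Hom\)/\(\Ext\) data), but the two steps you yourself flag as the hard ones are filled with assertions that do not go through as stated. For the bootstrap, you invoke ``the categorical principle that a braided tensor category with enough projectives whose indecomposable projectives are all rigid is itself rigid.'' There is no such principle available to cite, and your formulation omits the hypothesis that actually does the work: that \(\catFLE\) is \emph{Frobenius}, i.e.\ projectives and injectives coincide (\cref{thm:pproj}). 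The paper's \cref{thm:rigid} argues via exactly this: in a Frobenius braided tensor category, any short exact sequence with two rigid terms (whose duals are also rigid) has rigid third term, and every non-projective indecomposable is then reached from the rigid projectives through its presentations (\cref{thm:covhulpres}, \cref{thm:indseqs}), e.g.\ the kernel of \(\Proj{\Bmod{2k+1}{m}}\ra\Bmod{2k+1}{m}\) is \(\Bmod{2k-1}{m}\) of strictly smaller length, so induction applies. Your ``equivalent'' reformulation via the generator \(\bigoplus_{\ell}\sfmod{\ell}{\Stag}\oplus\bigoplus_{\lambda}\Typ{\lambda}\) is not even an object of \(\catFLE\), which consists of finite length modules.

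The second gap is the multiplicity determination in \eqref{eq:simpleprojfuse}. Your mechanism --- the adjunction \(\dHom{\Mmod_1\fuse\Mmod_2}{\Nmod}=\dHom{\Mmod_1}{\Nmod\fuse\Mmod_2^\vee}\) plus Schur's lemma --- is circular: since \(\Typ{\mu}^\vee\cong\sfmod{1}{\Typ{-\mu}}\), the right-hand side is a \(\Hom\) group into \(\sfmod{\ell}{\Typ{\lambda+\mu}}\fuse\sfmod{1}{\Typ{-\mu}}\), again an unknown fusion of two typical simples, i.e.\ precisely the kind of product being computed. The lattice realisation cannot close this loop either: restriction along \(\phi_1\) only produces a surjection from the \(\bgva\)-fusion onto the \(\latva\)-fusion, hence \emph{lower} bounds on multiplicities, never upper bounds. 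An external input is genuinely needed here, and the paper supplies it in \cref{thm:nonstagfusion} by importing the Adamovi\'{c}--Pedi\'{c} dimension count \eqref{eq:dimintops} of intertwining operators of type \(\binom{\Mmod}{\Typ{\lambda},\ \Typ{\mu}}\) (equivalently, by the universal property of the \(P(w)\)-tensor product, of \(\dHom{\Typ{\lambda}\fuse\Typ{\mu}}{\Mmod}\)); alternatively one can redo an HLZ double-dual computation in the style of \cref{thm:stagfusion}. Your step (iv) --- fusing the sequences of \cref{thm:classify,thm:indseqs}, splitting off injective \(\Stag\)-summands, and pinning the residual indecomposable by one-dimensional extension groups as in \cref{prop:loewy} --- is the paper's argument in \cref{sec:fusion} and is sound once the two gaps above are repaired.
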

We split the proof of \cref{thm:fusionlist} into multiple
parts. \cref{thm:fusionlist}.\ref{itm:rigid} is shown in
\cref{thm:rigid}. The fusion formulae \eqref{eq:simpleprojfuse},\eqref{eq:indfus1},\eqref{eq:indfus2} are
determined in
\cref{thm:nonstagfusion,thm:stagfusion,thm:pureindfus,thm:mixindfus}, respectively.

\begin{rmk}
  The fusion product formulae of \cref{thm:fusionlist} projected onto the
  Grothendieck group match the conjectured Verlinde formula of
  \cite[Corollaries 7 and 10]{RidBos14}, thereby proving that category
  \(\catFLE\) satisfies the standard module formalism version of the Verlinde
  formula. It will be an interesting future problem to find a more conceptual and
  direct proof for the validity of the Verlinde formula, rather than a proof by
  inspection.
\end{rmk}

\subsection{Fusion products of simple projective modules}
\label{sec:simplefusion}

In this section we determine the fusion products of the simple
projective modules.

\begin{prop}\label{thm:nonstagfusion}
  For \(\lambda,\mu\in \RR/\ZZ\), \(\lambda,\mu,\lambda + \mu \notin \ZZ\), we have
  \begin{equation}
    {\Typ{\lambda}} \fuse {\Typ{\mu}} \cong {\Typ{\lambda + \mu}} \oplus \sfmod{-1}{\Typ{\lambda + \mu}}. 
  \end{equation}
\end{prop}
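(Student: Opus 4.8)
The plan is to use the Huang--Lepowsky--Zhang double dual construction of \cref{def:HLZddual} and \cref{thm:fusiondef}, exactly as in the proof of \cref{thm:stagfusion}, separating the argument into a \emph{qualitative} step that pins down the possible constituents and a \emph{quantitative} step that fixes their multiplicities. For the qualitative step, since $\Typ{\lambda},\Typ{\mu}\in\catR$, \cref{thm:closurecomps}.\ref{itm:projfuscalc} implies that $\Typ{\lambda}\fuse\Typ{\mu}\in\catFLE$ with every composition factor lying in $\catR$ or $\sfmod{-1}{\catR}$. Because $J_0$ acts as a derivation on fusion products (take a residue of the Jacobi identity, as in \cref{thm:closurecomps}.\ref{itm:gradcomp}), ghost weights add, so every composition factor must have ghost weight congruent to $\lambda+\mu$ modulo $\ZZ$. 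Since $\lambda+\mu\neq\ZZ$, this excludes the vacuum-type factors $\VacMod$ and $\sfmod{-1}{\VacMod}$, both of which have integer ghost weight by \eqref{eq:twistwts}, and leaves only $\Typ{\lambda+\mu}\in\catR$ and $\sfmod{-1}{\Typ{\lambda+\mu}}\in\sfmod{-1}{\catR}$. Both are simple, projective and injective by \cref{thm:VacExt}, so $\Typ{\lambda}\fuse\Typ{\mu}$ can only be a direct sum of copies of these two modules.

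For the quantitative step I would compute on the double dual $\Typ{\lambda}\ddfuse\Typ{\mu}\cong(\Typ{\lambda}\fuse\Typ{\mu})'$, which by \cref{thm:duallist}.\ref{itm:resdual} is a direct sum of copies of $\Typ{-(\lambda+\mu)}$ and $\sfmod{1}{\Typ{-(\lambda+\mu)}}$. The multiplicity of each simple projective summand $N$ equals $\dHom{N}{\Typ{\lambda}\ddfuse\Typ{\mu}}$, and since $N$ is simple such a homomorphism is determined by the image of its relaxed highest weight generator; hence each multiplicity equals the dimension of the space of relaxed highest weight functionals of the appropriate type. Following the template of \cref{thm:stagfusion}, \cref{thm:pzcompatgens} reduces $P(w)$-compatibility to the generators $\beta$ and $\gamma$, and the identities \eqref{eq:rhwrels3} and \eqref{eq:rhwrels4} show that any compatible functional $\psi$ annihilated by $\beta_n$ and $\gamma_{n+1}$ for $n\ge1$ is determined by its values on the tensor products $u_j\otimes u_i$ of relaxed highest weight vectors of the two factors. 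Imposing the relevant $J_0$-eigenvalue equation through \eqref{eq:rhwrels1} then restricts the support to a single diagonal family; refining this family by the extra condition $\gamma_1\psi=0$ isolates one copy of $\Typ{-(\lambda+\mu)}$, while the condition $\beta_0\psi=0$ isolates one copy of $\sfmod{1}{\Typ{-(\lambda+\mu)}}$, giving multiplicity one each.

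The main obstacle I expect is precisely this last piece of bookkeeping: carefully identifying the minimal conformal weights at the relevant ghost weights, and verifying that the separating conditions $\gamma_1\psi=0$ and $\beta_0\psi=0$ each cut out exactly one summand, without overlap or higher multiplicity. As an independent confirmation of the lower bounds I would use the lattice free field realisation of \cref{thm:ffr}: since fusion over the lattice algebra $\latva$ merely adds Fock space weights and the $\bgva$-fusion surjects onto the $\latva$-fusion, suitable choices of coset representative exhibit each of $\Typ{\lambda+\mu}$ and $\sfmod{-1}{\Typ{\lambda+\mu}}$ as a quotient of $\Typ{\lambda}\fuse\Typ{\mu}$, so that both occur at least once. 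Matching these lower bounds against the upper bound of the qualitative step already forces the claimed isomorphism, and as a final consistency check it agrees, on the Grothendieck group, with the intertwining operator dimensions computed in \cite{AdaBG19}.
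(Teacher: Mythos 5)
Your proposal is correct, and its qualitative step is exactly the paper's: \cref{thm:closurecomps} confines the composition factors to $\catR$ and $\sfmod{-1}{\catR}$, additivity of ghost weight under fusion rules out the vacuum-type factors since $\lambda+\mu\neq\ZZ$, and projectivity/injectivity of $\Typ{\lambda+\mu}$ and $\sfmod{-1}{\Typ{\lambda+\mu}}$ (\cref{thm:VacExt}) turns composition factors into direct summands. Where you genuinely diverge is the multiplicity count: the paper closes the argument in one line by citing \cite[Corollary 6.1]{AdaBG19}, which gives $\dim\binom{\module{M}}{\Typ{\lambda},\ \Typ{\mu}}=1$ for $\module{M}\cong\sfmod{\ell}{\Typ{\lambda+\mu}}$, $\ell=0,-1$. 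Your plan --- an explicit HLZ double-dual computation modelled on \cref{thm:stagfusion} for the upper bound, together with free-field intertwining operators for the lower bound --- is precisely the self-contained alternative that the paper sketches in the remark immediately following this proposition; it buys independence from the literature at the price of the bookkeeping you flag. Your separating conditions do work, for the reason you hint at: since $\lambda+\mu\neq\ZZ$, both $\beta_0$ and $\gamma_0$ act injectively on $\Typ{-(\lambda+\mu)}$, so imposing $\gamma_1\psi=0$ on top of the base conditions annihilates every $\sfmod{1}{\Typ{-(\lambda+\mu)}}$ summand of the double dual (there twisted $\gamma_1$ acts as untwisted $\gamma_0$), while $\beta_0\psi=0$ annihilates every $\Typ{-(\lambda+\mu)}$ summand, so the two counts are cleanly separated.

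Two statements in your final paragraph need repair, though neither is fatal to the strategy. First, ``matching these lower bounds against the upper bound of the qualitative step'' proves nothing: the qualitative step bounds no multiplicities at all (any finite number of copies of each summand is consistent with it), so the upper bound of one copy each must come from your quantitative double-dual computation (or from \cite{AdaBG19}); only then do the free-field lower bounds finish the job. Second, the two summands are not reached by ``suitable choices of coset representative'': within either embedding of \cref{thm:ffr} the identification of $\LFock{\Lambda}$ as a $\bgva$-module depends only on the coset $\Lambda$, so a single realisation produces only one of the two summands. You need both embeddings --- $\phi_2$ exhibits $\Typ{\lambda+\mu}$ as a quotient of $\Typ{\lambda}\fuse\Typ{\mu}$, while $\phi_1$ exhibits $\sfmod{-1}{\Typ{\lambda+\mu}}$ --- which is exactly why the paper's remark refers to using the two free field realisations of \cref{sec:ffr}.
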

\begin{proof}
  Since \(\Typ{\lambda}\) and \(\Typ{\mu}\) both lie in category
  \(\catR\), we know, by \cref{thm:closurecomps},
  that the composition factors of the fusion product lie in
  categories \(\catR\) or \(\sfmod{-1}{\catR}\). Further, since
  \(J(z)\) is a conformal weight 1 field, its corresponding weight, the ghost
  weight, adds under fusion. Therefore the only possible composition
  factors are \(\Typ{\lambda+\mu}\) and
  \(\sfmod{-1}{\Typ{\lambda+\mu}}\). Since these composition factors
  are both projective and injective, they can only appear as direct
  summands and all that remains is to determine their multiplicity.
  In \cite{AdaBG19} Adamovi{\'c} and Pedi{\'c} computed dimensions of spaces
  of intertwining operators for fusion products of the simple
  projective modules. In particular, \cite[Corollary 6.1]{AdaBG19} states that
  \begin{equation}
    \label{eq:dimintops}
    \dim\binom{\module{M}}{\Typ{\lambda},\ \Typ{\mu}}=1,
  \end{equation}
  if \(\module{M}\) is isomorphic to
  \(\sfmod{\ell}{\Typ{\lambda+\mu}}$, $\ell=0,-1\). Thus the proposition follows.
\end{proof}

\begin{rmk}
  To prove the above proposition directly without citing the
  literature, we could have used the two free field realisations in
  \cref{sec:ffr} to construct intertwining operators of the type appearing in
  equation \eqref{eq:dimintops}, thereby showing that the dimension
  of the corresponding space of intertwining operators is at least
  1. This was also done in \cite{AdaBG19}. An upper bound of 1 can
  then easily be determined by calculations involving either the HLZ
  double dual construction (similar to the calculations done in
  \cref{thm:stagfusion}) or the NGK algorithm. 
\end{rmk}

\begin{prop}
  For \(\lambda\in \RR/\ZZ\), \(\lambda \neq \ZZ\), we have
  \begin{equation}
    \Typ{\lambda} \fuse \Typ{-\lambda} \cong \sfmod{-1}{\Stag}.
  \end{equation}
\end{prop}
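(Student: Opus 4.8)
The plan is to combine the partial information already extracted in \cref{thm:stagfusion} with a head computation that rules out any further summands. By \cref{thm:stagfusion} (taken with $\ell=k=0$) the fusion product $\Typ{\lambda}\fuse\Typ{-\lambda}$ has exactly one direct summand isomorphic to $\sfmod{-1}{\Stag}$. Since $\sfmod{-1}{\Stag}$ is injective by \cref{thm:pproj}, that summand splits off, so I may write $\Typ{\lambda}\fuse\Typ{-\lambda}\cong\sfmod{-1}{\Stag}\oplus\module{X}$ for some $\module{X}\in\catFLE$; the whole task reduces to showing $\module{X}=0$.

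First I would pin down the possible composition factors. By \cref{thm:closurecomps} the product lies in $\catFLE$ with composition factors only in $\catR$ and $\sfmod{-1}{\catR}$. Because $J$ has conformal weight $1$, ghost weights add under fusion, so every generalised $J_0$ weight of the product lies in the coset $\lambda+(-\lambda)=\ZZ$, i.e.\ is an integer. The only simple modules in $\catR\cup\sfmod{-1}{\catR}$ with integer ghost weight are the spectral flows $\VacMod$, $\sfmod{-1}{\VacMod}$ and $\sfmod{-2}{\VacMod}$, which are precisely the composition factors of $\sfmod{-1}{\Stag}$. In particular every composition factor of $\module{X}$ is a spectral flow of $\VacMod$.

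The key step is then to compute the head. By the universal property of the $P(w)$-tensor product \cite{HuaLog}, intertwining operators of type $\binom{\sfmod{m}{\VacMod}}{\Typ{\lambda},\ \Typ{-\lambda}}$ correspond to homomorphisms out of the product, so for each $m\in\ZZ$ one has $\dim\Homgrp{\Typ{\lambda}\fuse\Typ{-\lambda}}{\sfmod{m}{\VacMod}}=\dim\binom{\sfmod{m}{\VacMod}}{\Typ{\lambda},\ \Typ{-\lambda}}=\delta_{m,-1}$, the last equality being the intertwining-operator count of Adamovi\'{c} and Pedi\'{c} \cite{AdaBG19} (the $\lambda+\mu\in\ZZ$ companion to the computation invoked in \cref{thm:nonstagfusion}). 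Since $\hd\Stag\cong\VacMod$ (\cref{thm:projconstr}, cf.\ \cref{thm:homexttabs}), the corresponding $\Hom$ dimension for $\sfmod{-1}{\Stag}$ is also $\delta_{m,-1}$, so additivity of $\Hom$ over the direct sum forces $\Homgrp{\module{X}}{\sfmod{m}{\VacMod}}=0$ for all $m$. As $\module{X}$ is a finite-length module whose composition factors are all spectral flows of $\VacMod$, a nonzero $\module{X}$ would possess a nonzero semisimple head and hence a nonzero quotient map onto some $\sfmod{m}{\VacMod}$, a contradiction. Therefore $\module{X}=0$ and $\Typ{\lambda}\fuse\Typ{-\lambda}\cong\sfmod{-1}{\Stag}$.

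I expect the main obstacle to be the head computation, that is, justifying $\dim\binom{\sfmod{m}{\VacMod}}{\Typ{\lambda},\ \Typ{-\lambda}}=\delta_{m,-1}$. If one prefers not to import this from \cite{AdaBG19}, I would instead bound the head directly inside the HLZ double dual $\Typ{\lambda}\ddfuse\Typ{-\lambda}$, reusing the machinery of \cref{thm:stagfusion}: the calculation there already produces a two-dimensional $K(\Typ{\lambda}\ddfuse\Typ{-\lambda})$ carrying a rank-two $L_0$ block, and a parallel analysis of the functionals annihilated by the relevant positive modes would cap the multiplicity of each $\sfmod{m}{\VacMod}$ in the socle of $\Typ{\lambda}\ddfuse\Typ{-\lambda}$, equivalently in the head of the product. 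This dualising step, rather than the structural bookkeeping in the earlier paragraphs, is where the genuine work lies.
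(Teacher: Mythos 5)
Your proof is correct, but it takes a genuinely different route from the paper's. The paper argues top-down from rigidity: since \(\Typ{\lambda}\) is rigid (\cref{thm:rigidprojsimples}) and \(\Typ{-\lambda}\) is projective, the fusion product must be projective; \cref{thm:closurecomps} together with additivity of ghost weights then forces it to be a direct sum of copies of \(\sfmod{-1}{\Stag}\), and \cref{thm:stagfusion} pins the multiplicity at one. You instead write \(\Typ{\lambda}\fuse\Typ{-\lambda}\cong\sfmod{-1}{\Stag}\oplus\module{X}\) using \cref{thm:stagfusion} and kill \(\module{X}\) by a head computation: the universal property of the \(P(w)\)-tensor product identifies \(\Homgrp{\Typ{\lambda}\fuse\Typ{-\lambda}}{\sfmod{m}{\VacMod}}\) with a space of intertwining operators, whose dimension \(\delta_{m,-1}\) equals \(\dim\Homgrp{\sfmod{-1}{\Stag}}{\sfmod{m}{\VacMod}}\), so \(\hd\module{X}=0\); since \(\module{X}\) has finite length with all composition factors spectral flows of \(\VacMod\), this forces \(\module{X}=0\). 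Your route has the virtue of bypassing rigidity of \(\Typ{\lambda}\) entirely, which is the paper's hardest input (it rests on the free-field evaluation/coevaluation and hypergeometric computations), whereas the paper's route simply reuses machinery it needs anyway for the global rigidity theorem. The one point you must nail down is the dimension count \(\dim\binom{\sfmod{m}{\VacMod}}{\Typ{\lambda},\ \Typ{-\lambda}}=\delta_{m,-1}\): the paper invokes \cite[Corollary 6.1]{AdaBG19} only for the case \(\lambda+\mu\notin\ZZ\), so you should verify that the \(\lambda+\mu\in\ZZ\) companion statement genuinely appears in \cite{AdaBG19} (the introduction of this paper asserts that intertwining operators among all simple modules of \(\catFLE\) were computed there, so this is plausible, but the burden is on you). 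Failing that, your fallback via the socle of \(\Typ{\lambda}\ddfuse\Typ{-\lambda}\) is sound in principle and is indeed where the real work would lie; note only that under \(\Typ{\lambda}\fuse\Typ{-\lambda}\cong\brac*{\Typ{\lambda}\ddfuse\Typ{-\lambda}}'\) the multiplicity of \(\sfmod{m}{\VacMod}\) in the socle of the double dual corresponds to that of \(\sfmod{-1-m}{\VacMod}\) in the head of the product, a dualisation your sketch elides.
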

\begin{proof}
  By \cref{thm:rigidprojsimples}, \(\Typ{\lambda}\) is rigid and hence its
  fusion product with a projective module must again be projective.
  Further, by \cref{thm:closurecomps}, all
  composition factors must lie in categories \(\sfmod{\ell}{\catR}$, $\ell=-1, 0\). Finally, since ghost weights add under fusion, the
  ghost weights of the fusion product must lie in \(\ZZ\). Thus the fusion product must be isomorphic to a direct
  sum of some number of copies of \(\sfmod{-1}{\Stag}\). By
  \cref{thm:stagfusion}, we know there is exactly one such summand.
\end{proof}

\begin{prop}\label{thm:rigid}
  Category \(\catFLE\) is rigid.
\end{prop}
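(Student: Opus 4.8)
The plan is to reduce rigidity of the whole category to rigidity of its indecomposable projectives, and then to invoke the general principle that a braided tensor category with enough projectives whose tensor unit has a rigid projective cover is itself rigid. Before doing anything I would record the three closure properties of rigidity that are available in the abelian braided tensor category \(\catFLE\): the tensor dual of a fusion product of rigid modules \(\Mmod\fuse\Nmod\) is \(\Nmod^\vee\fuse\Mmod^\vee\), so fusion products of rigid modules are rigid; finite direct sums, and hence (idempotents splitting in an abelian category) direct summands, of rigid modules are rigid; and, since \cref{thm:specflow} gives \(\sfmod{\ell}{\Mmod}\fuse\sfmod{-\ell}{\Nmod}\cong\sfmod{0}{\brac*{\Mmod\fuse\Nmod}}=\Mmod\fuse\Nmod\) with \(\VacMod=\sfmod{0}{\VacMod}\) the fusion unit, the spectral flow \(\sfmod{\ell}{\Mmod}\) of a rigid module \(\Mmod\) is again rigid, with tensor dual \(\sfmod{-\ell}{\Mmod^\vee}\).

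Next I would feed in the two inputs already established. By \cref{thm:rigidprojsimples} every simple projective \(\sfmod{\ell}{\Typ{\lambda}}\) is rigid. The immediately preceding proposition gives \(\Typ{\lambda}\fuse\Typ{-\lambda}\cong\sfmod{-1}{\Stag}\), exhibiting \(\sfmod{-1}{\Stag}\) as a fusion product of rigid modules; it is therefore rigid, and applying spectral flow shows that every \(\sfmod{\ell}{\Stag}\) is rigid. Since \(\sfmod{\ell}{\Typ{\lambda}}\) and \(\sfmod{\ell}{\Stag}\) are the only indecomposable projectives in \(\catFLE\) (by \cref{thm:pproj} together with the classification of indecomposables in \cref{thm:classify}), every projective object of \(\catFLE\), being a finite direct sum of these, is rigid.

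Finally I would upgrade rigidity from projectives to all objects. The category \(\catFLE\) has enough projectives by \cref{thm:pproj}, and the projective cover of the tensor unit is \(\Proj{\VacMod}=\Stag\), which we have just shown to be rigid. Applying the general rigidity-propagation result then yields that \(\catFLE\) is rigid. This last step is the real obstacle: rigidity is \emph{not} inherited by arbitrary extensions or subquotients, so one cannot simply read it off for the non-projective indecomposables \(\sfmod{m}{\Bmod{n}{}}\) and \(\sfmod{m}{\Tmod{n}{}}\) from the rigidity of their composition factors. What makes the propagation work is precisely that fusing with a rigid (hence projective) module is exact, combined with \(\catFLE\) having enough projectives and injectives; verifying that \(\catFLE\) satisfies the hypotheses of this principle in the present non-\(C_2\)-cofinite, infinite setting — or reproving the principle there — is where the genuine work of the proof is concentrated.
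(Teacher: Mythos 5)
Your first two paragraphs are correct and coincide with the paper's argument: all indecomposable projectives are rigid, because the simple projectives \(\sfmod{\ell}{\Typ{\lambda}}\) are rigid by \cref{thm:rigidprojsimples}, they generate \(\sfmod{\ell}{\Stag}\) under fusion via \(\Typ{\lambda}\fuse\Typ{-\lambda}\cong\sfmod{-1}{\Stag}\), and rigidity passes to fusion products, direct sums/summands and spectral flow twists.

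The gap is in your final step, and it is exactly where you admit it is. You appeal to a ``general rigidity-propagation result'' --- that a braided tensor category with enough projectives whose tensor unit has a rigid projective cover is rigid --- but you cannot point to such a theorem, and you concede it would have to be ``reproved'' in this non-\(C_2\)-cofinite setting. That concession is fatal: reproving that principle here \emph{is} the proposition being proved, so as written the proposal is circular at its crux. Moreover, your parenthetical claim that ``rigidity is not inherited by arbitrary extensions or subquotients'' points away from the mechanism the paper actually uses. The paper's proof closes the gap concretely: \(\catFLE\) is a \emph{Frobenius} category (there are enough projectives and injectives, and the two classes coincide, by \cref{thm:pproj} and \cref{thm:VacExt}), and in such a tensor category any short exact sequence in which two of the three terms are rigid (with rigid duals) has a rigid third term --- fusion with a rigid, hence projective and injective, object is biexact, so the candidate dual of the third term can be built from the duals of the other two and the zig-zag identities checked. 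Granting this two-out-of-three property, rigidity propagates to every object by induction on length, using the explicit non-split sequences of the classification (\cref{thm:classify}, \cref{thm:indseqs}): for instance \(\dses{\Vmod{}}{}{\Tmod{2}{}}{}{\Vmod{1}}\) gives rigidity of the length-2 indecomposables from rigidity of the unit and its spectral flows, and the sequences \eqref{eq:defseq} then handle all longer \(\Bmod{}{}\) and \(\Tmod{}{}\) modules. Your proposal is missing this Frobenius two-out-of-three lemma (or any substitute for it), so the decisive step from ``all projectives are rigid'' to ``all objects are rigid'' is not established.
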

\begin{proof}
  Category \(\catFLE\) has sufficiently many injective and projective
  modules, that is,  all simple modules have projective covers and
  injective hulls, and all projectives are injective and
  vice-versa. Further, the simple projective 
  modules \(\sfmod{\ell}{\Typ{\lambda}}\) are rigid and generate the
  non-simple projective modules under fusion, so all projective modules are
  rigid. Catefory \(\catFLE\) is therefore a Frobenius category and hence any
  for short exact sequence with two rigid terms (whose duals are also rigid)
  the third term is also rigid. This implies that all modules are rigid and hence so is category
  \(\catFLE\).
\end{proof}

\begin{cor}\label{thm:stardual}
  Let \(\Mmod,\Nmod\in \catFLE\), then
  \begin{equation}
    \Mmod^* \fuse \Nmod^* \cong \brac{\Mmod \fuse \Nmod}^* .
  \end{equation}
\end{cor}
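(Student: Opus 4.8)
The plan is to reduce the statement to the compatibility of conjugation and spectral flow with fusion (\cref{thm:specflow}) together with rigidity (\cref{thm:rigid}). Since $^*$ is conjugation composed with the restricted dual, we have $\Mmod^*\cong\conjmod{\brac*{\Mmod'}}$, and therefore $\Mmod^*\fuse\Nmod^*\cong\conjmod{\brac*{\Mmod'}}\fuse\conjmod{\brac*{\Nmod'}}$. Applying the conjugation formula of \cref{thm:specflow} to the twists of $\Mmod'$ and $\Nmod'$ gives
\begin{equation}
  \Mmod^*\fuse\Nmod^*\cong\conjmod{\sfmod{}{\brac*{\Mmod'\fuse\Nmod'}}}.
\end{equation}
Hence the corollary will follow once we establish the single identity
\begin{equation}\label{eq:dualmonoidalanomaly}
  \brac*{\Mmod\fuse\Nmod}'\cong\sfmod{}{\brac*{\Mmod'\fuse\Nmod'}},
\end{equation}
because conjugating both sides of \eqref{eq:dualmonoidalanomaly} and comparing with the previous display yields $\brac*{\Mmod\fuse\Nmod}^*\cong\Mmod^*\fuse\Nmod^*$.

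To prove \eqref{eq:dualmonoidalanomaly} I would pass from the restricted dual $'$ to the rigid dual $^\vee$, which exists for every object by \cref{thm:rigid}. In any rigid braided tensor category the left-dual functor reverses tensor factors, and the braiding supplied by \cref{thm:vtencat} then gives a natural isomorphism $\brac*{\Mmod\fuse\Nmod}^\vee\cong\Mmod^\vee\fuse\Nmod^\vee$. The key structural input is the relation
\begin{equation}\label{eq:rigidvscontra}
  \Mmod^\vee\cong\sfmod{}{\brac*{\Mmod'}},
\end{equation}
which reflects that the tensor unit is fixed by $^\vee$ whereas $\VacMod'\cong\sfmod{-1}{\VacMod}$. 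Granting \eqref{eq:rigidvscontra}, equivalently ${\Mmod}'\cong\sfmod{-1}{\brac*{\Mmod^\vee}}$, identity \eqref{eq:dualmonoidalanomaly} is pure spectral-flow bookkeeping:
\begin{equation}
  \brac*{\Mmod\fuse\Nmod}'\cong\sfmod{-1}{\brac*{\Mmod^\vee\fuse\Nmod^\vee}}\cong\sfmod{-1}{\brac*{\sfmod{}{\brac*{\Mmod'}}\fuse\sfmod{}{\brac*{\Nmod'}}}}\cong\sfmod{}{\brac*{\Mmod'\fuse\Nmod'}},
\end{equation}
where the first isomorphism uses anti-monoidality of $^\vee$ and the last uses \cref{thm:specflow}.

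It therefore remains to justify \eqref{eq:rigidvscontra}, which I expect to be the main obstacle. On simple objects it is immediate: \cref{thm:rigidprojsimples} gives $\brac*{\sfmod{\ell}{\Typ{\lambda}}}^\vee\cong\sfmod{1-\ell}{\Typ{-\lambda}}$ while \cref{thm:duallist}.\ref{itm:resdual} gives $\brac*{\sfmod{\ell}{\Typ{\lambda}}}'\cong\sfmod{-\ell}{\Typ{-\lambda}}$, and the analogous comparison for $\sfmod{\ell}{\VacMod}$ works as well, so the two sides of \eqref{eq:rigidvscontra} agree on every simple module. To upgrade this to a natural isomorphism on all of $\catFLE$ I would argue by Yoneda: the rigid dual represents the functor $\module{X}\mapsto\Homgrp{\module{X}\fuse\Mmod}{\VacMod}$, and using that spectral flow is an equivalence with $\sfmod{-1}{\VacMod}\cong\VacMod'$ together with the standard adjunction $\Homgrp{\module{X}\fuse\Mmod}{\VacMod'}\cong\Homgrp{\module{X}}{\Mmod'}$ for the contragredient, one checks that $\sfmod{}{\brac*{\Mmod'}}$ represents the same functor. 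Alternatively, since both $^\vee$ and $\sfmod{}{\brac*{{}'}}$ are exact contravariant functors, one may instead verify \eqref{eq:rigidvscontra} directly on all indecomposables using the classification of \cref{thm:classify}; this is more laborious but entirely self-contained and avoids invoking the representability of the rigid dual.
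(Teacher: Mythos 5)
Your proposal is correct and its skeleton coincides with the paper's proof: both arguments rest on rigidity (\cref{thm:rigid}) making \(^\vee\) an exact contravariant equivalence, on the anti-monoidality \(\brac*{\Mmod\fuse\Nmod}^\vee\cong\Mmod^\vee\fuse\Nmod^\vee\) supplied by the braiding, on the comparison \(\Mmod^\vee\cong\sfmod{}{\brac*{\Mmod'}}\), and on \cref{thm:specflow}. Your detour through the intermediate identity \(\brac*{\Mmod\fuse\Nmod}'\cong\sfmod{}{\brac*{\Mmod'\fuse\Nmod'}}\) is only a reorganization of the paper's final computation \(\Mmod^*\fuse\Nmod^*\cong\sfmod{}{\conjmod{\brac*{\Mmod^\vee\fuse\Nmod^\vee}}}\cong\brac*{\Mmod\fuse\Nmod}^*\). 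The one point of genuine divergence is how the key identity \(\Mmod^\vee\cong\sfmod{}{\brac*{\Mmod'}}\) is promoted from simple modules to all of \(\catFLE\): the paper asserts that, since \(\brac{-}^\vee\) and \(\sfmod{}{\brac*{-}'}\) are exact contravariant invertible functors agreeing on simples, they agree on every object — this is the least rigorous step of the published proof, since agreement on simples does not in general force agreement on extensions. Your Yoneda argument, showing that \(\Mmod^\vee\) and \(\sfmod{}{\brac*{\Mmod'}}\) both represent \(\module{X}\mapsto\Homgrp{\module{X}\fuse\Mmod}{\VacMod}\), closes exactly that gap, at the modest cost of invoking the contragredient adjunction \(\Homgrp{\module{X}\fuse\Mmod}{\VacMod'}\cong\Homgrp{\module{X}}{\Mmod'}\) (equivalently, the standard symmetry of intertwining-operator spaces), which the paper never states explicitly; your fallback of checking the identity directly on the indecomposables classified in \cref{thm:classify} would also work and stays entirely inside the paper's toolkit.
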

\begin{proof}
 Due to rigidity, the tensor duality functor ${}^\vee$ defines an equivalence of
 categories and is therefore exact.
 Further, the tensor duality functor satisfies
 \begin{equation}
   \Mmod^\vee \fuse \Nmod^\vee \cong \brac*{\Mmod \fuse \Nmod}^\vee.
 \end{equation}
 This also implies that $\Vmod{k}^\vee = \Vmod{-k}$.
 We see that the tensor dual $\Mmod^\vee$ agrees with $\sfmod{}{\brac*{\Mmod'}}$ on all simple modules in $\catFLE$.
 As both $\brac{-}^\vee$ and $\sfmod{}{\brac*{-}'}$ are exact contravariant
 invertible functors, we have \(\Mmod^\vee\cong \sfmod{}{\brac*{\Mmod'}}\) for
 any module in \(\catFLE\).
 Recalling \(\brac{-}^\ast=c\brac{-}'\), we further have \(M^\ast\cong \sfaut{}{\conjaut{M^\vee}}\). \cref{thm:specflow} then implies
 \begin{equation}
   \Mmod^* \fuse \Nmod^* \cong
   \brac*{\sfaut{}{\conjaut{\Mmod^\vee}}} \fuse \brac*{\sfaut{}{\conjaut{\Nmod^\vee}}}
   \cong 
  \sfaut{}{\conjaut{\brac*{\Mmod\fuse\Nmod}^\vee}}
  \cong \brac{\Mmod \fuse \Nmod}^*.
\end{equation}
\end{proof}

\subsection{Fusion products of reducible indecomposable modules} \label{sec:fusindecomp}

In this section we calculate the remaining fusion product formulae involving indecomposable
modules in $\catFLE$. The main tool for determining these fusion products is
that category \(\catFLE\) is rigid by \cref{thm:rigid}. Hence fusion is biexact
and projective modules form a tensor ideal.
We begin by calculating certain basic fusion products from which the remainder can be determined
inductively.
\begin{lem}
    \begin{align}
      \Tmod{2}{} \fuse \Bmod{2}{} &\cong \Stmod{1}, \nonumber\\
      \Bmod{2}{} \fuse \Bmod{2}{} &\cong \Bmod{2}{} \oplus \Bmod{2}{1}, \nonumber\\
      \Tmod{2}{} \fuse \Tmod{2}{} &\cong \Tmod{2}{} \oplus \Tmod{2}{1}.
    \end{align}
\end{lem}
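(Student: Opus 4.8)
The plan is to reduce all three products to the defining short exact sequences of $\Bmod{2}{}$ and $\Tmod{2}{}$ and to exploit that fusion is biexact, which holds because $\catFLE$ is rigid by \cref{thm:rigid}. Since $\Bmod{2}{}=\sfmod{}{\Typ{0}^-}$ and $\Tmod{2}{}=\sfmod{}{\Typ{0}^+}$, twisting the sequences \eqref{eq:Weqseq} by $\sfaut$ gives the non-split sequences
\begin{equation}
  \dses{\Vmod{}}{}{\Bmod{2}{}}{}{\Vmod{1}}, \qquad \dses{\Vmod{1}}{}{\Tmod{2}{}}{}{\Vmod{}}.
\end{equation}
Throughout I would use that $\Vmod{}=\VacMod$ is the fusion unit and that $\Vmod{1}\fuse\Mmod\cong\sfmod{}{\Mmod}$ for every $\Mmod$, which follows from \cref{thm:specflow}; in particular $\Vmod{1}\fuse\Bmod{2}{}\cong\Bmod{2}{1}$ and $\Vmod{1}\fuse\Tmod{2}{}\cong\Tmod{2}{1}$.

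For $\Bmod{2}{}\fuse\Bmod{2}{}$, applying the biexact functor $-\fuse\Bmod{2}{}$ to the first sequence yields
\begin{equation}
  \dses{\Bmod{2}{}}{}{\Bmod{2}{}\fuse\Bmod{2}{}}{}{\Bmod{2}{1}}.
\end{equation}
The Ext-table of \cref{thm:homexttabs} gives $\dExt{\Bmod{2}{1}}{\Bmod{2}{}}=0$, so this sequence splits and $\Bmod{2}{}\fuse\Bmod{2}{}\cong\Bmod{2}{}\oplus\Bmod{2}{1}$. The product $\Tmod{2}{}\fuse\Tmod{2}{}$ then costs nothing extra: applying the ${}^\ast$ functor and using $\brac*{\Mmod\fuse\Nmod}^\ast\cong\Mmod^\ast\fuse\Nmod^\ast$ from \cref{thm:stardual} together with $\brac*{\Bmod{2}{n}}^\ast\cong\Tmod{2}{n}$ from \cref{thm:dual} converts the previous isomorphism into $\Tmod{2}{}\fuse\Tmod{2}{}\cong\Tmod{2}{}\oplus\Tmod{2}{1}$.

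The remaining product $\Tmod{2}{}\fuse\Bmod{2}{}$ is the main obstacle, precisely because the claimed answer $\Stmod{1}$ is indecomposable, so there is no splitting to exploit. Applying $\Tmod{2}{}\fuse-$ to the first sequence and $-\fuse\Bmod{2}{}$ to the second produces two short exact sequences sharing the same middle term $X:=\Tmod{2}{}\fuse\Bmod{2}{}$:
\begin{equation}
  \dses{\Tmod{2}{}}{}{X}{}{\Tmod{2}{1}}, \qquad \dses{\Bmod{2}{1}}{}{X}{}{\Bmod{2}{}}.
\end{equation}
Either sequence shows that the composition factors of $X$ are $\Vmod{}, \Vmod{1}, \Vmod{1}, \Vmod{2}$, which are exactly those of $\Stmod{1}$.

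I would then pin down $X$ by computing its socle from these two sequences. A simple submodule $S\cong\Vmod{}$ must meet the submodule $\Tmod{2}{}$ of the first sequence trivially, since $\soc\Tmod{2}{}=\Vmod{1}$, and hence inject into the quotient $\Tmod{2}{1}$, which has no $\Vmod{}$ factor; this is impossible. Likewise $S\cong\Vmod{2}$ is excluded via the second sequence, whose quotient $\Bmod{2}{}$ has no $\Vmod{2}$ factor. Finally, a second copy of $\Vmod{1}$ (beyond $\soc\Bmod{2}{1}\subseteq\soc X$) would have to inject into $\Bmod{2}{}$, whose socle is $\Vmod{}$, which is again impossible. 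Hence $\soc X\cong\Vmod{1}$ is simple, so $X$ embeds into its injective hull $\Inje{\Vmod{1}}\cong\Stmod{1}$ (by \cref{thm:pproj}); comparing lengths forces $X\cong\Stmod{1}$. Alternatively, having established indecomposability one could finish by noting that the first sequence is the unique non-split extension allowed by the one-dimensional group $\Extgrp{}{\Tmod{2}{1}}{\Tmod{2}{}}$, which $\Stmod{1}$ realises.
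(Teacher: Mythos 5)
Your proof is correct, and it follows the paper's skeleton --- fuse the defining length-2 exact sequences using biexactness (licensed by \cref{thm:rigid}), and dispose of the two ``diagonal'' products via vanishing Ext groups --- but it departs from the paper in the key step, the identification of $X=\Tmod{2}{}\fuse\Bmod{2}{}$. The paper derives the same two short exact sequences with middle term $X$ that you do, then shows that neither can split, since a splitting of either one would produce direct summands incompatible with the exactness of the other; it then invokes the one-dimensionality of the relevant extension groups (read off from \cref{thm:homexttabs}), so that $X$ must be the unique non-split extension, which $\Stmod{1}$ realises by \cref{thm:projconstr}. You instead compute $\soc X$ directly from the two sequences, showing it is a single copy of $\Vmod{1}$, then embed $X$ into $\Inje{X}\cong\Inje{\soc X}\cong\Inje{\Vmod{1}}\cong\Stmod{1}$ using \eqref{eq:sochd} and \cref{thm:pproj}, and conclude by comparing lengths. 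Both routes lean on the same structural inputs from Sections 3 and 4; yours trades the paper's non-splitness-by-contradiction argument for a socle computation plus injectivity of $\Stmod{1}$, which is slightly longer but never needs to know that the extension groups are one-dimensional (your parenthetical alternative finish, via indecomposability and $\dExt{\Tmod{2}{1}}{\Tmod{2}{}}=1$, is essentially the paper's argument). Your handling of $\Tmod{2}{}\fuse\Tmod{2}{}$ by applying ${}^\ast$ with \cref{thm:stardual} and \cref{thm:dual}, rather than re-fusing as the paper does here, is a harmless shortcut that the paper itself uses systematically in its later fusion computations.
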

\begin{proof}
  Taking the short exact sequence \eqref{eq:Weqseq1} for
  $\Typ{0}^+=\Tmod{2}{-1}$ and fusing it with \(\Typ{0}^-=\Bmod{2}{-1}\)
  yields the short exact sequence
  \begin{equation}\label{eq:fuseq1}
    \dses{\Typ{0}^{-}}{}{\Typ{0}^{+} \fuse \Typ{0}^{-}}{}{\sfmod{-1}{\Typ{0}^{-}}}.
  \end{equation}
  Similarly, fusing the short exact sequence \eqref{eq:Weqseq2} for
  $\Typ{0}^-$ with \(\Typ{0}^+\) yields
  \begin{equation}\label{eq:fuseq2}
    \dses{\sfmod{-1}{\Typ{0}^{+}}}{}{\Typ{0}^{-} \fuse \Typ{0}^{+}}{}{{\Typ{0}^{+}}}.
  \end{equation}
 If either of the above exact sequences splits there is a
 contradiction, because if \(\sfmod{-1}{\Typ{0}^+}\) and \(\Typ{0}^+\) are direct
   summands of \(\Typ{0}^{+} \fuse \Typ{0}^{-}\), \eqref{eq:fuseq1} is not
   exact, and if \(\Typ{0}^-\) and \(\sfmod{-1}{\Typ{0}^-}\) are direct
     summands, \eqref{eq:fuseq2} is not exact.
 Hence both sequences must be non-split. As can be read
    off from the tables in \cref{thm:homexttabs},
    \(\dExt{\sfmod{-1}{\Typ{0}^{-}}}{\Typ{0}^{-}}=\dExt{\Typ{0}^{+}}{\sfmod{-1}{\Typ{0}^{+}}}=1\).
 There is only one candidate for the middle coefficient of these exact
 sequences, namely $\sfmod{-1}{\Stag}$. Thus the first fusion rule follows.
 The other two fusion products by are determined by fusing \(\Typ{0}^\pm\) with
 the short exact sequences for \(\Typ{0}^\pm\). The extension groups
 corresponding to these fused exact sequences are zero-dimensional and hence
 the sequences split and the lemma follows.
\end{proof}

We further prepare the following Ext group dimensions for later use.
\begin{lem}\label{prop:loewy}
  The indecomposable modules \(\Tmod{2n+1}{}\), \(\Bmod{m}{2n+1}\),
  \(\Bmod{2n}{}\) and \(\Bmod{m}{2n}\) satisfy
  \begin{align}
    \dExt{\Tmod{2n+1}{}}{\Bmod{m}{2n+1}} = \dExt{\Bmod{2n}{}}{\Bmod{m}{2n}} = 1.
  \end{align}
  The corresponding extensions are given by $\Tmod{2n+m+1}{}$ and $\Bmod{2n+m}{}$ respectively.
\end{lem}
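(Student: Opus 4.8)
The plan is to compute each $\Ext$-dimension to be exactly $1$ by a single dimension shift through a projective cover, and then to pin down the unique non-split extension by exhibiting it explicitly as a submodule of the claimed indecomposable. I would treat $\dExt{\Tmod{2n+1}{}}{\Bmod{m}{2n+1}}$ in detail; the second identity follows by the same recipe with the roles of the even and odd spectral-flow indices interchanged.

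First I would record the extension itself. Reading off the Loewy diagram of $\Tmod{2n+m+1}{}$ (composition factors $\Vmod{0},\dots,\Vmod{2n+m}$, the even-index factors forming the head), the factors $\Vmod{2n+1},\dots,\Vmod{2n+m}$ span a submodule: it is closed under the $\bgva$-action because $\Vmod{2n+1}$ sits in the socle, and as a submodule it is precisely $\Bmod{m}{2n+1}$, with quotient $\Tmod{2n+1}{}$. Thus \( \dses{\Bmod{m}{2n+1}}{}{\Tmod{2n+m+1}{}}{}{\Tmod{2n+1}{}} \) is exact, and non-split because $\Tmod{2n+m+1}{}$ is indecomposable; for $m=1,2$ it reduces to \eqref{eq:dseq4} and \eqref{eq:defseq3}. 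Hence the $\Ext$-group is at least one-dimensional, and once the dimension is shown to be $1$ this sequence identifies the unique non-split extension as $\Tmod{2n+m+1}{}$. The analogous submodule of $\Bmod{2n+m}{}$ on the factors $\Vmod{2n},\dots,\Vmod{2n+m-1}$ produces \( \dses{\Bmod{m}{2n}}{}{\Bmod{2n+m}{}}{}{\Bmod{2n}{}} \), recovering \eqref{eq:dseq1} and \eqref{eq:defseq2} for $m=1,2$.

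For the dimension bound I would apply $\Homgrp{-}{\Bmod{m}{2n+1}}$ to the projective presentation of $\Tmod{2n+1}{}$ from \cref{thm:covhulpres}, with cover $\Proj{\Tmod{2n+1}{}}=\bigoplus_{i=0}^{n}\Stmod{2i}$ and syzygy $\Omega=\ker(\Proj{\Tmod{2n+1}{}}\to\Tmod{2n+1}{})$. Since every $\Stmod{j}$ is projective by \cref{thm:pproj}, this gives $\dExt{\Tmod{2n+1}{}}{\Bmod{m}{2n+1}}=\dim\coker\bigl(\Homgrp{\Proj{\Tmod{2n+1}{}}}{\Bmod{m}{2n+1}}\to\Homgrp{\Omega}{\Bmod{m}{2n+1}}\bigr)$. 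Now $\Tmod{2n+1}{}$ and its cover have composition factors only among the $\Vmod{j}$ with $j\le 2n$, whereas $\Bmod{m}{2n+1}$ has factors only among the $\Vmod{j}$ with $j\ge 2n+1$; since these index sets are disjoint, $\Homgrp{\Tmod{2n+1}{}}{\Bmod{m}{2n+1}}=0$ and, using $\dim\Homgrp{\Stmod{j}}{M}=[M:\Vmod{j}]$, also $\Homgrp{\Proj{\Tmod{2n+1}{}}}{\Bmod{m}{2n+1}}=0$. The dimension therefore collapses to $\dHom{\Omega}{\Bmod{m}{2n+1}}$. Subtracting the factors of $\Tmod{2n+1}{}$ from those of the cover (recall each $\Stmod{2i}$ has factors $\Vmod{2i-1},\Vmod{2i},\Vmod{2i},\Vmod{2i+1}$ by \cref{thm:projconstr}) shows $\Omega$ has composition factors $\Vmod{-1},\dots,\Vmod{2n+1}$, so it shares with $\Bmod{m}{2n+1}$ exactly the single simple $\Vmod{2n+1}$; this factor occurs once in the head of $\Omega$ and once in the socle of $\Bmod{m}{2n+1}$, so every nonzero homomorphism has image $\Vmod{2n+1}$ and the Hom-space is one-dimensional (a head–socle multiplicity count, cf.\ \eqref{eq:indechom}). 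Hence $\dExt{\Tmod{2n+1}{}}{\Bmod{m}{2n+1}}=1$. The second identity is obtained identically from $\Proj{\Bmod{2n}{}}=\bigoplus_{i=0}^{n-1}\Stmod{2i+1}$, whose syzygy has factors $\Vmod{1},\dots,\Vmod{2n}$ and shares only $\Vmod{2n}$ with $\Bmod{m}{2n}$.

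The step I expect to be the main obstacle is controlling the Loewy position of the boundary simple in the syzygy $\Omega$: the cokernel is one-dimensional precisely because $\Vmod{2n+1}$ lies in the \emph{head} of $\Omega$ (and in the socle of $\Bmod{m}{2n+1}$), whereas if it lay in the socle of $\Omega$ the Hom-space would vanish and the count would give $0$. Establishing that the syzygy of a $\Tmod{}{}$-type (resp.\ $\Bmod{}{}$-type) module is again of the type placing its top factor in the head is the delicate input; I would settle it directly from the presentations of \cref{thm:covhulpres}, and cross-check it against the non-split sequence of the second paragraph, which already forces $\dExt\ge 1$ and so rules out the degenerate placement. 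With the dimension fixed at $1$, the explicit sequences of the second paragraph then certify that the corresponding extensions are $\Tmod{2n+m+1}{}$ and $\Bmod{2n+m}{}$, as claimed.
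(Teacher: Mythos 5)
Your proof is correct and follows essentially the same route as the paper: a projective presentation of \(\Tmod{2n+1}{}\) (resp.\ \(\Bmod{2n}{}\)) from \cref{thm:covhulpres}, application of \(\Homgrp{-}{\Bmod{m}{2n+1}}\) via \cref{thm:homo}, vanishing of the first two Hom groups, a one-dimensional Hom group out of the syzygy coming from the single shared composition factor, and finally identification of the unique non-split extension by locating \(\Bmod{m}{2n+1}\) as a submodule of \(\Tmod{2n+m+1}{}\) with quotient \(\Tmod{2n+1}{}\).

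Two remarks on where your execution differs, both to your credit. First, your closing two-sided squeeze (\(\dExt{\Tmod{2n+1}{}}{\Bmod{m}{2n+1}}\ge 1\) from the explicit non-split sequence, and \(\le \dHom{\Omega}{\Bmod{m}{2n+1}}\le 1\) from the multiplicity count) already disposes of the ``main obstacle'' you flag at the end: since \(\Vmod{2n+1}\) has multiplicity one in both \(\Omega\) and \(\Bmod{m}{2n+1}\), one has \(\dHom{\Omega}{\Bmod{m}{2n+1}}\le [\hd\Omega:\Vmod{2n+1}]\cdot[\soc\Bmod{m}{2n+1}:\Vmod{2n+1}]\le 1\) \emph{whatever} the Loewy placement, so no separate analysis of the syzygy's head is needed; the paper instead asserts the one-dimensionality directly from Loewy diagrams and concludes by the vanishing Euler characteristic. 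Second, your direct factor count of the syzygy (\(\Vmod{-1},\dots,\Vmod{2n+1}\), each once) is the accurate one: the presentation quoted in the paper's proof, with kernel \(\Tmod{2n+2}{}\), cannot be literally correct, since the lengths do not add up (\(4(n+1)\neq(2n+2)+(2n+1)\)); the kernel is \(\Tmod{2n+3}{-1}\), whose head does contain \(\Vmod{2n+1}\), and with this correction the paper's Hom count and conclusion stand. One small slip on your side: the cover \(\bigoplus_{i=0}^{n}\Stmod{2i}\) does contain \(\Vmod{-1}\) and \(\Vmod{2n+1}\) as composition factors, so it is not true that its factors all have index \(j\le 2n\); your conclusion \(\Homgrp{\Proj{\Tmod{2n+1}{}}}{\Bmod{m}{2n+1}}=0\) is nevertheless valid, exactly as you derive it, from \(\dim\Homgrp{\Stmod{j}}{\Mmod}=[\Mmod:\Vmod{j}]\).
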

\begin{proof}
	We start with the following presentation of $\Tmod{2n+1}{}$
	\begin{equation}
	\dses{\Tmod{2n+2}{}}{}{\Proj{\Tmod{2n+1}{}}}{}{\Tmod{2n+1}{}}.
	\end{equation}
	Applying the functor $\Homgrp{-}{\Bmod{m}{2n+1}}$ yields
	\begin{equation}
	0 \lra {\Homgrp{\Tmod{2n+1}{}}{\Bmod{m}{2n+1}}} \lra {\Homgrp{\Proj{\Tmod{2n+1}{}}}{\Bmod{m}{2n+1}}} \lra {\Homgrp{\Tmod{2n+2}{}}{\Bmod{m}{2n+1}}} \lra {\Extgrp{}{\Tmod{2n+1}{}}{\Bmod{m}{2n+1}}} \lra 0.
	\end{equation} 
	The first coefficient vanishes due to $\Tmod{2n+1}{}$ and
        $\Bmod{m}{2n+1}$ having no common composition factors. The
        second coefficient can be shown to vanish
        using the projective cover formulae
        in \cref{thm:covhulpres}
        and reading off Hom group dimensions from the Loewy diagrams. For the
        third coefficient, the only composition factor common to both
        \(\Tmod{2n+2}{}\) and \(\Bmod{m}{2n+1}\) is $\Vmod{2n+1}$, which occurs
        as a quotient for $\Tmod{2n+2}{}$ and a submodule for
        $\Bmod{m}{2n+1}$, so this gives rise to a one dimensional Hom group. 
	The vanishing Euler characteristic then implies that $\dExt{\Tmod{2n+1}{}}{\Bmod{m}{2n+1}} = 1$ as expected.
	Furthermore, we can examine $\Tmod{2n+m+1}{}$ to see that it has a $\Bmod{m}{2n}$ submodule which yields $\Tmod{2n+1}{}$ when quotiented out, therefore this is the unique extension characterised by $\Extgrp{}{\Tmod{2n+1}{}}{\Bmod{m}{2n+1}}$.
	
	We can follow the same procedure starting with the projective presentation of $\Bmod{2n}{}$ to obtain the following exact sequence
	\begin{equation}
	0 \lra {\Homgrp{\Bmod{2n}{}}{\Bmod{m}{2n}}} \lra {\Homgrp{\Proj{\Bmod{2n}{}}}{\Bmod{m}{2n}}} \lra {\Homgrp{\Bmod{2n}{1}}{\Bmod{m}{2n}}} \lra {\Extgrp{}{\Bmod{2n+1}{}}{\Bmod{m}{2n}}} \lra 0.
	\end{equation} 
	By the same argument as above we can calculate the Hom groups, and vanishing Euler characteristic implies $\dExt{\Bmod{2n}{}}{\Bmod{m}{2n}} = 1$. Similarly we see that $\Bmod{2n+m}{}$ provides an extension of $\Bmod{2n}{}$ by $\Bmod{m}{2n}$ and must therefore be the unique one.
\end{proof}

We can now determine fusion products when
one factor has length 2 and the other has arbitrary length.
\begin{lem} \label{lem:b1t1}
  The fusion products of length 2 indecomposables with any
  indecomposable of types \(\Bmod{}{}\) or \(\Tmod{}{}\) satisfy the
  following decomposition formulae.
  \begin{align}
    \Bmod{2n+1}{} \fuse \Bmod{2}{} &\cong \Bmod{2}{} \oplus
                                     \Stsum{n}{1}&
    \Tmod{2n+1}{} \fuse \Tmod{2}{} &\cong \Tmod{2}{} \oplus \Stsum{n}{1}  \nonumber\\
    \Bmod{2n+2}{} \fuse \Bmod{2}{} &\cong \Bmod{2}{2n+1} \oplus \Bmod{2}{} \oplus \Stsum{n}{1}&
    \Tmod{2n+2}{} \fuse \Tmod{2}{} &\cong \Tmod{2}{2n+1} \oplus \Tmod{2}{} \oplus \Stsum{n}{1} \nonumber\\
    \Bmod{2n+1}{} \fuse \Tmod{2}{} &\cong \Tmod{2}{2n} \oplus \Stsum{n}{}& 
    \Tmod{2n+1}{} \fuse \Bmod{2}{} &\cong \Bmod{2}{2n} \oplus \Stsum{n}{}\nonumber\\
    \Bmod{2n}{} \fuse \Tmod{2}{} &\cong  \Stsum{n}{}&
    \Tmod{2n}{} \fuse \Bmod{2}{} &\cong  \Stsum{n}{}
  \end{align}
\end{lem}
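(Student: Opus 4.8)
The plan is to run an induction on the length of the $\Bmod{}{}$ or $\Tmod{}{}$ factor, fusing the defining short exact sequences of \cref{thm:classify} (equations \eqref{eq:defseq1}--\eqref{eq:defseq4}) and of \cref{thm:indseqs} with $\Bmod{2}{}$ or $\Tmod{2}{}$. Three tools drive this. First, \(\catFLE\) is rigid by \cref{thm:rigid}, so fusion is biexact and fusing a short exact sequence with a fixed module gives a short exact sequence. Second, fusion is compatible with spectral flow by \cref{thm:specflow}: since \(\sfmod{\ell}{\Mmod}\fuse\Nmod\cong\sfmod{\ell}{\brac*{\Mmod\fuse\Nmod}}\), the spectrally flowed copies \(\Vmod{k}\), \(\Bmod{2}{k}\), \(\Tmod{2}{k}\) occurring in the sequences reduce to the base fusions of the preceding lemma (for instance \(\Tmod{2}{2n-1}\fuse\Bmod{2}{}\cong\sfmod{2n-1}{\Stmod{1}}=\Stmod{2n}\)). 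Third, every \(\sfmod{\ell}{\Stag}\) is both projective and injective by \cref{thm:pproj}, so any direct sum of them splits off whenever it appears as a sub- or quotient-module. I would prove only the four left-hand formulae and then read off the four right-hand ones by applying the \({}^\ast\) functor: by \cref{thm:stardual,thm:dual}, \({}^\ast\) exchanges \(\Bmod{m}{n}\leftrightarrow\Tmod{m}{n}\), fixes each \(\Stmod{n}\), and commutes with \(\fuse\); thus dualising \(\Bmod{2n+1}{}\fuse\Bmod{2}{}\cong\Bmod{2}{}\oplus\Stsum{n}{1}\) yields \(\Tmod{2n+1}{}\fuse\Tmod{2}{}\cong\Tmod{2}{}\oplus\Stsum{n}{1}\), and analogously for the remaining three rows.

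For three of the four left-hand formulae, one outer term becomes projective after fusion and the sequence splits at once. For \(\Bmod{2n+1}{}\fuse\Bmod{2}{}\) I would fuse \eqref{eq:defseq1} with \(\Bmod{2}{}\); the quotient becomes \(\Tmod{2}{2n-1}\fuse\Bmod{2}{}\cong\Stmod{2n}\), which is projective, so the sequence splits and, with the inductive value \(\Bmod{2n-1}{}\fuse\Bmod{2}{}\cong\Bmod{2}{}\oplus\Stsum{n-1}{1}\), one obtains \(\Bmod{2}{}\oplus\Stsum{n-1}{1}\oplus\Stmod{2n}=\Bmod{2}{}\oplus\Stsum{n}{1}\). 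For \(\Bmod{2n}{}\fuse\Tmod{2}{}\) I would fuse \eqref{eq:defseq2} with \(\Tmod{2}{}\); both outer terms become projective, since \(\Bmod{2}{2n-2}\fuse\Tmod{2}{}\cong\Stmod{2n-1}\) and \(\Bmod{2n-2}{}\fuse\Tmod{2}{}\cong\Stsum{n-1}{}\) inductively, giving \(\Stmod{2n-1}\oplus\Stsum{n-1}{}=\Stsum{n}{}\). For \(\Bmod{2n+1}{}\fuse\Tmod{2}{}\) I would fuse \eqref{eq:dseq1} with \(\Tmod{2}{}\); here the quotient is \(\Bmod{2n}{}\fuse\Tmod{2}{}\cong\Stsum{n}{}\), which is projective, so the sequence splits to give \(\Tmod{2}{2n}\oplus\Stsum{n}{}\). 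The inductions are anchored by the preceding lemma together with \(\VacMod\) being the fusion unit.

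The one case requiring a genuine extension argument is \(\Bmod{2n}{}\fuse\Bmod{2}{}\), because no defining sequence produces a projective outer term here (the answer always contains the non-projective summand \(\Bmod{2}{}\)). I would fuse \eqref{eq:dseq2} with \(\Bmod{2}{}\), getting a short exact sequence with submodule \(\Bmod{2n-1}{}\fuse\Bmod{2}{}\cong\Bmod{2}{}\oplus\Stsum{n-1}{1}\) and quotient \(\Vmod{2n-1}\fuse\Bmod{2}{}\cong\Bmod{2}{2n-1}\). The injective summand \(\Stsum{n-1}{1}\) splits off, leaving an extension \(\dses{\Bmod{2}{}}{}{W}{}{\Bmod{2}{2n-1}}\); the Ext table of \cref{thm:homexttabs} gives \(\dExt{\Bmod{2}{2n-1}}{\Bmod{2}{}}=0\) for \(n\ge1\), so this residual extension splits and \(\Bmod{2n}{}\fuse\Bmod{2}{}\cong\Bmod{2}{}\oplus\Bmod{2}{2n-1}\oplus\Stsum{n-1}{1}\), which is the displayed even-length formula after relabelling. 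This residual splitting is the crux of the argument; the only real care needed beyond it is bookkeeping, namely checking that the spectrally flowed base fusions feed correctly into each inductive step and that the composition-factor multiplicities agree on both sides, so that the single vanishing Ext group of \cref{thm:homexttabs} is all that stands between the split-off projectives and the stated decomposition.
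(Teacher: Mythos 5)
Your proposal is correct and takes essentially the same approach as the paper's proof: induction on length by fusing the defining short exact sequences with the length-2 module, using biexactness from rigidity and spectral-flow compatibility, splitting off the projective-injective spectral flows of \(\Stag\), settling the even--even case via the vanishing of \(\Extgrp{}{\Bmod{2}{2n-1}}{\Bmod{2}{}}\) from \cref{thm:homexttabs}, and deducing the right-hand column with the \({}^\ast\) functor. The only deviations are cosmetic: in two cases you fuse \eqref{eq:defseq2} and \eqref{eq:dseq1} where the paper fuses \eqref{eq:xseq3}, which changes the bookkeeping but not the structure of the argument.
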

\begin{proof}
  We prove the left column of identities. The right column then follows from
  \cref{thm:stardual} and applying the \(^\ast\) functor.
  We start with the short exact sequence \eqref{eq:defseq1} satisfied by $\Bmod{2n+1}{}$,
  \begin{equation}
    \dses{\Bmod{2n-1}{}}{}{\Bmod{2n+1}{}}{}{\Tmod{2}{2n-1}}.
  \end{equation}
  We then take the fusion product with $\Bmod{2}{}$,
  \begin{equation}
    \dses{\Bmod{2n-1}{} \fuse \Bmod{2}{}}{}{\Bmod{2n+1}{} \fuse \Bmod{2}{}}{}{\Stmod{2n}}.
  \end{equation}
  Because $\Stmod{2n}$ is projective, the sequence splits and we have the recurrence relation
  \begin{equation}
    \Bmod{2n+1}{} \fuse \Bmod{2}{} \cong \brac{\Bmod{2n-1}{} \fuse \Bmod{2}{}} \oplus \Stmod{2n}.
  \end{equation}
  Then, the first fusion product formula of the lemma 
  follows by induction with $\Bmod{1}{} =
  \VacMod$ as the base case.
  
  We next consider the short exact sequence \eqref{eq:dseq2} and fuse it with $\Bmod{2}{}$ to obtain
  \begin{align}
    \dses{\Bmod{2n+1}{} \fuse \Bmod{2}{}}{}{\Bmod{2n+2}{} \fuse \Bmod{2}{} }{}{\Bmod{2}{2n+1}}.
  \end{align}
  Since $\Extgrp{}{\Bmod{2}{2n+1}}{\Bmod{2}{}} = 0$, by the tables in \cref{thm:homexttabs}, this
  sequence splits and we obtain the second fusion product of the lemma.
  
  For the final two fusion products, we perform the same exercises with
  different exact sequences. For the third and fourth fusion products 
  we use \eqref{eq:xseq3}, with odd and even length respectively. Fusing
  with $\Tmod{2}{}$ gives the short exact sequences 
  \begin{align}
    \dses{{\Bmod{2n-1}{2} \fuse \Tmod{2}{}}}{}{\Bmod{2n+1}{} \fuse \Tmod{2}{}}{}{\Stmod{1}}, \nonumber\\
	\dses{{\Bmod{2n}{2} \fuse \Tmod{2}{}}}{}{\Bmod{2n+2}{} \fuse \Tmod{2}{}}{}{\Stmod{1}}. 
  \end{align}
  In both cases, the sequences split because $\Stmod{1}$ is projective.
\end{proof}

We now use \cref{lem:b1t1} to prove the fusion product formulae
  \eqref{eq:indfus1} of \cref{thm:fusionlist}. 
  \begin{prop}\label{thm:pureindfus}
    The fusion products of indecomposable modules of types
    \(\Bmod{}{}\) and \(\Tmod{}{}\) with themselves satisfy the
    decomposition formulae below, for $m \ge n$.
  \begin{align}
    \Bmod{2m+1}{} \fuse \Bmod{2n+1}{} &\cong \Bmod{2m + 2n +1}{} \oplus
                                        \Stsum{m,n}{1}&
                                                        \Tmod{2m+1}{} \fuse \Tmod{2n+1}{} &\cong \Tmod{2m + 2n +1}{} \oplus \Stsum{m,n}{1}\nonumber\\
    \Bmod{2m+1}{} \fuse \Bmod{2n}{} &\cong \Bmod{2n}{} \oplus \Stsum{m,n}{1}&
                                                                              \Tmod{2m+1}{} \fuse \Tmod{2n}{} &\cong \Tmod{2n}{} \oplus \Stsum{m,n}{1}\nonumber\\
    \Bmod{2m}{} \fuse \Bmod{2n}{} &\cong \Bmod{2n}{2m-1} \oplus \Bmod{2n}{}
                                    \oplus \Stsum{m-1,n}{1}&
                                                             \Tmod{2m}{} \fuse \Tmod{2n}{} &\cong \Tmod{2n}{2m-1} \oplus \Tmod{2n}{} \oplus \Stsum{m-1,n}{1}
   \end{align}
\end{prop}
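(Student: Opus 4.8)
The plan is to prove the left-hand ($\Bmod{}{}$) column by induction and to deduce the right-hand ($\Tmod{}{}$) column formally. Indeed, applying the contravariant functor $^\ast$ and using $\brac*{\Bmod{m}{n}}^\ast\cong\Tmod{m}{n}$ and $\brac*{\Stmod{n}}^\ast\cong\Stmod{n}$ from \cref{thm:dual}, together with $\Mmod^\ast\fuse\Nmod^\ast\cong\brac*{\Mmod\fuse\Nmod}^\ast$ from \cref{thm:stardual}, each right-column identity is the $^\ast$-image of the left-column identity directly above it, the projective summands $\Stsum{m,n}{k}$ being $^\ast$-self-dual as direct sums of the self-dual $\Stmod{2r-1}$. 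Throughout I would use the three structural facts furnished by rigidity: fusion is exact in each variable (\cref{thm:rigid}); the projective modules form a tensor ideal and, being injective, split off as direct summands of any module in which they appear; and, by the classification, these projectives are precisely the direct sums of the $\Stmod{k}$.

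For the induction the base cases are exactly the length-two fusion products of \cref{lem:b1t1}, and the inductive step reduces the length of the shorter factor (we may assume $m\ge n$) via its defining short exact sequence. For example, to obtain $\Bmod{2m+1}{}\fuse\Bmod{2n+1}{}$ I would fuse the defining sequence \eqref{eq:defseq1} for $\Bmod{2n+1}{}$ with $\Bmod{2m+1}{}$, giving
\begin{equation*}
  \dses{\Bmod{2m+1}{}\fuse\Bmod{2n-1}{}}{}{\Bmod{2m+1}{}\fuse\Bmod{2n+1}{}}{}{\Bmod{2m+1}{}\fuse\Tmod{2}{2n-1}}.
\end{equation*}
By \cref{thm:specflow} the right-hand term is $\sfmod{2n-1}{\brac*{\Bmod{2m+1}{}\fuse\Tmod{2}{}}}\cong\Tmod{2}{2m+2n-1}\oplus\sfmod{2n-1}{\Stsum{m}{}}$ via \cref{lem:b1t1}, while by the inductive hypothesis the left-hand term is $\Bmod{2m+2n-1}{}\oplus\Stsum{m,n-1}{1}$. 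After discarding the projective summands, which split, there remains a short exact sequence $\dses{\Bmod{2m+2n-1}{}}{}{X_0}{}{\Tmod{2}{2m+2n-1}}$. This is precisely the shape of the defining sequence \eqref{eq:defseq1} for $\Bmod{2m+2n+1}{}$, whose extension group is one-dimensional and which by \cref{thm:classify} it uniquely characterises; hence $X_0$ is either the split sum or $\Bmod{2m+2n+1}{}$. The even-length and mixed-parity formulae are handled identically, fusing the appropriate sequence among \eqref{eq:defseq1}--\eqref{eq:defseq4} or \cref{thm:indseqs} and again peeling off the projective part.

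The main obstacle is showing that this residual sequence is non-split, so that $X_0\cong\Bmod{2m+2n+1}{}$ rather than $\Bmod{2m+2n-1}{}\oplus\Tmod{2}{2m+2n-1}$. I would settle this with a $\Hom$-dimension count: rigidity supplies an adjunction expressing the $\Hom$ group between $\VacMod$ and the fusion product in terms of $\Hom$ groups between $\Bmod{2m+1}{}$, $\Bmod{2n+1}{}$ and their tensor duals, whose dimensions are read off from the socle/head data in \cref{thm:homexttabs} and \cref{thm:covhulpres}; this dimension is incompatible with the split candidate and so forces non-splitness. The only other genuine labour is bookkeeping: checking that the projective summands accumulated at each inductive step assemble into $\Stsum{m,n}{1}$ (respectively $\Stsum{m,n}{}$ and $\Stsum{m-1,n}{1}$ in the other cases), that is, that the running multiplicities match $N_r=\min\set*{r,m,n,m+n-r}$. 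This is a routine but slightly delicate induction on the telescoping sums $f_n(\sfaut)$, and I expect it to consume most of the detailed writing even though it presents no conceptual difficulty.
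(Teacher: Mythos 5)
Your overall skeleton---induction on length using defining exact sequences fused with the other factor, biexactness of fusion coming from rigidity (\cref{thm:rigid}), splitting off projective summands, base case \cref{lem:b1t1}, and deducing the \(\Tmod{}{}\) column from the \(\Bmod{}{}\) column via \({}^\ast\) and \cref{thm:stardual}---is the same as the paper's, and your reduction via \eqref{eq:defseq1} is literally one of the two sequences the paper employs. The gap is in the crucial step, establishing non-splitness of the residual sequence \(\dses{\Bmod{2m+2n-1}{}}{}{X_0}{}{\Tmod{2}{2m+2n-1}}\), and there your proposed argument provably cannot work.

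The two candidates for \(X_0\), the split sum \(\Bmod{2m+2n-1}{}\oplus\Tmod{2}{2m+2n-1}\) and the non-split extension \(\Bmod{2m+2n+1}{}\), have \emph{identical} socles, \(\Vmod{}\oplus\Vmod{2}\oplus\cdots\oplus\Vmod{2m+2n}\), and \emph{identical} heads, \(\Vmod{1}\oplus\Vmod{3}\oplus\cdots\oplus\Vmod{2m+2n-1}\), as one reads off the Loewy diagrams. Since by \eqref{eq:indechom} a Hom group with a simple module only sees the socle or the head, every Hom group between either candidate and any simple module, in either direction---in particular with \(\VacMod\) and all of its spectral flows---takes the same value on both candidates (for instance \(\dHom{\VacMod}{X_0}=1\) in both cases, and the accumulated projective summands \(\Stmod{2r}\) contribute nothing). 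Hence no adjunction computation of \(\Homgrp{\VacMod}{\Bmod{2m+1}{}\fuse\Bmod{2n+1}{}}\) can be ``incompatible with the split candidate'': your discriminating invariant does not discriminate. To repair this along your lines you would need a reducible test module, e.g.\ \(\dHom{\Tmod{2}{2m+2n-1}}{X_0}\), which equals \(1\) for the split candidate and \(0\) for \(\Bmod{2m+2n+1}{}\); the rigidity adjunction then reduces this to a Hom group out of \((\Bmod{2m+1}{})^\vee\fuse\Tmod{2}{2m+2n-1}\), a length-two fusion supplied by \cref{lem:b1t1}, so the strategy is salvageable but requires exactly the ingredient you did not specify. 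The paper instead avoids Hom counts entirely: it fuses a \emph{second} defining sequence, \eqref{eq:xseq3}, with the same factor and observes that the two resulting exact sequences cannot both split, since splitting both would give two distinct direct sum decompositions of the same finite length module, contradicting Krull--Schmidt; combined with the one-dimensional Ext groups of \cref{prop:loewy} this forces the non-split answer. Finally, note that the even--even case is settled in the paper by a \emph{splitting} argument (\(\dExt{\Bmod{2n}{2m+1}}{\Bmod{2n}{}}=0\) for \(m\ge n\)), so your blanket claim that the remaining work is always to prove non-splitness also misdescribes that case.
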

\begin{proof}
  We prove the left column of identities. The right column then follows from
  \cref{thm:stardual} and applying the \(^\ast\) functor.
  First, for both superscripts odd, we take two short exact sequences
  \eqref{eq:xseq3} and \eqref{eq:defseq1} for $\Bmod{2n+1}{}$ and fuse with
  $\Bmod{2m+1}{}$ to find 
  \begin{align}
    \dses{{\Bmod{2n-1}2{} \fuse \Bmod{2m+1}{}}}{}{\Bmod{2n+1}{} \fuse
    \Bmod{2m+1}{}}{}{\Bmod{2}{} \oplus \Stsum{m}{1}}, \nonumber\\ 
    \dses{\Bmod{2n-1}{} \fuse \Bmod{2m+1}{}}{}{\Bmod{2n+1}{} \fuse \Bmod{2m+1}{}}{}{\Tmod{2}{2n + 2m -1} \oplus \Stsum{m}{2n-1}}. 
  \end{align}
  Now comparing these exact sequences, and using the fact that $\Stag$ is
  projective, we find that the sequences cannot both split, as they would give
  different direct sums. For the first short exact sequence, we use \cref{prop:loewy}, to find $\dExt{\Bmod{2}{}}{\Bmod{2m+2n-1}{2}} = 1 $, with
  the extension being given by $\Bmod{2m+2n+1}{}$
  so we can determine the fusion product formulae inductively to get
  \begin{align}
    \Bmod{2m+1}{} \fuse \Bmod{3}{} &\cong \Bmod{2m + 3}{} \oplus \Stsum{m}{1}, \nonumber\\
    \Bmod{2m+1}{} \fuse \Bmod{5}{} &\cong \Bmod{2m + 5}{} \oplus \brac*{1+\sfaut^2} \Stsum{m}{1} , \nonumber\\
    \Bmod{2m+1}{} \fuse \Bmod{2n+1}{} &\cong \Bmod{2m + 2n +1}{} \oplus \bigoplus_{k=1}^{m} \Stsum{n}{2k-1} = \Bmod{2m + 2n+1}{} \oplus \Stsum{m,n}{1}. 
  \end{align}
  We can deduce the remaining rules from short exact sequences that relate
  even and odd $\Bmod{}{}$s. Firstly, we take the two short exact sequences
  \eqref{eq:xseq3} and \eqref{eq:defseq2},  and fuse them with $\Bmod{2m+1}{}$ to get
  \begin{align}
    \dses{{\Bmod{2m+1}{2} \fuse \Bmod{2n}{}}}{}{\Bmod{2m+1}{} \fuse \Bmod{2n+2}{}}{}{\Bmod{2}{} \oplus \Stsum{m}{1}}, \nonumber\\
    \dses{{\Bmod{2}{2n}} \oplus \Stsum{m}{2n+1}}{}{\Bmod{2m+1}{} \fuse \Bmod{2n+2}{}}{}{\Bmod{2m+1}{} \fuse \Bmod{2n}{}}.
  \end{align}
  Either of these exact sequences splitting would lead to a contradiction,
  hence both must be non-split. Further, by \cref{prop:loewy} we find $\dExt{\Bmod{2}{}}{\Bmod{2n}{2}} = \dExt{\Bmod{2n}{}}{\Bmod{2}{2n}} = 1$,
  with the corresponding non-split extension given by $\Bmod{2n+2}{}$. Therefore
  \begin{equation}
    \Bmod{2m+1}{} \fuse \Bmod{2n}{} \cong \Bmod{2n}{} \oplus \Stsum{m,n}{1}.
  \end{equation}
  Finally we fuse \eqref{eq:dseq2} with $\Bmod{2n}{}$ to find
  \begin{align}
    \dses{\Bmod{2m+1}{} \fuse \Bmod{2n}{}}{}{\Bmod{2m+2}{} \fuse \Bmod{2n}{}}{}{\Bmod{2n}{2m+1}}. 
  \end{align}
  For $m \ge n$, $\dExt{\Bmod{2n}{2m+1}}{\Bmod{2n}{}} = 0$, which
  follows because the composition factors are separated by at least two units of
  spectral flow and $\Extgrp{}{\Vmod{n}}{\Vmod{m}} = 0$ for $\abs{n-m} > 1$, the above sequence splits.
  In the case when $m=n-1$, we have that $\Extgrp{}{\Bmod{2n}{2n-1}}{\Vmod{k}} =
  0$ for all the composition factors of $\Bmod{2n}{}$ , that is, $(0 \le k \le
  2n-1)$.
  Hence $\dExt{\Bmod{2n}{2n-1}}{\Bmod{2n}{}} = 0$ and the above sequence
  again splits. Thus,
  \begin{equation}
    \Bmod{2m+2}{} \fuse \Bmod{2n}{} \cong \Bmod{2n}{2m+1} \oplus \Bmod{2n}{}
    \oplus \Stsum{m,n}{1},\qquad m\ge n-1.    
  \end{equation}
\end{proof}

\begin{prop}\label{thm:mixindfus}
  The fusion products of indecomposable modules of types
  \(\Bmod{}{}\) and \(\Tmod{}{}\) with each other satisfy the
  decomposition formulae below, for $m \ge n$.
  \begin{align}
    \Tmod{2m+1}{} \fuse \Bmod{2n+1}{} &\cong \Tmod{2m-2n+1}{2n}  \oplus
                                        \Stsum{m+1,n}{}&
                                                         \Bmod{2m+1}{} \fuse \Tmod{2n+1}{} &\cong \Bmod{2m-2n+1}{2n}  \oplus \Stsum{m+1,n}{}\nonumber\\
    \Tmod{2m}{} \fuse \Bmod{2n+1}{} &\cong \Tmod{2m}{2n}  \oplus \Stsum{m,n}{}&
                                                                                \Bmod{2m}{} \fuse \Tmod{2n+1}{} &\cong \Bmod{2m}{2n}  \oplus \Stsum{m,n}{} \nonumber\\
    \Tmod{2m}{} \fuse \Bmod{2n}{} &\cong \Stsum{m,n}{}&\Bmod{2m}{} \fuse \Tmod{2n}{} &\cong \Stsum{m,n}{}
  \end{align}
\end{prop}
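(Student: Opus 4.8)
The plan is to establish only the left-hand column of identities and deduce the right-hand column from it. Applying the \({}^\ast\) functor and using \cref{thm:stardual} together with \cref{thm:dual} turns each left identity into the corresponding right one, since \({}^\ast\) interchanges \(\Bmod{}{}\)- and \(\Tmod{}{}\)-type modules, fixes every \(\Stmod{}\) summand, and preserves the spectral-flow index; this also respects the constraint \(m\ge n\). For the left column I would reuse the inductive machinery of \cref{thm:pureindfus}: fuse the defining short exact sequences \eqref{eq:defseq1}--\eqref{eq:dseq4} with a fixed indecomposable, exploit the biexactness of fusion granted by \cref{thm:rigid}, peel off the projective summands automatically because the projectives form a tensor ideal, and determine the remaining non-split extension from the fact that the relevant extension groups are one-dimensional by \cref{prop:loewy} and \cref{thm:homexttabs}. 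The base cases are the length-two fusions of \cref{lem:b1t1} and the trivial products \(\Tmod{k}{}\fuse\Vmod{\ell}\cong\Tmod{k}{\ell}\) with the tensor unit \(\VacMod\). Throughout, \cref{thm:closurecomps} constrains the possible composition factors and \cref{thm:specflow} reduces every spectral-flow-twisted product to an untwisted one.

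I would handle the three cases in order of increasing difficulty. For the even--even product \(\Tmod{2m}{}\fuse\Bmod{2n}{}\), fusing \eqref{eq:defseq4} with \(\Bmod{2n}{}\) yields a sequence whose quotient \(\Tmod{2}{2m-2}\fuse\Bmod{2n}{}\cong\Stsum{n}{2m-2}\) is projective by \cref{lem:b1t1}; the sequence therefore splits, and induction on \(m\) accumulates \(\bigoplus_{i=0}^{m-1}\Stsum{n}{2i}=\Stsum{m,n}{}\), with base case \(\Tmod{2}{}\fuse\Bmod{2n}{}\cong\Stsum{n}{}\). For the even--odd product \(\Tmod{2m}{}\fuse\Bmod{2n+1}{}\), fusing \eqref{eq:dseq1} with \(\Tmod{2m}{}\) gives \(0\to\Tmod{2m}{2n}\to\Tmod{2m}{}\fuse\Bmod{2n+1}{}\to\Tmod{2m}{}\fuse\Bmod{2n}{}\to0\); the quotient is the projective \(\Stsum{m,n}{}\) just computed, so the sequence splits and returns \(\Tmod{2m}{2n}\oplus\Stsum{m,n}{}\). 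Both of these formulae in fact hold for all \(m,n\ge1\), the restriction \(m\ge n\) being needed only in the last case.

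The odd--odd product \(\Tmod{2m+1}{}\fuse\Bmod{2n+1}{}\) is where the genuine work lies, and I would follow the pure case of \cref{thm:pureindfus} closely. Inducting on \(n\) from the base case \(n=0\), I would fuse \emph{two} defining sequences for \(\Bmod{2n+1}{}\), namely \eqref{eq:xseq3} and \eqref{eq:defseq1}, with \(\Tmod{2m+1}{}\), producing two presentations of the same module; comparing them shows they cannot both split, and \cref{prop:loewy}, which furnishes a one-dimensional extension group together with its unique non-split extension, identifies the surviving non-projective summand as \(\Tmod{2m-2n+1}{2n}\). The hard part will be the combinatorial bookkeeping of the projective summands: unlike the even cases, the naive composition-factor content coming from the fused sequence contains repetitions, and these must pair off into indecomposable projectives \(\Stmod{}\) that split away (being injective as well), leaving precisely \(\Tmod{2m-2n+1}{2n}\) and the multiplicities \(N_r=\min\{r,m+1,n,m+n+1-r\}\) packaged in \(\Stsum{m+1,n}{}\). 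Confirming that these cancellations happen and that the surviving projective multiplicities agree with \eqref{eq:polys} is the delicate step; the positivity of the length \(2m-2n+1\) is what forces \(m\ge n\), the complementary range being supplied by the braiding \(\Tmod{}{}\fuse\Bmod{}{}\cong\Bmod{}{}\fuse\Tmod{}{}\) of \cref{thm:vtencat}.
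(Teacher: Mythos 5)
Your proposal is correct, and for two of the three cases it coincides with the paper's own proof: deducing the right-hand column via \cref{thm:stardual} and the \({}^\ast\) functor, the even--even case (fuse \eqref{eq:defseq4} with \(\Bmod{2n}{}\), split off the projective quotient, induct), and the odd--odd case (fuse \eqref{eq:xseq3} and \eqref{eq:defseq1} with \(\Tmod{2m+1}{}\), note both presentations cannot split, peel off the projective summands and identify the remainder via the one-dimensional extension groups of \cref{prop:loewy}) are exactly the paper's steps. Where you genuinely diverge is the mixed case \(\Tmod{2m}{}\fuse\Bmod{2n+1}{}\), and your route is simpler: you fuse the single sequence \eqref{eq:dseq1} with \(\Tmod{2m}{}\), identify the submodule as \(\Tmod{2m}{2n}\) (tensor unit plus \cref{thm:specflow}) and the quotient as the projective \(\Stsum{m,n}{}\) via your already-established even--even formula, so the sequence splits outright. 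The paper instead treats the cases in the opposite order (odd--odd first) and, for the mixed case, fuses \emph{two} sequences, \eqref{eq:dseq4} and \eqref{eq:xseq2}, with \(\Bmod{2n+1}{}\); one of the resulting presentations has quotient \(\Tmod{2m-2n-1}{2n}\oplus\Stsum{m,n}{}\), which requires the odd--odd result already, and pinning down the answer then needs the computation \(\dExt{\Tmod{2m-2n-1}{2n}}{\Bmod{2n+1}{2m-1}}=1\) from \cref{prop:loewy}. Your ordering (even--even, then even--odd, then odd--odd) removes that logical dependency and one invocation of \cref{prop:loewy}, and it also substantiates your side observation---which is correct---that only the odd--odd formula needs \(m\ge n\) (positivity of the length \(2m-2n+1\)), the complementary range being recovered from commutativity of fusion; the first two formulas hold for all \(m,n\ge 1\). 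The only place you are schematic is the projective bookkeeping in the odd--odd induction, but the structure you describe is precisely the paper's recursion, whose multiplicities are packaged by the notation \(\Stsum{m+1,n}{}\) of \eqref{eq:polys}, so nothing is missing in substance.
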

\begin{proof}
  We prove the left column of identities. The right column then follows from \cref{thm:stardual} and applying the \(^\ast\) functor to each module.
  We start with sequences \eqref{eq:defseq1} and \eqref{eq:xseq3} for odd length $\Bmod{}{}$, and fuse them with $\Tmod{2m+1}{}$ to find
  \begin{align}
    \dses{\Tmod{2m+1}{} \fuse \Bmod{2n-1}{}}{}{\Tmod{2m+1}{} \fuse \Bmod{2n+1}{}&}{}{\Tmod{2}{2n-1} \oplus \Stsum{m}{2n}}, \\
    \dses{\Tmod{2m+1}{} \fuse \Bmod{2n-1}{2}}{}{\Tmod{2m+1}{} \fuse \Bmod{2n+1}{}&}{}{\Bmod{2}{2m} \oplus \Stsum{m}{}}.
  \end{align}
  Specialising to n=1 we have
  \begin{align}
    \dses{\Tmod{2m+1}{}}{}{\Tmod{2m+1}{} \fuse \Bmod{3}{}&}{}{\Tmod{2}{1} \oplus \bigoplus_{k=1}^{m} \sfmod{2k+1}{\Stag}}, \\
    \dses{\Tmod{2m+1}{2}}{}{\Tmod{2m+1}{} \fuse \Bmod{3}{}&}{}{\Bmod{2}{2m} \oplus \bigoplus_{k=1}^{m} \sfmod{2k-1}{\Stag}}.
  \end{align}
  Since $\Stmod{}$ is projective, its spectral flows must appear as direct
  summands in the middle coefficient of the above exact sequences. Thus, 
  \begin{equation}
    \Tmod{2m+1}{} \fuse \Bmod{3}{} \cong \Amod \oplus \bigoplus_{k=1}^{m+1}  \sfmod{2k-1}{\Stag} = \Amod \oplus \Stsum{m+1}{}.
  \end{equation}
  Therefore the module $\Amod$ satisfies the exact sequences
  \begin{align}
    &\dses{\Tmod{2m+1}{}}{}{\Amod \oplus \Stmod{1}}{}{{\Tmod{2}{1}}}, \nonumber\\
    &\dses{\Tmod{2m+1}{2}}{}{\Amod \oplus \Stmod{2m+1}}{}{\Bmod{2}{2m}}. 
  \end{align}
  Because either of these sequences splitting would lead to a contradiction
  and the corresponding extension groups are one-dimensional, the sequences
  uniquely characterise the fusion product. Proceeding by induction, we obtain
  \begin{align}
    \Tmod{2m+1}{} \fuse \Bmod{3}{} &\cong \Tmod{2m-1}{2} \oplus \Stsum{m+1}{},\nonumber\\
    \Tmod{2m+1}{} \fuse \Bmod{5}{} &\cong \Tmod{2m-3}{4} \oplus \brac*{1+\sfaut^2} \Stsum{m+1}{},\nonumber\\
    \Tmod{2m+1}{} \fuse \Bmod{2n+1}{} &\cong \Tmod{2m-2n+1}{2n}  \oplus \Stsum{m+1,n}{}.
  \end{align}
  Next we take two short exact sequences \eqref{eq:dseq4} and \eqref{eq:xseq2}, for $\Tmod{2m}{}$ and fuse them with $\Bmod{2n+1}{}$ to get
  \begin{align}
    &\dses{\Bmod{2n+1}{2m-1}}{}{\Tmod{2m}{}\fuse\Bmod{2n+1}{}}{}{\Tmod{2m-2n-1}{2n}  \oplus \Stsum{m,n}{}}, \nonumber\\
    &\dses{\Bmod{2m+2n-1}{1}\oplus \Stsum{m-1,n}{2}}{}{\Tmod{2m}{}\fuse\Bmod{2n+1}{}}{}{\Bmod{2n+1}{}}.
  \end{align}
  Again either of these sequences splitting would lead to a contradiction, and
  by \cref{prop:loewy}, 
  $\dExt{\Tmod{2m-2n-1}{2n}}{\Bmod{2n+1}{2m-1}} = 1$, with the extension being given by $\Tmod{2m}{2n}$, so the second fusion rule follows.
  Finally, fusing \eqref{eq:defseq4} with $\Bmod{2n}{}$, we have 
  \begin{align}
    \dses{\Tmod{2m-2}{} \fuse \Bmod{2n}{}}{}{\Tmod{2m}{} \fuse \Bmod{2n}{}}{}{\Stsum{n}{2m-2}}, \\
    \Tmod{2m}{} \fuse \Bmod{2n}{} \cong \bigoplus_{k=1}^{m} \Stsum{n}{2k-2} = \Stsum{m,n}{}.
  \end{align}
\end{proof}

\appendix

\section{Sufficient conditions for convergence and extension -- Proof of \cref{thm:gradconext}}
\label{sec:suffconvext}

In this section we give a proof of \cref{thm:gradconext} by reviewing
reasoning presented by Yang in \cite{Yang18} and 
showing that certain assumptions on the category of strongly graded modules (see
\cite[Assumption 7.1, Part 3]{Yang18}) are not required, if one only wishes
  to conclude that convergence and extension properties hold. Instead all that
is required is that the modules considered satisfy suitable finiteness
conditions. 
This appendix closely follows the logic of \cite[Sections
5 \& 6]{Yang18} and also \cite[Section 2]{HuaDif04}.

Throughout this section
let \(A \le B\) be abelian groups.
Further, let \(\VOA{V}\) be an \(A\)-graded \va{} with a vertex
subalgebra 
\(\overline{\VOA{V}}\subset\VOA{V}^{(0)}\). In this section only, all mode
expansions of fields from a \voa{} \(\VOA{V}\) will be of the form \(Y(v,z)=\sum_{n\in\ZZ}
v_{n}z^{-n-1}\) regardless of the conformal weight of \(v\in\VOA{V}\), that is,
\(v_{n}\) refers to the coefficient of \(z^{-n-1}\) rather than the one which
shifts conformal weight by \(-n\).

\begin{defn}
  Let \(\Wmod_0, \Wmod_1, \Wmod_2, \Wmod_3,\Wmod_4\) be \(B\)-graded \(\VOA{V}\)-modules.
  \begin{enumerate}
  \item We say that two \(B\)-graded logarithmic intertwining operators
    \(\mathcal{Y}_1\), \(\mathcal{Y}_2\) of respective types
    \(\binom{\Wmod_0}{\Wmod_1,\ \Wmod_4}\), \(\binom{\Wmod_4}{\Wmod_2,\ \Wmod_3}\) satisfy the
  \emph{convergence and 
      extension property for products} if for any \(a_1,a_2,\in B\) and any
    doubly homogeneous elements \(w_0^\prime\in
    \Wmod_0^\prime\), \(w_3\in \Wmod_3\),\ \(w_i\in \Wmod_i^{(a_i)},\ i=1,2\), there exist
    \(M\in \ZZ_{\ge0}\), \(r_1,\dots, r_M,s_1,\dots s_M\in\RR\), \(u_1,\dots u_M,
    v_1,\dots v_M\in\ZZ_{\ge0}\) and analytic functions \(f_1(z),\dots,
    f_M(z)\) on the disc \(|z|<1\) satisfying
    \begin{equation}
      \wt w_1+\wt w_2 + s_k >N,\quad \text{for each}\ k=1,\dots, M,
    \end{equation}
    where \(N\in\ZZ\) depends only on the intertwining operators
    \(\mathcal{Y}_1,\ \mathcal{Y}_2\) and \(a_1+a_2\), such that as a formal power series the matrix element
    \begin{equation}
      \dpair{w_0^\prime}{\mathcal{Y}_1(w_1,z_1)\mathcal{Y}_2(w_2,z_2)w_3}
      \label{eq:prodmatel}
    \end{equation}
    converges absolutely in the region \(|z_1|>|z_2|>0\) and may be
    analytically continued to the multivalued analytic function
    \begin{equation}
      \sum_{k=1}^M z_2^{r_k}(z_1-z_2)^{s_k}(\log
      z_2)^{u_k}(\log(z_1-z_2))^{v_k}f_k\brac*{\frac{z_1-z_2}{z_2}}
      \label{eq:prodconvext}
    \end{equation}
    in the region \(|z_2|>|z_1-z_2|>0\).
    \label{def:convextprod}
  \item We say that two \(B\)-graded logarithmic intertwining operators
    \(\mathcal{Y}_1\), \(\mathcal{Y}_2\) of respective types
    \(\binom{\Wmod_0}{\Wmod_4,\ \Wmod_3}\), \(\binom{\Wmod_4}{\Wmod_1,\ \Wmod_2}\) satisfy the
  \emph{convergence and 
      extension property for iterates} if for any \(a_2,a_3,\in B\) and any
    doubly homogeneous elements \(w_0^\prime\in
    \Wmod_0^\prime\), \(w_1\in \Wmod_3\),\ \(w_i\in \Wmod_i^{(a_i)},\ i=2,3\), there exist
    \(M\in \ZZ_{\ge0}\), \(r_1,\dots, r_M,s_1,\dots s_M\in\RR\), \(u_1,\dots u_M,
    v_1,\dots v_M\in\ZZ_{\ge0}\) and analytic functions \(f_1(z),\dots
    f_M(z)\) on the disc \(|z|<1\) satisfying
    \begin{equation}
      \wt w_2+\wt w_3 + s_k >N,\quad \text{for each}\ k=1,\dots, M ,
    \end{equation}
    where \(N\in\ZZ\) depends only on the intertwining operators
    \(\mathcal{Y}_1,\ \mathcal{Y}_2\) and \(a_2+a_3\), such that as a formal
    power series the matrix element 
    \begin{equation}
      \dpair{w_0^\prime}{\mathcal{Y}_1(\mathcal{Y}_2(w_1,z_1-z_2)w_2,z_2)w_3}
      \label{eq:itmatel}
    \end{equation}
    converges absolutely in the region \(|z_2|>|z_1-z_2|>0\) and may be
    analytically continued to the multivalued analytic function
    \begin{equation}
      \sum_{k=1}^M z_1^{r_k}z_2^{s_k}(\log z_1)^{u_k}(\log
      z_2)^{v_k}f_k\brac*{\frac{z_2}{z_1}}
      \label{eq:itconvext}
    \end{equation}
    in the region \(|z_1|>|z_2|>0\).
  \end{enumerate}
\end{defn}

Consider the Noetherian ring
\(R=\CC[z_1^{\pm1},z_2^{\pm1},(z_1-z_2)^{-1}]\). Then for any quadruple of \(B\)-graded
\(\VOA{V}\)-modules \(\Wmod_0, \Wmod_1, \Wmod_2,\Wmod_3,\) and any triple \((a_1,a_2,a_3)\in B^3\), we define the
\(R\)-module
\begin{equation}
  T^{(a_1,a_2,a_3)}=R\otimes \brac*{\Wmod_0^{\prime}}^{(a_1+a_2+a_3)}\otimes
  \Wmod_1^{(a_1)}\otimes \Wmod_2^{(a_2)}\otimes \Wmod_3^{(a_3)},
\end{equation}
where all the tensor product symbols denote complex tensor products. We will
generally omit the tensor product symbol separating \(R\) from the \(\VOA{V}\)-modules.
The motivation for considering this module is that for any \(B\)-graded module
\(\Wmod_4\) and any pair of grading compatible
logarithmic intertwining operators \(\mathcal{Y}_1, \mathcal{Y}_2\) of respective types \(\binom{\Wmod_0}{\Wmod_1,\ \Wmod_4}\) and
\(\binom{\Wmod_4}{\Wmod_2,\ \Wmod_3}\) it produces matrix elements via the map
\(\phi_{\mathcal{Y}_1,\mathcal{Y}_2}:T^{(a_1,a_2,a_3)}\to
z_1^h\CC\brac*{\set{z_2/z_1}}\sqbrac{z_1^{\pm1},z_2^{\pm1}}\),
where \(h\) is the combined conformal weight of \(w_0^\prime,\ w_1,\ w_2,\
w_3\) and \(\CC\brac*{\set{x}}\) is the space of all power series in \(x\) with
bounded below real exponents (the modules \(\Wmod_i\), \(i=0,1,2,3\) will always
have real conformal weights below),
defined by
\begin{align}
  \phi_{\mathcal{Y}_1,\mathcal{Y}_2}(f(z_1,z_2)w_0^\prime\otimes w_1\otimes
  w_2\otimes w_3)=\iota(f(z_1,z_2))\langle w_0^\prime,
  \mathcal{Y}_1(w_1,z_1)\mathcal{Y}_2(w_2,z_2)w_3\rangle,
  \label{eq:matelmap}
\end{align}
where \(\iota: R\to \CC\powser{z_2/z_1}\sqbrac{z_1^{\pm1},z_2^{\pm1}}\) is the
map expanding elements of \(R\) such that the powers of \(z_2\) are bounded below.
This in turn justifies considering the submodule
\begin{multline}
  J^{(a_1,a_2,a_3)}=\cspn_{R}\left\{\mathcal{A}\brac*{v,w_0^\prime,w_1,w_2,w_3},\mathcal{B}\brac*{v,w_0^\prime,w_1,w_2,w_3},\mathcal{C}\brac*{v,w_0^\prime,w_1,w_2,w_3},\mathcal{D}\brac*{v,w_0^\prime,w_1,w_2,w_3}\in
    T^{(a_1,a_2,a_3)}\right.
    \st\\ \left.v\in\overline{\VOA{V}},\ w_0^\prime\in
    \brac{\Wmod_0^\prime}^{(a_1+a_2+a_3)},\ w_i\in W^{(a_i)},\ i=1,2,3\right\},
\end{multline}
where the generators
\begin{align}
  \mathcal{A}\brac*{v,w_0^\prime,w_1,w_2,w_3}&=-w_0^\prime\otimes v_{-1} w_1\otimes w_2\otimes w_3
    + \sum_{k\ge 0}\binom{-1}{k}(-z_1)^kv_{-1-k}^{\ast}w_0^\prime \otimes
    w_1\otimes w_2\otimes w_3\nonumber\\
    &\quad-\sum_{k\ge0}\binom{-1}{k} (-(z_1-z_2))^{-1-k}
    w_0^\prime\otimes w_1\otimes v_k w_2\otimes w_3 \nonumber\\
                                             &\quad
                                               -\sum_{k\ge0}\binom{-1}{k} (-z_1)^{-1-k}
                                               w_0^\prime\otimes w_1\otimes w_2\otimes v_k w_3 , \nonumber\\
  \mathcal{B}\brac*{v,w_0^\prime,w_1,w_2,w_3}&=-w_0^\prime\otimes w_1\otimes
                                               v_{-1}w_2\otimes w_3+
                                                \sum_{k\ge 0}\binom{-1}{k}(-z_2)^kv_{-1-k}^{\ast}w_0^\prime \otimes
    w_1\otimes w_2\otimes w_3
                                               \nonumber\\
  &\quad -\sum_{k\ge0}\binom{-1}{k} (-(z_1-z_2))^{-1-k}
    w_0^\prime\otimes v_k w_1\otimes w_2\otimes w_3 \nonumber\\
                                             &\quad
                                               -\sum_{k\ge0}\binom{-1}{k} (-z_2)^{-1-k}
                                               w_0^\prime\otimes w_1\otimes
                                               w_2\otimes v_k w_3 , \nonumber\\ 
  \mathcal{C}\brac*{v,w_0^\prime,w_1,w_2,w_3}&= v^{\ast}_{-1}w_0^\prime\otimes w_1\otimes
                                               v_{-1}w_2\otimes w_3-
                                               \sum_{k\ge0}\binom{-1}{k} z_1^{-1-k}
                                               w_0^\prime\otimes v_k w_1\otimes
                                               w_2\otimes w_3
                                               \nonumber\\
  &\quad -\sum_{k\ge0}\binom{-1}{k} z_2^{-1-k}
                                               w_0^\prime\otimes w_1\otimes
    v_k w_2\otimes w_3
    - w_0^\prime\otimes w_1\otimes
    w_2\otimes v_{-1} w_3 , \nonumber\\
  \mathcal{D}\brac*{v,w_0^\prime,w_1,w_2,w_3}&=v_{-1}w_0^\prime\otimes w_1\otimes
                                               v_{-1}w_2\otimes w_3-
                                               \sum_{k\ge0}\binom{-1}{k} z_1^{k+1}
                                               w_0^\prime\otimes \ee^{z_1^{-1}L_{1}}\brac*{-z_1^2}^{L_0}v_k\brac*{-z_1^{-2}}^{L_0}\ee^{-z_1^{-1}L_{1}} w_1\otimes
                                               w_2\otimes w_3
                                               \nonumber\\
  &\quad -\sum_{k\ge0}\binom{-1}{k} z_2^{-1-k}
                                               w_0^\prime\otimes w_1\otimes
    \ee^{z_2^{-1}L_{1}}\brac*{-z_2^2}^{L_0}v_k\brac*{-z_2^{-2}}^{L_0}\ee^{-z_2^{-1}L_{1}} w_2\otimes w_3\nonumber\\
    &\quad - w_0^\prime\otimes w_1\otimes
    w_2\otimes v^{\ast}_{-1} w_3 ,
  \label{eq:jgens}
\end{align}
are preimages of the relations coming from residues of the Jacobi identity for
intertwining operators and where \(v_k^\ast:\Wmod_i^\prime\to \Wmod_i^\prime\) denotes the adjoint of
\(v_k:\Wmod_i\to \Wmod_i\). Hence \(J^{(a_1,a_2,a_3)}\) lies in the kernel of
\(\phi_{\mathcal{Y}_1,\mathcal{Y}_2}\) for any choice of intertwining
operators \(\mathcal{Y}_1,\mathcal{Y}_2\) of the correct types.

Next consider the doubly homogeneous space
\begin{equation}
  T^{(a_1,a_2,a_3)}_{[r]}=\prod_{\substack{r_0,r_1,r_2,r_3\in\RR\\r_0+r_1+r_2+r_3=r}}
  R\otimes \brac*{\Wmod_0^\prime}_{[r_0]}^{(a_1+a_2+a_3)}\otimes \brac*{\Wmod_1}_{[r_1]}^{(a_1)}
  \otimes \brac*{\Wmod_2}_{[r_2]}^{(a_2)}\otimes \brac*{\Wmod_3}_{[r_3]}^{(a_3)}
\end{equation}
to construct the subspaces
\begin{align}
  F_r(T^{(a_1,a_2,a_3)})&=\prod_{s\le r}T_{[s]}^{(a_1,a_2,a_3)} ,\nonumber\\
  F_r(J^{(a_1,a_2,a_3)})&=J^{(a_1,a_2,a_3)}\cap F_r(T^{(a_1,a_2,a_3)}).
\end{align}
These define filtrations on \(T^{(a_1,a_2,a_3)}\) and \(J^{(a_1,a_2,a_3)}\),
respectively, since \(F_s(T^{(a_1,a_2,a_3)})\subset F_r(T^{(a_1,a_2,a_3)})\)
and \(F_s(J^{(a_1,a_2,a_3)})\subset F_r(J^{(a_1,a_2,a_3)})\),
if \(s\le r\), and \(\bigcup_{r\in \RR}
F_r(T^{(a_1,a_2,a_3)})=T^{(a_1,a_2,a_3)}\) and \(\bigcup_{r\in \RR}
F_r(J^{(a_1,a_2,a_3)})=J^{(a_1,a_2,a_3)}\).  Note that if
the \(\Wmod_i, i=0,1,2,3\) are discretely strongly graded, then \(T^{(a_1,a_2,a_3)}_{[r]}\)
is a finite sum of finite dimensional doubly homogeneous spaces tensored with \(R\). Hence
\(T^{(a_1,a_2,a_3)}_{[r]}\) is a finitely generated free
\(R\)-module. Further, \(F_r(T^{(a_1,a_2,a_3)})\) is also a finite sum and
hence also a finitely generated free \(R\)-module. Finally, the ring \(R\) is Noetherian and
so the submodule \(F_r(J^{(a_1,a_2,a_3)})\) is also finitely generated.
\begin{prop}\label{thm:filtbounds}
  Let the \(\VOA{V}\)-modules \(\Wmod_i,\ i=0,1,2,3\) be discretely strongly \(B\)-graded and
  \(B\)-graded \(C_1\)-cofinite as \(\overline{\VOA{V}}\)-modules, then for
  any \(a_1,a_2,a_3\in B\) there exists \(M\in \ZZ\) such that for any \(r\in \RR\)
  \begin{equation}
    F_r(T^{(a_1,a_2,a_3)})\subset F_r(J^{(a_1,a_2,a_3)}) +
    F_M(T^{(a_1,a_2,a_3)})\quad \text{and}\quad
    T^{(a_1,a_2,a_3)}\subset J^{(a_1,a_2,a_3)} +
    F_M(T^{(a_1,a_2,a_3)}).
  \end{equation}
\end{prop}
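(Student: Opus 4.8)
The plan is to prove the proposition by a descending induction on total conformal weight, reducing an arbitrary homogeneous generator modulo \(J^{(a_1,a_2,a_3)}\) until all four tensor slots sit in a fixed finite-dimensional window; the engine of the reduction is graded \(C_1\)-cofiniteness, and the four generator families \(\mathcal{A},\mathcal{B},\mathcal{C},\mathcal{D}\) are exactly what let one perform it. Each family is built so that, read as an identity modulo \(J^{(a_1,a_2,a_3)}\), it expresses one distinguished top term — supported by a single mode \(v_{-1}\) (\(v\in\overline{\VOA{V}}\)) on one of the slots \(\Wmod_1,\Wmod_2,\Wmod_3\) for \(\mathcal{A},\mathcal{B},\mathcal{C}\), and on the dual slot \(\Wmod_0'\) for \(\mathcal{D}\) — in terms of its remaining summands. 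First I would verify, with \(\mathcal{A}\) as the model case, that for \(v\) of positive conformal weight \(h\) every remaining summand strictly lowers the total conformal weight of the top term \(-w_0'\otimes v_{-1}w_1\otimes w_2\otimes w_3\): the adjoint term \(v_{-1-k}^{\ast}w_0'\) drops it by \(2h+k\), and each term \(v_kw_2\) or \(v_kw_3\) with \(k\ge0\) drops it by \(k+1\), while the coefficients from \(R\) carry no conformal weight. The analogous bookkeeping for \(\mathcal{B},\mathcal{C},\mathcal{D}\) gives the same conclusion, so each of the four relations lies in \(F_r(T^{(a_1,a_2,a_3)})\) exactly when its top term does, and trades that top term modulo \(F_r(J^{(a_1,a_2,a_3)})\) for strictly lighter terms.

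Second I would bring in graded \(C_1\)-cofiniteness. The top term of \(\mathcal{A}\) ranges over \(C_1(\Wmod_1)\), and likewise for the other slots, so for each \(i\) I may fix a finite-dimensional graded complement \(X_i\) with \(\Wmod_i^{(\cdot)}=X_i+C_1(\Wmod_i)^{(\cdot)}\) (for the dual slot this uses \(C_1\)-cofiniteness of \(\Wmod_0'\), which for the modules at hand follows from that of \(\Wmod_0\) together with discrete strong grading). Let \(b_i\) be an upper bound for the finitely many conformal weights occurring in \(X_i\), and put \(M=b_0+b_1+b_2+b_3\). Given a doubly homogeneous generator \(f\otimes w_0'\otimes w_1\otimes w_2\otimes w_3\) of total weight \(r>M\), the pigeonhole principle forces \(\wt w_i>b_i\) for some slot; since \(X_i\) carries no weight above \(b_i\), that homogeneous factor lies wholly in \(C_1(\Wmod_i)\), and the matching relation rewrites the generator, modulo \(F_r(J^{(a_1,a_2,a_3)})\), as an \(R\)-combination of generators of strictly smaller total weight. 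Here I would use that \(F_r(T^{(a_1,a_2,a_3)})\) is stable under \(R\), whose action lives in the weight-\(0\) scalar slot, so that \(F_r(J^{(a_1,a_2,a_3)})=J^{(a_1,a_2,a_3)}\cap F_r(T^{(a_1,a_2,a_3)})\) absorbs the coefficient \(f\).

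Third I would close the induction. Because each \(\Wmod_i\) is lower bounded and \(\Wmod_0'\) shares its weights with \(\Wmod_0\), the total conformal weight on \(T^{(a_1,a_2,a_3)}\) is bounded below by some \(L\); and discreteness of the grading means only finitely many weights occur in \([L,r]\). Hence the strictly weight-decreasing reduction cannot iterate indefinitely, and repeating it writes any element of \(F_r(T^{(a_1,a_2,a_3)})\) as an element of \(F_r(J^{(a_1,a_2,a_3)})\) plus an \(R\)-combination of generators whose four factors all lie in the \(X_i\), that is, plus an element of \(F_M(T^{(a_1,a_2,a_3)})\). This is the first inclusion; taking the union over \(r\in\RR\) and using \(\bigcup_r F_r(J^{(a_1,a_2,a_3)})=J^{(a_1,a_2,a_3)}\) yields the second, \(T^{(a_1,a_2,a_3)}\subset J^{(a_1,a_2,a_3)}+F_M(T^{(a_1,a_2,a_3)})\).

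The hardest part will be the weight bookkeeping of the first step for the dual slot: one must confirm, in the appendix's convention that \(v_n\) is the coefficient of \(z^{-n-1}\), that the adjoint and opposite-operator summands of \(\mathcal{D}\) (and the \(v_{-1-k}^{\ast}w_0'\) summands of \(\mathcal{A},\mathcal{B},\mathcal{C}\)) really do strictly decrease total weight, since this uniformity in \(r\) is precisely what makes \(M\) independent of \(r\) and the induction well-founded. A second, smaller point to secure is the \(C_1\)-cofiniteness of the contragredient \(\Wmod_0'\) needed to reduce the dual slot, which must be extracted from the hypothesis on \(\Wmod_0\).
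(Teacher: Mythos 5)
Your proposal is correct and is essentially the paper's own argument: the paper likewise uses graded \(C_1\)-cofiniteness to fix \(M\) so that \(\bigoplus_{n>M}T^{(a_1,a_2,a_3)}_{[n]}\) is covered by the four summands having one tensor slot in \(C_1\), then runs an induction on conformal weight (ascending over \(r\), well-founded by discrete strong gradedness, and equivalent to your terminating descending reduction) in which the generators \(\mathcal{A},\mathcal{B},\mathcal{C},\mathcal{D}\) trade each \(v_{-1}\)-top term for strictly lighter terms modulo \(F_s(J^{(a_1,a_2,a_3)})\), and it obtains the second inclusion by taking the union over \(r\) exactly as you do. The caveat you flag about the dual slot is shared by the paper, whose proof silently invokes \(C_1\brac*{\Wmod_0^\prime}\) although the hypothesis is stated for \(\Wmod_0\); neither argument actually proves that \(C_1\)-cofiniteness passes to the contragredient, but in the intended application (\cref{thm:conext}) this is immaterial, since category \(\catFLE\) is closed under restricted duals and all of its objects are shown there to be graded \(C_1\)-cofinite.
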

\begin{proof}
  By assumption the modules \(\Wmod_i,\ i=0,1,2,3\) are \(B\)-graded
  \(C_1\)-cofinite as \(\overline{V}\)-modules, that is, the spaces
  \begin{equation}
    C_1(\Mmod)^{(a)}=\spn{v_{-h}w\in M^{(a)}\st v\in \overline{\VOA{V}}_{[h]}
      h>0, w\in \Mmod}
  \end{equation}
  have finite codimension in \(\Mmod^{(a)}\) for \(\Mmod = \Wmod_i,\
  i=0,1,2,3\). Thus \(\Mmod^{(a)}_{[h]}\subset C_1(\Mmod)^{(a)}\) for
  sufficiently large conformal weight \(h\in \RR\) and hence there exists
  \(M\in \ZZ\) such that
  \begin{multline}
    \bigoplus_{n>M}T^{(a_1,a_2,a_3)}_{[n]}\subset
    C_1\brac*{\Wmod_0^\prime}^{(a_1+a_2+a_3)}\otimes \Wmod_1^{(a_1)}\otimes
    \Wmod_2^{(a_2)}\otimes \Wmod_3^{(a_3)}
    +\brac*{\Wmod_0^\prime}^{(a_1+a_2+a_3)}\otimes C_1\brac*{\Wmod_1}^{(a_1)}\otimes
    \Wmod_2^{(a_2)}\otimes \Wmod_3^{(a_3)}\\
    +\brac*{\Wmod_0^\prime}^{(a_1+a_2+a_3)}\otimes \Wmod_1^{(a_1)}\otimes
    C_1\brac*{\Wmod_2}^{(a_2)}\otimes \Wmod_3^{(a_3)}
    +\brac*{\Wmod_0^\prime}^{(a_1+a_2+a_3)}\otimes \Wmod_1^{(a_1)}\otimes
    \Wmod_2^{(a_2)}\otimes C_1\brac*{\Wmod_3}^{(a_3)}.
    \label{eq:c1prods}
  \end{multline}
  We prove the first inclusion of the proposition by induction on \(r\in
  \RR\). If \(r\le M\), then the inclusion is true by \(F_r(T^{(a_1,a_2,a_3)})\)
  defining a filtration. Next assume that \(F_r(T^{(a_1,a_2,a_3)})\subset F_r(J^{(a_1,a_2,a_3)}) +
    F_M(T^{(a_1,a_2,a_3)})\) is true for all \(r<s\in \RR\) for some
    \(s>M\). We will show that any element of the homogeneous space
    \(T^{(a_1,a_2,a_3)}_{[s]}\) can be written as a sum of elements in
    \(F_s(J^{(a_1,a_2,a_3)})\) and \(F_M(T^{(a_1,a_2,a_3)})\). Since
    \(s>M\), this homogeneous element is an element of the \rhs{} of
    \eqref{eq:c1prods}. We shall only consider the case of this element lying
    in the second summand of the \rhs{}, as the other cases follow
    analogously. Without loss of generality we can assume the element has the
    form
    \(w_0^\prime\otimes v_{-1}w_1\otimes w_2\otimes w_3\in T^{(a_1,a_2,a_3)}_{[s]}\), where
    \(w_0^\prime\in \brac*{\Wmod_0^\prime}^{(a_1+a_2+a_3)}\), \(w_i\in \Wmod_i^{(a_i)}, i=1,2,3\), \(v\in
    \overline{V}_{[h]}, h>0\). By computing the degrees of the summands making up
    \(\mathcal{A}(v,w_0^\prime,w_1,w_2,w_3)\) in \eqref{eq:jgens} we see that
    the three sums over \(k\) all lie in \(F_{s-1}(T^{(a_1,a_2,a_3)})\subset
    F_{s-1}(J^{(a_1,a_2,a_3)}) +
    F_M(T^{(a_1,a_2,a_3)})\) and that 
    \(\mathcal{A}(v,w_0^\prime,w_1,w_2,w_3)\in
    F_s(J^{(a_1,a_2,a_3)})\).
     Further,
    \begin{multline}
      w_0^\prime\otimes v_{-1}w_1\otimes w_2\otimes w_3=
      -\mathcal{A}(v,w_0^\prime,w_1,w_2,w_3)+\sum_{k\ge 0}\binom{-1}{k}(-z_1)^kv_{k}^{\ast}w_0^\prime \otimes
    w_1\otimes w_2\otimes w_3\\
    -\sum_{k\ge0}\binom{-1}{k} (-(z_1-z_2))^{-1-k}
    w_0^\prime\otimes w_1\otimes v_k w_2\otimes w_3 
                                               -\sum_{k\ge0}\binom{-1}{k} (-z_1)^{-1-k}
                                               w_0^\prime\otimes w_1\otimes w_2\otimes v_k w_3.
    \end{multline}
    Thus \(w_0^\prime\otimes v_{-1}w_1\otimes
    w_2\otimes w_3\) lies in the sum \(F_{s}(J^{(a_1,a_2,a_3)}) +
    F_M(T^{(a_1,a_2,a_3)})\) and the first inclusion of the proposition
    follows. The second inclusion follows from \(F_r(T^{(a_1,a_2,a_3)})\) and
    \(F_r(J^{(a_1,a_2,a_3)})\) defining filtrations.
    \begin{align}
      T^{(a_1,a_2,a_3)}&=\bigcup_{r\in \RR}F_r(T^{(a_1,a_2,a_3)})
      \subset\bigcup_{r\in \RR}\brac*{F_{r}(J^{(a_1,a_2,a_3)}) +
        F_M(T^{(a_1,a_2,a_3)})} \nonumber \\
      &=\brac*{\bigcup_{r\in
          \RR}F_{r}(J^{(a_1,a_2,a_3)})}+F_M(T^{(a_1,a_2,a_3)}) 
      =  J^{(a_1,a_2,a_3)}+F_M(T^{(a_1,a_2,a_3)}).
    \end{align}
  \end{proof}

  \begin{cor}
    Let the \(\VOA{V}\)-modules \(\Wmod_i,\ i=0,1,2,3\) be discretely strongly \(B\)-graded and
    \(B\)-graded \(C_1\)-cofinite as \(\overline{\VOA{V}}\)-modules.
    \begin{enumerate}
    \item The quotient \(R\)-module
      \(T^{(a_1,a_2,a_3)}/J^{(a_1,a_2,a_3)}\) is
      finitely generated.
      \label{itm:fingen}
    \item   For any representative \(w\in T^{(a_1,a_2,a_3)}\), we denote its coset in
      \(T^{(a_1,a_2,a_3)}/J^{(a_1,a_2,a_3)}\) by \(\sqbrac*{w}\).
      Let
      \(w_0^\prime \in \brac*{\Wmod_0^\prime}^{(a_1+a_2+a_3)}\) and \(w_i\in
      \Wmod_i^{(a_i)}\) \(i=1,2,3\), and consider the submodules of
      \(T^{(a_1,a_2,a_3)}/ J^{(a_1,a_2,a_3)}\) given by
      \begin{align}
        M_1&=\rgen{\sqbrac*{w_0\otimes L_{-1}^j w_1\otimes w_2\otimes w_3}\st j\in
             \ZZ_{\ge0}},& 
        M_2&=\rgen{\sqbrac*{w_0\otimes  w_1\otimes L_{-1}^jw_2\otimes w_3}\st j\in
             \ZZ_{\ge0}}.
      \end{align}
      Then \(M_1\) and \(M_2\) are finitely generated, in particular, there exist
      \(m,n\in \ZZ_{\ge0}\) and \(a_k(z_1,z_2), b_\ell(z_1,z_2)\in R\), \(1\le k
      \le m\), \(1\le \ell\le n\) such that
      \begin{align}
        \sqbrac*{w_0\otimes L_{-1}^m w_1\otimes w_2\otimes
        w_3}+a_1(z_1,z_2)\sqbrac*{w_0\otimes L_{-1}^{m-1} w_1\otimes w_2\otimes
        w_3}
        +\cdots+a_m(z_1,z_2)\sqbrac*{w_0\otimes w_1\otimes w_2\otimes
        w_3}&=0,\nonumber\\
        \sqbrac*{w_0\otimes  w_1\otimes L_{-1}^n w_2\otimes
        w_3}+b_1(z_1,z_2)\sqbrac*{w_0\otimes w_1\otimes L_{-1}^{n-1} w_2\otimes
        w_3}
        +\cdots+b_n(z_1,z_2)\sqbrac*{w_0\otimes w_1\otimes w_2\otimes
        w_3}&=0.
        \label{eq:genrels}
      \end{align}
      \label{itm:genrels}
    \end{enumerate}
    \label{thm:gens}
  \end{cor}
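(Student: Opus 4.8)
The plan is to deduce both parts purely from the two inclusions established in \cref{thm:filtbounds} together with the Noetherianity of the ring $R=\CC[z_1^{\pm1},z_2^{\pm1},(z_1-z_2)^{-1}]$; the analytic content of the setup has already been discharged there, so what remains is commutative algebra.

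For Part \ref{itm:fingen}, I would start from the second inclusion of \cref{thm:filtbounds}, namely $T^{(a_1,a_2,a_3)}\subset J^{(a_1,a_2,a_3)}+F_M(T^{(a_1,a_2,a_3)})$ for the integer $M$ provided there. Reducing modulo $J^{(a_1,a_2,a_3)}$, this says precisely that the quotient $T^{(a_1,a_2,a_3)}/J^{(a_1,a_2,a_3)}$ is generated as an $R$-module by the image of $F_M(T^{(a_1,a_2,a_3)})$. Since the $\Wmod_i$ are discretely strongly graded, $F_M(T^{(a_1,a_2,a_3)})$ is a finitely generated free $R$-module, as noted before \cref{thm:filtbounds}; hence its image in the quotient is finitely generated, which proves Part \ref{itm:fingen}.

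For Part \ref{itm:genrels}, the key observation is that a finitely generated module over the Noetherian ring $R$ is itself Noetherian, so by Part \ref{itm:fingen} the module $T^{(a_1,a_2,a_3)}/J^{(a_1,a_2,a_3)}$ is Noetherian. Consequently every submodule is finitely generated, giving at once the finite generation of $M_1$ and $M_2$. To extract the explicit relations \eqref{eq:genrels}, I would introduce the ascending chains
\begin{equation}
  M_1^{(k)}=\rgen{\sqbrac*{w_0\otimes L_{-1}^j w_1\otimes w_2\otimes w_3}\st 0\le j\le k},\qquad k\in\ZZ_{\ge0},
\end{equation}
and the analogous chain $M_2^{(k)}$ with $L_{-1}$ acting on the third tensor factor. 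Since $M_1=\bigcup_{k}M_1^{(k)}$ sits inside the Noetherian module $T^{(a_1,a_2,a_3)}/J^{(a_1,a_2,a_3)}$, the ascending chain condition forces this chain to stabilise, so there exists $m\in\ZZ_{\ge0}$ with $M_1^{(m)}=M_1^{(m-1)}$. This equality means exactly that $\sqbrac*{w_0\otimes L_{-1}^m w_1\otimes w_2\otimes w_3}$ lies in the $R$-span of the lower terms $\sqbrac*{w_0\otimes L_{-1}^j w_1\otimes w_2\otimes w_3}$ with $0\le j\le m-1$, which is the first relation of \eqref{eq:genrels} once the coefficients $a_1(z_1,z_2),\dots,a_m(z_1,z_2)\in R$ are collected. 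The second relation follows identically from the chain $M_2^{(k)}$, producing $n$ and the $b_\ell(z_1,z_2)$.

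Since this corollary is essentially a repackaging of \cref{thm:filtbounds} in the language of finitely generated modules, I do not anticipate a genuine obstacle. The only point requiring care is the bookkeeping at the end: one must confirm that the stabilisation index can be chosen so that $L_{-1}^m w_1$ is expressed in terms of \emph{strictly lower} powers, rather than merely that the chain eventually terminates. This is exactly the content of the equality $M_1^{(m)}=M_1^{(m-1)}$ furnished by the ascending chain condition, so bare finite generation of $M_1$ does not suffice and the chain argument is the essential ingredient.
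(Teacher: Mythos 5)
Your proposal is correct and takes essentially the same route as the paper: Part (1) follows from the second inclusion of \cref{thm:filtbounds} together with the finite generation of the free module \(F_M(T^{(a_1,a_2,a_3)})\) (the paper phrases this as \(T^{(a_1,a_2,a_3)}/J^{(a_1,a_2,a_3)}\) being a subquotient of \(F_M(T^{(a_1,a_2,a_3)})\), you phrase it as the image of \(F_M(T^{(a_1,a_2,a_3)})\) generating the quotient), and Part (2) follows from Noetherianity of \(R\), where the paper leaves the extraction of \eqref{eq:genrels} as immediate and you make it explicit via the ascending chain condition. One minor quibble that does not affect correctness: your closing claim that bare finite generation of \(M_1\) would not suffice is overstated, since any finite generating set of \(M_1\) lies in the \(R\)-span of finitely many canonical generators \(\sqbrac*{w_0\otimes L_{-1}^j w_1\otimes w_2\otimes w_3}\), \(0\le j\le m-1\), and then the generator with \(j=m\) is an \(R\)-combination of these, which is exactly \eqref{eq:genrels}.
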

  \begin{proof}
    Since \(R\) is a Noetherian ring, Part \ref{itm:fingen} holds if
    \(T^{(a_1,a_2,a_3)}/J^{(a_1,a_2,a_3)}\) is
    isomorphic to a subquotient of a finitely generated module over \(R\).
    By \cref{thm:filtbounds} we have the inclusion and identification
    \begin{align}
 	T^{(a_1,a_2,a_3)}/J^{(a_1,a_2,a_3)} & \subset
      \brac*{J^{(a_1,a_2,a_3)}+F_M(T^{(a_1,a_2,a_3)})}/J^{(a_1,a_2,a_3)}
      \cong
      F_M(T^{(a_1,a_2,a_3)})/\brac*{F_M(T^{(a_1,a_2,a_3)})\cap J^{(a_1,a_2,a_3)}}.
    \end{align}
    Thus \(T^{(a_1,a_2,a_3)}/J^{(a_1,a_2,a_3)}\)
    is isomorphic to a subquotient of the finitely generated module
    \(F_M(T^{(a_1,a_2,a_3)})\) and Part \ref{itm:fingen} follows.
    Part \ref{itm:genrels} is an immediate consequence of Part
    \ref{itm:fingen} and the fact that a submodule of a finitely generated module over
    a Noetherian ring is again finitely generated.
  \end{proof}

  \begin{thm}\label{thm:mateldiffs}
    Let the \(\VOA{V}\)-modules \(\Wmod_i,\ i=0,1,2,3\) be discretely strongly \(B\)-graded and
    \(B\)-graded \(C_1\)-cofinite as \(\overline{\VOA{V}}\)-modules, let
    \(\Wmod_4\) be a \(B\)-graded \(\VOA{V}\)-module and let \(\mathcal{Y}_1\),
    \(\mathcal{Y}_2\) be logarithmic grading compatible intertwining operators
    of types \(\binom{\Wmod_0}{\Wmod_1,\ \Wmod_4}\), \(\binom{\Wmod_4}{\Wmod_2,\ \Wmod_3}\),
    respectively. Then for any homogeneous elements \(w_0^\prime \in
    \Wmod_0^\prime\), \(w_i\in \Wmod_i\), \(i=1,2,3\), there exist
    \(m,n\in\ZZ_{\ge0}\) and \(a_k(z_1,z_2),b_\ell(z_1,z_2)\in R\), \(1\le k\le m,\
    1\le \ell\le n\) such that the power series expansion of the matrix element
    \begin{equation}
      \dpair{w_0^\prime}{\mathcal{Y}_1(w_1,z_1)\mathcal{Y}_2(w_2,z_2)w_3}
      \label{eq:matels}
    \end{equation}
    is a solution to the power series expansion of the system of differential equations
    \begin{align}
      \frac{\partial^m \phi}{\partial
      z_1^m}+a_1(z_1,z_2)\frac{\partial^{m-1}\phi}{\partial z_1^{m-1}}+\cdots +
      a_m(z_1,z_2)\phi&=0,& 
      \frac{\partial^n \phi}{\partial
      z_2^n}+b_1(z_1,z_2)\frac{\partial^{n-1}\phi}{\partial z_1^{n-1}}+\cdots
      + b_n(z_1,z_2)\phi=0,
     \label{eq:diffeq}
    \end{align}
    in the region \(|z_1|>|z_2|>0\).
  \end{thm}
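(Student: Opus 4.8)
The plan is to transport the finite-generation relations of \cref{thm:gens}.\ref{itm:genrels} across the matrix-element map $\phi_{\mathcal{Y}_1,\mathcal{Y}_2}$ and to reinterpret the powers of $L_{-1}$ appearing there as iterated derivatives, using the $L_{-1}$-derivation property of intertwining operators. As a first step I would record that, since $\mathcal{Y}(L_{-1}w,z)=\frac{\dd}{\dd z}\mathcal{Y}(w,z)$ for any grading compatible logarithmic intertwining operator, term-by-term differentiation of the formal matrix element gives
\begin{equation}
  \langle w_0^\prime, \mathcal{Y}_1(L_{-1}^j w_1,z_1)\mathcal{Y}_2(w_2,z_2)w_3\rangle=\frac{\partial^j}{\partial z_1^j}\langle w_0^\prime, \mathcal{Y}_1(w_1,z_1)\mathcal{Y}_2(w_2,z_2)w_3\rangle,
\end{equation}
and analogously $\mathcal{Y}_2(L_{-1}^j w_2,z_2)$ produces $\partial^j/\partial z_2^j$, the operator $\mathcal{Y}_1(w_1,z_1)$ being independent of $z_2$ so that the derivative passes through it.

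Next I would reduce to the homogeneous situation to which \cref{thm:gens} applies. The elements $w_0^\prime,w_1,w_2,w_3$ are assumed homogeneous, say $w_i\in\Wmod_i^{(a_i)}$; by grading compatibility of $\mathcal{Y}_1,\mathcal{Y}_2$ the pairing \eqref{eq:matels} vanishes unless $w_0^\prime$ lies in the grade $a_1+a_2+a_3$ (with the dual grading convention used in defining $T^{(a_1,a_2,a_3)}$), in which case both differential equations hold trivially. Otherwise $w_0^\prime\in(\Wmod_0^\prime)^{(a_1+a_2+a_3)}$, so \cref{thm:gens}.\ref{itm:genrels} furnishes $m,n\in\ZZ_{\ge0}$ and coefficients $a_k(z_1,z_2),b_\ell(z_1,z_2)\in R$ realising the relations \eqref{eq:genrels} in $T^{(a_1,a_2,a_3)}/J^{(a_1,a_2,a_3)}$. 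These relations say precisely that the $R$-combinations
\begin{equation}
  w_0^\prime\otimes L_{-1}^m w_1\otimes w_2\otimes w_3+\sum_{k=1}^{m}a_k(z_1,z_2)\bigl(w_0^\prime\otimes L_{-1}^{m-k} w_1\otimes w_2\otimes w_3\bigr)
\end{equation}
and its $w_2$-analogue lie in the submodule $J^{(a_1,a_2,a_3)}$.

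The final step is to apply $\phi_{\mathcal{Y}_1,\mathcal{Y}_2}$ to these elements of $J^{(a_1,a_2,a_3)}$. Since $J^{(a_1,a_2,a_3)}$ is contained in the kernel of $\phi_{\mathcal{Y}_1,\mathcal{Y}_2}$, and since $\phi_{\mathcal{Y}_1,\mathcal{Y}_2}$ is $R$-linear with the $R$-action on the target realised through the expansion map $\iota$ (so that each coefficient $a_k$ is replaced by the series $\iota(a_k)$, expanded in $|z_1|>|z_2|>0$), applying $\phi_{\mathcal{Y}_1,\mathcal{Y}_2}$ to the first relation and invoking the derivative identity of the first step yields
\begin{equation}
  \frac{\partial^m\phi}{\partial z_1^m}+\iota(a_1)\frac{\partial^{m-1}\phi}{\partial z_1^{m-1}}+\cdots+\iota(a_m)\phi=0,\qquad \phi=\langle w_0^\prime, \mathcal{Y}_1(w_1,z_1)\mathcal{Y}_2(w_2,z_2)w_3\rangle,
\end{equation}
which is the first equation of \eqref{eq:diffeq}; the $w_2$-relation gives the corresponding $z_2$-equation.

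Since the genuinely substantive content, namely the finiteness of the relevant modules over the Noetherian ring $R$, has already been extracted in \cref{thm:filtbounds} and \cref{thm:gens} from the discrete strong grading and graded $C_1$-cofiniteness hypotheses, I expect this theorem to follow essentially formally. The only point requiring care is the bookkeeping of the $\iota$-expansion: one must confirm that $\phi_{\mathcal{Y}_1,\mathcal{Y}_2}$ intertwines multiplication by $a_k\in R$ on $T^{(a_1,a_2,a_3)}$ with multiplication by the expanded series $\iota(a_k)$, so that the rational coefficients of \eqref{eq:diffeq} are to be read through their expansions convergent in the region $|z_1|>|z_2|>0$, which is exactly the region in which the matrix element \eqref{eq:matels} converges.
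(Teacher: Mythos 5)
Your proposal is correct and follows essentially the same route as the paper's own proof: reduce to the case where \(w_0^\prime\) carries the grade \(a_1+a_2+a_3\) (the matrix element vanishing otherwise), invoke the finite-generation relations \eqref{eq:genrels} of \cref{thm:gens}, and push them through \(\phi_{\mathcal{Y}_1,\mathcal{Y}_2}\) — whose kernel contains \(J^{(a_1,a_2,a_3)}\) — using the \(L_{-1}\)-derivative property and the \(\iota\)-expansion in \(|z_1|>|z_2|>0\). The only cosmetic difference is that the paper phrases the last step via the induced map \(\overline{\phi}_{\mathcal{Y}_1,\mathcal{Y}_2}\) on the quotient \(T^{(a_1,a_2,a_3)}/J^{(a_1,a_2,a_3)}\), while you work directly with representatives in \(J^{(a_1,a_2,a_3)}\); these are the same argument.
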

  \begin{proof}
    Let \(a_1,a_2,a_3\) be the respective \(B\)-grades of \(w_1,w_2,w_3\),
    then we can assume that the \(B\)-grade of \(w_0^\prime\) is
    \(a_1+a_2+a_3\), because otherwise the matrix element vanishes and the
    theorem follows trivially.
    Recall the map \(\phi_{\mathcal{Y}_1,\mathcal{Y}_2}:T^{(a_1,a_2,a_3)}\to z_1^h
    \CC\brac*{\set{z_2/z_1}}\sqbrac*{z_1^{\pm1},z_2^{\pm2}}\), 
    defined by the 
    formula \eqref{eq:matelmap}. Since \(J^{(a_1,a_2,a_3)}\) lies in the
    kernel of \(\phi_{\mathcal{Y}_1,\mathcal{Y}_2}\), we have an induced map
    \begin{equation}
      \overline{\phi}_{{\mathcal{Y}_1,\mathcal{Y}_2}}:T^{(a_1,a_2,a_3)}/J^{(a_1,a_2,a_3)}\to z_1^h
    \CC\brac*{\set{z_2/z_1}}\sqbrac*{z_1^{\pm1},z_2^{\pm2}}.
    \end{equation}
    The theorem then follows by applying \(\overline{\phi}_{{\mathcal{Y}_1,\mathcal{Y}_2}}\) to the
    relations \eqref{eq:genrels} of \cref{thm:gens}.\ref{itm:genrels}, using
    the \(L_{-1}\) derivative property of intertwining operators and
    expanding in the region \(|z_1|>|z_2|>0\).
  \end{proof}
  
  Systems of differential equations of the form \eqref{eq:diffeq} have
  solutions very close to the expansion required if their singular points are
  regular, see for example \cite[Appendix B]{KnaRep86}. A sufficient
  condition, whose validity we shall verify shortly,
  for regularity at a given singular point is that the coefficients \(a_i,
  b_j\) in the system \eqref{eq:diffeq} have poles of degree at most \(m-i\)
  and \(n-j\) respectively. Such singular points are called simple (see
  \cite[Appendix B]{KnaRep86} for the general definition). The singular points relevant for the convergence
  and extension property for products are \(z_1=z_2\) and \((z_1-z_2)/z_2=0\).
  
  We need to consider new filtrations in addition to those considered
  previously. Let \(\overline{R}=\CC\sqbrac{z_1^{\pm1},z_2^{\pm2}}\), then
  \(R_n=(z_1-z_2)^{-n}\overline{R},\ n\in\ZZ\) equips \(R\) with the structure of a
  filtered ring in the sense that \(R_n\subset R_m\), if \(n\le m\),
  \(R=\bigcup_{n\in \ZZ} R_n\) and \(R_n\cdot R_m\subset R_{m+n}\). The
  \(R\)-module \(T^{(a_1,a_2,a_3)}\) can then also be equipped with a compatible filtration
  \begin{equation}
    R_r(T^{(a_1,a_2,a_3)})=\prod_{\substack{n+h_0+h_1+h_2+h_3\le
        r\\h_i\in\RR}}R_n\otimes 
    \brac*{\Wmod_0^\prime}_{[h_0]}^{(a_1+a_2+a_3)}\otimes
    \brac*{\Wmod_1}_{[h_1]}^{(a_1)}\otimes \brac*{\Wmod_2}_{[h_2]}^{(a_2)}\otimes
    \brac*{\Wmod_3}_{[h_3]}^{(a_3)},\quad r\in\RR ,
  \end{equation}
  in the sense that \(R_r(T^{(a_1,a_2,a_3)})\subset R_s(T^{(a_1,a_2,a_3)})\),
  if \(r\le s\), \(T^{(a_1,a_2,a_3)}=\bigcup_{r\in\RR} R_r(T^{(a_1,a_2,a_3)})\)
  and \(R_n\cdot R_r(T^{(a_1,a_2,a_3)})\subset
  R_{n+r}(T^{(a_1,a_2,a_3)})\). Further, let
  \(R_r(J^{(a_1,a_2,a_3)})=R_r(T^{(a_1,a_2,a_3)})\cap J^{(a_1,a_2,a_3)}\).

  \begin{prop}\label{thm:singfiltbounds}
    Let the \(\VOA{V}\)-modules \(\Wmod_i,\ i=0,1,2,3\) be discretely strongly \(B\)-graded and
  \(B\)-graded \(C_1\)-cofinite as \(\overline{\VOA{V}}\)-modules. Then for
  any \(a_1,a_2,a_3\in B\) there exists \(M\in \ZZ\) such that for any \(r\in \RR\)
  \begin{equation}
    R_r(T^{(a_1,a_2,a_3)})\subset R_r(J^{(a_1,a_2,a_3)}) +
    F_M(T^{(a_1,a_2,a_3)})\quad
    \text{and}\quad
    T^{(a_1,a_2,a_3)}=J^{(a_1,a_2,a_3)}+F_M(T^{(a_1,a_2,a_3)}).
  \end{equation}
  Further, \(T^{(a_1,a_2,a_3)}/J^{(a_1,a_2,a_3)}\) is finitely generated.
\end{prop}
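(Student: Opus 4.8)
The plan is to mirror the proof of \cref{thm:filtbounds} almost verbatim, the only genuinely new ingredient being a degree‑bookkeeping observation about how the Jacobi‑relation generators \eqref{eq:jgens} interact with the refined filtration. First note that the last two assertions are consequences of the refined inclusion $R_r(T^{(a_1,a_2,a_3)})\subset R_r(J^{(a_1,a_2,a_3)})+F_M(T^{(a_1,a_2,a_3)})$ alone: taking the union over all $r\in\RR$ and using that the $R_r(J^{(a_1,a_2,a_3)})$ form a filtration of $J^{(a_1,a_2,a_3)}$ yields the identity $T^{(a_1,a_2,a_3)}=J^{(a_1,a_2,a_3)}+F_M(T^{(a_1,a_2,a_3)})$ exactly as in \cref{thm:filtbounds}, and finite generation of the quotient then follows precisely as in \cref{thm:gens}.\ref{itm:fingen}, by realising $T^{(a_1,a_2,a_3)}/J^{(a_1,a_2,a_3)}$ as a subquotient of the finitely generated $R$-module $F_M(T^{(a_1,a_2,a_3)})$. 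So the entire content reduces to the first inclusion.

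For that inclusion I would keep the same integer $M$ produced by graded $C_1$-cofiniteness in \cref{thm:filtbounds}; since that choice depends only on conformal weights and not on the ring $R$, the inclusion \eqref{eq:c1prods} is available unchanged. The key structural change is to run the induction on conformal weight, downward to the base case, rather than on the single filtration degree. Because $R$ carries no conformal weight, any element of $R_r(T^{(a_1,a_2,a_3)})$ may be taken homogeneous of some conformal weight $w$. If $w\le M$ the element lies in $F_M(T^{(a_1,a_2,a_3)})$ by definition and there is nothing to prove. If $w>M$ then by \eqref{eq:c1prods} it is an $R$-multiple of a $C_1$-product, say $g\cdot\brac*{w_0^\prime\otimes v_{-1}w_1\otimes w_2\otimes w_3}$ with $v\in\overline{\VOA{V}}$ of positive weight, and I would rewrite it using $g\cdot\mathcal{A}\brac*{v,w_0^\prime,w_1,w_2,w_3}\in J^{(a_1,a_2,a_3)}$.

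The decisive computation is to track, for each summand of $g\cdot\mathcal{A}\brac*{v,w_0^\prime,w_1,w_2,w_3}$, the combined degree given by the pole order in $z_1-z_2$ plus the total conformal weight. The leading summand is the original element. The two summands carrying $z_1$-coefficients act by the adjoint of $v$ on $w_0^\prime$ or by a mode of $v$ on $w_3$, strictly lowering the conformal weight while leaving the pole order untouched, hence strictly lowering the combined degree. The summand carrying $(z_1-z_2)^{-1-k}$ moves the mode onto $w_2$, raising the pole order by exactly $1+k$ and lowering the conformal weight by exactly $1+k$, so it keeps the combined degree fixed while still strictly lowering the conformal weight. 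Consequently $g\cdot\mathcal{A}\brac*{v,w_0^\prime,w_1,w_2,w_3}$ lies in $R_r(T^{(a_1,a_2,a_3)})\cap J^{(a_1,a_2,a_3)}=R_r(J^{(a_1,a_2,a_3)})$, while every remaining summand has conformal weight strictly below $w$ and still lies in $R_r(T^{(a_1,a_2,a_3)})$; the inductive hypothesis places those in $R_r(J^{(a_1,a_2,a_3)})+F_M(T^{(a_1,a_2,a_3)})$, whence so does the original element. The generators $\mathcal{B}$, $\mathcal{C}$, $\mathcal{D}$ are treated identically, their $(z_1-z_2)^{-1-k}$ terms again effecting the weight‑for‑pole‑order trade.

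The step I expect to be the main obstacle is exactly this bookkeeping: one must verify that transporting a mode of $v$ from one tensor factor to an adjacent one through the Jacobi relation produces a pole in $z_1-z_2$ whose order \emph{exactly} offsets the drop in conformal weight, so that the combined degree is preserved and not increased; this is precisely the mechanism that later forces the singular point $z_1=z_2$ to be simple in \cref{thm:mateldiffs}. A secondary point is termination of the downward induction on conformal weight, which holds because the $\Wmod_i$ are discretely strongly graded, so within fixed $B$-grades only finitely many conformal weights occur in any interval $[M,w]$ and the reduction tree is finite.
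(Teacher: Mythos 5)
Your proposal is correct and takes essentially the same route as the paper: the paper's own proof is a one-line reduction stating that one mimics \cref{thm:filtbounds} once the generators $\mathcal{A},\mathcal{B},\mathcal{C},\mathcal{D}$ of \eqref{eq:jgens} are verified to lie in $R_h(J^{(a_1,a_2,a_3)})$, with $h$ the total conformal weight of their arguments, and your combined-degree bookkeeping (pole order in $z_1-z_2$ plus conformal weight) is precisely that verification, supplemented by the same induction and Noetherian arguments as in \cref{thm:filtbounds} and \cref{thm:gens}. The only slight imprecision is your closing remark that $\mathcal{B},\mathcal{C},\mathcal{D}$ all exhibit the same weight-for-pole-order trade: $\mathcal{B}$ does, but $\mathcal{C}$ and $\mathcal{D}$ contain no $(z_1-z_2)$ poles at all, so for them the membership in $R_h(J^{(a_1,a_2,a_3)})$ is even easier.
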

\begin{proof}
  The proof of this proposition mimics the proof of \cref{thm:filtbounds} once
  one has verified that the elements
  \(\mathcal{A}(u,w_0^\prime,w_1,w_2,w_3)\),
  \(\mathcal{B}(u,w_0^\prime,w_1,w_2,w_3)\),
  \(\mathcal{C}(u,w_0^\prime,w_1,w_2,w_3)\) and
  \(\mathcal{D}(u,w_0^\prime,w_1,w_2,w_3)\) lie in \(R_{h}(J)\), where \(h\)
  is the sum of the conformal weights of \(u,w_0^{\prime},w_1,w_2,w_3\).
\end{proof}

We also need to consider the \(\overline{R}\)-module \(U^{(a_1,a_2,a_3)}=\overline{R}\otimes
\brac*{\Wmod_0^\prime}^{(a_1+a_2+a_3)}\otimes \Wmod_1^{(a_1)}\otimes
\Wmod_2^{(a_2)}\otimes \Wmod_3^{(a_3)}\) and denote by \(U^{(a_1,a_2,a_3)}_{[r]}\) the subspace of
conformal weight \(r\in \RR\). Thus \(U^{(a_1,a_2,a_3)}=\prod_{r\in \RR} U_{[r]}^{(a_1,a_2,a_3)}\).

\begin{lem}\label{thm:polebounds}
  Let the \(\VOA{V}\)-modules \(\Wmod_i,\ i=0,1,2,3\) be discretely strongly \(B\)-graded and
  \(B\)-graded \(C_1\)-cofinite as \(\overline{\VOA{V}}\)-modules. For
  any \(a_1,a_2,a_3\in B\) and any doubly homogeneous vectors
  \(w_0^\prime \in \brac*{\Wmod_0^\prime}_{[h_0]}^{(a_1+a_2+a_3)}\), \(w_i\in
  \brac*{\Wmod_i}_{[h_i]}^{(a_i)}\), let \(h=\sum_i h_i\), let \(\overline{h}\)
  be the smallest non-negative representative of the coset \(h+\ZZ\) and let
  \(m_J\in R_h(J^{(a_1,a_2,a_3)})\), \(m_T\in F_M(T^{(a_1,a_2,a_3)})\) be
  vectors satisfying
  \begin{equation}
    w_0^\prime\otimes w_1\otimes w_2\otimes w_3=m_J+ m_T.
  \end{equation}
  Then there exists
  \(S\in \RR\) such that \(\overline{h}+S\in\ZZ_{\ge0}\) and
  \((z_1-z_2)^{h+S}m_T\in U^{(a_1,a_2,a_3)}\).
\end{lem}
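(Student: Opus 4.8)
The plan is to extract a uniform bound on the order of the $(z_1-z_2)$-poles occurring in $m_T$ and then clear them all at once with a single power of $(z_1-z_2)$ whose exponent lies in $h+\ZZ$. First I would decompose $m_J$ and $m_T$ into their homogeneous components for the grading $T^{(a_1,a_2,a_3)}=\prod_{s}T^{(a_1,a_2,a_3)}_{[s]}$ by total conformal weight $s$ of the vector tensor factors, i.e.\ the grading underlying the filtration $F_\bullet$. Writing $(x)_s$ for the $T^{(a_1,a_2,a_3)}_{[s]}$-component of $x$, the hypothesis $w_0^\prime\otimes w_1\otimes w_2\otimes w_3=m_J+m_T$ gives $(m_T)_s=(w_0^\prime\otimes w_1\otimes w_2\otimes w_3)_s-(m_J)_s$ for every $s$, where the left-hand side is concentrated in conformal weight $h$ and so contributes only when $s=h$.

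Next I would bound the pole order of each $(m_T)_s$. Since $m_J\in R_h(J^{(a_1,a_2,a_3)})\subset R_h(T^{(a_1,a_2,a_3)})$ and $R_h(T^{(a_1,a_2,a_3)})=\prod_{n+\sum_i h_i^\prime\le h}R_n\otimes\brac*{\Wmod_0^\prime}_{[h_0^\prime]}^{(a_1+a_2+a_3)}\otimes\cdots$, the $R$-coefficient of $(m_J)_s$ lies in $R_{h-s}=(z_1-z_2)^{-(h-s)}\overline R$; hence $(m_J)_s$, and therefore $(m_T)_s$, has a $(z_1-z_2)$-pole of order at most $h-s$. It remains to bound the range of $s$. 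The upper bound $s\le M$ is immediate from $m_T\in F_M(T^{(a_1,a_2,a_3)})$. For the lower bound I would invoke discrete strong gradedness: on each fixed $B$-grade the conformal weights of $\Wmod_1,\Wmod_2,\Wmod_3$ are bounded below, say by $L_1,L_2,L_3$, and since the contragredient functor preserves the conformal weight grading (as $L_0^{\opp}=L_0$), the fixed-grade piece $\brac*{\Wmod_0^\prime}^{(a_1+a_2+a_3)}$ is likewise bounded below, say by $L_0$. Consequently $T^{(a_1,a_2,a_3)}_{[s]}=0$ for $s<L:=L_0+L_1+L_2+L_3$, so $(m_T)_s=0$ there and $m_T=\sum_{L\le s\le M}(m_T)_s$ is a finite sum whose components have pole order at most $h-s\le h-L$.

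Finally I would choose $S$. As $\overline h$ is the smallest non-negative representative of $h+\ZZ$, we have $h-\overline h\in\ZZ$, so taking any integer $N\ge0$ large enough that $S:=N-\overline h\ge -L$ yields $\overline h+S=N\in\ZZ_{\ge0}$ and $h+S=(h-\overline h)+N\in\ZZ$. For each component, $(z_1-z_2)^{h+S}(m_T)_s$ then lies in $(z_1-z_2)^{S+s}\,\overline R\otimes\cdots$, and $S+s\ge S+L\ge0$, so this component lies in $\overline R\otimes\cdots$; summing over the finitely many $s$ gives $(z_1-z_2)^{h+S}m_T\in U^{(a_1,a_2,a_3)}$, as required.

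I expect the main obstacle to be the lower bound on the conformal weights supporting $m_T$: without it the pole-order estimates $h-s$ are unbounded and no single power of $(z_1-z_2)$ can clear them. The two facts that resolve it are that $m_T$ is supported in weights $\le M$ (so the only threat is $s\to-\infty$) and that \emph{every} tensor factor — crucially including the contragredient $\Wmod_0^\prime$ — is bounded below in conformal weight on each fixed $B$-grade. Once these are in hand, the clearing of poles is routine bookkeeping across the filtrations $F_\bullet$ and $R_\bullet$.
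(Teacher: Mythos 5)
Your proof is correct and takes essentially the same route as the paper's: both rest on the observation that \(m_T = w_0'\otimes w_1\otimes w_2\otimes w_3 - m_J\) lies in \(R_h(T^{(a_1,a_2,a_3)})\), so that the lower bound on conformal weights furnished by discrete strong gradedness (including for the contragredient \(\Wmod_0'\)) uniformly bounds the \((z_1-z_2)\)-pole orders, after which an integrality-adjusted shift \(S\) clears every pole at once. The only differences are cosmetic — you argue component-wise in conformal weight where the paper uses the spanning-set description of \(R_h(T^{(a_1,a_2,a_3)})\) directly, and your intermediate containment in \((z_1-z_2)^{S+s}\overline{R}\otimes\cdots\) should strictly be phrased via the integer pole order \(n\le h-s\), giving \((z_1-z_2)^{h+S-n}\overline{R}\) with \(h+S-n\in\ZZ_{\ge0}\).
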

\begin{proof}
  Note that the existence of the vectors \(m_J,\ m_T\) is guaranteed by \cref{thm:singfiltbounds}.
  Choose \(S\in \RR\) such that \(\overline{h}+S\in\ZZ_{\ge0}\) and such that
  for any \(r\le -S\), \(T_{[r]}^{(a_1,a_2,a_3)}=0\). Such an \(S\) must exist,
  since the conformal weights of \(T^{(a_1,a_2,a_3)}\) are bounded below by
  assumption.
  By definition, \(R_r(T^{(a_1,a_2,a_3)})\) is spanned by elements of the
  form
  \((z_1-z_2)^{-n}f(z_1,z_2)\widetilde{w}_0\otimes\widetilde{w}_1\otimes\widetilde{w}_2\otimes\widetilde{w}_3\),
  where \(f\in\overline{R}\) and \(n+\sum_i \wt \widetilde{w}_i \le r\). The
  number \(S\) was therefore chosen such that
  \((z_1-z_2)^{r+S}R_r(T^{(a_1,a_2,a_3)})\subset U^{(a_1,a_2,a_3)}\) whenever
  \(r+S\in\ZZ\). Now, by assumption,
  \begin{equation}
    m_T=w_0^\prime\otimes w_1\otimes w_2\otimes w_3-m_J.
  \end{equation}
  The \rhs{} of this equality lies in \(R_{h}(T^{(a_1,a_2,a_3)})\) by construction
  and therefore so does the \lhs{}. Hence \((z_1-z_2)^{h+S}m_T\in U^{(a_1,a_2,a_3)}\).
\end{proof}

\begin{thm}
  Let the \(\VOA{V}\)-modules \(\Wmod_i,\ i=0,1,2,3\) be discretely strongly \(B\)-graded and
  \(B\)-graded \(C_1\)-cofinite as \(\overline{\VOA{V}}\)-modules, let
  \(\Wmod_4\) be a \(B\)-graded \(\VOA{V}\)-module and let \(\mathcal{Y}_1\),
  \(\mathcal{Y}_2\) be logarithmic grading compatible intertwining operators
  of types \(\binom{\Wmod_0}{\Wmod_1,\ \Wmod_4}\), \(\binom{\Wmod_4}{\Wmod_2,\ \Wmod_3}\),
  respectively and consider the system of differential equations of
  \cref{thm:mateldiffs}. For the singular points \(z_1=z_2\) and
  \((z_1-z_2)/z_2=0\) 
  there exist coefficients
  \(a_k(z_1,z_2),\ b_l(z_1,z_2)\in R\) such that these singular points of the
  system of differential equations \eqref{eq:diffeq} satisfied by the matrix
  elements \eqref{eq:matels} are regular.
\end{thm}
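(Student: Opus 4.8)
The plan is to establish regularity of the two singular points by bounding the orders of the poles of the coefficients $a_k$ and $b_\ell$ appearing in the system \eqref{eq:diffeq}. Recall that the criterion stated just before the theorem is that the singular point is regular (simple) provided $a_i$ has a pole of order at most $m-i$ and $b_j$ a pole of order at most $n-j$ at that point. Since these coefficients arise from the relations \eqref{eq:genrels} in \cref{thm:gens}.\ref{itm:genrels}, which express $L_{-1}$-powers acting on the first and second factors as $R$-linear combinations of lower powers modulo $J^{(a_1,a_2,a_3)}$, the whole problem reduces to controlling the degree of the singularities of these $R$-coefficients at $z_1=z_2$ and at $(z_1-z_2)/z_2=0$.

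First I would exploit the filtered structure introduced via $R_n=(z_1-z_2)^{-n}\overline{R}$ and the compatible filtration $R_r(T^{(a_1,a_2,a_3)})$, together with the pole bound established in \cref{thm:polebounds}. The key observation is that applying $L_{-1}$ to a factor raises the conformal weight by $1$, so the iterated relations \eqref{eq:genrels} live in a controlled layer of the filtration; by \cref{thm:singfiltbounds} the quotient $T^{(a_1,a_2,a_3)}/J^{(a_1,a_2,a_3)}$ is finitely generated over the filtered Noetherian ring $R$, and one may choose generators whose representatives lie in $F_M(T^{(a_1,a_2,a_3)})$. Tracking how many factors of $(z_1-z_2)^{-1}$ are needed to write $\sqbrac*{w_0\otimes L_{-1}^m w_1\otimes w_2\otimes w_3}$ in terms of lower-order generators — using that each application of $L_{-1}$ shifts the relevant weight by one unit and hence, by \cref{thm:polebounds}, increases the admissible pole order by at most one — gives precisely the bound $m-i$ for the order of the pole of $a_i$, and symmetrically $n-j$ for $b_j$. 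The singular point $(z_1-z_2)/z_2=0$ is handled identically to $z_1=z_2$ after the change of variable, since the filtration by powers of $(z_1-z_2)$ is exactly the one adapted to both.

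The main obstacle will be making the pole-order bookkeeping precise: one must show that the coefficients produced by the finite-generation argument of \cref{thm:gens} do not accumulate poles faster than the filtration degree permits. Concretely, the relation expressing $L_{-1}^m w_1$ modulo $J$ is obtained by repeatedly using the generators $\mathcal{A},\mathcal{B},\mathcal{C},\mathcal{D}$ of \eqref{eq:jgens}, and each such generator contributes factors of $(z_1-z_2)^{-1-k}$; the delicate point is that the conformal-weight grading forces these $k$ to be bounded in terms of the weight increase, so that the filtration degree $h$ appearing in \cref{thm:polebounds} controls the total pole order. I would therefore argue that the coefficient of $L_{-1}^{m-i}w_1$ in the generating relation lies in $R_{h_0+i}$ for an appropriate base weight, and invoke \cref{thm:polebounds} to convert this filtration membership into the pole bound $m-i$, and analogously for the $b_j$. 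Once this degree estimate is in place, the regularity of both singular points follows immediately from the standard criterion cited from \cite[Appendix B]{KnaRep86}, completing the proof.
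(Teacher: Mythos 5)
Your treatment of the first singular point \(z_1=z_2\) is in the right spirit and close to the paper's: the correspondence you identify (each application of \(L_{-1}\) raises the conformal weight by one and, via \cref{thm:polebounds}, the admissible pole order by one) is exactly what drives the bound. However, the mechanism cannot be ``tracking'' pole orders through the generating relations of \cref{thm:gens}: those relations come from finite generation over \(R\), and \(R\) contains \((z_1-z_2)^{-n}\) for all \(n\), so nothing confines their coefficients to any filtration level. The paper instead manufactures \emph{new} relations with built-in bounds: it splits \(w_0'\otimes L_{-1}^k w_1\otimes w_2\otimes w_3=m_J^{(k)}+m_T^{(k)}\) using \cref{thm:singfiltbounds}, multiplies by \((z_1-z_2)^{h+k+S}\) so that the \(m_T^{(k)}\) land in the pole-free \(\overline{R}\)-module \(U^{(a_1,a_2,a_3)}\), and applies Noetherianity of \(\overline{R}\) (not of \(R\)) there; dividing the resulting \(\overline{R}\)-relation back by these powers automatically yields coefficients of the form \(c_k(z_1,z_2)(z_1-z_2)^{k-m}\) with \(c_k\in\overline{R}\) regular at \(z_1=z_2\). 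Your sketch could plausibly be completed along these lines, so I regard this half as essentially the paper's argument stated imprecisely.

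The genuine gap is the second singular point. It is not true that \((z_1-z_2)/z_2=0\) is ``handled identically after the change of variable'': the filtration by powers of \((z_1-z_2)\) sees only the \((z_1-z_2)\)-adic order and is blind to the \(z_2\)-dependence of the coefficients. A coefficient such as \(z_2^{17}(z_1-z_2)^{k-m}\) passes your pole-order test at \(z_1=z_2\), yet it is not a function of \(x=(z_1-z_2)/z_2\); consequently, after the \(L_0\)-conjugation rescaling the system does not reduce to an ordinary differential equation in \(x\), and regularity at \(x=0\) --- which is what the expansion in the region \(|z_2|>|z_1-z_2|>0\) actually requires --- cannot even be formulated, let alone proved. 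The paper closes this with a separate argument that your proposal omits entirely: introduce the auxiliary \(\ZZ\)-grading with \(\deg z_1=\deg z_2=-1\), check that the generators \eqref{eq:jgens} of \(J^{(a_1,a_2,a_3)}\) are homogeneous for it (so the grading descends to the quotient), and conclude that the relations \eqref{eq:regdiff1} and \eqref{eq:regdiff2} may be chosen homogeneous; this forces the \(c_k\) and \(d_\ell\) to have degree \(0\), i.e.\ to be Laurent polynomials in \((z_1-z_2)/z_2\), which is precisely what makes the second singular point regular. Without this homogeneity step the proof does not go through.
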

\begin{proof}
  We consider first the singular point \(z_1=z_2\). By
  \cref{thm:singfiltbounds,thm:polebounds}, for any \(k\in\ZZ_{\ge0}\)
  together with a vector \(w_0^\prime\otimes L_{-1}^k w_1\otimes w_2\otimes w_3\in
  T^{(a_1,a_2,a_3)}\), where the \(w_i\) are doubly
  homogeneous vectors of total conformal weight \(h\in\RR\), there
  exist \(m_J^{(k)}\in R_{h+k}(J^{(a_1,a_2,a_3)})\) and \(m_T^{(k)}\in
  F_M(T^{(a_1,a_2,a_3)})\) such that
  \begin{equation}
    w_0^\prime\otimes L_{-1}^k w_1\otimes w_2\otimes w_3=m_J^{(k)}+m_T^{(k)} .
  \end{equation}
  Let \(\overline{h}\) be the smallest non-negative representative of the coset \(h+\ZZ\). Then, by \cref{thm:polebounds}, there exists \(S\in \RR\) such that \(\overline{h}+S\in \ZZ_{\ge0}\) and
  \((z_1-z_2)^{h+k+S}m_T^{(k)}\in U^{(a_1,a_2,a_3)}\) and thus
  \((z_1-z_2)^{h+k+S}m_T^{(k)}\in\bigcup_{r\le
    M}U^{(a_1,a_2,a_3)}_{[r]}\). Since the \(V\)-modules \(\Wmod_i\) are
  discretely strongly
  \(B\)-graded and \(B\)-graded \(C_1\)-cofinite, \(\prod_{r\le
    M}U^{(a_1,a_2,a_3)}_{[r]}\) is a finite sum of finitely generated
  \(\overline{R}\)-modules and hence also finitely generated. Thus, since
  \(\overline{R}\) is Noetherian, the submodule generated by the
  \((z_1-z_2)^{h+k+S}m_T^{(k)}\), \(k\in\ZZ_{\ge0}\) is also finitely
  generated. Hence there exists an \(m\in \ZZ_{\ge0}\) such that
  \(\set{(z_1-z_2)^{h+k+S}m_T^{(k)}\st 0\le k\le
  m-1}\) is a finite generating set for this submodule and subsequently there exist \(c_k(z_1,z_2)\in \overline{R}\) such that
  \begin{equation}
    (z_1-z_2)^{h+m+S}m_T^{(m)}+\sum_{k=0}^{m-1} c_k(z_1,z_2)(z_1-z_2)^{h+k+S}m_T^{(k)} = 0.
  \end{equation}
  Therefore,
  \begin{equation}
    w_0^\prime\otimes L_{-1}^m w_1\otimes w_2\otimes
    w_3+\sum_{k=0}^{m-1}c_k(z_1,z_2)(z_1-z_2)^{k-m}w_0^\prime\otimes L_{-1}^k w_1\otimes
    w_2\otimes w_3=m_J^{(m)}+\sum_{k=0}^{m-1}c_k(z_1,z_2) m_J^{(k)} .
  \end{equation}
  Thus in the quotient module \(T^{(a_1,a_2,a_3)}/J^{(a_1,a_2,a_3)}\), we
  obtain (where we again use square brackets to denote cosets)
  \begin{equation}
    \sqbrac*{w_0^\prime\otimes L_{-1}^m w_1\otimes w_2\otimes
      w_3}+\sum_{k=0}^{m-1}c_k(z_1,z_2)(z_1-z_2)^{k-m}\sqbrac*{w_0^\prime\otimes L_{-1}^k w_1\otimes
      w_2\otimes w_3}=0,
    \label{eq:regdiff1}
  \end{equation}
  since \(m_J^{(k)}\in J^{(a_1,a_2,a_3)}\). By a similar line of reasoning
  there exists an \(n\in\ZZ_{\ge0}\) and \(d_\ell(z_1,z_2)\in \overline{R}\)
    such that
      \begin{equation}
    \sqbrac*{w_0^\prime\otimes  w_1\otimes L_{-1}^n w_2\otimes
      w_3}+\sum_{\ell=0}^{m-1}d_\ell(z_1,z_2)(z_1-z_2)^{\ell-n}\sqbrac*{w_0^\prime\otimes  w_1\otimes L_{-1}^k
      w_2\otimes w_3}=0 .
        \label{eq:regdiff2}
  \end{equation}
  Applying the map \(\phi_{\mathcal{Y}_1,\mathcal{Y}_2}\) defined by \eqref{eq:matelmap} and using the
  \(L_{-1}\) property for intertwining operators will then result in a system
  of differential equations for which \(z_1=z_2\) is a simple, and hence regular, singular point.

  To show the regularity of the singular point \((z_1-z_2)/z_2=0\), we
  introduce new gradings on \(R\) and \(T^{(a_1,a_2,a_3)}\). We assign degree
  \(-1\) to the variables \(z_1,z_2\), thus giving \(R\) a \(\ZZ\) grading and
  then grade \(T^{(a_1,a_2,a_3)}\) by adding \(R\)-degrees and conformal
  weights. This implies that the elements
  \(\mathcal{A}(v,w_0^\prime,w_1,w_2,w_3), \ 
  \mathcal{B}(v,w_0^\prime,w_1,w_2,w_3), \ 
  \mathcal{C}(v,w_0^\prime,w_1,w_2,w_3)\) and
  \(\mathcal{D}(v,w_0^\prime,w_1,w_2,w_3)\) are homogeneous with respect to
  this new grading if their arguments
  are doubly homogeneous. The new grading therefore descends to
  \(T^{(a_1,a_2,a_3)}/J^{(a_1,a_2,a_3)}\). Further, 
  for doubly homogeneous elements \(w_0^\prime,w_1,w_2,w_3\), the elements
  \begin{equation}
    \sqbrac*{w_0^\prime\otimes L_{-1}^k w_1\otimes w_2\otimes w_3},\quad
    \sqbrac*{w_0^\prime\otimes w_1\otimes L_{-1}^\ell w_2\otimes w_3}\in
    T^{(a_1,a_2,a_3)}/J^{(a_1,a_2,a_3)} ,
  \end{equation}
  are also homogeneous. Thus the coefficients \(c_k(z_1,z_2),
  d_\ell(z_1,z_2)\) of equations \eqref{eq:regdiff1} and \eqref{eq:regdiff2}
  are elements of degree 0 in \(R\) and can therefore be written as
  Laurent polynomials in \((z_1-z_2)/z_2\). It then follows that the singular
  point \((z_1-z_2)/z_2=0\) is regular.
\end{proof}

The fact that the matrix element \eqref{eq:prodmatel} satisfies an expansion
of the form \eqref{eq:prodconvext} now follows by the reasoning of \cite[Theorem
3.5]{HuaDif04}. A little care is needed when following the reasoning
  of \cite{HuaDif04}, since there only modules with a diagonalisable action of
\(L_0\) are considered. However, as noted in \cite[Part VII, Proof of Theorem
11.8 and Remark 11.9]{HuaLog} the argument extends easily to modules where
\(L_0\) has Jordan blocks. The basic idea is that one can use the \(L_0\)
conjugation property of intertwining operators (recall that \(L_0\) is the
generator of dilations) to rescale the variables in the matrix element 
\eqref{eq:prodmatel} by \(z_2\) so that it becomes a function in
\(z_3=(z_1-z_2)/z_2\) only and the system of differential equations
\eqref{eq:regdiff1} and \eqref{eq:regdiff2} then becomes an ordinary
differential equation for \(z_3\) with a regular singularity at
\(z_3=0\). Similar reasoning for the matrix element \eqref{eq:itmatel} leads one to conclude
that it satisfies the expansion \eqref{eq:itconvext}. Hence \cref{thm:gradconext} follows.

\flushleft

\end{document}